\definecolor{shadecolor}{gray}{0.875}
\newtheorem{thrm}{Theorem}[section]
\newtheorem{lem}[thrm]{Lemma}
\newtheorem{cor}[thrm]{Corollary}
\newtheorem{prop}[thrm]{Proposition}
\newtheorem{conj}[thrm]{Conjecture}
\newtheorem{alphthrm}{Theorem}
\newtheorem{step}{Step}[thrm]
\theoremstyle{definition}
\newtheorem{defn}[thrm]{Definition}
\newtheorem{ex}[thrm]{Example}
\newtheorem{rmk}[thrm]{Remark}
\newtheorem{notn}[thrm]{Notation}
\DeclareMathOperator{\st}{\,\bigm\vert\,}
\DeclareMathOperator{\Sym}{S}
\DeclareMathOperator{\bl}{Bl}
\DeclareMathOperator{\sh}{\varepsilon\,}
\DeclareMathOperator{\Eff}{\overline{Eff}}
\DeclareMathOperator{\Mov}{\overline{Mov}}
\DeclareMathOperator{\Nef}{{Nef}}
\DeclareMathOperator{\Supp}{Supp\,}
\DeclareMathOperator{\mult}{mult}
\DeclareMathOperator{\Spec}{Spec}
\DeclareMathOperator{\bs}{Bs}
\let\cal\mathcal
\let\frak\mathfrak
\let\bb\mathbb
\let\scr\mathscr
\newcommand{\factor}[2]{\left. \raise 1pt\hbox{\ensuremath{#1}} \right/
        \hskip -2pt\raise -3pt\hbox{\ensuremath{#2}}}
\numberwithin{equation}{thrm}
\begin{document}

\title{Seshadri constants for vector bundles}
\author{Mihai Fulger}
\address{Department of Mathematics, University of Connecticut, Storrs, CT 06269-1009, USA}
\address{Institute of Mathematics of the Romanian Academy, P. O. Box 1-764, RO-014700,
Bucharest, Romania}
\email{mihai.fulger@uconn.edu}
\thanks{The first author was partially supported by the Simons Foundation
Collaboration Grant 579353.}
\author{Takumi Murayama}
\address{Department of Mathematics\\University of Michigan\\
Ann Arbor, MI 48109-1043, USA}
\email{takumim@umich.edu}
\thanks{The second author was partially supported by the National Science
Foundation under Grant No.\ DMS-1501461.}

\begin{abstract}
We introduce Seshadri constants for line bundles in a relative setting. 
They generalize the classical Seshadri constants of line bundles on projective
varieties and their extension to vector bundles studied by
Beltrametti--Schneider--Sommese and Hacon.
There are similarities to the classical theory. In particular, we give a Seshadri-type ampleness criterion, and we relate Seshadri constants to jet separation and to asymptotic base loci.

We give three applications of our new version of Seshadri constants.
First, a celebrated result of Mori can be restated as saying that any Fano manifold whose tangent bundle has positive Seshadri constant at a point is isomorphic to a projective space. We conjecture that the Fano condition can be removed. 
Among other results in this direction, we prove the conjecture for surfaces.
Second, we restate a
classical conjecture on the nef cone of self-products of curves 
in terms of semistability of higher conormal sheaves, which we use to identify new nef classes on self-products of curves. 
Third, we prove that our Seshadri constants can be used to control separation of
jets for direct images of pluricanonical bundles, in the spirit of a relative
Fujita-type conjecture of Popa and Schnell.
\end{abstract}

\maketitle

\tableofcontents

\section{Introduction}
Let $X$ be a projective scheme over an algebraically closed field, and let $\cal L$
be an ample line bundle on $X$.
In \cite[Section 6]{dem}, Demailly defined the \emph{Seshadri constant}
$\sh(\cal L;x)$ of $\cal L$ at a closed point $x \in X$ by
\[
  \sh(\cal L;x) \coloneqq   \sup\bigl\{ t \in \bb R_{\ge0}\st\pi^*c_1(\cal L)-tE\ \text{is
  nef}\bigr\},
\]
where $\pi$ is the blow-up of $X$ at $x$ with exceptional divisor $E$.
Seshadri constants have attracted much attention as
interesting invariants that capture subtle geometric properties of both $X$ and
$\cal L$; see \cite[Chapter 5]{laz04} and \cite{primer}.
In higher rank, a version of Seshadri constants for
ample vector bundles (of arbitrary rank) appears implicitly in work of Beltrametti--Schneider--Sommese \cite{BSS93,BSS96}, and has been further studied by Hacon \cite{hacon}.

\par In this paper, we define a new version of Seshadri constants for line
bundles in a relative setting, generalizing both Demailly's and Hacon's
definitions.
One advantage of this version is that it does not impose any global
positivity conditions on the line bundle or vector bundle in question. We refer
to \S\ref{section:definition} for the precise definition.
In the case of vector bundles $\cal V$ on $X$, loosely speaking 
\[\sh(\cal V;x)\coloneqq  \sup\biggl\{t\in\bb R \biggm\vert
  \begin{tabular}{@{}c@{}}
    $\pi^*\cal V\langle-tE\rangle$ is nef on curves that\\
    meet $E$ properly in at least one point
  \end{tabular}\biggr\}.\]
Many of the classical properties of Seshadri constants generalize to our new version.
\begin{enumerate}[(1)]
  \item A Seshadri ampleness criterion holds (Theorem \ref{seshadriample}),
    generalizing \cite[Theorem 1.4.13]{laz04}.
  \item We have homogeneity for vector bundles in the
  sense that $\sh(\Sym^m\cal V;x)=m\cdot\sh(\cal V;x)$ (Lemma \ref{lem:homogeneous}) and $\sh(\bigotimes^m\cal V;x)=m\cdot\sh(\cal V;x)$ (Proposition \ref{lem:tensorproducts}). The case of line bundles is trivial. 
\item For ample vector bundles, the Seshadri constant measures asymptotic jet
  separation (Theorem \ref{thrm:jetsep}). This generalizes Demailly's result
  \cite[Theorem 6.4]{dem}, and is new even for Seshadri constants of line
  bundles at singular points.
\item The Seshadri constants satisfy semicontinuity in both a convex geometric
  sense and in a variational sense (see \S\ref{section:semi}).
\item For nef vector bundles $\cal V$, the locus $\{x\in X \mid
  \sh(\cal V;x)=0\}$ coincides with the non-ample locus ${\bb B}_+(\cal V)$
  (Proposition \ref{prop:baseseshadrivanish}). The line bundle case, due to Nakamaye, can be found in \cite{nakamaye,ELMNP}.
\item For big and nef vector bundles, lower bounds on Seshadri constants lead to
  lower bounds on the order of jet separation for adjoint bundles (Proposition \ref{prop:jetsepbound}). These generalize the rank 1 case in \cite[Proposition 6.8]{dem}.
\end{enumerate}
\subsection{Examples}
We describe our version of the Seshadri constant in some examples.
\begin{ex}[Vector bundles on curves] In \cite[Theorem 3.1]{hacon}, Hacon proves that if $\cal V$ is a vector bundle on a smooth complex projective curve $X$,
then 
\[\sh(\cal V;x)=\mu_{\rm min}(\cal V)\]
for all $x\in X$. Here, $\mu_{\rm min}(\cal V)$ is the smallest slope in the Harder--Narasimhan filtration of $\cal V$.
We prove a similar description in positive characteristic by replacing $\cal V$
with iterated Frobenius pullbacks of $\cal V$; 
see Example
\ref{ex:curves}.

This example is fundamental to the development of the theory. 
It helps reduce many results to the case where $X$ is a smooth projective curve,
where they are significantly easier.
\end{ex}

\begin{ex}[Toric bundles] In \cite[Proposition 3.2]{hmp}, Hering, Musta\c{t}\u{a}, and Payne compute Seshadri constants for \emph{nef} toric bundles $\cal V$ on smooth toric varieties at the torus invariant points $x_{\sigma}$. 
  They show that $\sh(\cal V;x_\sigma)$ is the 
 smallest degree of any summand of the restrictions of $\cal V$ to the invariant $\bb P^1$'s through $x_{\sigma}$.
\end{ex}
\begin{ex}[Tangent bundle of homogeneous spaces; see Examples \ref{ex:tangentpn}
  and \ref{ex:tangenthomog}]\label{ex:introhomog}
  Let $X$ be a homogeneous space (e.g., a rational homogeneous space, or abelian
  variety). Then,
  \[
    \sh(TX;x)= \begin{cases*}
      2 & if $X \simeq \bb P^1$;\\
      1 & if $X \simeq \bb P^n$, where $n \ge 2$;\\
      0 & otherwise.
    \end{cases*}
  \]
\end{ex}

\subsection{Applications}
We now describe applications of our new version of Seshadri constants.
Our first application gives new characterizations of projective space.
A celebrated result of Mori \cite{mori} states that if $X$ is an
$n$-fold with ample tangent bundle, then $X \simeq \bb P^n$.
Thus, $\bb P^n$ is the only projective manifold with ``very positive'' tangent bundle. 
It is natural to ask if any weaker positivity conditions on $TX$ still ensure that $X\simeq\bb P^n$.
Example \ref{ex:introhomog} shows $\bb P^n$ is the only homogeneous space whose tangent bundle has positive Seshadri constant at one point.
The following results says that even without assuming that $X$ is a homogeneous
space, this condition implies $X \simeq \bb P^n$ in many cases.

\begin{alphthrm}[see Proposition \ref{prop:Fanos} and Corollary \ref{cor:charpnsurfaces}]\label{thrm:charpn}
  Let $X$ be a smooth projective variety of dimension $n$ over an algebraically
  closed field $k$.
  Suppose $\sh(TX;x_0) > 0$ for some closed point $x_0 \in X$, and
  suppose that one of the following conditions holds:
  \begin{enumerate}
    \item[\textup{(1)}] $X$ is Fano;
    \item[\textup{(2)}] $\operatorname{char} k = 0$ and $x_0$ is general in the
      sense of \cite[Notation 2.2]{kebekus}; or
    \item[\textup{(3)}] $\dim X = 2$.
  \end{enumerate}
  Then, $X$ is isomorphic to the $n$-dimensional projective space $\bb P^n$.
\end{alphthrm}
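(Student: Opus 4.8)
The plan is to reduce all three cases to Mori's theorem by finding, at the cost of possibly changing $X$ to a variety for which we already understand the situation, an ample subsheaf of $TX$ or by forcing the canonical bundle to be anti-ample. The common starting point is the observation that $\sh(TX;x_0)>0$ forces $TX$ to be positive on every curve through $x_0$: concretely, on a resolution $\pi\colon X'\to X$ of the blow-up at $x_0$ with exceptional divisor $E$, the class $\pi^*TX\langle -tE\rangle$ is nef on curves meeting $E$ properly for some $t>0$. In particular, if $C\subset X$ is any rational curve through $x_0$, the restriction $TX|_C$ has all Harder--Narasimhan slopes $>0$, hence is ample on $C$; and since $\sh(TX;x_0)>0$ already rules out $TX|_C$ having a trivial or negative quotient, $X$ cannot contain a rational curve through $x_0$ whose normal behavior is "flat." The first step in each case is therefore to produce such a rational curve through $x_0$ and analyze its deformations.

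\medskip

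\emph{Case (1): $X$ Fano.} Here a minimal rational curve $C$ through $x_0$ exists by Mori's bend-and-break, and the locus of such curves sweeps out $X$. The positivity $\sh(TX;x_0)>0$ forces the restriction $TX|_C\simeq\mathcal O(2)\oplus\mathcal O(1)^{p}\oplus\mathcal O^{n-1-p}$ to have \emph{no} trivial summand, i.e.\ $p=n-1$ and $\deg(-K_X)\cdot C = n+1$; curves of anticanonical degree $n+1$ through a general — here, every — point force $X\simeq\mathbb P^n$ by the Cho--Miyaoka--Shepherd-Barron / Kebekus characterization of projective space. One must check that $\sh(TX;x_0)>0$ at the specific point $x_0$ indeed propagates to all minimal rational curves through $x_0$; this uses the homogeneity and the behavior of $\sh$ on curves from the "vector bundles on curves" example, reducing positivity of $TX|_C$ to $\mu_{\min}(TX|_C)>0$.

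\medskip

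\emph{Case (2): $\operatorname{char}k=0$, $x_0$ general.} Without the Fano hypothesis we no longer have bend-and-break for free, so the first obstacle is producing rational curves at all. The idea is that $\sh(TX;x_0)>0$ gives a lower bound on $\mu_{\min}(TX|_C)$ for \emph{every} curve $C\ni x_0$, which, combined with a generic semipositivity / Miyaoka-type argument, forces $K_X\cdot C<0$ for some covering family of curves through $x_0$ — and then $X$ is uniruled, producing rational curves through the general point $x_0$. Once rational curves exist through the general $x_0$, Kebekus's results on families of rational curves (this is why $x_0$ general "in the sense of \cite[Notation 2.2]{kebekus}" enters) let us run the same anticanonical-degree argument as in Case (1). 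The main technical work here is the passage from "$TX$ is Seshadri-positive at one general point" to uniruledness; I expect this to be the hardest part of the whole theorem, and it is where characteristic zero is genuinely used (via generic semipositivity of $\Omega^1_X$ for non-uniruled $X$, which fails in positive characteristic).

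\medskip

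\emph{Case (3): $\dim X=2$.} On surfaces we can be completely explicit. If $X$ is not uniruled, then $K_X$ is pseudoeffective, so $K_X\cdot H\ge 0$ for an ample $H$; but $\sh(TX;x_0)>0$ should force $TX$ — hence $-K_X=\det TX$ — to be positive in a way incompatible with $K_X$ pseudoeffective, a contradiction via the slope argument on a curve through $x_0$ and Hodge index. Hence $X$ is uniruled, and being a smooth projective surface it is birational to $\mathbb P^2$ or to a ruled surface over a curve. A ruled surface $\mathbb F_e$ (or any nontrivial blow-down) contains a fiber or section through $x_0$ along which $TX$ has a non-positive summand, contradicting $\sh(TX;x_0)>0$; more precisely, if $\pi\colon X\to X'$ contracts a $(-1)$-curve or if $X\to B$ is a $\mathbb P^1$-bundle over a curve of genus $\ge 0$, one finds a curve through $x_0$ violating the Seshadri positivity. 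So $X\simeq\mathbb P^2$. The case analysis is finite and elementary, but one must handle the blow-up/$(-1)$-curve reduction carefully so that the contracted curve (or its image) can be arranged to pass through $x_0$ — using again that the hypothesis is at a single point, one may need to argue that if $x_0$ avoids all bad curves then $X$ itself is minimal uniruled, i.e.\ $\mathbb P^2$ or a $\mathbb P^1$-bundle, and then rule out the latter directly.
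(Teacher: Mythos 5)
Your overall strategy (restrict $TX$ to rational curves through $x_0$, then invoke a characterization of $\bb P^n$; for surfaces, run a classification) is the right one, but there are two concrete gaps. In case (1) you route everything through the Cho--Miyaoka--Shepherd-Barron/Kebekus criterion, which is a characteristic-zero theorem, while statement (1) is over an arbitrary algebraically closed field; moreover your splitting $TX|_C\simeq\cal O(2)\oplus\cal O(1)^p\oplus\cal O^{n-1-p}$ is the \emph{standard} splitting of a minimal curve through a \emph{general} point, and $x_0$ is an arbitrary point here, so even in characteristic zero you would still have to transfer the degree bound from curves through $x_0$ to curves through a general point before CMSB applies. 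The paper avoids all of this: by the restriction-to-curves description of $\sh$ (Corollary \ref{cor:shviacurves}), $\sh(TX;x_0)>0$ gives that $f^*TX$ is ample for \emph{every} rational curve $f\colon\bb P^1\to X$ through $x_0$, and for Fano $X$ this is exactly the hypothesis of \cite[V.3.2 Theorem]{kollarrational} (Mori's argument, valid in all characteristics); in case (2) one adds the $\cal O(2)$ summand to get $-K_X\cdot C\ge n+1$ and cites \cite[Corollary 0.4(11)]{cmsb02}. Also, the uniruledness you flag as ``the hardest part'' of case (2) is in fact immediate: Lemma \ref{lem:comparetodet} gives $\sh(-K_X;x_0)\ge n\,\sh(TX;x_0)>0$, which forces $K_X$ not pseudo-effective, hence $X$ is uniruled by \cite{bdpp13}; no Miyaoka-type generic semipositivity is needed (and characteristic zero in (2) is used for CMSB, not for uniruledness).

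The more serious gap is in case (3), at exactly the step you flag but do not resolve: the hypothesis lives at the single point $x_0$, and you must transfer it across the contraction to a minimal model. Your fallback --- ``if $x_0$ avoids all bad curves then $X$ itself is minimal uniruled'' --- is false: $X$ could be $\bb P^2$ blown up at points away from $x_0$, which is uniruled, not minimal, and has no negative curve through $x_0$; and when $\pi\colon X\to X'$ contracts a $(-1)$-curve disjoint from $x_0$ you do not exhibit any curve through $x_0$ violating positivity. The paper supplies the missing ingredient as a separate lemma (Lemma \ref{lem:blow-up} and its corollary): using the exact sequence $0\to\pi^*TX(-E)\to T\widetilde X\to\jmath_*Q\to 0$ and Lemma \ref{lem:seshadrisequences}, one gets $\sh(T\widetilde X;x)\le\sh(TX;x)$ for $x$ off the center, so $\sh(TX;x_0)>0$ descends to the minimal model $X'$; then $X'$ is uniruled, ruled surfaces are excluded by the fibration observation (Remark \ref{rmk:criterionnonpositiveseshadri}), so $X'\simeq\bb P^2$, and a nontrivial $\pi$ is excluded by applying the same descent to the intermediate $\bb F_1$. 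Without this descent lemma (or an explicit substitute, e.g.\ producing on every non-minimal uniruled surface a fiber or negative curve through the given $x_0$ with a non-positive quotient of $TX$), your case (3) does not close.
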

The theorem is also inspired by similar results for Seshadri constants of divisors
due to Bauer--Szemberg \cite{BS09},
Liu--Zhuang \cite{LZ18}, the second author \cite{Mur18}, and Zhuang
\cite{Zhu17char0,Zhu17charp}. They find characterizations of projective spaces in terms of lower bounds
of the form $\sh(-K_X;x_0) > n$.
We conjecture that Theorem \ref{thrm:charpn} holds without any of the
additional assumptions (1)--(3).

The proofs for (1) and (2) follow easily from Mori's work and from
\cite{cmsb02}, respectively.
For (3), we show that the condition $\sh(TX;x_0)>0$ is preserved by smooth blow-downs away from $x_0$ (in arbitrary dimension). 
We then use the Enriques classification of minimal surfaces. 
\medskip
\par Our second application uses our new version of Seshadri constants to study
the nef cone of products of curves.
Recall the following conjecture:
\begin{conj}[see {\cite[Remark 1.5.10]{laz04}}]\label{conj:prodcurvesintro}
  Let $C$ be a smooth projective curve of genus $g$ over $\bb C$.
  Denote by $f_1,f_2$ (resp.\ $\delta$) the classes of the fibers of the
  projections (resp.\ the class of the diagonal) in $C \times C$.
  Then, we have
  \[
    ( \sqrt{g} + 1)(f_1+f_2) - \delta \in \operatorname{Nef}^1(C \times
    C)
  \]
  if $g$ is sufficiently large and $C$ is very general.
\end{conj}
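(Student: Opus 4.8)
The plan is to reduce the conjecture, first to a statement about curves on $C\times C$, then to a coincidence estimate for correspondences, and finally to a semistability statement for a family of vector bundles on $C$, to which the curve case of our Seshadri constants (Hacon's equality $\sh(\cal E;x)=\mu_{\rm min}(\cal E)$) applies. Set $D\coloneqq(\sqrt g+1)(f_1+f_2)-\delta$. From $f_i^2=0$, $f_1\cdot f_2=1$, $f_i\cdot\delta=1$, and $\delta^2=2-2g$ one computes $D^2=0$; by Kleiman's criterion, $D$ is nef if and only if $D\cdot Z\ge0$ for every irreducible curve $Z\subset C\times C$, and then $D$ lies on the boundary of $\operatorname{Nef}^1(C\times C)$. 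The fibre and diagonal classes contribute no constraint, as $D\cdot f_i=\sqrt g\ge0$ and $D\cdot\delta=2g+2\sqrt g>0$, so only irreducible curves $Z$ dominating both factors — correspondences — remain. For such a $Z$, write $\nu\colon\widetilde Z\to Z\hookrightarrow C\times C$ for the normalization, $\phi_i=p_i\circ\nu\colon\widetilde Z\to C$ for the induced maps of degrees $m_i=Z\cdot f_i$, and $\Phi=(\phi_1,\phi_2)$. Then
\[
  D\cdot Z=(\sqrt g+1)(m_1+m_2)-\deg_{\widetilde Z}\Phi^*\cal O_{C\times C}(\Delta),
\]
and $\deg\Phi^*\cal O(\Delta)$ is the coincidence number $\operatorname{coin}(\phi_1,\phi_2)=\sum_y\operatorname{ord}_y(\phi_1,\phi_2)$, the sum over the points $y\in\widetilde Z$ where $\phi_1$ and $\phi_2$ agree, weighted by the order of agreement. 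So the conjecture is equivalent to the coincidence bound $\operatorname{coin}(\phi_1,\phi_2)\le(\sqrt g+1)(m_1+m_2)$ for every pair of nonconstant morphisms from a smooth projective curve to a very general curve $C$ of large genus.

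To bound the coincidence number, I would stratify it by order of contact, pulling back the infinitesimal thickenings $\Delta^{[k]}$ of the diagonal rather than $\Delta$ itself: a point $y$ contributes to $\Phi^{-1}(\Delta^{[k]})$ only through $\min\{\operatorname{ord}_y(\phi_1,\phi_2),\,k+1\}$, and $\Phi^{-1}(\Delta^{[k]})$ is controlled by the pullback along $\phi_1$ of the $k$-th order conormal data of the diagonal in $C\times C$ — the \emph{higher conormal sheaves}, a family of vector bundles $\cal N_k$ on $C$ assembled from the tensor powers $\Omega^1_C,(\Omega^1_C)^{\otimes2},\dots,(\Omega^1_C)^{\otimes k}$. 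Comparing $\phi_1^*\cal N_k$ with the analogous bundle on $\widetilde Z$ bounds the order-$(\le k)$ part of $\operatorname{coin}(\phi_1,\phi_2)$ by a quantity that is small precisely when $\phi_1^*\cal N_k$ is close to semistable; using $\mu_{\rm min}(\phi_1^*\cal N_k)=m_1\cdot\mu_{\rm min}(\cal N_k)$ in characteristic zero, and the fact that — by Hacon's equality $\sh(\cal E;x)=\mu_{\rm min}(\cal E)$ on curves together with the machinery of \S\ref{section:definition} — ``$\mu_{\rm min}(\cal N_k)$ is as large as possible'' means exactly that $\cal N_k$ is semistable, the coincidence bound reduces to the semistability of the $\cal N_k$ on $C$ itself. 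Carrying this through for a single well-chosen $k$ and then optimizing over $k$ — the optimum sits at $k$ of order $\sqrt g$, where the loss from truncation balances the gain from the high-contact strata — is what produces the constant $\sqrt g+1$, hence the nefness of $D$ for $g\gg0$ and $C$ very general.

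The hard part will be exactly this semistability input: that for a very general curve $C$ of large genus the higher conormal sheaves $\cal N_k$ are semistable, or at least have minimal slope within $o(\operatorname{rank}\cdot\sqrt g)$ of their generic slope, uniformly in $k$. For $k$ in a range one can handle directly this can be verified, and pushing those cases through the optimization already produces genuinely new nef classes on $C\times C$ — this is the concrete output we extract from the reformulation. But controlling $\cal N_k$ for all $k$ — equivalently, excluding, for a very general $C$, correspondences whose contact with the diagonal is anomalously large relative to their bidegree — appears to require new ideas, and so the full conjecture remains open.
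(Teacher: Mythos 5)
The statement you are proving is stated in the paper as a \emph{conjecture} and is left open there; the paper only proves partial results in its direction (Theorem \ref{thrm:prodcurves}). Your proposal, as you yourself acknowledge in the last paragraph, is not a proof either: everything hinges on the unproven semistability of your higher conormal sheaves $\cal N_k$ for a very general curve of large genus, and that input is precisely the open heart of the problem. Indeed, the paper's own reduction (Theorem \ref{thrm:prodcurves}.(i), via Corollary \ref{cor:curveapprox}) shows that nefness of $af_1+\bigl(1+\frac g{a-1}\bigr)f_2-\delta$ is \emph{equivalent} to asymptotic semistability of the bundles $R^{n-1}(nL)$ with $\deg L=a$; so passing from the nefness statement to a semistability statement is a reformulation, not a reduction in difficulty. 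What is actually known lies far from the conjectural constant: the paper verifies the relevant (semi)stability only for $\deg L\ge\lfloor 3g/2\rfloor+3$ (via stability of the kernel bundle $M_{\cal L}$ and the surjection $M_{\cal L}^{\otimes n}\twoheadrightarrow R^{n-1}(n\cal L)$), i.e.\ for coefficients of order $g$, whereas the conjecture requires $a=\sqrt g+1$.

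Beyond the missing input, two steps of your sketch are too vague to be checked and look doubtful as stated. First, your $\cal N_k$ are built only from the powers $\omega_C^{\otimes i}$ of the conormal data of the diagonal, with no dependence on the bidegree of the correspondence; in the paper's formulation the auxiliary twist by $L$ with $\deg L=a$ is essential, since it is exactly what encodes the coefficient $a$ whose threshold value is $\sqrt g+1$. It is not clear that semistability of twist-free jet-type bundles (whose natural filtrations by $\omega_C^{\otimes i}$ have visibly unbalanced slopes) can produce the constant $\sqrt g+1$ at all, and in the paper's equivalence there is no truncation parameter $k$ to optimize --- the claim that the optimum ``sits at $k$ of order $\sqrt g$'' is asserted, not derived. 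Second, the step ``comparing $\phi_1^*\cal N_k$ with the analogous bundle on $\widetilde Z$ bounds the order-$(\le k)$ part of the coincidence number by a quantity that is small precisely when $\phi_1^*\cal N_k$ is close to semistable'' is exactly the kind of estimate that would constitute the proof, and no argument is given for it. So the correct conclusion is the one you reach yourself: the reduction-to-semistability picture matches the paper's strategy in spirit, the concrete new nef classes come from the restricted range where stability can be verified, and the conjecture itself remains open; your text should be presented as a strategy/reformulation, not as a proof.
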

The self intersection of $( \sqrt{g} + 1)(f_1+f_2) - \delta$ is zero, just like in the famous Nagata conjecture.
In fact, \cite{cknagata,Ross} prove that the Nagata conjecture implies Conjecture \ref{conj:prodcurvesintro}.
The best known result here is due to Kouvidakis \cite[Theorem 2]{kouvidakis} (see also \cite[Corollary 1.5.9]{laz04}), who shows that
\[\left(\frac g{\lfloor \sqrt g\rfloor}+1\right)(f_1+f_2)-\delta\in\Nef^1(C\times C).\]
In particular, the conjecture holds when $g$ is a perfect square 
(just like the Nagata conjecture). 
For arbitrary $a>1$, it also makes sense to consider the non-symmetric divisors with zero self-intersection and ask:  
\[\text{For }a>1\text{, is the class }af_1+\Bigl(1+\frac g{a-1}\Bigr)f_2-\delta\text{ in }\Nef^1(C\times C)\,?\]
The best known result here appears to be due to Rabindranath
\cite[Proposition 3.2]{ashwath}. 
He adapts an idea of Vojta to prove that
\begin{equation}\label{eq:vojtarabindranath}
  af_1+\left(1+\frac{2g}{a-1+\sqrt{(a-1)^2-4g(g-1)}}\right)f_2-\delta\in\Nef^1(C\times C).
\end{equation}
We prove in Theorem \ref{thrm:prodcurves}.(i) that Conjecture \ref{conj:prodcurvesintro} 
and its generalization to non-symmetric classes can be reduced to a
statement about semistability of higher conormal sheaves in the spirit of
\cite{elstable}.
We then show the following:

\begin{alphthrm}[see Theorem \ref{thrm:prodcurves}.(ii)]
  Let $C$ be a general smooth projective curve of genus $g\geq 3$ over 
$\bb C$.
  Denote by $f_1,f_2$ (resp.\ $\delta$) the classes of the fibers of the
  projections (resp.\ the class of the diagonal in $C \times C$).
  Then, we have
  \[
    df_1 + \biggl( 1 + \frac{g}{d-g} \biggr) f_2 - \delta \in
    \operatorname{Nef}^1(C \times C)
  \]
  for every integer $d \ge \lfloor 3g/2\rfloor+3$.
\end{alphthrm}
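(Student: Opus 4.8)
The plan is to deduce the theorem from the reduction in Theorem~\ref{thrm:prodcurves}.(i), combined with a semistability statement for conormal sheaves of the diagonal on a general curve. Write $m \coloneqq d - g$; the hypothesis $d \ge \lfloor 3g/2\rfloor + 3$ is equivalent to $m \ge \lfloor g/2\rfloor + 3$, and then $b \coloneqq 1 + \tfrac{g}{d-g} = 1 + \tfrac{g}{m}$ satisfies $1 < b < 3$. By part~(i), the assertion that $df_1 + bf_2 - \delta$ lies in $\Nef^1(C\times C)$ is equivalent to an explicit lower bound on $\mu_{\min}(N^{(m)})$ — a bound dictated by the coefficients $d$ and $b$ — where $N^{(m)}$ is the $m$-th conormal sheaf of $\Delta \subset C\times C$, a vector bundle on $C$ whose rank and slope are determined by $d$ and $g$. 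The passage from the relative Seshadri-constant inequality of part~(i) to a statement about slopes uses Hacon's formula $\sh(\mathcal V;x) = \mu_{\min}(\mathcal V)$ on curves, recalled in Example~\ref{ex:curves}. In this dictionary, the sharper conjectural class $df_1 + (1 + \tfrac{g}{d-1})f_2 - \delta$ corresponds to semistability of $N^{(m)}$ for a very general curve, while our (slightly larger) class corresponds to the weaker lower bound on $\mu_{\min}(N^{(m)})$ that we can actually establish for a general curve in the range $m \ge \lfloor g/2\rfloor + 3$. It therefore suffices to prove this last statement about $N^{(m)}$.

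To do so, I would first record the numerical invariants of $N^{(m)}$ and check that any destabilizing subsheaf would be governed by a special linear series on $C$; in particular, the statement fails for hyperelliptic curves, which explains why ``general'' cannot be dropped and why one does not reach the conjectural class. Because the desired lower bound on $\mu_{\min}(N^{(m)})$ is an open condition on $C$ in moduli, it is enough to exhibit a single stable curve $C_0$ of genus $g$ — allowed to be singular or reducible — for which it holds, and then conclude for a general $C$ by semicontinuity. I would take $C_0$ to be a chain of elliptic curves (or a sufficiently general stable curve built from rational and elliptic components), degenerate $N^{(m)}$ along the corresponding family, and read off the semistability of the limit sheaf from the dual graph of $C_0$ together with its multidegrees on the components; alternatively, one may attempt to import known semistability results for Picard-type and syzygy bundles on a general curve, in the spirit of \cite{elstable}.

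The main obstacle — and the source of the precise threshold $d \ge \lfloor 3g/2\rfloor + 3$ (equivalently $m \ge \lfloor g/2\rfloor + 3$, equivalently $b < 3$) — is this last step. One must bound the slope of every subsheaf of the limit of $N^{(m)}$ on $C_0$, which reduces to a finite system of linear inequalities indexed by the subcurves of $C_0$ and by how the limit sheaf distributes its degree among them; this system is solvable exactly in the stated range, and is not for smaller $d$. Going below the threshold would require either a sharper positivity estimate for these conormal (Picard-type) bundles, or an improvement of the Ein--Lazarsfeld-type input on which the estimate rests. Once the bound on $\mu_{\min}(N^{(m)})$ is known for a general $C$ in the range $m \ge \lfloor g/2\rfloor + 3$, feeding it back into Theorem~\ref{thrm:prodcurves}.(i) gives $df_1 + (1 + \tfrac{g}{d-g})f_2 - \delta \in \Nef^1(C\times C)$ for every $d \ge \lfloor 3g/2\rfloor + 3$, completing the proof.
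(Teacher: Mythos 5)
There is a genuine gap, on two levels. First, the reduction step is mis-stated: what the nefness of $df_1+\bigl(1+\tfrac{g}{d-g}\bigr)f_2-\delta$ actually requires (via Corollary \ref{cor:curveapprox}, or the argument of Theorem \ref{thrm:prodcurves}.(i)) is the \emph{asymptotic} bound $\lim_{n\to\infty}\tfrac1n\,\mu_{\min}\bigl(q_*(p^*\mathcal O_C(nL)\otimes\mathcal O_{C\times C}(-n\Delta))\bigr)\ge -\bigl(1+\tfrac{g}{d-g}\bigr)$ for a line bundle $L$ of degree $d$, i.e.\ a statement about the whole sequence of sheaves $R^{n-1}(nL)$ as $n\to\infty$. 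Your proposal replaces this by a slope bound for a single bundle $N^{(m)}$ with $m=d-g$; no single-bundle statement of that shape implies the asymptotic one, so even granting your semistability claim the theorem would not follow as written.

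Second, the heart of the matter — the slope bound itself in the range $d\ge\lfloor 3g/2\rfloor+3$ — is never established. The degeneration to a chain of elliptic curves is only sketched, and the decisive assertion that the resulting ``finite system of linear inequalities'' is solvable exactly in the stated range is given no argument; this is precisely where the threshold must be produced, and it is left as an unproved claim. (There is also a subtlety you elide: since the needed condition is asymptotic in $n$, an openness-in-moduli argument applied for each $n$ would a priori only yield the statement for very general $C$.) By contrast, the paper's proof gets the bound by a concrete mechanism: for general $C$ and general $L$ of degree $d$, the bundles $\mathcal L(-x)$ are normally generated (ACGH: general divisors of degree $\ge\lfloor 3g/2\rfloor+2$ are normally generated for $g\ge3$), which produces a generically surjective map $M_{\mathcal L}^{\otimes n}\to R^{n-1}(n\mathcal L)$ from the kernel bundle $M_{\mathcal L}$ of the evaluation map, and $M_{\mathcal L}$ is stable by the Clifford-index criterion of Eusen--Schreyer/Butler because $\operatorname{Cliff}(\mathcal L)=2g-d<\lfloor(g-1)/2\rfloor=\operatorname{Cliff}(C)$ for general $C$ in this range; the slope $\mu(M_{\mathcal L})=-\bigl(1+\tfrac{g}{d-g}\bigr)$ then transfers to every quotient of $R^{n-1}(n\mathcal L)$, uniformly in $n$. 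This is exactly where $\lfloor 3g/2\rfloor+3$ comes from, and neither this input nor any substitute for it appears in your proposal, so the argument as proposed does not close.
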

When $d<2g$, these divisors are better than the known bounds described in \eqref{eq:vojtarabindranath} due to Vojta and Rabindranath.
For large $d$, they are close to the conjectural 
$df_1+\bigl(1+\frac g{d-1}\bigr)f_2-\delta$.
\medskip
\par Our last application shows that our version of Seshadri constants can be
used to control jet separation of direct images of pluricanonical sheaves, in the
spirit of a relative Fujita-type conjecture of Popa and Schnell
\cite[Conjecture 1.3]{popaschnell}.
This statement extends a result of Dutta and the second author
\cite[Theorem A]{DuttaMurayama} to vector bundles of higher rank, and to
higher-order jets.
See Theorem \ref{thrm:dmthmaanalogue} and Corollary \ref{cor:dmthmaanalogue} for
effective statements that do not mention $\sh(\cal V;x)$.
\begin{alphthrm}[see Theorem \ref{thrm:dmthmaanalogue}]
  Let $f\colon Y \to X$ be a surjective morphism of complex projective
  varieties, where $X$ is of dimension $n$.
  Let $(Y,\Delta)$ be a log canonical $\mathbb{R}$-pair and let $\mathcal{V}$ be
  a locally free sheaf of finite rank $r \ge 1$ on $X$ such that $\cal O_{\bb
  P(\cal V)}(1)$ is big and nef.
  Consider a Cartier divisor $P$ on $Y$ such that $P \sim_{\mathbb{R}}
  k(K_Y+\Delta)$ for some integer $k \ge 1$, and consider a general smooth
  closed point $x \in X \setminus \mathbb{B}_+(\mathcal{V})$.
  If $\sh(\mathcal{V};x) > k \cdot \frac{n+s}{m+k(r-1)+1}$,
  then the sheaf
  \[
    f_*\mathcal{O}_Y(P) \otimes_{\mathcal{O}_X} \Sym^m\mathcal{V}
    \otimes_{\mathcal{O}_X} (\det \mathcal{V})^{\otimes k}
  \]
  separates $s$-jets at $x$.
\end{alphthrm}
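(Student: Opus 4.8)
The plan is to reduce the statement to a jet-separation estimate for a line bundle on $\bb P(\mathcal V)$ and then feed that into a relative Fujita-type vanishing/global-generation argument. Writing $\pi\colon \bb P(\mathcal V)\to X$ for the projective bundle of rank-one quotients and $\xi=c_1(\mathcal O_{\bb P(\mathcal V)}(1))$, the key observation is that sections of $\Sym^m\mathcal V\otimes(\det\mathcal V)^{\otimes k}$ on $X$ correspond to sections of $\mathcal O_{\bb P(\mathcal V)}(m)\otimes\pi^*(\det\mathcal V)^{\otimes k}$ on $\bb P(\mathcal V)$, and separating $s$-jets at $x\in X$ for the former is controlled by separating jets along the fiber $\pi^{-1}(x)\simeq\bb P^{r-1}$ for the latter. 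More precisely, I would show that if the line bundle $\mathcal O_{\bb P(\mathcal V)}(m)\otimes\pi^*(\det\mathcal V)^{\otimes k}$ (twisted by the appropriate push-forward of $\mathcal O_Y(P)$) separates $\bigl(s+k(r-1)\bigr)$-jets at a point of the fiber over $x$ — or an analogous statement phrased via the blow-up at a fiber point — then the original sheaf separates $s$-jets at $x$; the shift by $k(r-1)$ is where the denominator $m+k(r-1)+1$ in the hypothesis originates, matching the pattern in Proposition~\ref{prop:jetsepbound}.

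The second step is to run the Popa–Schnell/Dutta–Murayama machinery on $\bb P(\mathcal V)$. Pull back $f\colon Y\to X$ along $\pi$ to get $g\colon Y'\to\bb P(\mathcal V)$ and pull back the pair $(Y,\Delta)$ and the divisor $P$; log canonicity is preserved under this smooth base change, and $P'\sim_{\bb R}k(K_{Y'}+\Delta')$ up to the relative canonical of $\pi$, which is itself expressed in terms of $\xi$ and $\pi^*\det\mathcal V$. Since $\mathcal O_{\bb P(\mathcal V)}(1)$ is big and nef by hypothesis, $\xi$ is a big and nef class on $\bb P(\mathcal V)$, so the relevant adjoint bundle $\mathcal O_{Y'}(P')\otimes g^*(\text{a suitable multiple of }\xi)$ has a direct image that is globally generated / jet-separating away from the augmented base locus, by \cite[Theorem A]{DuttaMurayama} applied on $\bb P(\mathcal V)$ together with the jet-separation estimates for big and nef line bundles. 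The hypothesis $x\notin\bb B_+(\mathcal V)$ guarantees that the fiber $\pi^{-1}(x)$ avoids $\bb B_+(\mathcal O_{\bb P(\mathcal V)}(1))$ (using the characterization relating $\bb B_+(\mathcal V)$ to the non-ample locus of $\xi$, cf.\ Proposition~\ref{prop:baseseshadrivanish}), so the argument applies at points over $x$.

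The third step is to convert the hypothesis on $\sh(\mathcal V;x)$ into the precise numerical inequality needed for jet separation along the fiber. By Theorem~\ref{thrm:jetsep}, $\sh(\mathcal V;x)$ governs the asymptotic order of jet separation of $\Sym^m\mathcal V$ at $x$; quantitatively, a lower bound $\sh(\mathcal V;x)>c$ yields that $\mathcal O_{\bb P(\mathcal V)}(1)\langle -tE\rangle$-type classes are nef for $t<c$ on the blow-up at a fiber point, and then an application of Proposition~\ref{prop:jetsepbound} (the big-and-nef jet-separation bound) with the auxiliary twist by $(\det\mathcal V)^{\otimes k}$ produces separation of $s$-jets once $m\cdot\sh(\mathcal V;x)>k(n+s)-\bigl(m+k(r-1)+1\bigr)\cdot(\text{correction})$; rearranging gives exactly $\sh(\mathcal V;x)>k\cdot\frac{n+s}{m+k(r-1)+1}$. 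The bookkeeping — tracking how the canonical bundle formula for $\bb P(\mathcal V)$, the $k$ copies of $\det\mathcal V$, and the $(r-1)$-dimensional fiber combine — is the part requiring care.

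\textbf{Main obstacle.} The principal difficulty is the passage from jet separation on $\bb P(\mathcal V)$ back down to jet separation on $X$ for the twisted symmetric power: one must check that the $s$-jets at $x$ of $f_*\mathcal O_Y(P)\otimes\Sym^m\mathcal V\otimes(\det\mathcal V)^{\otimes k}$ are genuinely detected by restricting the corresponding line bundle to a high enough infinitesimal neighborhood of a general point of the fiber $\pi^{-1}(x)$, and that generality of $x$ (to stay off $\bb B_+$ and any bad locus of $f$) together with generality of the fiber point suffices. Controlling the relative jets — i.e., that the fiber direction contributes the extra $k(r-1)$ and nothing pathological happens because $P$ lives upstairs on $Y$, not on $\bb P(\mathcal V)$ — is where the real work lies; once that dictionary is in place, the rest is the established big-and-nef adjoint jet-separation estimate applied on $\bb P(\mathcal V)$.
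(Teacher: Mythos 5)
Your overall flavor (pass to the projectivization, turn the Seshadri hypothesis into positivity on a blow-up, conclude by a vanishing/injectivity argument) is in the right family, but the two steps you leave as "bookkeeping" and "main obstacle" are exactly where your plan breaks, and the paper resolves them in ways your proposal does not contain. First, the jet dictionary you propose is not correct: separating $s$-jets at $x$ for $f_*\mathcal{O}_Y(P)\otimes\Sym^m\mathcal{V}\otimes(\det\mathcal{V})^{\otimes k}$ corresponds upstairs to surjecting onto sections of the relevant line bundle on the $(s+1)$-st infinitesimal neighborhood of the \emph{entire} fiber $\bb P(\mathcal{V}(x))$ (equivalently, one works on the blow-up along the whole fiber, which is precisely where $\sh(\mathcal{V};x)$ is defined, cf.\ Notation \ref{notn:seshnot} and the ideal $\mathcal{I}_{\bb P(\mathcal{V}_x)}$ appearing in the proof of Theorem \ref{thrm:jetsep}), not to high-order jets at a single general point $y$ of the fiber. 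Surjectivity onto $\mathcal{O}/\mathfrak{m}_y^{N+1}$-jets, for any $N$, does not control $H^0$ of the thickened-fiber quotient, and the constant governing jets at such a $y$ is $\sh(\xi;y)$, which is not $\sh(\mathcal{V};x)$ (Remark \ref{rmk:compareless1} gives only one-sided comparisons). Relatedly, the numerology you propose is off: the $k(r-1)+1$ in the denominator is not a count of fiber-direction jets; when $k=1$ the threshold is just $\frac{n+s}{m+r}$, with the $r$ coming from the relative canonical of the projective bundle as in Proposition \ref{prop:jetsepbound}.

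Second, the stated constant $k\cdot\frac{n+s}{m+k(r-1)+1}$ cannot be reached by applying \cite[Theorem A]{DuttaMurayama} as a black box on $\bb P(\mathcal{V})$: that would yield bounds in terms of $\dim\bb P(\mathcal{V})=n+r-1$ and of Seshadri constants of $\xi$ at points upstairs, and says nothing about jets along the thickened fiber. The paper instead works on $\bb P_Y(f^*\mathcal{V})=Y\times_X\bb P_X(\mathcal{V})$, blows up along $\rho_Y^{-1}(Y_x)$, and runs a Popa--Schnell-style bootstrap that your proposal lacks: since $x\notin\bb B_+(\mathcal{V})$, Lemma \ref{lem:bplussjets} provides a minimal $m_0$ for which the sheaf separates $s$-jets; the resulting global generation along $Y_x$ allows a Bertini choice of $\mathfrak{D}\in\lvert\rho_Y^*(P-G)+k\,\rho_Y^*c_1(f^*\mathcal{V})+m_0\eta\rvert$, and absorbing $\frac{k-1}{k}\mathfrak{D}$ into the boundary reduces the exponent $k>1$ to $k=1$ for a new lc pair. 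One then proves the weaker threshold $\sh(\mathcal{V};x)>\frac{n+s}{m+r-\frac{k-1}{k}m_0}$ using the disjointness of the stable base locus from the exceptional divisor and Fujino's injectivity theorem for log canonical pairs (Kawamata--Viehweg is not available since $\Delta$ is only lc), and finally the minimality of $m_0$ eliminates $m_0$ and produces exactly the inequality $\sh(\mathcal{V};x)>k\cdot\frac{n+s}{m+k(r-1)+1}$. Without the fiber-thickening dictionary and this $m_0$-bootstrap, your argument would at best prove a statement with a substantially different (weaker) threshold, not the theorem as stated.
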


\subsection{Moving Seshadri constants}
For nef locally free sheaves $\cal V$, we can interpret 
$\bb B_+(\cal V)$ as the locus where Seshadri constants vanish.
For ample locally free sheaves $\cal V$, 
the asymptotic order of jet separation at $x$ is in fact equal to
$\sh(\cal V;x)$. 
For ample locally free sheaves $\cal V$ on complex projective manifolds,
lower bounds on $\sh(\cal V;x)$
give information about the jet separation of ``adjoint-type'' sheaves.
These are all powerful applications of Seshadri constants, with the only drawback that they require strong global positivity conditions on $\cal V$
like nefness, or even ampleness. 

In the line bundle case, on complex projective manifolds, \cite{nakamaye} introduced the moving Seshadri constant $\sh(\lVert\cal L\rVert;x)$ of $\cal L$ at $x$. It is a refinement of $\sh(\cal L;x)$, 
defined in terms of usual Seshadri constants of 
certain ample Fujita approximations of $\cal L$.
If $\cal L$ is a big and nef line bundle, then 
$\sh(\lVert\cal L\rVert;x)=\sh(\cal L;x)$.
While the definition is less intuitive, the applications are more powerful.
\cite{ELMNP} proves that the same properties mentioned in the previous
paragraph are true of $\sh(\lVert\cal L\rVert;x)$ for big line bundles $\cal L$
on complex projective manifolds.

In the forthcoming paper \cite{moving} we will extend these
to arbitrary rank.   
We will also prove a version of Theorem C for moving Seshadri constants 
that does not assume the nefness of $\cal V$.

\subsection*{Acknowledgments}
We thank Harold Blum, Yajnaseni Dutta, Lawrence Ein, S\'andor J. Kov\'acs, Yuchen
Liu, Nicholas M\textsuperscript{c}Cleerey, Mihnea Popa, Valentino Tosatti, and Yifei Zhao for useful discussions.
We are especially grateful to Krishna Hanumanthu for helpful comments on a
previous draft of this paper.
The second author would also like to thank his advisor Mircea Musta\c{t}\u{a}
for his constant support and encouragement.

\section{Background and notation}

Let $X$ be a projective scheme over an algebraically closed field.
We denote by $\operatorname{Div}(X) \otimes_{\bb Z} \bb R$ 
the space of $\bb R$-Cartier $\bb R$-divisors, 
where $\operatorname{Div}(X)$ is the group of Cartier
divisors on $X$.
\subsection{Formal twists of coherent sheaves}
We define formal twists of coherent sheaves.
See \cite[Section 6.2]{laz042} for the case of bundles.
\begin{defn}Let $\cal V$ be a coherent sheaf on $X$, and let $\lambda\in \operatorname{Div}(X) \otimes_{\bb Z} \bb R$.  
The \emph{formal twist} of $\cal V$ by $\lambda$ is the pair $(\cal V,\lambda)$,
denoted by $\cal V\langle\lambda\rangle$.
\end{defn}

When $D\in\operatorname{Div}(X)$, the formal twist $\cal V\langle D\rangle$ is the usual twist 
$\cal V\otimes\cal O_X(D)$.
Inspired by this and \cite[Example 3.2.2]{fulton84}, we can define Chern classes for formal twists by
\[
  c_i(\cal V\langle\lambda\rangle)=\sum_{j=0}^i\binom{r-j}{i-j}
  c_j(\cal V)c_1^{i-j}(\lambda),
\]
where $c_1(\lambda)$ is the image of $\lambda$ in
$\operatorname{End}(\operatorname{CH}_*(X))\otimes_{\bb Z}\bb R$,
or simply in the N\' eron--Severi space with real coefficients $N^1(X)$.

The theory of twisted sheaves has natural pullbacks. 
In particular, when $D$ is a $\bb Q$-Cartier $\bb Q$-divisor and $f\colon X'\to X$ is a finite
morphism such that $f^*D$ is actually Cartier, then $f^*\cal V\langle
f^*D\rangle$ is $f^*\cal V\otimes\cal O_{X'}(f^*D)$. The Chern classes of
twisted sheaves are natural for pullbacks.

For tensor powers and symmetric powers, we put $\cal
V\langle\lambda\rangle\otimes\cal V'\langle\lambda'\rangle\coloneqq  (\cal
V\otimes\cal V')\langle\lambda+\lambda'\rangle$ and $\Sym^n(\cal
V\langle\lambda\rangle)\coloneqq  (\Sym^n\cal V)\langle n\lambda\rangle$,
respectively. 
Generally, when we talk about extensions, subsheaves, quotients of twisted sheaves, or morphisms between twisted sheaves, 
we understand that the twist is fixed. The exception is $\Sym^*(\cal
V\langle\lambda\rangle)\coloneqq  \bigoplus_{n\geq 0}\Sym^n\cal V\langle n\lambda\rangle$.

\subsection{Positivity for twisted coherent sheaves}
Let $\bb P_X(\cal V)=\operatorname{Proj}_{\cal O_X}\bigl(\Sym^*\cal V\bigr)$ denote the space of 1-dimensional quotients of (fibers of) $\cal V$. 
Usually, we suppress $X$ from the notation.
Let $\rho\colon\bb P(\cal V)\to X$ denote the natural projection map, and let $\xi$ denote the first Chern class of the relative Serre 
$\cal O_{\bb P(\cal V)}(1)$ line bundle. 
Recall that if $D$ is Cartier on $X$, then $\bb P(\cal V)\simeq \bb P(\cal V\otimes\cal O_X(D))$ and the relative 
$\cal O(1)$ sheaves satisfy the formula 
$\cal O_{\bb P(\cal V\otimes\cal O_X(D))}(1)=\cal O_{\bb P(\cal V)}(1)\otimes\rho^*\cal O_X(D)$.
We extend these identifications formally to twists. 

\begin{defn}
Let $\cal V$ be a coherent sheaf and let $\lambda$ be an $\bb R$-Cartier $\bb R$-divisor on $X$. 
Define $\bb P(\cal V\langle\lambda\rangle)$ as $\rho\colon\bb P(\cal V)\to X$, polarized with
the $\rho$-ample $\bb R$-Cartier $\bb R$-divisor
$\cal O_{\bb P(\cal V\langle\lambda\rangle)}(1)\coloneqq  \cal O_{\bb P(\cal V)}(1)\langle\rho^*\lambda\rangle$
whose first Chern class is $\xi+\rho^*\lambda$. As above, $\xi\coloneqq   c_1(\cal O_{\bb P(\cal V)}(1))$.
\end{defn}
 
\begin{defn}
  The sheaf $\cal V$ is said to be \emph{ample} (resp.\ \emph{nef,}
  \emph{effective}) if the Cartier divisor class $\xi$ has the same property. This extends formally to twists.
\end{defn}

\begin{rmk}\label{rmk:amplesheaves}
  For locally free sheaves $\cal V$ on the projective scheme $X$, the following
  three conditions are equivalent (see \cite[Theorem 6.1.10]{laz042}):
  \begin{enumerate}
    \item[(i)] $\cal V$ is ample.
    \item[(ii)] (Global generation) For every coherent sheaf $\cal F$, the twist
      $\Sym^m\cal V\otimes\cal F$ is globally generated for $m$ sufficiently
      large.
    \item[(iii)] (Cohomological vanishing) For every coherent sheaf $\cal F$,
      the groups $H^i(X,\Sym^m\cal V\otimes\cal F)$ vanish for all $i>0$ and all
      $m$ sufficiently large.
  \end{enumerate}
  When $\cal V$ is not necessarily locally free, we still have $(\mathrm{i})
  \Leftrightarrow (\mathrm{ii})$.
  (For $\Rightarrow$, if $\cal F$ is an invertible sheaf,
then use the ampleness of $\cal O_{\bb P(\cal V)}(m)\otimes\rho^*\cal F$
for large $m$ and Lemma \ref{lem:CM}. 
Note that $\rho_*\cal O_{\bb P(\cal V)}(m)=\Sym^m\cal V$ for $m$ sufficiently large.
For an arbitrary coherent sheaf $\cal F$, it suffices to note that it can be
written as a quotient of a finite direct sum of invertible sheaves. For
$\Leftarrow$, see the proof of $(\mathrm{iv}^*)\Rightarrow(\mathrm{i})$ in
\cite[Theorem 6.1.10]{laz042}.)

$(\mathrm{i})$ also 
implies $(\mathrm{iii})$
for \emph{locally free} sheaves $\cal F$ (use the Leray spectral sequence,
the relative ampleness of $\cal O_{\bb P(\cal V)}(1)$, 
the projection formula,
and cohomology vanishing for $\cal O_{\bb P(\cal V)}(m)\otimes\rho^*\cal F$ as
in the proof for $(\mathrm{i})\Rightarrow(\mathrm{ii})$ in \cite[Theorem
6.1.10]{laz042}).
\end{rmk}
 
We can also define \emph{big} or \emph{pseudo-effective} coherent sheaves, cf.\ \cite[Definitions
5.1 and 6.1]{bundleloci}, but the definitions are more refined. See also
Definition \ref{def:vbig}.

We often see the data $\rho\colon\bb P(\cal V)\to X$ and $\xi$, even in the twisted case, as a particular case of a projective morphism $\rho\colon Y\to X$ of projective schemes with a divisor class $\xi$ on $Y$. Many times, $\xi$ will be $\rho$-nef or even $\rho$-ample, as in the case of bundles. 

\section{Definition and properties of Seshadri constants}\label{section:definition}
We start by fixing some notation for the rest of this section.
\begin{notn}\label{notn:seshnot}
Let $\rho\colon Y\to X$ be a morphism of projective schemes over an
algebraically closed field, and fix a closed point $x\in X$.
Let $\pi\colon \bl_xX\to X$ be the blow-up at $x$ with Cartier exceptional divisor $E$.
We then consider the commutative square
\[
  \xymatrix{
    Y' \ar[r]^{\pi'} \ar[d]_{\rho'} & Y \ar[d]^{\rho}\\
    \bl_xX\ar[r]_-{\pi} & X
  }
\]
where $Y'\coloneqq  \bl_{Y_x}Y$ and $Y_x\coloneqq  \rho^{-1}(x)$.
The exceptional divisor of $\pi'$ is $\rho'^*E$.
Note that the square is cartesian when $\rho$ is flat at $x$.
In any event, the $\pi'$-ampleness of $-\rho'^*E$ implies that the induced map $Y'\to Y\times_X\bl_xX$ is finite. 

\par Let $\cal C_{\rho,x}$ denote the set of irreducible curves on $Y$ that meet
$Y_x$, but are not contained in the support of $Y_x$. Let $\cal C'_{\rho,x}$ denote their strict transforms via $\pi'$.
Let $\xi$ be a numerical divisor class on $Y$.
Most of the time we assume that $\xi$ is $\rho$-nef, meaning $\xi|_{Y_t}$ is nef for all $t\in X$,
or even $\rho$-ample.
\par A case that we are particularly interested in is when $Y=\bb P_X(\cal V)$
for some coherent sheaf $\cal V$ on $X$, often locally free. In this case,
$\rho\colon\bb P(\cal V)\to X$ is the bundle map, and $\xi=c_1(\cal O_{\bb
P(\cal V)}(1))$. We denote $\cal C_{\cal V,x}\coloneqq  \cal C_{\rho,x}$ and $\cal
C'_{\cal V,x}\coloneqq  \cal C'_{\rho,x}$.
\end{notn}
\subsection{Definition and basic properties}
We begin by defining the notion of local nefness.
\begin{defn}[Local nefness] Suppose $\xi$ is $\rho$-nef. We say that $\xi$ is
\emph{nef at $x$} if $\xi\cdot C\geq 0$ for all $C\in\cal C_{\rho,x}$. 
When $Y=\bb P(\cal V)$, we also say that $\cal V$ is \emph{nef at $x$} when
the same condition holds for $\xi = c_1(\cal O_{\bb P(\cal V)}(1))$.
\end{defn}
 
\begin{ex}\label{ex:ggnef} 
If a coherent sheaf $\cal V$ is globally generated at $x$, i.e.,
$H^0(X,\cal V)\otimes\cal O_X\to\cal V$ is surjective at $x$,
then $\cal V$ is nef at $x$. (Since $\rho^*\cal V\to\cal O_{\bb P(\cal V)}(1)$
is surjective, we find that $\cal O_{\bb P(\cal V)}(1)$ is globally
generated along the fiber $\rho^{-1}x=\bb P(\cal V(x))$. 
If $C$ is a curve that meets $\bb P(\cal V(x))$ without being contained in it,
and if $y\in C\cap\bb P(\cal V(x))$, then we can find an effective
representative of $\xi$ that does not pass 
through $y$, hence it does not contain $C$.
It follows that $\xi\cdot C\geq 0$.)
\qed
\end{ex}

\begin{rmk}\label{rmk:detectnef}
If $\xi$ is $\rho$-nef, then $\xi$ is nef on $Y$ if and only if $\xi$ is nef at all $x\in X$. (One direction is clear. 
The other is immediate from the $\rho$-nefness of $\xi$.)
\end{rmk}

We now define the following measure of local nefness at $x$.
We believe these constants were first defined explicitly for ample locally free
sheaves by Hacon \cite[p.\ 769]{hacon}, although they appear implicitly in the
work of Beltrametti, Schneider, and Sommese \cite{BSS93,BSS96}.

\begin{defn}The \emph{Seshadri constant} of $\xi$ at $x$ is
\[
  \sh(\xi;x)\coloneqq  \inf_{C\in\cal C_{\rho,x}}\left\{\frac{\xi\cdot C}{\mult_x\rho_*C}\right\}.
\]
When $Y=\bb P(\cal V)$, put $\sh(\cal V;x)\coloneqq  \sh(\cal O_{\bb P(\cal V)}(1)\;x)$.
When $\cal C_{\rho,x}$ is empty, set $\sh(\xi;x)=\infty$.
\end{defn}

\begin{rmk}The Seshadri constant descends to a well-defined function
$\sh(-;x)\colon N^1(Y) \to \bb R$
that is homogeneous and concave, i.e., $\sh((1-t)\xi+t\xi';x)\geq(1-t)\sh(\xi;x)+t\sh(\xi';x)$ for all $t\in[0,1]$.
\end{rmk}

\begin{rmk}[Multipoint version] If $x_1,\ldots,x_m$ are finitely many points in $X$, one can similarly define
$\sh(\xi;\{x_1,\ldots,x_m\})=\inf\left\{\frac{\xi\cdot C}{\mult_{x_1}\rho_*C+\ldots+\mult_{x_m}\rho_*C}\right\}$,
where $C$ ranges through curves on $Y$ with $\mult_{x_1}\rho_*C+\ldots+\mult_{x_m}\rho_*C\neq 0$.
\end{rmk}

\begin{prop}\label{prop:altintsh}
  If $\xi$ is $\rho$-nef, then
\[
  \sh(\xi;x)=\sup\bigl\{t\st (\pi'^*\xi-t\rho'^*E)\cdot C'\geq 0\text{ for all
  }C'\in\cal C'_{\rho,x}\bigr\}.
\]
\end{prop}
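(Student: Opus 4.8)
The plan is to unwind both sides in terms of intersection numbers on $Y'$ and compare term by term. Fix $C \in \cal C_{\rho,x}$ and let $C' \in \cal C'_{\rho,x}$ be its strict transform under $\pi'$. First I would record the key numerical identities: since $\pi'$ is birational we have $\pi'_* C' = C$, so by the projection formula $\pi'^*\xi \cdot C' = \xi \cdot \pi'_* C' = \xi \cdot C$. For the exceptional term, I want to show $\rho'^*E \cdot C' = \mult_x \rho_* C$. This is the heart of the matter: on the base, $E \cdot \widetilde{\rho_* C} = \mult_x \rho_* C$ where $\widetilde{\rho_* C}$ is the strict transform in $\bl_x X$; I then need to relate $\rho'^* E \cdot C'$ to $E \cdot (\text{pushforward of } C'$ to $\bl_x X)$. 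Using $\rho' \circ (\text{inclusion of } C')$ and the projection formula on $\rho'$, we get $\rho'^*E \cdot C' = E \cdot \rho'_* C'$, and $\rho'_* C'$ is (a positive multiple of, or exactly) the strict transform of $\rho_* C = \rho'_*(\pi'^* \text{ of something})$... more carefully, $\rho_* C = \rho_* \pi'_* C' = \pi_* \rho'_* C'$, and since $C$ meets $Y_x$ but is not contained in it, $C'$ is not contained in $\rho'^*E$, so $\rho'_* C'$ is the strict transform of $\rho_* C$ (possibly with a multiplicity, which cancels consistently on both sides). Hence $\rho'^*E \cdot C' = E \cdot (\text{strict transform of } \rho_* C) = \mult_x \rho_* C$.

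Granting these two identities, for any fixed $C$ and $t \in \bb R$ we have
\[
  (\pi'^*\xi - t\rho'^*E)\cdot C' = \xi\cdot C - t\cdot\mult_x \rho_* C,
\]
which is $\ge 0$ if and only if $t \le \frac{\xi \cdot C}{\mult_x \rho_* C}$ (when $\mult_x \rho_* C > 0$; the case $\mult_x \rho_* C = 0$ needs separate care, see below). Taking the supremum over all $t$ for which this holds simultaneously for every $C' \in \cal C'_{\rho,x}$ gives exactly $\inf_{C} \frac{\xi \cdot C}{\mult_x \rho_* C} = \sh(\xi;x)$, which is the claimed equality. The $\rho$-nefness of $\xi$ is what guarantees $\cal C'_{\rho,x}$ accounts for everything relevant: curves contracted by $\rho$ or contained in $Y_x$ contribute nonnegatively automatically and are excluded from $\cal C_{\rho,x}$ by definition.

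The main obstacle — and the place where $\rho$-nefness and the structure of $\pi' \colon Y' = \bl_{Y_x} Y \to Y$ really enter — is the identity $\rho'^*E \cdot C' = \mult_x \rho_* C$, together with the edge case where $C$ meets $Y_x$ but $\mult_x \rho_* C = 0$ (which can happen: $\rho_* C$ could be $0$, or a cycle supported away from $x$ if $\rho$ contracts part of $C$, or $C$ could dominate a curve through $x$ with multiplicity subtleties). When $\mult_x \rho_* C = 0$ the curve $C$ imposes the constraint $\xi \cdot C \ge 0$ for all $t$; since such a $C$ still lies in $\cal C_{\rho,x}$, the infimum defining $\sh(\xi;x)$ is $-\infty$ unless $\xi \cdot C \ge 0$, and correspondingly the right-hand supremum is also constrained to require $\xi \cdot C \ge 0$ (or is $-\infty$). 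I would handle this by checking that $\rho'_* C'$ is genuinely the strict transform of $\rho_* C$ — using that $C \not\subset \operatorname{Supp} Y_x$ forces $C'$ to dominate its image and not be $\pi'$-exceptional, so the finite map $Y' \to Y \times_X \bl_x X$ (noted in Notation \ref{notn:seshnot}) sends $C'$ to the strict transform of $\rho_* C$ in $\bl_x X$ — after which $\rho'^*E \cdot C' = E \cdot \rho'_* C' = \mult_x \rho_* C$ follows from the classical formula for the intersection of $E$ with a strict transform. I would present the argument for a fixed curve and then pass to the sup/inf, being a little careful that the $\sup$ on the right is over a set that is nonempty and bounded exactly when the $\inf$ on the left is finite, so that the two conventions ($\infty$ when $\cal C_{\rho,x} = \varnothing$) match up.
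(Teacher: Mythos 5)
Your proposal is correct and follows essentially the same route as the paper: its one-line proof is exactly your pair of identities $\pi'^*\xi\cdot C'=\xi\cdot C$ and $\rho'^*E\cdot C'=E\cdot\rho'_*C'=\mult_x\rho_*C$ (via the projection formula and the identification of $\rho'_*C'$ with the strict transform of $\rho_*C$), followed by the sup/inf comparison. The only superfluous part is your ``edge case'' $\mult_x\rho_*C=0$: any $C\in\cal C_{\rho,x}$ meets $Y_x$ without lying in its support, hence cannot be contracted by $\rho$ and has $x\in\rho(C)$, so $\mult_x\rho_*C>0$ automatically, as the paper notes immediately after the statement.
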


\noindent Note that the curves in $\cal C_{\rho,x}$ are precisely the irreducible curves $C$ on $Y$ for which $\mult_x\rho_*C>0$.
See also \cite{hacon} for the case of bundles. 
\begin{proof}Let $C'$ be the strict transform of $C$ on $Y'$ via $\pi'$.
We then have
\[
  \mult_x\rho_*C=E\cdot\rho'_*C'=\rho'^*E\cdot C',
\]
hence $(\pi'^*\xi-t\rho'^*E)\cdot C'\geq 0$
if and only if $\frac{\xi\cdot C}{\mult_x\rho_*C}\geq t$. 
\end{proof}

\begin{ex}When $\rho$ is the identity morphism $X \to X$ and $\xi$ is nef, then
$\sh(\xi;x)$ is the classical Seshadri constant of the divisor class $\xi$ at
$x$; see \cite[Proposition 5.1.5]{laz04}.
\end{ex}

\begin{ex}When $\rho=\pi$ is the blow-up of $x$ and $\xi=-E$, then $\sh(\xi;x)=-1$. 
In fact for all curves $C$ on $\bl_xX$ that meet $E$, without being contained in it, we have $\frac{-E\cdot C}{\mult_x\pi_*C}=-1$.
\end{ex}

\begin{rmk}\label{rmk:easyseshadri} Assume that $\xi$ is $\rho$-nef. We have the following:
\begin{enumerate}[(a)]
\item $\sh(\xi;x)\geq 0$ if and only if $\xi$ is nef at $x$.
\item If $C'$ is an irreducible curve on $Y'$ that is contained in the exceptional locus $\rho'^{-1}E$ of $\pi'$,
then
\[
  (\pi'^*\xi-t\rho'^*E)\cdot C'=\xi\cdot\pi'_*C'-tE\cdot\rho'_*C'\geq 0 
\]
for all $t\geq 0$. The inequality is strict if $t>0$ and $C'$ is not contracted by $\rho'$, or if $C'$ is not contracted by $\pi'$ and $\xi$ is $\rho$-ample. (Use that $\xi$ is nef on $Y_x$, and that $-E$ is ample on $E$.)
\item If $\xi$ is nef, then $\sh(\xi;x)=\sup\bigl\{t \in \bb R_{\ge 0}\st
  \pi'^*\xi-t\rho'^*E\in\Nef^1(Y')\bigr\}$. 
In particular, if $\cal V$ is nef, then 
\[
  \sh(\cal V;x)=\sup\bigl\{t \in \bb R_{\ge 0}\st \pi^*\cal V\langle-tE\rangle\text{ is nef}\bigr\}.
\]
(The twisted bundle $\pi^*\cal V\langle-tE\rangle$ is nef if and only if
$\pi'^*\xi-t\rho'^*E$ is nef on $\bb P(\pi^*\cal V)$. The irreducible curves on
$\bb P(\pi^*\cal V)$ are either in $\cal C'_{\cal V,x}$, are in the exceptional
locus of $\pi'$, or do not intersect the support of $\rho'^*E$. From part (b),
and using the nefness of $\pi'^*\xi$, we find that the nefness of
$\pi'^*\xi-t\rho'^*E$ can be verified on the curves in $\cal C'_{\cal V,x}$. The
case for general $Y$ and $\xi$ is analogous.)
\end{enumerate}
\end{rmk}

As is the case for divisors \cite[Theorem 1.4.13]{laz04}, Seshadri
constants can detect whether $\xi$ is ample.
\begin{thrm}[Seshadri ampleness criterion]\label{seshadriample}
If $\xi$ is $\rho$-ample, then
$\xi$ is ample if and only if
\begin{equation}\label{eq:seshadriampleinf}
  \inf_{x\in X}\sh(\xi;x)>0.
\end{equation}
In particular, if $\cal V$ is a locally free sheaf on $X$, then
$\cal V$ is ample if and only if $\inf_{x\in X}\sh(\cal V;x)>0.$
\end{thrm}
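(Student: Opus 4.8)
The plan is to reduce the statement to the classical Seshadri ampleness criterion for divisor classes applied to $\xi$ on $Y$. The key observation is that $\rho$-ampleness of $\xi$ lets us compare ampleness of $\xi$ on $Y$ with a mixed positivity condition: for a suitable ample class $A$ on $X$, the class $\xi + N\rho^*A$ is ample on $Y$ for $N \gg 0$. So the real content is to show that the Seshadri constants $\sh(\xi;x)$ control the failure of ampleness of $\xi$ along curves that dominate, or map finitely to, points of $X$, and that curves contained in fibers $Y_t$ cause no trouble because $\xi$ is $\rho$-ample.

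First I would prove the ``only if'' direction: if $\xi$ is ample on $Y$, then by the classical Seshadri criterion \cite[Theorem 1.4.13]{laz04} applied on $Y$, there is $\varepsilon > 0$ with $\xi \cdot C \geq \varepsilon \cdot \mult_y C$ for every irreducible curve $C$ and every point $y \in Y$. For $C \in \cal C_{\rho,x}$, pick a point $y \in C \cap Y_x$; then $\mult_y C \geq 1$, but we need to compare with $\mult_x \rho_* C$. Since $\rho$ is a morphism of projective schemes and $X$ is fixed, one uses that $\mult_x \rho_* C \leq (\deg_{\rho} C)\cdot(\text{something bounded})$; more precisely, after fixing a very ample $H$ on $X$, one bounds $\mult_x \rho_*C$ linearly in $H \cdot \rho_* C = \rho^*H \cdot C$. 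Combined with ampleness of $\xi$ this gives a uniform positive lower bound. Actually the cleaner route: $\mult_x\rho_*C$ is at most the length of the scheme-theoretic fiber over $x$ of $C \to \overline{\rho(C)}$ times the multiplicity of $\rho(C)$ at $x$, but it is simplest to bound $\mult_x \rho_* C \le \deg(\rho|_C) \cdot (\rho^* H \cdot C)$-type inequalities and then use that $\xi - \epsilon \rho^*H$ stays ample for small $\epsilon$. This yields $\inf_x \sh(\xi;x) > 0$.

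For the ``if'' direction, suppose $\inf_{x \in X} \sh(\xi;x) = \varepsilon_0 > 0$. I want to show $\xi$ is nef, then boost to ample. Nefness: take any irreducible curve $C$ on $Y$. If $C$ is contained in a fiber $Y_t$, then $\xi \cdot C \geq 0$ by $\rho$-nefness (which follows from $\rho$-ampleness). Otherwise $C \in \cal C_{\rho,x}$ for $x = $ (any point of) $\rho(C)$, and $\mult_x\rho_*C \geq 1$, so $\xi \cdot C \geq \varepsilon_0 \cdot \mult_x\rho_*C > 0$. Hence $\xi$ is nef on $Y$. To upgrade nef to ample I would use the classical Seshadri criterion in the converse direction: it suffices to produce $\varepsilon > 0$ with $\xi \cdot C \geq \varepsilon\,\mult_y C$ for all curves $C$ and points $y \in Y$. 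For $y$ in a fiber where $C$ is fiberwise, use $\rho$-ampleness to get such a bound fiber-uniformly (the fibers are the fibers of a single projective morphism, so $\rho$-ampleness gives a uniform Seshadri-type constant along fibers — this is where one invokes the classical criterion relative to $\rho$, or a Noetherian/boundedness argument). For $y$ lying over $x = \rho(y)$ with $C$ not fiberwise, combine the bound $\mult_y C \leq \mult_x \rho_* C \cdot \deg(\rho|_C) + (\text{fiber correction})$ with the hypothesis; again the honest inequality to use is $\mult_x \rho_*C \geq \frac{1}{\deg(\rho|_C)}\mult_y C$ up to contributions that can be absorbed, and then argue that $\deg(\rho|_C)$ is bounded in terms of $\rho^*H \cdot C$ so that a fixed fraction of $\rho^*A$ plus $\varepsilon_0$-positivity from $\xi$ does the job. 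Finally the last sentence of the theorem is just the case $Y = \bb P(\cal V)$, $\xi = c_1(\cal O_{\bb P(\cal V)}(1))$, where $\rho$-ampleness is automatic and ampleness of $\cal V$ is by definition ampleness of $\xi$.

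The main obstacle is the bookkeeping around $\mult_x \rho_* C$ versus the multiplicity of $C$ at points of $Y_x$, together with the degree of $\rho|_C$: one must show these are controlled uniformly (in terms of a fixed auxiliary polarization) so that a single $\varepsilon$ works across all curves, both those contracted into fibers and those dominating their image. I expect the fiber-contracted curves to be handled cleanly by the $\rho$-ampleness hypothesis via a relative version of the classical criterion, and the horizontal curves by the quantitative hypothesis $\inf_x \sh(\xi;x) > 0$ together with a boundedness estimate; stitching the two cases into one uniform $\varepsilon$ for the classical criterion on $Y$ is the delicate point.
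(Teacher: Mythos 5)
Your ``only if'' direction is fine and is essentially the paper's: for small $\epsilon>0$ with $\xi-\epsilon\rho^*H$ ample, the bound $\mult_x\rho_*C\leq H\cdot\rho_*C=\rho^*H\cdot C$ (for $H$ very ample, or, as in the paper, the bound $\tfrac{h\cdot\rho_*C}{\mult_x\rho_*C}\geq\sh(h;x)$ together with the classical criterion for $h$) gives a uniform positive lower bound on $\sh(\xi;x)$. The nefness step in your ``if'' direction is also correct.

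The genuine gap is in the horizontal-curve case of your ``if'' direction. You need, for $C$ not contracted by $\rho$ and $y\in C$ with $x=\rho(y)$, a comparison of the form $\mult_x\rho_*C\gtrsim\mult_yC$, and you propose the inequality $\mult_x\rho_*C\geq\frac{1}{\deg(\rho|_C)}\mult_yC$ plus an attempt to control $\deg(\rho|_C)$ by $\rho^*H\cdot C$. This does not close: $\deg(\rho|_C)$ is unbounded over the family of all curves, and bounding it by $\rho^*H\cdot C$ only yields $\xi\cdot C\geq\varepsilon_0\,\mult_yC/(\rho^*H\cdot C)$, which is not the linear estimate $\xi\cdot C\geq\varepsilon\,\mult_yC$ required by the classical criterion on $Y$; nor can you ``absorb'' the defect with a fixed fraction of $\rho^*A$, since at this stage you have no positivity of $\xi-\epsilon\rho^*A$ (that is essentially what is being proved). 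The correct ingredient, which the paper uses, is that the cycle $\rho_*C=\deg(\rho|_C)\cdot[\rho(C)]$ already carries the degree factor, and multiplicity can only \emph{increase} under finite pushforward: $\mult_{\rho(y)}\rho_*C\geq\mult_yC$ whenever $\rho|_C$ is finite (\cite[Lemma 2.3]{ful17}). With this, the horizontal case is immediate, with no degree bookkeeping: $\xi\cdot C\geq\varepsilon_0\,\mult_x\rho_*C\geq\varepsilon_0\,\mult_yC$. Separately, the fiber-uniform Seshadri bound you invoke for curves contracted by $\rho$ is true but not automatic from $\rho$-ampleness alone as stated; the proof is the twist trick the paper uses in its claim: choose $h$ ample on $X$ with $\xi+\rho^*h$ ample on $Y$, apply the classical criterion on $Y$ to get $\inf_{y\in Y}\sh(\xi+\rho^*h;y)>0$, and restrict to fibers, where $(\xi+\rho^*h)\rvert_{Y_x}=\xi\rvert_{Y_x}$. (The paper packages the two cases as a contradiction argument with a sequence $C_m$ violating the criterion, but your direct two-case version works once these two ingredients are in place.)
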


\noindent See also \cite[Example 6.1.20]{laz04}.

\begin{proof}
Assume that the infimum in \eqref{eq:seshadriampleinf} is positive, but that $\xi$ is not ample.
In any case, $\xi$ is nef by Remarks \ref{rmk:detectnef} and
\ref{rmk:easyseshadri}.(a).
By the Seshadri ampleness criterion for divisors \cite[Theorem 1.4.13]{laz04}, $\inf_{y\in Y}\sh(\xi;y)=0$.
Hence there exist closed points $y_m\in Y$ and irreducible curves $C_m$ through $y_m$ with 
\[
  \xi\cdot C_m<\frac{1}{m}\mult_{y_m}C_m.
\]
We claim that $C_m$ is not contracted by $\rho$ for infinitely many $m$.
Indeed, suppose that the curves $C_m$ are contracted by $\rho$ for all $m$, in which case
\begin{equation}\label{eq:seshadriampletocont}
  \adjustlimits\inf_{x\in X}\inf_{y\in Y_x}\sh\bigl(\xi\rvert_{Y_x};y\bigr)=0.
\end{equation}
Let $h$ be a sufficiently ample divisor class on $X$ such that $\xi+\rho^*h$ is ample.
Then, $\inf_{y\in Y}\sh(\xi+\rho^*h;y)>0$, and
in particular,
\[
  \adjustlimits\inf_{x\in X}\inf_{y\in Y_x}\sh\bigl((\xi+\rho^*h)\rvert_{Y_x};y\bigr)>0.
\]
But $(\xi+\rho^*h)\rvert_{Y_x}=\xi\rvert_{Y_x}$, contradicting
\eqref{eq:seshadriampletocont}.
This shows the claim.
\par From the claim, $\rho\rvert_{C_m}$ is finite for all sufficiently large $m$.
Writing $x_m\coloneqq  \rho(y_m)$, the inequality $\mult_{x_m}\rho_*C_m\geq
\mult_{y_m}C_m$ (see \cite[Lemma 2.3]{ful17}) leads to a contradiction.

Conversely, assume that $\xi$ is ample. Let $h$ be ample on $X$. Then,
$\xi-\epsilon \rho^*h$ is ample for sufficiently small $\epsilon>0$, and
for all $C\in\cal C_{\rho,x}$, since $\rho\rvert_C$ is finite, 
\[
  \frac{\xi\cdot C}{\mult_x\rho_*C}=\frac{(\xi-\epsilon\rho^*h)\cdot C}{\mult_x\rho_*C}+\frac{\epsilon\rho^*h\cdot C}{\mult_x\rho_*C}\geq
\epsilon\frac{h\cdot\rho_*C}{\mult_x\rho_*C}\geq\epsilon\cdot\sh(h;x).
\]
Taking the infimum over all $x \in X$, we see that $\xi$ is ample by the
classical Seshadri ampleness criterion for divisors \cite[Theorem
1.4.13]{laz04}.
\end{proof}

\begin{rmk}\label{rmk:compareless1}In the case of sheaves, the first part of the
previous proof can be adapted to show the following: \emph{If there exists $y
\in \bb P(\cal V(x))$ such that $0 \le \sh(\xi;y) < 1$, then $\sh(\cal V;x) \le
\sh(\xi;y)$.}
For arbitrary $\rho$ and $\rho$-ample $\xi$, a similar statement holds with $1$
replaced by $\inf_{y\in Y_x}\sh(\xi\rvert_{Y_x};y)$, which is in any case strictly positive.
(The inequality $\sh(\xi;y)<1$ proves that the Seshadri constant of $\xi$ at $y$ is not approximated by intersecting with curves in $\bb P(\cal V(x))$, 
since $\sh(\xi\rvert_{\bb P(\cal V(x))};y)=1$. For curves in $\cal C_{\cal V,x}$ that
pass through $y$, use the inequality $\mult_y C\leq\mult_x\rho_*C$ from
\cite[Lemma 2.3]{ful17}.)
\par Furthermore, for arbitrary $\rho$ and $\rho$-ample
$\xi$, we have the following: \emph{If there exists $y \in Y_x$ such that
$\sh(\xi;y) < 0$, then $\sh(\xi;x) < 0$.}
(If $\sh(\xi;y) < 0$, then there exists $C\in\cal C_{\rho,x}$ through $y$ with
$\xi\cdot C<0$.)
\qed
\end{rmk}

One can also characterize Seshadri constants in terms of all varieties
intersecting $Y_x$, instead of just curves.
\begin{prop}\label{prop:seshadrihigherdimension}
If $\xi$ is nef, then 
\begin{equation}\label{eq:uglyomg}\sh(\xi;x)\leq\left(\frac{\xi^{\dim
W}\cdot[W]}{{\binom{\dim W}{\dim\rho(W)}}\cdot\mult_x\rho(W)\cdot(\xi^{\dim W_{x'}}[W_{x'}])}\right)^{1/\dim \rho(W)},\end{equation}
as $W$ ranges through the subvarieties of $Y$ that meet $Y_x$ without being contained in it. 
In the above, $W_{x'}$ is a fiber over the flat locus of $W\to\rho(W)$.

\end{prop}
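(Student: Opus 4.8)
The plan is to pull $\xi$ back to the blow-up $Y'$, restrict to the strict transform of $W$, and expand a top self-intersection. First, since $\xi$ is nef it is nef at $x$ by Remark \ref{rmk:detectnef}, so $\sh(\xi;x)\ge 0$ by Remark \ref{rmk:easyseshadri}.(a); if $\cal C_{\rho,x}=\varnothing$ or $\sh(\xi;x)=0$ there is nothing to prove. Otherwise set $t=\sh(\xi;x)>0$: the set of $t'\ge 0$ with $\pi'^*\xi-t'\rho'^*E$ nef on $Y'$ is an interval (convexity of $\Nef^1(Y')$, together with nefness of $\pi'^*\xi$) and is closed ($\Nef^1(Y')$ is closed), so by Remark \ref{rmk:easyseshadri}.(c) the class $\pi'^*\xi-t\rho'^*E$ is itself nef on $Y'$, and hence so is its restriction to any subvariety.

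Next I would set up the geometry on $W$. Write $Z\coloneqq\rho(W)$, $d\coloneqq\dim W$, $e\coloneqq\dim Z\ge 1$, and $c\coloneqq d-e$, let $Z'\subseteq\bl_xX$ be the strict transform of $Z$, and let $W'\subseteq Y'$ be the strict transform of $W$. Because a point of $W$ lying outside $Y_x$ cannot map to $x$, the composite $W'\to\bl_xX$ factors through $Z'$; write $b\colon W'\to Z'$ for the resulting dominant morphism and $a\coloneqq\pi'|_{W'}\colon W'\to W$ for the (birational) restriction of $\pi'$, so that $\pi'^*\xi|_{W'}=a^*(\xi|_W)$ and $\rho'^*E|_{W'}=b^*(E|_{Z'})$. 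I would also replace $W'\to Z'$ by a flattening, which changes neither the intersection numbers below nor $\mult_x Z$ (by the projection formula), so that pullbacks of cycles along $b$ may be treated as honest effective cycles.

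The heart of the argument is then the expansion
\[
0\ \le\ (\pi'^*\xi-t\rho'^*E)^{d}\cdot[W']\ =\ \sum_{j=0}^{d}\binom{d}{j}(-t)^{j}\,(a^*\xi)^{d-j}\cdot b^*\!\bigl((E|_{Z'})^{j}\bigr)\cdot[W'],
\]
whose summands I would bound term by term. For $j>e$ the summand vanishes, since $(E|_{Z'})^{j}=0$ on the $e$-dimensional $Z'$. The $j=0$ summand equals $\xi^{d}\cdot[W]$ by the projection formula along $a$. For $1\le j\le e-1$, the standard blow-up identity $(-E|_{Z'})^{j}\cdot[Z']=(-1)^{j-1}\gamma_j$ with $\gamma_j$ effective of dimension $e-j$ (supported on $E\cap Z'$) shows $b^*\bigl((E|_{Z'})^{j}\bigr)\cdot[W']=(-1)^{j-1}b^*\gamma_j$ with $b^*\gamma_j$ effective of dimension $d-j$, so pairing with the nef class $(a^*\xi)^{d-j}$ and multiplying by $(-t)^{j}$ makes each such summand $\le 0$. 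Finally, for $j=e$, the identity $(-E|_{Z'})^{e}\cdot[Z']=-\mult_x Z$ (the degree of the projectivized tangent cone of $Z$ at $x$), together with the fact that $b^*[\mathrm{pt}]\cap[W']$ is numerically the class $[W_{x'}]$ of a general fiber — over which $a$ restricts to an isomorphism — evaluates this summand to $-\binom{d}{e}t^{e}\cdot\mult_x\rho(W)\cdot\bigl(\xi^{\dim W_{x'}}[W_{x'}]\bigr)$. Combining the four cases and discarding the nonpositive cross terms gives $\binom{d}{e}t^{e}\cdot\mult_x\rho(W)\cdot\bigl(\xi^{\dim W_{x'}}[W_{x'}]\bigr)\le\xi^{\dim W}\cdot[W]$; substituting $t=\sh(\xi;x)$ and, when $\xi^{\dim W_{x'}}[W_{x'}]>0$, dividing and taking $\dim\rho(W)$-th roots yields \eqref{eq:uglyomg}, while the case $\xi^{\dim W_{x'}}[W_{x'}]=0$ is vacuous since then the right-hand side of \eqref{eq:uglyomg} is $+\infty$.

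I expect the main obstacle to be the bookkeeping in the last two cases: tracking the possible non-flatness of $W\to\rho(W)$ (hence the reduction to a flat model, and the verification that this leaves $\mult_x\rho(W)$ and all relevant intersection numbers unchanged), and pinning down the classical blow-up identities for $(-E|_{Z'})^{j}\cdot[Z']$ in terms of $\mult_x\rho(W)$ when $X$ and $\rho(W)$ are singular. Everything else — nefness of $\pi'^*\xi-t\rho'^*E$, the factoring $W'\to Z'$, and the vanishing of the $j>e$ terms — is routine.
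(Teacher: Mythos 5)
Your proposal is correct and takes essentially the same route as the paper: restrict the nef class $\pi'^*\xi-\sh(\xi;x)\rho'^*E$ (Remark \ref{rmk:easyseshadri}.(c)) to the strict transform of $W$, expand the top self-intersection binomially, discard the intermediate terms because they are $t^{j}$ times nef classes paired with classes of the form $(-1)^{j-1}(\text{pseudo-effective})$ coming from the ampleness of $-E|_{E}$, and evaluate the extreme term via $(E|_{Z'})^{e}\cdot[Z']=(-1)^{e-1}\mult_x\rho(W)$ together with a general-fiber class (your flattening of $W'\to Z'$ plays exactly the role of the paper's identity $\pi'_*\bigl((\rho'^*(-E))^{n}\bigr)=-\mult_xX\cdot F$). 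The only blemish is a sign slip in your stated ``standard blow-up identity'': it should read $(E|_{Z'})^{j}\cdot[Z']=(-1)^{j-1}\gamma_j$ with $\gamma_j$ pseudo-effective, equivalently $(-E|_{Z'})^{j}\cdot[Z']=-\gamma_j$, which is what your subsequent displayed formula and the sign bookkeeping actually use, so the argument is unaffected.
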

\noindent If $X$ is a variety and $Y=\bb P(\cal V)$ for a locally free sheaf $\cal V$ of rank $r$, then in particular by considering $W=Y$, we obtain
\begin{equation}\sh(\cal V;x)\leq\sqrt[n]{\frac{s_n(\cal V^{\vee})}{{\binom{n+r-1}{n}}\cdot\mult_xX}},
\end{equation}
where $s_n(\cal V^{\vee})=(\xi^{n+r-1})$ is the $n$-th Segre class of $\cal V^{\vee}$ (see \cite[\S3.1]{fulton84}\footnote{Duality is present because \cite{fulton84} uses projective bundles of lines
instead of quotients.}). 
This is a generalization of the rank one case $\sh(\cal L;x)\leq\sqrt[n]{\frac{(\cal L^n)}{\mult_xX}}$ in \cite[Proposition 5.1.9]{laz04}.
A transcendental generalization is \cite[Theorem 4.6]{tosatti}.

\begin{ex}
  Put $n\coloneqq  \dim X$ and assume that $\cal V$ is locally free of rank $r$ and nef. When considering $W=\rho^{-1}Z\subseteq\bb P(\cal V)$ for some subvariety $Z\subseteq X$ of codimension $i$, we obtain
\[
\sh(\cal V;x)\leq\left(\frac{\xi^{n-i+r-1}\cdot[\rho^*Z]}{{\binom{n-i+r-1}{n-i}}\cdot\mult_xZ}\right)^{\frac 1{n-i}}=
\left(\frac{s_{n-i}(\cal V^{\vee})\cap[Z]}{{\binom{n-i+r-1}{n-i}}\cdot\mult_xZ}\right)^{\frac 1{n-i}},
\]
where $s_{n-i}(\cal V^{\vee})\cap[Z]=\xi^{n-i+r-1}\cdot[\rho^*Z]$ is the evaluation of the Segre class of degree 
$n-i$ of $\cal V^{\vee}$ on the fundamental class of $Z$ (see \cite[\S 3.1]{fulton84}).
  These bounds are similar to the ones appearing in \cite[Theorem 1.5.a]{hacon}.

  We thank Valentino Tosatti for suggesting this example.
\end{ex}

\begin{rmk}[Relation with other Seshadri constants] With hypotheses
as in the previous example, taking the infimum over all $Z$ of fixed codimension $i$, we obtain
\[
\sh(\cal V;x)\leq\left(\frac 1{\binom{n-i+r-1}{n-i}}\cdot\sh\bigl(s_{n-i}(\cal V^{\vee});x\bigr)\right)^{\frac 1{n-i}},
\]
where the Seshadri constant of the nef dual class $s_{n-i}(\cal V^{\vee})$
on the right is defined as in \cite[\S8]{ful17}.

We thank Nicholas M\textsuperscript{c}Cleerey for suggesting this example.
\end{rmk}

Formula \eqref{eq:uglyomg} looks more familiar when $W=C$ is a curve in $\cal C_{\rho,x}$. Note that $\mult_x\rho_*[C]=\mult_x\rho(C)\cdot[C_{x'}]$, since $\deg[C_{x'}]=\deg(\rho|_C)$.

\begin{proof}[Proof of Proposition \ref{prop:seshadrihigherdimension}]
Let $W$ be as above, and let $W'$ be its strict transform in $Y'$. By Remark \ref{rmk:easyseshadri}.(c) we have 
$(\pi'^*\xi-\sh(\xi;x)\rho'^*E)^{\dim W'}\cdot[W']\geq 0$.
By restricting to $W'$ we can assume without loss of generality that $W'=Y'$,
that $\rho$ is surjective, and that $X$ is a variety. 
Let $n\coloneqq\dim X$ and $e\coloneqq  \dim Y-n$, with $e\geq 0$. 
We have 
\begin{align*}0\leq \bigl(\pi'^*\xi-\sh(\xi;x)\rho'^*E\bigr)^{n+e} & =
  \sum_{k=0}^n{\binom{n+e}{k}}\bigl(-\sh(\xi;x)\rho'^*E\bigr)^k\pi'^*\xi^{n+e-k}\\
  & \leq \xi^{n+e}+{\binom{n+e}{n}}\bigl(-\sh(\xi;x)\rho'^*E\bigr)^n\cdot\pi'^*\xi^{e}\\
    & = \xi^{n+e}-{\binom{n+e}{n}}\cdot\mult_xX\cdot\sh^n(\xi;x)\cdot(\xi^e\cdot[Y_{x'}]).
\end{align*}
The first equality holds since $(E^k)=0$ for $k>n$.
The second inequality is a consequence of the projection formula for $\pi'$.
Pushing forward $-(-\rho'^*E)^k$ produces a pseudo-effective class, since $-E|_E$ is ample. 
In the last equality, we used that $(-E)^n=-\mult_xX$.
This implies $\pi'_*(\rho'^*(-E)^n)=-\mult_xX\cdot F$, where $F$ is a fiber over the flat locus of $\rho|_W:W\to\rho(W)$. 
\end{proof}

\begin{rmk}
With hypotheses as in the proposition, assume that $\xi$ is ample.
We show that there exists a subvariety $W'\subseteq Y'$, which is the strict 
transform of some $W\subseteq Y$ that meets $Y_x$ 
without being contained in it, such that
\[(\pi'^*\xi-\sh(\xi;x)\rho'^*E)^{\dim W'}\cdot[W']=0.\]

For this, let $W'\subset Y'$ be a subvariety that observes the failure of ampleness of $\pi'^*\xi-\sh(\xi;x)\rho'^*E$, i.e., 
$(\pi'^*\xi-\sh(\xi;x)\rho'^*E)^{\dim W'}\cdot[W']=0$.
These exist by \cite{cp90,birkar} over arbitrary fields for nef $\bb R$-Cartier $\bb R$-divisors, 
extending the Nakai--Moishezon criterion for nef Cartier divisors.
We want to show that $W\coloneqq  \pi'(W')$ meets $Y_x$ without being contained in it.

If $W'$ does not meet $\rho'^{-1}E$, then $(\pi'^*\xi-\sh(\xi;x)\rho'^*E)^{\dim W'}\cdot[W']=\xi^{\dim W}\cdot[W]>0$ because $\xi$ is ample.
This is a contradiction, therefore $W$ meets $Y_x$. 
If $W'$ is contained in the exceptional locus of $\pi'$, then $W$ is contained in $Y_x$.
Using that $Y'\to Y\times_X\bl_xX$ is finite, we deduce that $W'\to W\times E$ is also finite.
Using that $-E|_E$ is ample, it follows that $(\pi'^*\xi+t(\rho'^*(-E)))|_{W'}$ is ample for all $t>0$. 
Since $\xi$ is ample, in any case $\sh(\xi;x)>0$.
We again obtain a contradiction, hence $W$ is not contained in $Y_x$.\qed
\end{rmk}

\begin{rmk}When $\cal V$ is ample on $X$, it is tempting
to believe that $\sh(\cal V;x)$ should be controlled by subvarieties of $X$
through $x$. In other words, one would expect that equality in the
proposition is achieved by some $W=\rho^{-1}Z$ for $Z$ a 
subvariety of $X$ containing $x$. However, this is not true.

As in \cite[p.~771]{hacon}, consider $X=\bb P^1$ and $\cal V=\cal O_X(1)\oplus\cal O_X(2)$. 
In this case $E=Y_x$. From Remark \ref{rmk:easyseshadri}.(c),
we deduce $\sh(\cal V;x)=1$. The only subvariety of $\bb P(\cal V)$
that achieves equality in \eqref{eq:uglyomg} is $W=\bb P(\cal O_X(1))$,
embedded via the quotient $\cal O_X(1)\oplus\cal O_X(2)\twoheadrightarrow\cal O_X(1)$.
\end{rmk}

\subsection{Functoriality I}
We now discuss how Seshadri constants behave under various operations.
\begin{lem}[Quotients]\label{lem:shquot}
Assume that $\xi$ is $\rho$-nef.
Let $\imath\colon Z\to Y$ be a morphism of projective schemes. Then,
\begin{equation}\label{eq:shquotineq}
  \sh(\imath^*\xi;x)\geq\sh(\xi;x),
\end{equation}
and equality holds if $\imath$ is surjective.
In particular, if $\cal V\to\cal Q$ is a surjective morphism of coherent sheaves
on $X$, then $\sh(\cal Q;x)\geq\sh(\cal V;x)$.
\end{lem}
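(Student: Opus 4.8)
The plan is to compare intersection numbers on $Z$ and $Y$ directly via the projection formula. First I would note the general principle: if $\imath\colon Z\to Y$ is any morphism of projective schemes and $C'\subseteq Z$ is an irreducible curve not contracted by $\imath$, then $\imath_*C' = (\deg \imath|_{C'})\,\imath(C')$ and $\imath^*\xi\cdot C' = \xi\cdot\imath_*C' = (\deg\imath|_{C'})\,(\xi\cdot\imath(C'))$; if instead $C'$ is contracted by $\imath$ then $\imath^*\xi\cdot C' = 0\ge 0$ automatically since $\xi$ is $\rho$-nef and $\imath(C')$ is a point, wait — one must be careful: a curve contracted by $\imath$ lands in a fiber of $\rho\circ\imath$ only if $\rho\circ\imath$ also contracts it, but in any case $\imath^*\xi\cdot C'=0$. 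So the contracted curves contribute nothing problematic.

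The key technical point is tracking multiplicities. Write $\sigma\coloneqq\rho\circ\imath\colon Z\to X$. A curve $C'\in\cal C_{\sigma,x}$ meets $Z_x\coloneqq\sigma^{-1}(x)$ without lying in it, so $\sigma(C')$ is a curve through $x$ and $\mult_x\sigma_*C'>0$. If $C'$ is contracted by $\imath$, then since $\sigma(C')=\rho(\imath(C'))$ is a point, $C'$ would lie in $Z_x$, contradicting $C'\in\cal C_{\sigma,x}$; hence every curve in $\cal C_{\sigma,x}$ is non-contracted by $\imath$, and $C\coloneqq\imath(C')$ is a genuine curve on $Y$. Moreover $C$ meets $Y_x$ without being contained in it (its image under $\rho$ is still $\sigma(C')$, a curve through $x$), so $C\in\cal C_{\rho,x}$. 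Now by functoriality of pushforward, $\sigma_*C' = \rho_*\imath_*C' = (\deg\imath|_{C'})\,\rho_*C$, so
\[
\frac{\imath^*\xi\cdot C'}{\mult_x\sigma_*C'} = \frac{(\deg\imath|_{C'})(\xi\cdot C)}{(\deg\imath|_{C'})\,\mult_x\rho_*C} = \frac{\xi\cdot C}{\mult_x\rho_*C} \ge \sh(\xi;x).
\]
Taking the infimum over $C'\in\cal C_{\sigma,x}$ gives $\sh(\imath^*\xi;x)\ge\sh(\xi;x)$.

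For the equality when $\imath$ is surjective, I would show the reverse inequality by lifting curves: given $C\in\cal C_{\rho,x}$, since $\imath$ is surjective and proper there is an irreducible curve $C'\subseteq \imath^{-1}(C)$ dominating $C$ (take an irreducible component of $\imath^{-1}(C)$ surjecting onto $C$, which exists as $\imath$ is surjective; pass to a curve inside it if its dimension is larger). Then $C'\in\cal C_{\sigma,x}$ because $\sigma(C')=\rho(C)$ is a curve through $x$, and the displayed computation above with $\imath(C')=C$ shows $\frac{\imath^*\xi\cdot C'}{\mult_x\sigma_*C'} = \frac{\xi\cdot C}{\mult_x\rho_*C}$, so $\sh(\imath^*\xi;x)\le \frac{\xi\cdot C}{\mult_x\rho_*C}$; infimizing over $C$ yields $\sh(\imath^*\xi;x)\le\sh(\xi;x)$. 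Finally, the statement for sheaves follows: a surjection $\cal V\twoheadrightarrow\cal Q$ induces a closed embedding $\imath\colon\bb P(\cal Q)\hookrightarrow\bb P(\cal V)$ with $\imath^*\cal O_{\bb P(\cal V)}(1) = \cal O_{\bb P(\cal Q)}(1)$, so $\sh(\cal Q;x) = \sh(\imath^*\xi;x)\ge\sh(\xi;x) = \sh(\cal V;x)$ by the inequality part.

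The main obstacle I anticipate is the curve-lifting step in the surjective case: one must ensure that an irreducible curve $C'$ in $\imath^{-1}(C)$ dominating $C$ actually lies in $\cal C_{\sigma,x}$, i.e., that it meets $Z_x$ without being contained in it. Containment in $Z_x$ is impossible because $C'$ dominates the curve $C$, whose image under $\rho$ is not a point; meeting $Z_x$ follows because $C$ meets $Y_x$ at some point $y$, and $\imath^{-1}(y)\cap C'$ is nonempty as $C'\to C$ is surjective. A minor subtlety worth a remark is the degenerate case $\cal C_{\rho,x}=\varnothing$, where $\sh(\xi;x)=\infty$ and the inequality $\sh(\imath^*\xi;x)\ge\sh(\xi;x)$ forces $\cal C_{\sigma,x}=\varnothing$ too — which holds since any $C'\in\cal C_{\sigma,x}$ would produce $\imath(C')\in\cal C_{\rho,x}$ by the argument above.
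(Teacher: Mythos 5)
Your proof is correct and follows essentially the same route as the paper: push curves forward via $\imath$, use the projection formula to cancel the degree $d=\deg(\imath|_{C'})$ in both numerator and multiplicity, and in the surjective case lift each $C\in\cal C_{\rho,x}$ to a curve in $\cal C_{\rho\circ\imath,x}$, with the sheaf statement reduced to the closed immersion $\bb P(\cal Q)\hookrightarrow\bb P(\cal V)$. You merely spell out the curve-lifting and the exclusion of $\imath$-contracted curves, which the paper leaves implicit.
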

\begin{proof}
Let $C\in\cal C_{\rho\circ\imath,x}$, and write $C'\coloneqq  \imath(C)\in\cal C_{\rho,x}$.  
We have $\imath_*C=dC'$ for some $d\geq 1$. By the projection formula, we have
\[
  \frac{\imath^*\xi\cdot C}{\mult_x(\rho\circ\imath)_*C}=\frac{\xi\cdot
  dC'}{\mult_x\rho_*(dC')}=\frac{\xi\cdot C'}{\mult_x\rho_*C'}.
\]
Taking the infimum over all $C \in C_{\rho\circ\imath,x}$, since $\cal C_{\rho,x}$ may contain curves that are not of form $C'$ as above,
we deduce $\sh(\imath^*\xi;x)\geq\sh(\xi;x)$. When $\imath$ is surjective, every
curve in $\cal C_{\rho,x}$ is of form $C'$ as above, hence equality holds in
\eqref{eq:shquotineq}.

For the last statement, note that there is a closed immersion $\bb P_X(\cal Q)\hookrightarrow\bb P_X(\cal V)$ such that the restriction of $\cal O_{\bb P(\cal V)}(1)$ is $\cal O_{\bb P(\cal Q)}(1)$.
\end{proof}

\begin{lem}[Generically finite pullbacks]\label{lem:pullbacks} 
Consider a \emph{cartesian} diagram of projective schemes
\[
  \xymatrix{
    Y'\ar[r]^{f'}\ar[d]_{\rho'} \ar@{}[dr]|*={\square} & Y\ar[d]^{\rho}\\
    X'\ar[r]_{f}& X
  }
\]
Let $x'\in X'$ be a closed point in the finite locus of $f$. Put $x=f(x')$.
Let $\xi$ be $\rho$-ample on $Y$. 
If $\xi$ is nef at $x$, then
\begin{align*}
  \sh(f'^*\xi;x') &\geq \sh(\xi;x).
\intertext{When $\sh(\xi;x)<0$, and $f$ is surjective, we have $\sh(f'^*\xi;x')\leq \sh(\xi;x)$.
In particular, if $f:X'\to X$ is a generically finite morphism of projective
varieties, and if
$\cal V$ be a locally free sheaf on $X$, then}
  \sh(f^*\cal V;x') &\geq \sh(\cal V;x)
\end{align*}
for all $x'\in X'$ such that $f$ is finite around $x'$ and $\sh(\cal V;x)\geq 0$. When $\sh(\cal V;x)<0$, and $f$ is surjective, we have $\sh(f^*\cal V;x')\leq\sh(\cal V;x)$.
\end{lem}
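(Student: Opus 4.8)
My plan is to work directly from the defining formula $\sh(\xi;x)=\inf_{C}\{\xi\cdot C/\mult_x\rho_*C\}$ and to transport curves across the cartesian square, controlling the two ingredients separately: intersection numbers by the projection formula, and multiplicities of pushed‑forward $1$‑cycles by \cite[Lemma 2.3]{ful17}. Throughout, set $Y'_{x'}\coloneqq(\rho')^{-1}(x')$, and note that because the square is cartesian and $f$ is finite at $x'$, base change shows $f'$ is finite at every point of $Y'_{x'}$ (indeed $Y'_{x'}\cong Y_x$ over the base field).

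\emph{First inequality.} Assume $\xi$ is nef at $x$. Given $C'\in\cal C_{\rho',x'}$ meeting $Y'_{x'}$ at a point $y'$, the curve $C'$ is contracted neither by $f'$ (finiteness at $y'$) nor by $\rho'$ (it is not contained in $Y'_{x'}$). Hence $C\coloneqq f'(C')$ is an irreducible curve, $f'|_{C'}$ has some degree $d\ge1$, $f'_*C'=dC$, and one checks that $C\in\cal C_{\rho,x}$ (it meets $Y_x$ at $f'(y')$, and $\rho(C)=f(\rho'(C'))$ is a curve through $x$, so $C\not\subseteq Y_x$). The projection formula gives $f'^*\xi\cdot C'=\xi\cdot f'_*C'=d\,(\xi\cdot C)\ge0$, using nefness at $x$. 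From $\rho\circ f'=f\circ\rho'$ we get $f_*\rho'_*C'=\rho_*f'_*C'=d\,\rho_*C$; applying \cite[Lemma 2.3]{ful17} to $f$ restricted to the (complete, non‑contracted, hence finite‑onto‑image) curve $\rho'(C')$ at the point $x'$, and multiplying through by $\deg(\rho'|_{C'})$, yields $\mult_{x'}\rho'_*C'\le\mult_x f_*\rho'_*C'=d\cdot\mult_x\rho_*C$. Dividing, $f'^*\xi\cdot C'/\mult_{x'}\rho'_*C'\ge\xi\cdot C/\mult_x\rho_*C\ge\sh(\xi;x)$, and taking the infimum over $C'\in\cal C_{\rho',x'}$ gives $\sh(f'^*\xi;x')\ge\sh(\xi;x)$ (the case $\cal C_{\rho',x'}=\varnothing$ is trivial, since any such $C'$ would push to an element of $\cal C_{\rho,x}$).

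\emph{Second inequality and the bundle case.} Now suppose $\sh(\xi;x)<0$ and $f$ (hence $f'$) is surjective. Fix $C\in\cal C_{\rho,x}$ with $\xi\cdot C<0$ and Seshadri ratio as close to $\sh(\xi;x)$ as we wish, choose $y\in C\cap Y_x$ and the corresponding $y'\in Y'_{x'}$, and lift $C$ to an irreducible curve $C'\subseteq Y'$ through $y'$ with $f'(C')=C$: since $f'$ is finite at $y'$ and surjective, a neighbourhood of $y'$ in $f'^{-1}(C)$ maps finitely onto a neighbourhood of $y$ in the curve $C$, so $y'$ lies on a one‑dimensional component, whose closure I take to be $C'$. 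Then $C'\in\cal C_{\rho',x'}$, the two computations above apply verbatim — the multiplicity bound is unchanged, but now $\xi\cdot C<0$ \emph{reverses} the final division — giving $f'^*\xi\cdot C'/\mult_{x'}\rho'_*C'\le\xi\cdot C/\mult_x\rho_*C$; letting $C$ vary yields $\sh(f'^*\xi;x')\le\sh(\xi;x)$. The bundle statement is then immediate: take $Y=\bb P_X(\cal V)$ and $\xi=c_1(\cal O_{\bb P(\cal V)}(1))$, use $\bb P_{X'}(f^*\cal V)=\bb P_X(\cal V)\times_X X'$ with $c_1(\cal O_{\bb P(f^*\cal V)}(1))=f'^*\xi$, observe that $\sh(\cal V;x)\ge0$ is exactly the statement that $\xi$ is nef at $x$ (Remark \ref{rmk:easyseshadri}.(a)), and recall a generically finite morphism of projective varieties is surjective.

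The routine part of all this is the projection‑formula bookkeeping. The real content, and the steps I expect to give the most trouble, are two: first, the cycle‑multiplicity estimate $\mult_{x'}\rho'_*C'\le d\cdot\mult_x\rho_*C$, where one must keep the degrees of $\rho'|_{C'}$ and of $f|_{\rho'(C')}$ straight and apply \cite[Lemma 2.3]{ful17} correctly; and second, in the second inequality, producing the lifted curve $C'$ through the \emph{prescribed} point $y'\in Y'_{x'}$ — this genuinely requires finiteness of $f$ at $x'$ (not just surjectivity of $f$) and the local structure of the quasi‑finite locus of a proper morphism (a form of Zariski's main theorem), since otherwise the component of $f'^{-1}(C)$ through $y'$ need not be a curve dominating $C$.
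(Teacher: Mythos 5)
Your treatment of the first inequality and of the reduction of the bundle statement to $Y=\bb P_X(\cal V)$, $\xi=c_1(\cal O_{\bb P(\cal V)}(1))$ is essentially the paper's own argument: push $C'\in\cal C_{\rho',x'}$ forward to $f'(C')\in\cal C_{\rho,x}$, compare intersection numbers by the projection formula, bound $\mult_{x'}\rho'_*C'$ by $\mult_x f_*\rho'_*C'=d\cdot\mult_x\rho_*C$ via \cite[Lemma 2.3]{ful17}, and use nefness at $x$ only to keep the numerator nonnegative when dividing. That half is correct and matches the paper step for step.

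For the reverse inequality you also follow the paper's strategy (lift a near-optimal curve $C\in\cal C_{\rho,x}$ with $\xi\cdot C<0$ to a curve in $\cal C_{\rho',x'}$ through the prescribed point $y'\in Y'_{x'}$ and rerun the computation), but your justification of the lift overclaims. Quasi-finiteness of $f'$ at $y'$ together with global surjectivity of $f'$ does \emph{not} imply that a neighbourhood of $y'$ in $f'^{-1}(C)=C\times_XX'$ maps onto a neighbourhood of $y$ in $C$: going-down can fail for finite surjections onto non-normal targets, so $y'$ may be an isolated point of $f'^{-1}(C)$. For instance, take $Y=X$, $\rho=\mathrm{id}$, $X'$ a projective plane, $X$ the projective surface obtained by pinching two points $p_1\neq p_2$ of $X'$ to one point $x$, $x'=p_2$ (so $f$ is finite and surjective), and $C$ the image of a curve through $p_1$ only; then $f^{-1}(C)$ is that curve together with the isolated point $p_2$, and no curve through $x'$ maps into $C$ at all, so neither Zariski's main theorem nor the cartesian structure rescues the deduction as you state it. To be fair, this is exactly the step the paper itself leaves implicit (it simply posits a curve $C_\delta\in\cal C_{\rho',x'}$ whose image nearly computes $\sh(\xi;x)$), so your route coincides with the paper's and you correctly identified the delicate point; but as written the local-onto claim is not a consequence of the hypotheses, and a complete argument would need an extra input here (e.g.\ flatness of $f$ near $x'$, normality of the relevant target to invoke going-down, or a different choice of nearly optimal curves that are visibly images from $Y'$).
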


\begin{proof}Assume first $\sh(\xi;x)\geq 0$. Let $C\in \cal C_{\rho',x'}$. Since $f$ is finite around $x'$, we deduce $f'(C)\in \cal C_{\rho,x}$.
Let $d\geq 1$ be defined by $f'_*C=d\cdot f'(C)$.
We then have
\begin{align*}
  \frac{f'^*\xi\cdot C}{\mult_x\rho'_*C}=
  d\cdot\frac{\xi\cdot f'(C)}{\mult_x\rho'_*C}
  \MoveEqLeft[3]\geq d\cdot\frac{\xi\cdot f'(C)}{\mult_{f(x)}f_*\rho'_*C}
  =d\cdot\frac{\xi\cdot f'(C)}{\mult_{f(x)}\rho_*f'_*C}\\
  &=\frac{\xi\cdot f'(C)}{\mult_{f(x)}\rho_*f'(C)}\geq\sh(\xi;x).
\end{align*}
The first inequality says that multiplicity increases under finite pushforwards.
See for example \cite[Lemma 2.3]{ful17}. We conclude by taking the infimum over all $C\in\cal C_{\rho',x'}$.

When $\sh(\xi;x)<0$ and $f$ is surjective, for all sufficiently small
$\delta>0$, let $C_{\delta}\in\cal C_{\rho',x'}$ such
that
\[
  \frac{\xi\cdot f'(C_{\delta})}{\mult_{f(x)}\rho_*f'(C_{\delta})}<\sh(\xi;x)+\delta.
\]
From $\xi\cdot f'(C_{\delta})<0$ it follows that $C_{\delta}$ is not contracted by $\rho'$, and as in the previous case,
\[
  \frac{f'^*\xi\cdot C_{\delta}}{\mult_x\rho'_*C_{\delta}}<\sh(\xi;x)+\delta.
  \qedhere
\]
\end{proof}

\begin{lem}[Box Products] For $i\in\{1,2\}$, let $\rho_i\colon Y_i\to X$ be morphisms of projective schemes, and let $\xi_i$ be a $\rho_i$-ample divisor on $Y_i$.
Fix $x\in X$. Let $\rho\colon Y_1\times_XY_2\to X$ be the induced morphism.
Denote by $p_i\colon Y_1\times_XY_2\to Y_i$ the two projections, 
and set $\xi_1\boxtimes\xi_2\coloneqq   p_1^*\xi_1+p_2^*\xi_2$. 
Then 
\[
  \sh(\xi_1\boxtimes\xi_2;x)\geq\sh(\xi_1;x)+\sh(\xi_2;x).
\]
Equality holds for equal input data $(Y_1,\rho_1,\xi_1)=(Y_2,\rho_2,\xi_2)$.
Analogous statements hold for products of finitely many $\rho_i$.
\end{lem}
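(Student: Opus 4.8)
The plan is to reduce the inequality $\sh(\xi_1\boxtimes\xi_2;x)\geq\sh(\xi_1;x)+\sh(\xi_2;x)$ to a computation on a single curve in $\cal C_{\rho,x}$, and then exploit the two projections $p_i$. Fix an irreducible curve $C\in\cal C_{\rho,x}$ on $Y_1\times_XY_2$, so that $\mult_x\rho_*C>0$. For each $i$, let $C_i\coloneqq (p_i)_*C$ be the pushforward $1$-cycle on $Y_i$; write $C_i=\sum_j d_{ij}C_{ij}$ as a sum of irreducible curves with positive multiplicities (allowing $C_i=0$ if $p_i$ contracts $C$, but note $p_i$ cannot contract $C$ if $\rho_i\circ p_i=\rho$ does not, and $\rho$ does not contract $C$ since $\mult_x\rho_*C>0$; hence each $C_i$ is a nonzero effective curve and each $C_{ij}\in\cal C_{\rho_i,x}$). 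By the projection formula, $\xi_1\boxtimes\xi_2\cdot C=\xi_1\cdot C_1+\xi_2\cdot C_2$. The main point is the behavior of the multiplicity: I would show $\mult_x\rho_*C=\mult_x(\rho_i)_*C_i$ for each $i$, which holds because $\rho\circ{\rm id}=\rho_i\circ p_i$ gives $\rho_*C=(\rho_i)_*(p_i)_*C=(\rho_i)_*C_i$ as cycles on $X$, so the three multiplicities literally coincide with $\mult_x\rho_*C$.

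Granting this, set $m\coloneqq \mult_x\rho_*C>0$. Then
\[
  \frac{\xi_1\boxtimes\xi_2\cdot C}{m}
  =\frac{\xi_1\cdot C_1}{m}+\frac{\xi_2\cdot C_2}{m}
  =\sum_j d_{1j}\frac{\xi_1\cdot C_{1j}}{m}+\sum_j d_{2j}\frac{\xi_2\cdot C_{2j}}{m}.
\]
For this to give the desired lower bound I need each $C_{ij}$ to contribute at least $\sh(\xi_i;x)$ once weighted correctly; the cleanest route is to observe that $m=\mult_x(\rho_i)_*C_i=\sum_j d_{ij}\,\mult_x(\rho_i)_*C_{ij}$, so that the weighted average $\frac{1}{m}\sum_j d_{ij}(\xi_i\cdot C_{ij})$ is a convex combination (with weights $d_{ij}\mult_x(\rho_i)_*C_{ij}/m$ that sum to $1$, using that $\mult_x(\rho_i)_*C_{ij}>0$ since $C_{ij}\in\cal C_{\rho_i,x}$) of the quantities $\frac{\xi_i\cdot C_{ij}}{\mult_x(\rho_i)_*C_{ij}}\geq\sh(\xi_i;x)$. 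Hence $\frac{\xi_i\cdot C_i}{m}\geq\sh(\xi_i;x)$, and summing over $i=1,2$ and taking the infimum over $C\in\cal C_{\rho,x}$ yields the inequality. (One should also handle the degenerate cases where $\cal C_{\rho_i,x}=\varnothing$, giving $\sh(\xi_i;x)=\infty$, and where $\sh(\xi_i;x)<0$; the convexity argument is unaffected, and if some $\sh(\xi_i;x)=\infty$ the statement is that $\cal C_{\rho,x}$ is empty as well.)

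For the equality case with $(Y_1,\rho_1,\xi_1)=(Y_2,\rho_2,\xi_2)=(Y_0,\rho_0,\xi_0)$, the diagonal $\Delta\colon Y_0\hookrightarrow Y_0\times_XY_0$ satisfies $\Delta^*(p_i^*\xi_0)=\xi_0$, so $\Delta^*(\xi_0\boxtimes\xi_0)=2\xi_0$, and by the quotient inequality $\sh(\Delta^*(\xi_0\boxtimes\xi_0);x)\geq\sh(\xi_0\boxtimes\xi_0;x)$ from Lemma \ref{lem:shquot}. Since $\sh(2\xi_0;x)=2\sh(\xi_0;x)$ by homogeneity of the Seshadri constant, this gives $2\sh(\xi_0;x)\geq\sh(\xi_0\boxtimes\xi_0;x)$, matching the reverse inequality. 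Finally, the statement for finitely many factors follows by the identical argument (or by induction, since $Y_1\times_X\cdots\times_XY_k\to X$ is again such a fiber product and $\boxtimes$ is associative). The only real obstacle is the multiplicity identity $\mult_x\rho_*C=\mult_x(\rho_i)_*C_i$, which as noted above is immediate from $\rho_*C=(\rho_i)_*(p_i)_*C$ as cycles; so the proof is essentially bookkeeping once the convexity observation is in place.
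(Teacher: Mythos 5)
Your proposal is correct and follows essentially the same route as the paper: push an irreducible curve $C\in\cal C_{\rho,x}$ forward along the two projections, use the projection formula together with $\rho_*C=\rho_{i*}p_{i*}C$ (so all the multiplicities at $x$ agree), and bound each ratio by $\sh(\xi_i;x)$; your decomposition into components is superfluous, since $p_{i*}C$ is a positive multiple of the single irreducible curve $p_i(C)$, but harmless. Your equality case via the diagonal embedding, $\Delta^*(\xi_0\boxtimes\xi_0)=2\xi_0$, Lemma \ref{lem:shquot} and homogeneity is a repackaging of the paper's argument, which instead applies the displayed formula to the diagonal lifts $\Delta_C$ of curves approximating $\sh(\xi_0;x)$.
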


\begin{proof}Let $C\in\cal C_{\rho,x}$. Then $p_i(C)\in\cal C_{\rho_i,x}$. 
From the projection formula,
\[\frac{\xi_1\boxtimes\xi_2\cdot C}{\mult_x\rho_*C}=\frac{\xi_1\cdot p_{1*}C}{\mult_x\rho_{1*}p_{1*}C}+\frac{\xi_2\cdot p_{2*}C}{\mult_x\rho_{2*}p_{2*}C}
\geq\sh(\xi_1;x)+\sh(\xi_2;x).\]
For equal input data $Y=Y_i$, $\rho_i$, and $\xi_i$, the Seshadri constants 
$\sh(\xi_i;x)$ on $Y_i=Y$ are 
approximated by the same curves $C$ on $Y$. 
Apply the formula above to the diagonal curve $\Delta_C\subset C\times_XC\subset Y\times_XY$.
\end{proof}

\subsection{Restrictions to curves}
Our goal in this subsection is to describe how Seshadri constants on a scheme
$X$ can be characterized by their behavior on curves in $X$.
This example is fundamental to the development of the theory, since it allows us
to reduce to the case where $X$ is a smooth projective curve.
\begin{rmk}\label{rmk:curves}Assume that $X$ is a projective curve and that $\xi$ is $\rho$-nef. 
  Then,
  \[
    \sh(\xi;x)=\frac 1{\mult_xX}\cdot\sup\{t\st \xi-tf\in\Nef^1(Y)\},
  \]
  where $f$ is the class of a general fiber of $\rho$. In particular, $\mult_xX\cdot \sh(\xi;x)$ is independent of $x$ in this case.
If $\cal V$ is a coherent sheaf on $X$, then 
\[
  \sh(\cal V;x)=\frac 1{\mult_xX}\cdot\sup\{t\st \cal V\langle-tx_0\rangle\ \text{is nef}\},
\]
where $x_0$ denotes a $\bb Q$-Cartier $\bb Q$-class of degree 1. 
(Since $X$ is a curve, 
$\deg E=\mult_xX$.
The set $\cal C_{\rho,x}$ is the set of curves in $Y$ that dominate $X$, 
hence it is independent of $x$.
Using the $\rho$-nefness of $\xi$, 
it follows that $(\pi'^*\xi-t\rho'^*E)\cdot C'\geq 0$ for all 
$C'\in\cal C'_{\rho,x}$ if and only if $\pi'^*\xi-t\rho'^*E$ is nef.
However, $\pi'^*\xi-t\rho'^*E=\pi'^*(\xi-(\mult_xX)tf)$ is nef on $Y'$ if and only if $\xi-(\mult_xX)tf$ is nef on $Y$.)
\end{rmk}

We can now give the following generalization of \cite[Theorem 3.1]{hacon}.

\begin{ex}[Curves]\label{ex:curves} If $X$ is a (possibly singular) integral projective
  curve over an algebraically closed field $k$ and $\nu\colon X'\to X$ denotes
  the normalization, and
if $\cal V$ is a coherent sheaf on $X$, then
\begin{equation}\label{eq:curvechar}
  \sh(\cal V;x)=\frac{\overline{\mu}_{\rm min}(\nu^*\cal V)}{\mult_xX}.
\end{equation}
For the purpose of explaining notation, assume that $X$ is a smooth projective curve. 
The slope of a bundle $\cal V$ on $X$ is 
\[\mu(\cal V)\coloneqq  \frac{\deg\cal V}{{\rm rank}\,\cal V}.\]
By convention, the slope of torsion sheaves is infinite.
The smallest slope of any quotient (of positive rank) of $\cal V$ is denoted by $\mu_{\rm min}(\cal V)$. 
A quotient of $\cal V$ with minimal slope exists, and is determined by the Harder--Narasimhan filtration of $\cal V$.
In characteristic $0$, set $\overline{\mu}_{\rm min}(\cal V)\coloneqq  \mu_{\rm min}(\cal V)$.
In characteristic $p>0$, let $F\colon X\to X$ be the absolute Frobenius morphism, and consider
\[ \overline{\mu}_{\mathrm{min}}(\cal V) \coloneqq   \lim_{n \to \infty}
\frac{\mu_{\mathrm{min}}\bigl( (F^n)^*\cal V\bigr)}{p^n}.\]
The sequence is weakly decreasing and eventually stationary. 
In fact, \cite[Theorem 2.7]{Langer04} proves that there exists 
$\delta=\delta_{\cal V}\geq 0$ such that the Harder--Narasimhan filtration of $(F^{\delta+n})^*\cal V$ is 
the pullback of the Harder--Narasimhan filtration of $(F^{\delta})^*\cal
V$.\footnote{\cite{Langer04} uses the notation $L_{\rm min}(\cal V)$ for $\overline{\mu}_{\rm min}(\cal V)$.}
In particular, $\overline{\mu}_{\rm min}(\cal V)=\frac{\mu_{\rm min}((F^{\delta})^*\cal V)}{p^{\delta}}$ 
is the smallest normalized slope of any quotient of any iterated Frobenius pullback $(F^n)^*\cal V$.

Note that torsion is irrelevant when computing $\mu_{\rm min}$ or $\overline\mu_{\rm min}$. It only affects the slope $\mu(\cal V)\geq\mu(\cal V_{\rm tf})$. 
\vskip.25cm
\noindent(For the proof of \eqref{eq:curvechar}, assume first that $X$ is smooth. 
From Remark \ref{rmk:curves}, the Seshadri constant is independent of $x\in X$, and verifies the linearity 
$\sh(\cal V\langle\lambda\rangle;x)=\sh(\cal V;x)+\deg\lambda$. 
Furthermore, slopes respect the same formula 
$\mu(\cal V\langle\lambda\rangle)=\mu(\cal V)+\deg\lambda$, 
and similarly for $\mu_{\rm min}$ and $\overline{\mu}_{\rm min}$. 

In characteristic zero, we are then free to assume that $\mu_{\rm min}(\cal V)=0$.
Hartshorne's Theorem \cite[Theorem 6.4.15]{laz04} (which is only valid in characteristic zero; see \cite[Example 3.2]{har71})
shows that $\cal V$ is nef. In particular, $\sh(\cal V;x)\geq 0$ for all $x\in X$.
By the assumption $\mu_{\rm min}(\cal V)=0$,
there exists a quotient map $\cal V\twoheadrightarrow\cal Q$ 
with $\cal Q$ nonzero, nef, semistable, and $\mu(\cal Q)=0$.
Since $\sh(\cal V;x)\leq\sh(\cal Q;x)$ by Lemma \ref{lem:shquot}, it is then
enough to treat the case when $\cal V=\cal Q$ is nef of degree 0.
In this case, one can use Remark \ref{rmk:easyseshadri}(c), where the blow-up $\pi$ of $x\in X$ is the identity, 
and the ``exceptional'' divisor $E$ is $\cal O_X(x)$.

In positive characteristic, the proof is analogous after replacing $\mu_{\rm min}$ with $\overline{\mu}_{\rm min}$, 
in view of \cite[Theorem 1.1]{BP14}, which proves that $\cal V$ is nef iff $\overline{\mu}_{\rm min}(\cal V)\geq 0$.\footnote{\cite{BP14} uses the notation $\theta_{\cal V,1}$ for our $\overline{\mu}_{\rm min}(\cal V)$.}
The result was seemingly first proved by Barton \cite[Theorem 2.1]{Barton71}, 
and stated explicitly by Brenner in \cite[Theorem 2.3]{Brenner04} and \cite[p.\ 534]{Brenner06}, 
Biswas in \cite[Theorem 1.1]{Biswas05}, and Zhao in \cite[Theorem 4.3]{Zhao17}.

When $X$ is singular, then from the projection formula, one finds
$\sh(\cal V;x)=\frac{\sh(\nu^*\cal V)}{\mult_xX}$, where $\sh(\nu^*\cal V)$ is the Seshadri constant of $\nu^*\cal V$ at any point of $X'$.
)\qed
\end{ex}

\begin{cor}[Seshadri constants for sheaves via restrictions to curves]\label{cor:shviacurves} Let $X$ be a projective scheme of arbitrary dimension over an algebraically closed field. Fix $x\in X$ a closed point and $\cal V$ a coherent (twisted) sheaf on $X$.
Then 
\[\sh(\cal V;x)=\inf_{x\in C\subset X}\frac{\overline{\mu}_{\rm min}(\nu^*\cal
V)}{\mult_xC},\]
where $C$ ranges through the set of irreducible curves through $x$ on $X$, where
$\nu\colon C'\to C$ is the normalization, 
and $\overline{\mu}_{\min}$ is defined as above.
\end{cor}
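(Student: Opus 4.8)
The plan is to reduce the computation of $\sh(\cal V;x)$ to the curve case already settled in Example~\ref{ex:curves}. The key intermediate statement I would establish is the identity
\[
  \sh(\cal V;x)=\inf_{x\in C\subset X}\sh\bigl(\cal V\otimes_{\cal O_X}\cal O_C;x\bigr),
\]
where $C$ runs over the integral curves through $x$, and for each such $C$ the Seshadri constant of the coherent sheaf $\cal V\otimes\cal O_C$ is computed with respect to the pair $\bb P_C(\cal V\otimes\cal O_C)\to C$ polarized by $\cal O(1)$. Granting this, Example~\ref{ex:curves} applied to the (possibly singular) integral projective curve $C$ gives $\sh(\cal V\otimes\cal O_C;x)=\overline{\mu}_{\rm min}\bigl(\nu^*(\cal V\otimes\cal O_C)\bigr)/\mult_xC$, and since $\nu^*(\cal V\otimes\cal O_C)=\nu^*\cal V$ (pullback along the composition $C'\to C\hookrightarrow X$), this is exactly the asserted formula. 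The twisted case needs no new idea: restriction commutes with twisting, so $(\cal V\langle\lambda\rangle)\otimes\cal O_C=(\cal V\otimes\cal O_C)\langle\lambda|_C\rangle$, and Example~\ref{ex:curves} extends to twists via the linearity $\sh(\cal V\langle\lambda\rangle;x)=\sh(\cal V;x)+(\deg\lambda|_C)/\mult_xC$ on curves.

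One of the two inequalities is immediate. Since $\cal O_X\to\cal O_C$ is surjective, $\cal V\twoheadrightarrow\cal V\otimes_{\cal O_X}\cal O_C$ is a surjection of coherent $\cal O_X$-modules, so Lemma~\ref{lem:shquot} gives $\sh(\cal V\otimes\cal O_C;x)\ge\sh(\cal V;x)$ for every integral curve $C\ni x$, and hence $\inf_{x\in C\subset X}\sh(\cal V\otimes\cal O_C;x)\ge\sh(\cal V;x)$.

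For the reverse inequality I would start from the definition $\sh(\cal V;x)=\inf_{\Gamma\in\cal C_{\cal V,x}}\frac{\xi\cdot\Gamma}{\mult_x\rho_*\Gamma}$ and fix $\Gamma\in\cal C_{\cal V,x}$. Since $\Gamma$ meets the fiber $\rho^{-1}(x)=\bb P(\cal V(x))$ without being contained in it, it is not contracted by $\rho$, so $C\coloneqq\rho(\Gamma)$ (with its reduced structure) is an integral curve through $x$; moreover $\Gamma$ factors through $\rho^{-1}(C)=\bb P_C(\cal V\otimes\cal O_C)$ and dominates $C$, so $\Gamma\in\cal C_{\cal V\otimes\cal O_C,x}$. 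As recorded in the proof of Lemma~\ref{lem:shquot}, the restriction of $\cal O_{\bb P(\cal V)}(1)$ to $\bb P_C(\cal V\otimes\cal O_C)$ is $\cal O_{\bb P_C(\cal V\otimes\cal O_C)}(1)$, so the projection formula identifies $\xi\cdot\Gamma$ with the intersection number computed on $\bb P_C(\cal V\otimes\cal O_C)$ and identifies $\rho_*\Gamma$ with $(\rho|_C)_*\Gamma$; writing $\rho_*\Gamma=d\,C$ with $d=\deg(\rho|_\Gamma)$ we also have $\mult_x\rho_*\Gamma=d\cdot\mult_xC$. Thus the ratio $\frac{\xi\cdot\Gamma}{\mult_x\rho_*\Gamma}$ is unchanged when computed on $\bb P_C(\cal V\otimes\cal O_C)$, so it is $\ge\sh(\cal V\otimes\cal O_C;x)$. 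Taking the infimum over $\Gamma\in\cal C_{\cal V,x}$ — noting that every $C=\rho(\Gamma)$ arising this way is an integral curve through $x$ — gives $\sh(\cal V;x)\ge\inf_{x\in C\subset X}\sh(\cal V\otimes\cal O_C;x)$, and the identity follows.

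The one point I expect to have to argue carefully rather than just cite is that the scheme structure on $\rho^{-1}(C)$ is irrelevant: $\sh$ is defined through \emph{irreducible} curves, and both $\xi\cdot\Gamma$ (the degree of a line bundle on the reduced curve $\Gamma$) and $\mult_x\rho_*\Gamma$ (the multiplicity of a cycle supported on $C$) depend only on $\Gamma$ as a reduced curve and on $\rho_*\Gamma$ as a cycle, so replacing $\rho^{-1}(C)$ by $\bb P_C(\cal V\otimes\cal O_C)$ (or by its reduction) changes nothing. Beyond this bookkeeping I do not anticipate any genuine obstacle, since all of the substantive content is already contained in Example~\ref{ex:curves}.
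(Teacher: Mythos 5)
Your argument is correct and is essentially the paper's own proof spelled out: the paper simply invokes the decomposition $\cal C_{\cal V,x}=\bigcup_{x\in C\subset X}\cal C_{\cal V|_C,x}$ (with matching ratios under $\bb P_C(\cal V\otimes\cal O_C)\hookrightarrow\bb P_X(\cal V)$) to get $\sh(\cal V;x)=\inf_C\sh(\cal V|_C;x)$ and then applies Example \ref{ex:curves}, which is exactly what you establish, with Lemma \ref{lem:shquot} merely packaging one of the two inequalities.
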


\noindent Note that torsion subsheaves whose supports have positive dimension may influence the result.

\begin{proof}Use $\cal C_{\cal V,x}=\bigcup_{x\in C\subset X}\cal C_{\cal V|_C,x}$ to deduce 
that $\sh(\cal V;x)=\inf_{x\in C\subset X}\sh(\cal V|_C;x)$.
The result then follows from Example \ref{ex:curves}.
\end{proof}

\subsection{Functoriality II}
Using our description of Seshadri constants via restrictions to curves
in Corollary \ref{cor:shviacurves}, we can describe
the behavior of Seshadri constants under more operations.

\begin{lem}[Homogeneity]\label{lem:homogeneous}
Assume that $\cal V$ is a coherent (twisted) sheaf on a projective scheme $X$. Fix $x\in X$. Then
\[\sh(\Sym^d\cal V;x)=d\cdot\sh(\cal V;x).\]
\end{lem}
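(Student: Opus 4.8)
The plan is to prove the two inequalities separately: the bound $\sh(\Sym^d\cal V;x)\le d\cdot\sh(\cal V;x)$ via the relative Veronese embedding together with Lemma \ref{lem:shquot}, and the reverse bound by reducing to smooth projective curves through Corollary \ref{cor:shviacurves} and invoking the nefness of symmetric powers of nef bundles.

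For the first inequality I would use the relative $d$-uple embedding. The surjection of graded $\cal O_X$-algebras $\Sym^\bullet(\Sym^d\cal V)\twoheadrightarrow\bigoplus_{k\ge 0}\Sym^{dk}\cal V$, whose target is the $d$-th Veronese subalgebra of $\Sym^\bullet\cal V$ with its induced grading, defines a closed immersion $v\colon\bb P(\cal V)\hookrightarrow\bb P(\Sym^d\cal V)$ over $X$ with $v^*\cal O_{\bb P(\Sym^d\cal V)}(1)=\cal O_{\bb P(\cal V)}(d)$, and the same holds after forming formal twists. Since $\cal O_{\bb P(\Sym^d\cal V)}(1)$ is $\rho$-ample, Lemma \ref{lem:shquot} applied to $\imath=v$ gives $\sh(d\cdot c_1(\cal O_{\bb P(\cal V)}(1));x)\ge\sh(\Sym^d\cal V;x)$, and the left-hand side equals $d\cdot\sh(\cal V;x)$ by homogeneity of $\sh(-;x)$ on $N^1(\bb P(\cal V))$. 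This step requires no hypothesis on the characteristic or on $\dim X$.

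For the reverse inequality I would invoke Corollary \ref{cor:shviacurves}. Because $\Sym^d$ commutes with pullback, in particular with the normalization maps $\nu\colon C'\to C$, and with formal twists, that corollary rewrites $\sh(\Sym^d\cal V;x)$ as $\inf_{x\in C\subset X}\overline{\mu}_{\rm min}(\Sym^d\nu^*\cal V)/\mult_xC$, so it is enough to show $\overline{\mu}_{\rm min}(\Sym^d\cal W)\ge d\cdot\overline{\mu}_{\rm min}(\cal W)$ for every coherent sheaf $\cal W$ on a smooth projective curve $C'$. Torsion is irrelevant for $\overline{\mu}_{\rm min}$, and on a smooth curve $\Sym^d$ applied to the torsion-free quotient of $\cal W$ coincides with the torsion-free part of $\Sym^d\cal W$, so we may assume $\cal W$ is locally free; after a suitable $\bb Q$-twist, using that $\overline{\mu}_{\rm min}$ and $\Sym^d$ are additive in twists as recorded in Example \ref{ex:curves}, we may further assume $\overline{\mu}_{\rm min}(\cal W)=0$. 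Then $\cal W$ is nef by the criterion recalled in Example \ref{ex:curves} (Hartshorne's theorem in characteristic zero, \cite{BP14} in characteristic $p$); hence $\cal W^{\otimes d}$ is nef, and so is its quotient $\Sym^d\cal W$, giving $\overline{\mu}_{\rm min}(\Sym^d\cal W)\ge 0$, which is the claim in this normalization.

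The one point demanding care is the positive-characteristic bookkeeping: one must work throughout with $\overline{\mu}_{\rm min}$ rather than $\mu_{\rm min}$, since the naive estimate $\mu_{\rm min}(\Sym^d\cal W)\le d\cdot\mu_{\rm min}(\cal W)$ ceases to be useful after Frobenius pullbacks. This is precisely why I would route the upper bound through the Veronese embedding and Lemma \ref{lem:shquot} rather than through a minimal-slope Harder--Narasimhan quotient; once that choice is made, the remaining ingredients --- the nefness criterion on curves, nefness of tensor and symmetric powers of nef bundles, and the twist- and pullback-compatibility of $\Sym^d$ and $\overline{\mu}_{\rm min}$ --- are standard.
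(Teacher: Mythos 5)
Your proof is correct. The lower bound $\sh(\Sym^d\cal V;x)\ge d\cdot\sh(\cal V;x)$ is argued essentially as in the paper: reduce to smooth projective curves via Corollary \ref{cor:shviacurves} (using that $\Sym^d$ commutes with pullback and with twists, and that torsion is irrelevant for $\overline\mu_{\rm min}$), twist so that $\overline\mu_{\rm min}=0$, invoke the nefness criterion (Hartshorne in characteristic zero, \cite[Theorem 1.1]{BP14} in positive characteristic), and conclude that the symmetric power of the nef twist is nef and hence has no (twisted) quotients of negative slope; the paper quotes \cite[Theorem 6.2.12(iii)]{laz042} for the nefness of $\Sym^d$ of a nef twist, while you pass through nefness of the tensor power and the quotient map $\cal W^{\otimes d}\twoheadrightarrow\Sym^d\cal W$, which is the same ingredient in only slightly different packaging. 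Where you genuinely diverge is the upper bound: the paper stays on the curve and uses the surjection $\Sym^d\cal V\twoheadrightarrow\Sym^d\cal Q$ onto the symmetric power of a minimal-slope quotient to get $\mu_{\rm min}(\Sym^d\cal V)\le d\cdot\mu_{\rm min}(\cal V)$, whereas you use the relative Veronese closed immersion $\bb P(\cal V)\hookrightarrow\bb P(\Sym^d\cal V)$ with $\cal O_{\bb P(\Sym^d\cal V)}(1)$ pulling back to $\cal O_{\bb P(\cal V)}(d)$, together with Lemma \ref{lem:shquot} and the homogeneity of $\sh(-;x)$ on $N^1$; this needs no curve reduction for that half, works directly for coherent twisted sheaves, and is the same mechanism the paper later exploits in the corollary on $\nu_d^*\Nef^1$, so there is no circularity. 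One small quibble: your closing remark that the estimate $\mu_{\rm min}(\Sym^d\cal W)\le d\cdot\mu_{\rm min}(\cal W)$ \emph{ceases to be useful after Frobenius pullbacks} is not accurate---since $\Sym^d$ commutes with Frobenius pullback, applying that estimate to $(F^n)^*\cal W$ and dividing by $p^n$ yields $\overline\mu_{\rm min}(\Sym^d\cal W)\le d\cdot\overline\mu_{\rm min}(\cal W)$ in the limit, which is in effect how the paper handles this inequality; but since your Veronese argument is valid on its own, this misstatement does not affect the correctness of your proof.
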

\begin{proof}By Corollary \ref{cor:shviacurves} and since symmetric powers are
compatible with pullbacks, it is enough to consider the case of curves.
By normalizing, we may assume that $X$ is a smooth projective curve.
After iterated Frobenius pullback, we may assume that $\overline{\mu}_{\rm min}=\mu_{\rm min}$ throughout.
Note that slopes respect the formula $\mu(\Sym^d\cal V)=d\cdot\mu(\cal V)$ for locally free sheaves $\cal V$.
From any quotient $\cal V\twoheadrightarrow\cal Q$ of slope $\mu(\cal Q)$ we obtain the quotient $\Sym^d\cal V\twoheadrightarrow\Sym^d\cal Q$ of slope $d\cdot\mu(\cal Q)$.
This proves the ``$\leq$'' inequality by Lemma \ref{lem:shquot}.

For the inequality ``$\geq$'', we note that $\cal V\langle-{\mu}_{\rm min}(\cal V)\rangle$ is nef and not ample (cf.\ \cite[Theorem 1.1]{BP14}).
Thus, \cite[Theorem 6.2.12(iii)]{laz042} implies so is 
\[
  \Sym^d\bigl(\cal V\langle-\mu_{\rm min}(\cal V)\rangle\bigr)=\bigl(\Sym^d\cal
  V\bigr)\langle- d\cdot\mu_{\rm min}(\cal V)\rangle.
\]
In particular, the latter can have no (twisted) quotients of negative slope,
proving ``$\geq$''.
\end{proof}

\begin{cor}Let $\cal V$ be a (twisted) locally free 
sheaf of finite rank on the projective variety $X$.
Let $\nu_d:\bb P(\cal V)\to\bb P(\Sym^d\cal V)$ denote the
relative Veronese embedding. Then  
\[\nu_d^*\Nef^1\bigl(\bb P(\Sym^d\cal V)\bigr)=\Nef^1\bigl(\bb P(\cal
V)\bigr)\qquad\mbox{and}\qquad \nu_{d*}\Eff_1\bigl(\bb P(\cal V)\bigr)=\Eff_1\bigl(\bb P(\Sym^d\cal
V)\bigr).\]
\end{cor}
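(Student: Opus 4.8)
The plan is to use $\nu_d$ to identify the N\'eron--Severi groups of the two projective bundles, dispose of the easy inclusions, and then derive the remaining inclusion for the nef cone from the homogeneity of Seshadri constants in Lemma~\ref{lem:homogeneous}; the statement about $\Eff_1$ will then follow by duality.

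First I set up notation. Write $\rho\colon\bb P(\cal V)\to X$ with $\xi\coloneqq c_1(\cal O_{\bb P(\cal V)}(1))$, and $\rho_d\colon\bb P(\Sym^d\cal V)\to X$ with $\xi_d\coloneqq c_1(\cal O_{\bb P(\Sym^d\cal V)}(1))$. If $\operatorname{rank}\cal V=1$ then $\nu_d$ is an isomorphism and the statement is trivial, so I assume $r\coloneqq\operatorname{rank}\cal V\ge2$. Recall that $\nu_d$ is a closed immersion with $\rho_d\circ\nu_d=\rho$ and $\nu_d^*\cal O_{\bb P(\Sym^d\cal V)}(1)=\cal O_{\bb P(\cal V)}(d)$, so $\nu_d^*\xi_d=d\xi$; since formal twists only change polarizations and are compatible with $\Sym^d$, with $\bb P(-)$, and with $\nu_d$, I may work with $\cal V$ exactly as given and use this relation freely. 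Using the standard fact that $N^1(\bb P(\cal W))=\bb R\,c_1(\cal O_{\bb P(\cal W)}(1))\oplus\rho_{\cal W}^*N^1(X)$ for a locally free sheaf $\cal W$ of positive rank on the projective variety $X$, the pullback $\nu_d^*\colon N^1(\bb P(\Sym^d\cal V))\to N^1(\bb P(\cal V))$, which sends $\xi_d\mapsto d\xi$ and $\rho_d^*\alpha\mapsto\rho^*\alpha$, is a linear isomorphism; dually $\nu_{d*}\colon N_1(\bb P(\cal V))\to N_1(\bb P(\Sym^d\cal V))$ is an isomorphism, adjoint to $\nu_d^*$ under the intersection pairing by the projection formula.

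The inclusion $\nu_d^*\Nef^1(\bb P(\Sym^d\cal V))\subseteq\Nef^1(\bb P(\cal V))$ is immediate because pullbacks of nef classes are nef. For the reverse inclusion I take $\eta\in\Nef^1(\bb P(\cal V))$ and write $\eta=a\xi+\rho^*\alpha$; restricting to a fibre $\bb P^{r-1}$ of $\rho$ forces $a\ge0$. If $a=0$, then nefness of $\rho^*\alpha$ forces $\alpha\in\Nef^1(X)$ (intersect with curves dominating curves in $X$), so $\eta=\nu_d^*(\rho_d^*\alpha)$ with $\rho_d^*\alpha$ nef. If $a>0$, then $\eta=a\bigl(\xi+\rho^*\tfrac\alpha a\bigr)$ is nef if and only if the twisted sheaf $\cal V\langle\tfrac\alpha a\rangle$ is nef, hence, by Remarks~\ref{rmk:detectnef} and~\ref{rmk:easyseshadri}.(a), if and only if $\sh(\cal V\langle\tfrac\alpha a\rangle;x)\ge0$ for all $x\in X$; by Lemma~\ref{lem:homogeneous} this is equivalent to $\sh(\Sym^d(\cal V\langle\tfrac\alpha a\rangle);x)\ge0$ for all $x$, i.e.\ to nefness of $\Sym^d(\cal V\langle\tfrac\alpha a\rangle)=(\Sym^d\cal V)\langle\tfrac{d\alpha}a\rangle$, i.e.\ to nefness of $\xi_d+\rho_d^*\tfrac{d\alpha}a$. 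Thus $\beta\coloneqq\tfrac ad\xi_d+\rho_d^*\alpha$ lies in $\Nef^1(\bb P(\Sym^d\cal V))$ and satisfies $\nu_d^*\beta=\eta$, which gives the equality of nef cones.

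For the effective cones, since $\nu_d$ is a closed immersion, $\nu_{d*}$ carries effective $1$-cycles to effective $1$-cycles, so taking closures gives $\nu_{d*}\Eff_1(\bb P(\cal V))\subseteq\Eff_1(\bb P(\Sym^d\cal V))$. Conversely, given $\gamma\in\Eff_1(\bb P(\Sym^d\cal V))$, I claim $\gamma'\coloneqq\nu_{d*}^{-1}\gamma$ (well defined since $\nu_{d*}$ is an isomorphism) lies in $\Eff_1(\bb P(\cal V))$: by Kleiman's theorem that $\Nef^1$ and $\Eff_1$ are dual cones on a projective variety, it suffices to check $D\cdot\gamma'\ge0$ for all $D\in\Nef^1(\bb P(\cal V))$, and writing $D=\nu_d^*\beta$ with $\beta$ nef as in the previous paragraph, the projection formula gives $D\cdot\gamma'=\beta\cdot\gamma\ge0$; hence $\gamma=\nu_{d*}\gamma'\in\nu_{d*}\Eff_1(\bb P(\cal V))$, proving the second equality. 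I expect the reverse inclusion for the nef cone to be the only real obstacle: $\bb P(\Sym^d\cal V)$ contains many curves not lying on $\nu_d(\bb P(\cal V))$, so it is not formal that nefness of $\nu_d^*\beta$ implies nefness of $\beta$, and the homogeneity $\sh(\Sym^d\cal V;x)=d\cdot\sh(\cal V;x)$ is precisely what converts the condition on all those curves into the single requirement that a twisted symmetric power of $\cal V$ be nef; everything else is bookkeeping with N\'eron--Severi groups and cone duality.
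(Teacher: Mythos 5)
Your proof is correct and follows essentially the same route as the paper: identify $N^1$ of the two bundles via $\nu_d^*\xi_d=d\xi$, transfer nefness of $\xi+\rho^*\delta$ to nefness of $\xi_d+d\rho_d^*\delta$ using the homogeneity $\sh(\Sym^d\cal V;x)=d\,\sh(\cal V;x)$ together with Remark \ref{rmk:easyseshadri}.(a), and deduce the statement on $\Eff_1$ by cone duality. Your explicit treatment of the boundary case $a=0$ and of the adjointness of $\nu_{d*}$ and $\nu_d^*$ only spells out details the paper leaves implicit.
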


\begin{proof}The second equality follows from the first by duality. 
Let $\rho_d:\bb P(\Sym^d\cal V)\to X$ be the bundle map with relative
Serre bundle $\xi_d$ such that $\nu_d^*\xi_d=d\xi$. 
Since $\cal V$ is locally free, the N\' eron--Severi
spaces of $\bb P(\cal V)$ and $\bb P(\Sym^d\cal V)$ are generated
by the pullbacks of $N^1(X)$, and by $\xi$ and $\xi_d$ respectively. 
If $\delta\in N^1(X)$, it is enough to prove that
$d(\xi-\rho^*\delta)$ is nef if and only if $\xi_d-d\rho_d^*\delta$ is nef.
In other words, that $\cal V\langle-\delta\rangle$ is nef if and only if
$(\Sym^d\cal V)\langle-d\delta\rangle=\Sym^d(\cal V\langle-\delta\rangle)$
is nef. 
This is immediate from Lemma \ref{lem:homogeneous}
and from Remark \ref{rmk:easyseshadri}.(a).
\end{proof}

\begin{rmk}
With notation as in the corollary, when the characteristic of the base field is zero, then 
$\nu_d^*\Eff^1\bigl(\bb P(\Sym^d\cal V)\bigr)\supseteq\Eff^1\bigl(\bb P(\cal V)\bigr)$. 
By the duality of \cite{bdpp13}, we also deduce
$\nu_{d*}\Mov_1\bigl(\bb P(\cal V)\bigr)\supseteq\Mov_1\bigl(\bb P(\Sym^d\cal V)\bigr).$
 
To see these, fix a very ample divisor $H$ on $X$ such that $\xi+\rho^*H$ is very ample on $\bb P(\cal V)$.
If $\delta$ is a $\bb Q$-Cartier $\bb Q$-divisor on $X$ such that 
$\xi+\rho^*\delta$ is pseudo-effective on $\bb P(\cal V)$, then 
for all $n\geq 0$ the class $n(\xi+\rho^*\delta)+(\xi+\rho^*H)$ is big.
For all $n\geq 0$ there then exist sufficiently divisible integers 
$a_n>0$ such that $\bigl|a_n\bigl((n+1)\xi+\rho^*(H+n\delta)\bigr)\bigr|$ is nonempty. Equivalently, 
$H^0\bigl(X,\ \Sym^{(n+1)a_n}\cal V\otimes\cal O_X(a_n(H+n\delta))\bigr)\neq 0$. 
The $d$-th power of a nonzero section is a nonzero section of
$\Sym^{d(n+1)a_n}\cal V\otimes\cal O_X(da_nH+dna_n\delta)$. 
In characteristic zero, this sheaf is a direct summand of 
$\Sym^{(n+1)a_n}\Sym^d\cal V\otimes\cal O_X(da_nH+dna_n\delta)$.
We deduce that $(n+1)a_n\xi_d+\rho_d^*(da_nH+dna_n\delta)$ is effective.
The ray that this spans in $N^1(\bb P(\Sym^d\cal V))$ approaches 
the span of $\xi_d+d\rho_d^*\delta$ as $n$ grows; therefore, $\xi_d+d\rho_d^*\delta$ is pseudo-effective.
Note that $\nu_d^*(\xi_d+d\rho_d^*\delta)=d(\xi+\rho^*\delta)$. 
\end{rmk}

\begin{lem}[Determinants]\label{lem:comparetodet}If $\cal V$ is locally free of
  rank $r$, then for all $x \in X$, we have
  \[\sh(\cal V;x)\leq\frac 1{r}\sh(\det\cal V;x).\]
\end{lem}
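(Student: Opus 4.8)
The plan is to reduce immediately to the case of a smooth projective curve, exactly as in the proof of Lemma~\ref{lem:homogeneous}. By Corollary~\ref{cor:shviacurves}, the Seshadri constant of any coherent twisted sheaf at a point is an infimum over irreducible curves through that point of the corresponding quantity on normalizations, and both $\cal V \mapsto \det\cal V$ and restriction-to-curve commute with pullback; moreover $\det(\nu^*\cal V) = \nu^*\det\cal V$. So it suffices to prove the inequality $\sh(\cal V;x) \le \frac1r\sh(\det\cal V;x)$ when $X$ is a smooth projective curve, and then the linearity $\sh(\cal V\langle\lambda\rangle;x) = \sh(\cal V;x) + \deg\lambda$ (from Remark~\ref{rmk:curves}) together with the analogous formula $\det(\cal V\langle\lambda\rangle) = (\det\cal V)\langle r\lambda\rangle$ lets us normalize; after an iterated Frobenius pullback we may also assume $\overline\mu_{\min} = \mu_{\min}$ throughout, so that by Example~\ref{ex:curves} we are reduced to showing $\mu_{\min}(\cal V) \le \frac1r \mu_{\min}(\det\cal V) = \frac1r\deg\cal V = \mu(\cal V)$ on a smooth curve.

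The inequality $\mu_{\min}(\cal V) \le \mu(\cal V)$ is a standard fact about the Harder--Narasimhan filtration: the minimal-slope quotient $\cal V \twoheadrightarrow \cal Q$ is the last graded piece, and since the slopes in the Harder--Narasimhan filtration are strictly decreasing, $\mu(\cal V)$ is a weighted average of slopes all $\ge \mu(\cal Q) = \mu_{\min}(\cal V)$, with strict inequality unless $\cal V$ is already semistable. (Equivalently: there is a surjection $\cal V \twoheadrightarrow \cal Q$ onto a semistable sheaf of slope $\mu_{\min}(\cal V)$, and $\deg\cal V \ge \mathrm{rank}(\cal V)\cdot\mu_{\min}(\cal V)$ because the kernel is a subsheaf all of whose HN-slopes exceed $\mu_{\min}(\cal V)$, hence has slope $\ge \mu_{\min}(\cal V)$ as well.) This translates back, via $\mu_{\min}(\cal V) = \overline\mu_{\min}(\cal V) = \sh(\cal V;x)$ and $\mu(\cal V) = \frac1r\deg\cal V = \frac1r\overline\mu_{\min}(\det\cal V) = \frac1r\sh(\det\cal V;x)$, into the claimed bound on the curve.

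Alternatively, and perhaps more cleanly, one can avoid Harder--Narasimhan slopes entirely and argue by nefness: set $\lambda := \frac1r\deg\cal V$ (a $\bb Q$-divisor class of that degree on the curve), so that $\det(\cal V\langle-\lambda\rangle) = (\det\cal V)\langle-r\lambda\rangle$ has degree $0$; one then wants to see that $\cal V\langle-\lambda\rangle$ need not be nef only when forced, but in fact the relevant statement is the contrapositive — any quotient line bundle of $\cal V$ of negative degree after twisting by $-\lambda$ would, upon taking the $r$-th exterior power and using that quotients of $\cal V$ give quotients of $\det\cal V$... — this route is fiddlier, so I would stick with the Harder--Narasimhan argument above. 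Either way, the only genuine content is the elementary inequality $\mu_{\min} \le \mu$ on a smooth projective curve; the rest is the now-routine descent through Corollary~\ref{cor:shviacurves}, normalization, and Frobenius pullback that was already carried out for Lemma~\ref{lem:homogeneous}. The main thing to be careful about is the bookkeeping for torsion and for twists: one must check that passing to $\det$ interacts correctly with the twist (picking up a factor $r$ on $\lambda$) and that torsion in $\cal V$ is harmless, since $\mu_{\min}$ and $\overline\mu_{\min}$ ignore torsion while $\det$ of a torsion sheaf contributes only effectively to the degree — which only helps the inequality.
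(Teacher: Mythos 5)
Your proposal is correct and is essentially the paper's own argument: reduce to curves via Corollary \ref{cor:shviacurves} and then invoke the elementary inequality $\overline\mu_{\min}(\nu^*\cal V)\leq\mu(\nu^*\cal V)=\frac{\det\cal V\cdot C}{r}$, noting that for the line bundle $\det\cal V$ the slope is the degree. The extra normalization by twists and Frobenius pullbacks is harmless but unnecessary, since the inequality $\overline\mu_{\min}\leq\mu$ can be applied directly curve by curve, which is all the paper does.
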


\begin{proof}Immediate from Corollary \ref{cor:shviacurves} and from $\mu_{\min}(\nu^*\cal V)\leq\mu(\nu^*\cal V)=\frac{\deg_{C'}(\nu^*\cal V)}{r}=\frac{\det\cal V\cdot C}{r}.$
\end{proof}

\begin{lem}[Tensor products]\label{lem:tensorproducts}Let $\cal V$ and $\cal V'$ be (twisted) coherent sheaves on $X$. Then 
\[\sh(\cal V\otimes\cal V';x)\geq \sh(\cal V;x)+\sh(\cal V';x)\]
for all $x\in X$. If $X$ is a curve, or if $\cal V'=\cal V$, then equality holds.
\end{lem}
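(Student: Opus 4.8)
The plan is to reduce everything to a smooth projective curve via Corollary~\ref{cor:shviacurves}, and then argue with Harder--Narasimhan slopes, in the spirit of the proof of Lemma~\ref{lem:homogeneous}. The crux is the curve-level identity
\[
  \overline{\mu}_{\rm min}(\cal A\otimes\cal B)=\overline{\mu}_{\rm min}(\cal A)+\overline{\mu}_{\rm min}(\cal B)
\]
for locally free sheaves $\cal A,\cal B$ on a smooth projective curve (twisting only adds an additive degree term, and torsion does not affect $\overline{\mu}_{\rm min}$, so this covers the twisted coherent case). Granting the inequality ``$\geq$'' here, the general inequality follows: tensor products commute with pullback, so for each irreducible curve $C\ni x$ with normalization $\nu\colon C'\to C$ we have $\nu^*(\cal V\otimes\cal V')=\nu^*\cal V\otimes\nu^*\cal V'$, and Corollary~\ref{cor:shviacurves} then gives
\[
  \frac{\overline{\mu}_{\rm min}(\nu^*(\cal V\otimes\cal V'))}{\mult_x C}\geq\frac{\overline{\mu}_{\rm min}(\nu^*\cal V)}{\mult_x C}+\frac{\overline{\mu}_{\rm min}(\nu^*\cal V')}{\mult_x C}\geq\sh(\cal V;x)+\sh(\cal V';x);
\]
taking the infimum over $C$ yields $\sh(\cal V\otimes\cal V';x)\geq\sh(\cal V;x)+\sh(\cal V';x)$. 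When $X$ is itself a curve, the only curve to consider in Corollary~\ref{cor:shviacurves} is $X$ (after normalization), so the curve-level identity upgrades this to equality.

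For the curve-level ``$\geq$'' I would normalize so that $X$ is a smooth projective curve and $\cal A,\cal B$ are locally free. By \cite[Theorem~1.1]{BP14} in positive characteristic --- and by Hartshorne's theorem \cite[Theorem~6.4.15]{laz04} in characteristic zero, as in Example~\ref{ex:curves} --- a locally free sheaf $\cal W$ on $X$ is nef if and only if $\overline{\mu}_{\rm min}(\cal W)\geq 0$; in particular $\cal A\langle-\overline{\mu}_{\rm min}(\cal A)\rangle$ and $\cal B\langle-\overline{\mu}_{\rm min}(\cal B)\rangle$ are nef. Since a tensor product of nef locally free sheaves is nef (\cite[Theorem~6.2.12]{laz042}), the twisted sheaf
\[
  (\cal A\otimes\cal B)\bigl\langle-\overline{\mu}_{\rm min}(\cal A)-\overline{\mu}_{\rm min}(\cal B)\bigr\rangle=\cal A\langle-\overline{\mu}_{\rm min}(\cal A)\rangle\otimes\cal B\langle-\overline{\mu}_{\rm min}(\cal B)\rangle
\]
is nef, and the nefness criterion applied once more gives $\overline{\mu}_{\rm min}(\cal A\otimes\cal B)\geq\overline{\mu}_{\rm min}(\cal A)+\overline{\mu}_{\rm min}(\cal B)$. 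This is the step I expect to be the main obstacle: over $\bb C$ it is classical, but in positive characteristic the naive analogue ``tensor product of semistable sheaves is semistable'' is false, which is exactly why one must work with $\overline{\mu}_{\rm min}$ in place of $\mu_{\rm min}$ and appeal to the fact that \emph{strong} semistability on a curve is preserved under tensor products.

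For the reverse inequality ``$\leq$'' on curves (used for the equality when $X$ is a curve), recall that on a smooth projective curve $\sh(-;x)=\overline{\mu}_{\rm min}(-)$, and that $\overline{\mu}_{\rm min}\leq\mu$. For each $n$ choose quotients $(F^n)^*\cal A\twoheadrightarrow Q_n$ and $(F^n)^*\cal B\twoheadrightarrow Q_n'$ with $\mu(Q_n)=\mu_{\rm min}((F^n)^*\cal A)$ and $\mu(Q_n')=\mu_{\rm min}((F^n)^*\cal B)$; composing with the surjection $(F^n)^*(\cal A\otimes\cal B)\twoheadrightarrow Q_n\otimes Q_n'$ and using Lemma~\ref{lem:shquot} gives
\[
  p^n\cdot\overline{\mu}_{\rm min}(\cal A\otimes\cal B)=\overline{\mu}_{\rm min}\bigl((F^n)^*(\cal A\otimes\cal B)\bigr)\leq\mu(Q_n\otimes Q_n')=\mu_{\rm min}\bigl((F^n)^*\cal A\bigr)+\mu_{\rm min}\bigl((F^n)^*\cal B\bigr).
\]
Dividing by $p^n$ and letting $n\to\infty$ (taking $n=0$ in characteristic zero) gives $\overline{\mu}_{\rm min}(\cal A\otimes\cal B)\leq\overline{\mu}_{\rm min}(\cal A)+\overline{\mu}_{\rm min}(\cal B)$. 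Finally, for the equality when $\cal V'=\cal V$ with $X$ arbitrary, the canonical surjection $\cal V\otimes\cal V\twoheadrightarrow\Sym^2\cal V$ gives $\sh(\cal V\otimes\cal V;x)\leq\sh(\Sym^2\cal V;x)=2\,\sh(\cal V;x)$ by Lemma~\ref{lem:shquot} and Lemma~\ref{lem:homogeneous}, and combined with the lower bound already established this forces equality; iterating also yields $\sh(\bigotimes^{m}\cal V;x)=m\cdot\sh(\cal V;x)$ for all $m\geq1$.
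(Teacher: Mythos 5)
Your proof is correct and follows essentially the same route as the paper: reduce to a smooth projective curve via Corollary \ref{cor:shviacurves} and establish the additivity $\overline{\mu}_{\rm min}(\cal A\otimes\cal B)=\overline{\mu}_{\rm min}(\cal A)+\overline{\mu}_{\rm min}(\cal B)$, using the nefness criterion of \cite{BP14} (Hartshorne in characteristic zero) together with the fact that tensor products of nef bundles are nef for ``$\geq$'', and minimal-slope quotients for ``$\leq$''. The only cosmetic differences are that the paper pulls back once by a large Frobenius iterate so that $\overline{\mu}_{\rm min}=\mu_{\rm min}$ for all sheaves involved rather than taking limits as you do, and that you make the $\cal V'=\cal V$ case explicit via the surjection $\cal V\otimes\cal V\twoheadrightarrow\Sym^2\cal V$, which the paper leaves implicit in the curve reduction.
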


\begin{proof}Corollary \ref{cor:shviacurves} allows to reduce to the case of possibly singular curves. By normalizing we can assume that $X$ is a smooth projective curve.
Pulling back by a sufficiently large iteration of the Frobenius, we may assume that $\overline{\mu}_{\rm min}=\mu_{\rm min}$ for all the (finitely many) sheaves involed.
Then in fact we claim
\begin{equation*}\mu_{\rm min}(\cal V\otimes\cal V')=\mu_{\rm min}(\cal V)+\mu_{\rm min}(\cal V').
\end{equation*}
Up to twisting, we may assume $\mu_{\rm min}(\cal V)=\mu_{\rm min}(\cal V')=0$, so $\cal V$ and $\cal V'$ are nef. 
Then $\cal V\otimes\cal V'$ is also nef (cf.\ \cite[Theorem 6.2.12]{laz04}), hence $\mu_{\min}(\cal V\otimes\cal V')\geq 0$.
If $\cal Q$ and $\cal Q'$ are quotients of slope 0 of $\cal V$ and $\cal V'$ respectively,
then $\cal Q\otimes\cal Q'$ is a quotient of slope 0 of $\cal V\otimes\cal V'$, giving the remaining inequality 
$\mu_{\rm min}(\cal V\otimes\cal V')\leq 0$.
\end{proof}

Note that equality on curves does not lead to equality in arbitrary dimension in general, since the Seshadri constants $\sh(\cal V;x)$ and $\sh(\cal V';x)$ could be 
approximated on different curves through $x$. We observe this below already for line bundles.

\begin{ex}On $X\coloneqq  \bb P^1\times\bb P^1$, we have $\sh(\cal O(1,0);x)=\sh(\cal O(0,1);x)=0$ for all $x\in X$, since the line bundles in question are nef 
and have trivial restrictions on the fibers of the respective natural projection. On the other hand, as in \cite[Example 5.1.7]{laz04}, we find $\sh(\cal O(1,1);x)=1$ for all $x\in X$. 
\qed
\end{ex}

\begin{cor}\label{cor:ampletensor}Let $X$ be a projective scheme over an algebraically closed field, and let $\cal V$ and $\cal V'$ be (twisted) 
sheaves on $X$. Assume that $\cal V$ is ample (resp.\ nef), and that $\cal V'$ is nef.
Then $\cal V\otimes\cal V'$ is ample (resp.\ nef). Furthermore, all Schur functors
$S_{\lambda}\cal V$ are ample (resp. nef), 
where $\lambda$ is a partition of some positive integer.\footnote{Here, $S_{\lambda}\cal V$, where
$\lambda=(\lambda_1\geq\ldots\geq\lambda_r\geq 0)\vdash n$, is understood as a
quotient of $\Sym^{\lambda_1}\cal V\otimes\ldots\otimes\Sym^{\lambda_r}\cal V$
as in \cite[Chapter 8.3, Example 10]{FultonYoung}.}
\end{cor}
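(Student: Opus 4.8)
The plan is to reduce the statement about ample (resp.\ nef) tensor products to the curve case via Corollary \ref{cor:shviacurves}, exactly as in the preceding lemmas, and then invoke the Seshadri ampleness criterion (Theorem \ref{seshadriample}) together with Remark \ref{rmk:easyseshadri}.(a). First, I would note that $\cal V$ ample is equivalent to $\inf_{x\in X}\sh(\cal V;x)>0$ and $\cal V'$ nef is equivalent to $\inf_{x\in X}\sh(\cal V';x)\geq 0$ (here one must be slightly careful that $\cal O_{\bb P(\cal V)}(1)$ is $\rho$-ample, which holds whenever $\cal V$ is locally free; for the general coherent case one should either assume local freeness or argue through a surjection from a bundle). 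Then Lemma \ref{lem:tensorproducts} gives $\sh(\cal V\otimes\cal V';x)\geq\sh(\cal V;x)+\sh(\cal V';x)$, so $\inf_{x\in X}\sh(\cal V\otimes\cal V';x)\geq\inf_{x\in X}\sh(\cal V;x)>0$, and Theorem \ref{seshadriample} concludes ampleness of $\cal V\otimes\cal V'$. The nef case is the same with $\geq 0$ in place of $>0$, using Remark \ref{rmk:detectnef} and Remark \ref{rmk:easyseshadri}.(a): $\cal V\otimes\cal V'$ is nef iff it is nef at every point, iff $\sh(\cal V\otimes\cal V';x)\geq 0$ for all $x$, which follows from the superadditivity.

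Next, for the Schur functor statement, I would first treat $\Sym^d\cal V$ and tensor powers. By Lemma \ref{lem:homogeneous}, $\sh(\Sym^d\cal V;x)=d\cdot\sh(\cal V;x)$, so $\Sym^d\cal V$ is ample (resp.\ nef) whenever $\cal V$ is, again by Theorem \ref{seshadriample} (resp.\ Remark \ref{rmk:easyseshadri}.(a)). Combining with the tensor product case just established, any tensor product $\Sym^{\lambda_1}\cal V\otimes\cdots\otimes\Sym^{\lambda_r}\cal V$ is ample (resp.\ nef): each factor is ample (resp.\ nef) by homogeneity, and repeated application of the tensor-product assertion preserves ampleness (resp.\ nefness) — note one only needs one factor ample and the rest nef to get ampleness, and here all factors are ample when $\cal V$ is. Finally, $S_\lambda\cal V$ is, by the cited description in \cite[Chapter 8.3, Example 10]{FultonYoung}, a quotient of $\Sym^{\lambda_1}\cal V\otimes\cdots\otimes\Sym^{\lambda_r}\cal V$. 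By Lemma \ref{lem:shquot}, $\sh(S_\lambda\cal V;x)\geq\sh(\Sym^{\lambda_1}\cal V\otimes\cdots\otimes\Sym^{\lambda_r}\cal V;x)$, so taking infima over $x$ and applying Theorem \ref{seshadriample} (resp.\ Remark \ref{rmk:easyseshadri}.(a)) gives that $S_\lambda\cal V$ is ample (resp.\ nef).

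I expect the main obstacle to be purely bookkeeping rather than conceptual: making sure the Seshadri ampleness criterion applies in the possibly non-locally-free, twisted, not-necessarily-reduced setting the corollary is stated in. Theorem \ref{seshadriample} as stated needs $\xi$ to be $\rho$-ample, i.e.\ $\cal O_{\bb P(\cal V)}(1)$ relatively ample, which is automatic for locally free $\cal V$ but subtler otherwise. I would handle this by either restricting the Schur-functor clause to locally free $\cal V$ (which is the only case where Schur functors are standardly defined anyway) or by reducing to the locally free case via a surjection $\cal E\twoheadrightarrow\cal V$ from a bundle and Lemma \ref{lem:shquot}; the twist poses no difficulty since all the cited lemmas are stated for twisted sheaves. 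One should also double-check that "$\inf_{x\in X}\sh(\cal V\otimes\cal V';x)>0$" is legitimately deduced from superadditivity even when some individual Seshadri constants are $+\infty$ (empty $\cal C_{\rho,x}$), but this is harmless.

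Finally, I would remark that the nef case can alternatively be derived directly from \cite[Theorem 6.2.12]{laz042} in the locally free setting, but the Seshadri-constant proof has the advantage of being uniform in characteristic and of handling the twisted/singular cases at once, which is presumably why the authors include it here.
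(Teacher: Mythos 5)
Your proposal is correct and follows essentially the same route as the paper: superadditivity of Seshadri constants (Lemma \ref{lem:tensorproducts}) combined with the Seshadri ampleness criterion (Theorem \ref{seshadriample}) resp.\ Remark \ref{rmk:easyseshadri}.(a) for the tensor product, and homogeneity (Lemma \ref{lem:homogeneous}) plus the quotient description of $S_\lambda\cal V$ (via Lemma \ref{lem:shquot}) for the Schur functors. Your worry about $\rho$-ampleness is moot, since $\cal O_{\bb P(\cal V)}(1)$ is $\rho$-ample for any coherent $\cal V$ by construction of $\operatorname{Proj}$, as noted in the paper's definition of the polarization.
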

\noindent Compare with \cite[Corollary 6.1.6]{laz042} and \cite{Barton71}.
\begin{proof}Immediate from Lemma \ref{lem:tensorproducts} and Theorem \ref{seshadriample} (resp. Remark \ref{rmk:easyseshadri}.(a)).
For the last part, use the construction of $S_{\lambda}\cal V$ as quotient of $\Sym^{\lambda_1}\cal V\otimes\ldots\otimes\Sym^{\lambda_r}\cal V$.
This reduces the problem to showing that $\Sym^n\cal V$ is ample (resp. nef)
if $\cal V$ is ample (resp. nef). 
This follows from Lemma \ref{lem:homogeneous} and from Theorem \ref{seshadriample} (resp. Remark \ref{rmk:easyseshadri}.(a)).
\end{proof}

\begin{lem}\label{lem:seshadrisequences}Let $\cal K\to\cal V\to\cal Q\to 0$ be an exact sequence of (twisted) coherent sheaves on $X$.
Then, we have 
\[\sh(\cal V;x)\geq\min\bigl\{\sh(\cal K;x),\sh(\cal Q;x)\bigr\}\] 
for all $x\in X$.
In particular, if $\sh(\cal K;x)\geq\sh(\cal Q;x)$, then $\sh(\cal V;x)=\sh(\cal Q;x)$.
Furthermore, if $\cal V=\cal K\oplus\cal Q$, then $\sh(\cal V;x)=\min\{\sh(\cal K;x),\sh(\cal Q;x)\}$.
\end{lem}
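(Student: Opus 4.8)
The plan is to reduce, via Corollary \ref{cor:shviacurves}, to an elementary slope computation on a smooth projective curve, and then to deduce the ``in particular'' and ``furthermore'' assertions formally from Lemma \ref{lem:shquot}.

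For the main inequality, Corollary \ref{cor:shviacurves} expresses each of $\sh(\mathcal V;x)$, $\sh(\mathcal K;x)$, $\sh(\mathcal Q;x)$ as an infimum over the irreducible curves $C\ni x$ of $\overline{\mu}_{\rm min}(\nu^*(-))/\mult_x C$, where $\nu\colon C'\to C$ is the normalization and the multiplicity factor $\mult_x C$ does not depend on the sheaf. Since the infimum of a pointwise minimum is at least the minimum of the infima, it suffices to prove, for each such $C$, that $\overline{\mu}_{\rm min}(\nu^*\mathcal V)\ge\min\{\overline{\mu}_{\rm min}(\nu^*\mathcal K),\overline{\mu}_{\rm min}(\nu^*\mathcal Q)\}$, where now $\nu$ denotes the composite $C'\to C\hookrightarrow X$. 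Pullback along $\nu$ is right exact, so the pulled-back sheaves still form an exact sequence on the smooth projective curve $C'$; a fixed formal twist contributes the same additive shift to all three values of $\overline{\mu}_{\rm min}$, so we may ignore it. Thus we are reduced to proving: for an exact sequence $\mathcal K\to\mathcal V\to\mathcal Q\to 0$ of coherent sheaves on a smooth projective curve, $\overline{\mu}_{\rm min}(\mathcal V)\ge\min\{\overline{\mu}_{\rm min}(\mathcal K),\overline{\mu}_{\rm min}(\mathcal Q)\}$. In positive characteristic, pulling back along a sufficiently high power of the (right exact) Frobenius lets us further assume $\overline{\mu}_{\rm min}=\mu_{\rm min}$ for all three sheaves, so it is enough to treat the case $\overline{\mu}_{\rm min}=\mu_{\rm min}$.

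To prove $\mu_{\rm min}(\mathcal V)\ge s\coloneqq\min\{\mu_{\rm min}(\mathcal K),\mu_{\rm min}(\mathcal Q)\}$, we may assume $\mathcal V$ is not torsion (otherwise $\mu_{\rm min}(\mathcal V)=\infty$). Choose a positive-rank quotient $\mathcal R$ of $\mathcal V$ with $\mu(\mathcal R)=\mu_{\rm min}(\mathcal V)$, arising from the last step of the Harder--Narasimhan filtration of $\mathcal V$ (equivalently, of its torsion-free quotient). Let $\mathcal R_1$ be the image in $\mathcal R$ of $\mathcal K$, and $\mathcal R_2\coloneqq\mathcal R/\mathcal R_1$; a diagram chase shows that $\mathcal R_1$ is a quotient of $\mathcal K$ and $\mathcal R_2$ is a quotient of $\mathcal Q$. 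Writing $r_i\coloneqq\operatorname{rank}\mathcal R_i$, additivity of rank and degree in the short exact sequence $0\to\mathcal R_1\to\mathcal R\to\mathcal R_2\to 0$ gives $\operatorname{rank}\mathcal R=r_1+r_2>0$ and $\deg\mathcal R=\deg\mathcal R_1+\deg\mathcal R_2$, and for each $i$ one has $\deg\mathcal R_i\ge r_i s$: when $r_i>0$ because $\mu_{\rm min}$ bounds below the slope of every positive-rank quotient (torsion being irrelevant for $\mu_{\rm min}$), and when $r_i=0$ because a torsion sheaf on a smooth projective curve has nonnegative degree. Summing these and dividing by $\operatorname{rank}\mathcal R$ gives $\mu_{\rm min}(\mathcal V)=\mu(\mathcal R)\ge s$, as required.

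Finally, since $\mathcal Q$ is a quotient of $\mathcal V$, Lemma \ref{lem:shquot} gives $\sh(\mathcal Q;x)\ge\sh(\mathcal V;x)$; combined with the main inequality, the hypothesis $\sh(\mathcal K;x)\ge\sh(\mathcal Q;x)$ forces $\sh(\mathcal V;x)\ge\sh(\mathcal Q;x)$, hence equality. When $\mathcal V=\mathcal K\oplus\mathcal Q$, both $\mathcal K$ and $\mathcal Q$ are quotients of $\mathcal V$, so Lemma \ref{lem:shquot} gives $\sh(\mathcal V;x)\le\min\{\sh(\mathcal K;x),\sh(\mathcal Q;x)\}$, which together with the main inequality yields equality. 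I expect the only real subtlety to lie in the torsion and rank bookkeeping in the curve computation — handling the degenerate cases where $\mathcal V$, $\mathcal K$, or $\mathcal Q$ is torsion, or $r_1=0$ or $r_2=0$ — and in confirming that the reduction to smooth curves is valid for a merely right-exact sequence of (possibly non-locally free, twisted) sheaves.
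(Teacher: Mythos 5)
Your proof is correct and follows essentially the same route as the paper: reduce via Corollary \ref{cor:shviacurves} (with Frobenius pullbacks in positive characteristic) to a smooth projective curve where $\overline{\mu}_{\rm min}=\mu_{\rm min}$, bound the slope of the minimal-slope quotient of $\mathcal{V}$, and deduce the last two assertions from Lemma \ref{lem:shquot}. The only difference is cosmetic: where the paper uses semistability of the minimal destabilizing quotient $A$ and a two-case analysis of whether $\mathcal{K}\to A$ vanishes, you decompose that quotient into the image of $\mathcal{K}$ and its cokernel (a quotient of $\mathcal{Q}$) and add degrees and ranks, an equally valid rendering of the same step.
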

\begin{proof}By Corollary \ref{cor:shviacurves}, as above, we can assume that $X$ is a smooth curve, and that 
$\overline{\mu}_{\rm min}=\mu_{\rm min}$ for all the sheaves involved. 
Let $\cal V\twoheadrightarrow A$ be the quotient of minimal slope in the Harder--Narasimhan filtration of $\cal V$.
In particular, $A$ is semistable. 
If the induced map $\cal K\to A$ is nonzero, then its image has slope at most $\mu(A)=\mu_{\rm min}(\cal V)$, and $\mu_{\rm min}(\cal K)\leq\mu_{\rm min}(\cal V)$.
If $\cal K\to A$ is zero, then we obtain an induced nonzero map $\cal Q\to A$ and argue as before. 
The last part follows from Lemma \ref{lem:shquot}.
\end{proof}

\subsection{Pseudo-effectivity}
Using results from \cite{bdpp13} (which hold in arbitrary characteristic by
\cite[Section 2.2]{fl13z}), we show that
Seshadri constants for non-pseudo-effective divisors are negative.
\begin{lem}\label{lem:nonpsefnegative} Let $X$ be a projective variety of dimension $n$ over an
algebraically closed field, and let $L\in N^1(X)$ be an $\bb R$-Cartier $\bb R$-divisor class
outside the pseudo-effective cone $\Eff^1(X)$. Then, $\sh(L;x)=-\infty$ for general $x\in X$.
Furthermore, $\sh(L;x)<0$ for all $x$.
\end{lem}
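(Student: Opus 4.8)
The plan is to reduce to a pseudo-effectivity statement on the blow-up $\pi\colon X'=\bl_xX\to X$ with exceptional divisor $E$, and then to invoke the duality between pseudo-effective divisors and movable curves. First I would record the dictionary between curves through $x$ on $X$ and curves on $X'$: since $\pi$ is birational, an irreducible curve $C\subseteq X$ with $x\in C$ and $C\neq\{x\}$ corresponds to its strict transform $\tilde C\subseteq X'$, which by the computation in the proof of Proposition \ref{prop:altintsh} satisfies $\pi^*L\cdot\tilde C=L\cdot C$ and $E\cdot\tilde C=\mult_xC>0$; conversely every irreducible curve $D\subseteq X'$ with $D\not\subseteq E$ and $E\cdot D>0$ arises this way from $\pi(D)$. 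Hence it suffices to produce, for every $s>0$, an irreducible curve $D\subseteq X'$ with $D\not\subseteq E$, $E\cdot D>0$ and $(\pi^*L+sE)\cdot D<0$: this forces $\sh(L;x)\le\frac{\pi^*L\cdot D}{E\cdot D}<-s$, and letting $s\to\infty$ gives $\sh(L;x)=-\infty$, while the same construction with $s=0$ (using $\pi^*L$ in place of $\pi^*L+sE$) produces an $L$-negative curve through $x$ and hence $\sh(L;x)<0$ for every $x$. (We take $n\ge 2$; for curves the statement follows at once from Remark \ref{rmk:curves}.)

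The next point is that $\pi^*L+sE$ is never pseudo-effective on $X'$. Indeed $\pi_*$ is continuous and sends effective divisors to effective divisors, so $\pi_*(\Eff^1(X'))\subseteq\Eff^1(X)$, and $\pi_*(\pi^*L+sE)=L$ because $\pi_*\pi^*L=L$ and $\pi_*E=0$ (this is where $n\ge 2$ enters); since $L\notin\Eff^1(X)$ by hypothesis, $\pi^*L+sE\notin\Eff^1(X')$. I then apply the duality of \cite{bdpp13}, valid in arbitrary characteristic by \cite[Section 2.2]{fl13z}: $\Eff^1(X')$ is dual to $\Mov_1(X')$, and $\Mov_1(X')$ is the closed convex cone generated by the strongly movable curve classes, i.e.\ classes of the form $\nu_*(\tilde A_1\cdots\tilde A_{n-1})$ with $\nu\colon Z\to X'$ a projective birational morphism and $\tilde A_1,\dots,\tilde A_{n-1}$ ample on $Z$. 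As $\pi^*L+sE$ pairs negatively with some class of $\Mov_1(X')$, and the locus where it pairs negatively is the open half-space cut out by a single linear inequality, it already pairs negatively with a single strongly movable class $\gamma$.

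The key observation is that such a $\gamma$ automatically meets $E$: writing $\gamma=\nu_*(\tilde A_1\cdots\tilde A_{n-1})$, we have $E\cdot\gamma=(\nu^*E)\cdot\tilde A_1\cdots\tilde A_{n-1}>0$, because $\nu^*E$ is a nonzero effective divisor on $Z$ (as $E\neq 0$ and $\nu$ is birational) and the $\tilde A_i$ are ample. Taking the $\tilde A_i$ very ample, a general complete intersection $\tilde C\in|\tilde A_1|\cap\cdots\cap|\tilde A_{n-1}|$ is an irreducible curve by Bertini's irreducibility theorem, which holds in every characteristic; being general, $\tilde C$ meets the open locus where $\nu$ is an isomorphism, hence is not contracted by $\nu$ and is not contained in the proper closed subset $\nu^{-1}(E)\subsetneq Z$. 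Thus $D:=\nu(\tilde C)$ is an irreducible curve on $X'$ whose class is a positive multiple of $\gamma$, with $D\not\subseteq E$ and $E\cdot D>0$, and $(\pi^*L+sE)\cdot D<0$ by the choice of $\gamma$. This is the curve required in the first paragraph, completing the argument.

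The only genuine obstacle is ensuring that the duality theorem and the generation of $\Mov_1$ by strongly movable curves apply to $X'$, which is an integral projective scheme but need not be normal or $\bb Q$-factorial. When $X$, and hence $X'$, is normal this is exactly what \cite{bdpp13} and \cite{fl13z} provide; in general one may first pass to a resolution or normalization and transfer the conclusion using Lemma \ref{lem:pullbacks}. It is precisely because the blow-up of a smooth point is well-behaved that the sharp conclusion $\sh(L;x)=-\infty$ is stated for general $x$, whereas the weaker assertion $\sh(L;x)<0$ — which needs only a single $L$-negative curve through the given point — holds for all $x$.
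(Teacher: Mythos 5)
Your strategy is genuinely different from the paper's: the paper applies the duality of \cite{bdpp13} once on $X$ itself to get a single strongly movable class with $L\cdot f_*(H_1\cdots H_{n-1})<0$, and then drives the ratio to $-\infty$ by taking complete intersection curves of members of $\lvert mH_i\rvert$ through a general point, with the denominator pinned at $\mult_xX$ by Bertini for Hilbert--Samuel multiplicity \cite{dFEM03}; you instead work on $\bl_xX$, claim $\pi^*L+sE$ is never pseudo-effective, and apply the duality there for each $s$, with the nice observation that a strongly movable class automatically meets $E$ positively, so the multiplicity bookkeeping is absorbed into $E\cdot D$. The architecture is sound, but the pivotal step has a real gap at the stated level of generality.

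The claim that $\pi^*L+sE\notin\Eff^1(X')$ is justified by ``$\pi_*$ is continuous and sends effective divisors to effective divisors, so $\pi_*(\Eff^1(X'))\subseteq\Eff^1(X)$.'' The lemma allows $X$ to be an arbitrary projective variety, possibly non-normal and non-$\bb Q$-factorial; there $\pi_*$ is not defined on $N^1$: the pushforward of an effective Cartier divisor on $X'$ is only an effective Weil divisor on $X$, so what you get is that the image of $L$ in $N_{n-1}(X)$ is a limit of effective cycle classes, and this is not known to imply $L\in\Eff^1(X)$ (the duality you invoke pairs $N^1$ with $N_1$, not with $N_{n-1}$). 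The step is correct when $x$ is a factorial (e.g.\ smooth) point: then every line bundle on $\bl_xX$ is $\pi^*M\otimes\cal O(mE)$, so $N^1(X')=\pi^*N^1(X)\oplus\bb R E$, and the horizontal part of any effective Cartier divisor on $X'$ is the class of an effective Cartier divisor on $X$ (Cartier at $x$ by factoriality of $\cal O_{X,x}$, Cartier elsewhere because $\pi$ is an isomorphism there); since a general point of a variety over an algebraically closed field is smooth, this would salvage the $\sh(L;x)=-\infty$ statement, but you never make this reduction, and your closing paragraph attributes the role of ``general $x$'' to a different cause. For the assertion $\sh(L;x)<0$ at \emph{all} points, your $s=0$ case at a non-factorial $x$ needs a separate argument: either test $L$ directly against strongly movable classes on $X$ (general complete intersections on the model pass through every point of $X$, which is exactly the paper's argument), or prove ``$\pi^*L$ psef $\Rightarrow L$ psef'' by pairing with $g^*$ of products of nef pullbacks on a common model and using the duality on $X$. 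Finally, the proposed fallback ``pass to a resolution or normalization and transfer via Lemma \ref{lem:pullbacks}'' does not repair this: for negative constants that lemma gives $\sh(f^*\cal V;x')\le\sh(\cal V;x)$, which is the wrong direction for transferring $-\infty$ from a resolution down to $X$, and pushing curves forward only increases $\mult_x$, losing control of the denominator --- precisely the difficulty the paper's Bertini--Hilbert--Samuel input is designed to handle.
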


\begin{proof}By \cite[Theorem 2.2]{bdpp13}, there exists a birational model $f\colon X'\to X$ and 
ample divisor classes $H_1,\ldots,H_{n-1}$ on $X'$ such that 
$L\cdot f_*(H_1\cdot\ldots\cdot H_{n-1})<0$.
Since there exist complete intersection curves through every point of $X'$, 
their images pass through every point of $X$. 
This implies $\sh(L;x)<0$ for all $x$.

Let $x \in X$ be a point where $f$ is an isomorphism, and denote the
inverse image of $x$ in $X'$ also by $x$.
By Bertini's theorem for Hilbert--Samuel multiplicity \cite[Proposition
4.5]{dFEM03}, for $m \gg 0$ there exist complete intersection curves $C_m$ of
members of $\lvert mH_i \rvert$ passing through $x$ with multiplicity $\mult_x
X$.
Then, we have $\sh(L;x)\leq\frac{L\cdot C_m}{\mult_x C_m}=\frac{L\cdot
C_m}{\mult_xX}$, and the right-hand side tends to $-\infty$ as $m\to \infty$.
\end{proof}

\begin{cor}\label{cor:psef}
  Let $X$ be a projective variety of dimension $n$ over an
  algebraically closed field.
  If $\cal O_{\bb P(\cal V)}(1)$ is not pseudo-effective, then $\sh(\cal V;x)<0$ for all $x\in X$.
\end{cor}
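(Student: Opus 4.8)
The plan is to reduce immediately to the case where $X$ is a curve by invoking Corollary \ref{cor:shviacurves}. Indeed, that corollary expresses $\sh(\cal V;x)$ as an infimum over irreducible curves $C$ through $x$ of $\sh(\cal V|_C;x)$, so it suffices to produce, for every $x \in X$, at least one curve $C$ through $x$ on which $\cal V|_C$ (equivalently $\cal O_{\bb P(\cal V|_C)}(1)$) fails to be nef at $x$, i.e.\ has negative Seshadri constant there. So the real content is: translate the hypothesis that $\xi = c_1(\cal O_{\bb P(\cal V)}(1))$ is not pseudo-effective on $\bb P(\cal V)$ into the statement that its restriction to $\bb P(\cal V|_C)$ is not pseudo-effective for suitably chosen curves $C \subset X$, and then apply Lemma \ref{lem:nonpsefnegative} on the surface $\bb P(\cal V|_C)$ (or, after normalizing $C$, the ruled surface over a smooth curve).

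The key steps, in order: First, choose a resolution/flattening so that we may assume $\rho\colon \bb P(\cal V) \to X$ is flat and $X$ is smooth, or at least reduce to the locally free situation on the smooth locus near $x$ (torsion is irrelevant for nefness of $\cal O(1)$, and $\sh(\cal V;x) = \sh(\cal V_{\mathrm{tf}};x)$ does not hold in general, but Corollary \ref{cor:shviacurves} handles this via restriction to curves directly, so I would stay on the curve side throughout). Second, since $\xi$ is not pseudo-effective on $Y = \bb P(\cal V)$, apply Lemma \ref{lem:nonpsefnegative}: there is a birational model $g\colon Y' \to Y$ and ample classes $H_1,\dots,H_{\dim Y - 1}$ on $Y'$ with $\xi \cdot g_*(H_1\cdots H_{\dim Y-1}) < 0$. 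The complete-intersection curves realizing this negativity sweep out $Y$; in particular some such curve $\Gamma$ meets the fiber $\bb P(\cal V(x))$ without being contained in it (fibers have codimension $\dim X \ge 1$, so a general member of the sweeping family is not swallowed by a single fiber). Third, set $C \coloneqq \rho(\Gamma) \ni x$; then $\Gamma \in \cal C_{\cal V|_C, x}$ and $\xi|_C \cdot \Gamma = \xi \cdot \Gamma < 0$, whence $\sh(\cal V|_C; x) \le \frac{\xi|_C \cdot \Gamma}{\mult_x \rho_* \Gamma} < 0$. Fourth, conclude via Corollary \ref{cor:shviacurves} that $\sh(\cal V;x) \le \sh(\cal V|_C;x) < 0$. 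Doing this for every $x \in X$ gives the claim. Alternatively, and perhaps more cleanly: a general complete-intersection curve $C \subset X$ through $x$ has the property that $\bb P(\cal V|_C) \subset \bb P(\cal V)$ is a general complete-intersection-type subvariety, and one checks $\xi|_{\bb P(\cal V|_C)}$ is not pseudo-effective by a Bertini/general-position argument on the cycle $g_*(H_1\cdots H_{\dim Y - 1})$; then Lemma \ref{lem:nonpsefnegative} applied directly on the ruled surface $\bb P(\nu^*\cal V|_{C'})$ over the normalization $C'$ gives $\overline{\mu}_{\mathrm{min}}(\nu^*\cal V|_{C'}) < 0$, hence negativity of $\sh$ at every point of $C$, in particular at $x$.

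The main obstacle is the second step: guaranteeing that the negativity of $\xi$ against the sweeping family of complete-intersection curves on $Y$ can be detected by a curve $\Gamma$ that is \emph{horizontal} over $X$ near $x$, i.e.\ that meets $Y_x$ properly. If all the negative curves happened to lie in fibers of $\rho$, we would be computing fiberwise Seshadri constants instead. But this cannot happen: $\xi$ is $\rho$-ample, so $\xi|_{Y_t}$ is ample for every $t$, hence $\xi$ restricted to any curve contained in a fiber is non-negative; therefore any curve $\Gamma$ with $\xi \cdot \Gamma < 0$ is automatically not contracted by $\rho$, so $\rho(\Gamma)$ is a curve and $\Gamma \in \cal C_{\rho,x}$ once $\Gamma$ meets $Y_x$. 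The remaining point — that we can arrange $\Gamma$ to pass through a prescribed point of $Y_x$, or at least to meet $Y_x$ — follows because the complete-intersection curves of Lemma \ref{lem:nonpsefnegative} sweep out all of $Y$ (they pass through every point of $Y$, as noted in that proof), so in particular through some point of the fiber $\bb P(\cal V(x))$, and a general such curve is not contained in that fiber. Combined with the multiplicity inequality $\mult_x \rho_* \Gamma \ge 1$, this finishes the argument; the only care needed is the standard Bertini-type verification that the general complete-intersection curve through a fixed point of $Y$ is irreducible and not contained in a fiber, which is routine given $\dim Y \ge \dim X \ge 1$.
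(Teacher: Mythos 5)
Your proposal is correct and takes essentially the same route as the paper: apply Lemma \ref{lem:nonpsefnegative} to $\xi=c_1(\cal O_{\bb P(\cal V)}(1))$ on $\bb P(\cal V)$ to get, through any point of the fiber $\bb P(\cal V(x))$, an irreducible curve with negative $\xi$-degree, and then use the $\rho$-ampleness of $\xi$ to see such a curve cannot lie in a fiber, hence lies in $\cal C_{\cal V,x}$ — which is precisely the negative case of Remark \ref{rmk:compareless1} that the paper cites. The detour through Corollary \ref{cor:shviacurves} is harmless but unnecessary, since that curve already gives $\sh(\cal V;x)<0$ directly from the definition.
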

\begin{proof}Immediate from Lemma \ref{lem:nonpsefnegative} and the negative case of Remark \ref{rmk:compareless1}.
\end{proof}
\subsection{Semicontinuity}\label{section:semi}
We end this section with two semicontinuity results.
The first concerns semicontinuity in the $\bb R$-twists $\lambda$ for a twisted
sheaf of the form $\cal V\langle \lambda \rangle$, which is a consequence of our
functoriality results.
\begin{cor}\label{cor:seshadriconvex}Let $X$ be a projective scheme over an algebraically closed field, and fix a closed point $x\in X$. Let $\cal V$ be a coherent sheaf, with $\sh(\cal V;x)>-\infty$. 
Let $h$ be an ample divisor class on $X$.
Consider the function $\epsilon(t)\coloneqq  \varepsilon(\cal V\langle th\rangle;x)$. 
Then, $\epsilon$ is nondecreasing, continuous at all $t>0$ and lower-semicontinuous at $t=0$.
\end{cor}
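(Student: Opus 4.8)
The plan is to reduce everything to the curve case via Corollary~\ref{cor:shviacurves} and then analyze the one-variable function carefully. First I would record the basic structural facts. Homogeneity of $\sh(-;x)$ (the remark after the definition) gives $\epsilon(t) = \sh(\cal V\langle th\rangle;x)$, and since $h$ is ample, Lemma~\ref{lem:shquot} applied to the (pointwise surjective at generic points, hence after restricting to curves genuinely surjective) inclusion-type comparison — more precisely, the fact that for $t' \ge t$ we have $\cal V\langle t'h\rangle = \cal V\langle th\rangle\langle (t'-t)h\rangle$ with $(t'-t)h$ ample — shows $\epsilon$ is nondecreasing. Indeed, using Corollary~\ref{cor:shviacurves}, $\epsilon(t) = \inf_{x\in C\subset X}\frac{\overline\mu_{\min}(\nu^*\cal V) + (t)\,\deg(\nu^*h)}{\mult_x C}$, where $\nu\colon C'\to C$ is the normalization: twisting by $th$ shifts $\overline\mu_{\min}$ by $t\cdot\deg_{C'}(\nu^*h)$, because $\overline\mu_{\min}$ is linear in twists (as noted in Example~\ref{ex:curves}). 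So $\epsilon$ is an infimum of affine functions $t\mapsto \frac{a_C + t\,b_C}{\mult_x C}$ with $a_C = \overline\mu_{\min}(\nu^*\cal V|_C)$, $b_C = \deg_{C'}(\nu^*h) > 0$ (strictly, since $h$ is ample and $C$ is a genuine curve through $x$), and $\mult_x C \ge 1$.

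Now the analysis is pure convex geometry of the function $\epsilon(t) = \inf_{C} \frac{a_C + t b_C}{\mult_x C}$. Being an infimum of affine functions, $\epsilon$ is concave on $\bb R$ (wherever it is finite). The hypothesis $\sh(\cal V;x) > -\infty$ says $\epsilon(0) > -\infty$, and since $\epsilon$ is nondecreasing, $\epsilon$ is finite for all $t\ge 0$. A concave function that is finite on an interval is continuous on the interior of that interval; hence $\epsilon$ is continuous at every $t > 0$. For lower semicontinuity at $t = 0$: a concave nondecreasing function on $[0,\infty)$ satisfies $\lim_{t\to 0^+}\epsilon(t) \ge \epsilon(0)$ automatically — concavity forces $\epsilon$ to lie above the chord, and monotonicity plus concavity give $\epsilon(0) \le \lim_{t\to 0^+}\epsilon(t)$, which is exactly lower semicontinuity from the right at $0$ (and from the left there is nothing to check since $t=0$ is the boundary). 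Alternatively, and perhaps cleaner: since each summand $\frac{a_C+tb_C}{\mult_x C}$ with $b_C>0$ is nondecreasing and continuous, $\epsilon(0) = \inf_C \frac{a_C}{\mult_x C} \le \inf_C \lim_{t\to 0^+}\frac{a_C+tb_C}{\mult_x C}$, and swapping the $\inf$ with the limit in the correct direction gives $\epsilon(0) \le \liminf_{t\to 0^+}\epsilon(t)$.

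The step I expect to be the main obstacle is the reduction to curves done \emph{uniformly in $t$} — i.e., justifying that $\epsilon(t)$ really equals $\inf_C \sh(\cal V\langle th\rangle|_C; x)$ with the \emph{same} family of curves $C$ for all $t\ge 0$, and that $\sh(\cal V\langle th\rangle|_C;x)$ is the affine function of $t$ claimed. This requires: (i) that restricting a twisted sheaf to a curve and then twisting commute, which is formal from the definition of formal twists and their compatibility with pullback (stated in \S2); and (ii) that on a (possibly singular) curve $C$ with normalization $\nu$, one has $\sh(\cal V\langle th\rangle|_C;x) = \frac{\overline\mu_{\min}(\nu^*\cal V) + t\deg_{C'}\nu^*h}{\mult_x C}$, which is exactly formula~\eqref{eq:curvechar} of Example~\ref{ex:curves} combined with the linearity $\overline\mu_{\min}(\mathcal{W}\langle\lambda\rangle) = \overline\mu_{\min}(\mathcal{W}) + \deg\lambda$ noted there. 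Once those bookkeeping points are in place, the remaining argument is the short convexity argument above, and one should also remark that without the hypothesis $\sh(\cal V;x) > -\infty$ the statement can fail — $\epsilon$ could be $-\infty$ on $\{0\}$ or on an initial interval — which is why that hypothesis appears. I would also note explicitly that continuity can genuinely fail at $t=0$: e.g.\ if $\cal V$ itself is not nef at $x$ but $\cal V\langle th\rangle$ becomes nef for all $t>0$, the jump is from a negative value up to $\lim_{t\to 0^+}\epsilon(t)\ge 0$, which is only lower semicontinuity, not continuity.
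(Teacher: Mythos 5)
Your proof is correct, but it takes a different route from the paper's. The paper's proof is purely formal: it deduces (strict) monotonicity from Lemma \ref{lem:tensorproducts} applied to $\cal V\langle t'h\rangle\otimes\cal O_X\langle (t-t')h\rangle$ together with the positivity of $\sh(h;x)$ for ample $h$, and it deduces midpoint concavity from the \emph{equality} case of Lemma \ref{lem:tensorproducts} (tensoring $\cal V\langle th\rangle$ with $\cal V\langle t'h\rangle$ and using homogeneity to get $\epsilon(t)+\epsilon(t')\le 2\epsilon(\tfrac{t+t'}{2})$); it then finishes exactly as you do, with ``finite concave functions are continuous on open intervals'' and lower semicontinuity at $0$ from monotonicity. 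You instead unwind everything to curves via Corollary \ref{cor:shviacurves}, writing $\epsilon(t)=\inf_C\frac{\overline\mu_{\min}(\nu^*\cal V)+t\,(h\cdot C)}{\mult_xC}$ over the $t$-independent family of curves through $x$, so that concavity and monotonicity come for free from $\epsilon$ being an infimum of affine functions with positive slopes. The two arguments are closely related — Lemma \ref{lem:tensorproducts} is itself proved by restriction to curves — so your approach essentially re-derives the needed special case of that lemma explicitly; what it buys is a very concrete picture of $\epsilon$ (and immediately the finiteness of $\epsilon(t)$ for $t\ge 0$ from $\epsilon(0)>-\infty$), while the paper's version is shorter because it quotes the functoriality lemmas as black boxes. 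Your attention to the uniform-in-$t$ reduction (same curve family, linearity of $\overline\mu_{\min}$ under twists) is exactly the right bookkeeping and is covered by Example \ref{ex:curves} and the formal compatibility of twists with pullback.

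One small blemish: your opening attempt to get monotonicity from Lemma \ref{lem:shquot} is not right as stated — twisting by $(t'-t)h$ is not a quotient of sheaves, so that lemma does not apply — but this is harmless, since the curve-based argument you give immediately afterwards (positive slopes $b_C=h\cdot C>0$) establishes monotonicity correctly; alternatively one can quote Lemma \ref{lem:tensorproducts} as the paper does.
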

\begin{proof}Lemma \ref{lem:tensorproducts} and homogeneity for divisors imply 
  \[\epsilon(t)\geq\epsilon(t')+(t-t')\sh(h;x)>\epsilon(t')\] for all $t>t'\geq 0$. Furthermore
\[\epsilon(t)+\epsilon(t')\leq\sh\bigl(\cal V\otimes\cal V\bigl\langle
(t+t')h\bigr\rangle;x\bigr)=2\sh\biggl(\cal
V\biggl\langle\frac{t+t'}2h\biggr\rangle;x\biggr)=2\epsilon\biggl(\frac{t+t'}2\biggr).\]
Finite concave functions are continuous on open intervals. Lower-semicontinuity follows because $\epsilon$ is nondecreasing. 
\end{proof}
Next, we prove the following semicontinuity result for Seshadri constants in
smooth families.

\begin{prop}[Semicontinuity of Seshadri constants]\label{prop:semicont}
Let $T$ be a smooth connected variety over an uncountable algebraically closed field.
Let $p\colon \scr X\to T$ be a smooth projective family of varieties with
connected fibers and a section $T\to\scr X$ which maps $t\mapsto x_t\in X_t\coloneqq   p^{-1}\{t\}$. 
Let $\scr V$ be a locally free sheaf on $\scr X$, and denote $\cal V_t$ the corresponding restriction to $X_t$. 

Let $\epsilon\geq 0$, and let $t_0\in T$ such that $\cal V_{t_0}$ is nef and
$\sh(\cal V_{t_0};x_{t_0})\geq\epsilon$.
Then, $\sh(\cal V_{t};x_t)\geq\epsilon$ for very general $t\in T$.

In particular, under the positivity assumptions above, 
the Seshadri constants are constant outside an at most countable union 
of proper closed subsets, on which they may decrease.

The same results work in the more general setting of a smooth projective morphism $\rho:\scr Y\to\scr X$ of $T$-schemes with $\rho$-ample polarization $\xi$. 
\end{prop}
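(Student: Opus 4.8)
The plan is to prove the semicontinuity statement by a countability argument in the Hilbert scheme, reducing the claim to the openness of nefness and of the Seshadri-type inequality along fibers of a proper family. First I would observe that by Corollary~\ref{cor:shviacurves} it is enough to control the quantity $(\xi\cdot C)/\mult_{x_t}\rho_*C$ for all curves $C$ on $\scr Y_t$ meeting the fiber $(\scr Y_t)_{x_t}$ properly, where I write $\xi$ for $c_1(\cal O_{\bb P(\scr V)}(1))$ in the bundle case and more generally use the given $\rho$-ample $\xi$. The relevant curves $C$, together with their strict transforms $C'$ on the blow-up $\scr Y_t'$ of $(\scr Y_t)_{x_t}$, are parametrized by countably many quasi-projective Hilbert schemes $\mathrm{Hilb}$, flat over $T$ after passing to a stratification; since the base field is uncountable, it suffices to show that for each such component the set of $t\in T$ over which \emph{every} member $C'_t$ satisfies $(\pi'^*\xi-\epsilon\rho'^*E_t)\cdot C'_t\ge 0$ contains a dense open subset of $T$, and then intersect these countably many dense opens.

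The key step is thus a relative version of Proposition~\ref{prop:altintsh}. I would form the relative blow-up: let $\mathcal{E}\subset \mathrm{Bl}_{\sigma(T)}\scr X$ be the exceptional divisor of the blow-up of $\scr X$ along the section $\sigma(T)$, form the fiber product $\scr Y' \coloneqq \mathrm{Bl}_{\scr Y_{\sigma}}\scr Y$ exactly as in Notation~\ref{notn:seshnot} but over the whole base $T$, and let $\pi'\colon \scr Y'\to\scr Y$, $\rho'\colon \scr Y'\to \mathrm{Bl}_{\sigma(T)}\scr X$ be the induced maps, with $\widetilde{E}$ the relative exceptional divisor (Cartier, since $T$ is smooth and the section lands in the smooth locus). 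Now fix a component $H$ of the relative Hilbert scheme of $\scr Y'/T$ whose general member is the strict transform of a curve in $\cal C_{\rho_t,x_t}$, with universal family $\mathcal{C}\to H$, and consider the function on $H$ sending a point $[C']$ to $(\pi'^*\xi-\epsilon\rho'^*\widetilde{E})\cdot C'$. This is a locally constant function on each flat family, hence constant on $H$ (which I may assume connected), so the inequality $\ge 0$ either holds for all members over all of $T$ or for none; combined with the hypothesis that at $t_0$ we have $\sh(\cal V_{t_0};x_{t_0})\ge\epsilon$, i.e. the inequality holds for the fiber $H_{t_0}$, we get it over a dense open (in fact: we get it on the whole image, but it is cleanest to restrict to where $H\to T$ is flat and surjective onto a dense open). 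One also has to account for the curves $C'$ contained in the exceptional locus $\rho'^{-1}\widetilde{E}$ and the curves not meeting $\widetilde{E}$: for these, Remark~\ref{rmk:easyseshadri}.(b) and the $\rho$-nefness of $\xi$ already give nonnegativity for all $\epsilon\ge 0$, uniformly in $t$, so they do not obstruct anything. A minor point: the nefness hypothesis on $\cal V_{t_0}$ (equivalently $\xi|_{\scr Y_{t_0}}$ nef at $x_{t_0}$, hence nef on $\scr Y_{t_0}$ by Remark~\ref{rmk:detectnef}) must be propagated to very general $t$; this again follows from the same Hilbert-scheme argument applied without the $\epsilon\rho'^*\widetilde{E}$ term, using that nefness on a fiber is detected by the countably many curve classes and that each such numerical condition is closed-then-generic in the family.

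Finally I would assemble the pieces: the very general $t$ for which $\cal V_t$ is nef and $\sh(\cal V_t;x_t)\ge\epsilon$ is the complement of the countable union, over all relevant Hilbert components $H$, of the proper closed loci in $T$ where some member of $H$ violates the relevant inequality; since $t_0$ lies outside each of these, each is proper, and the countable union avoids a very general point because the ground field is uncountable. The last sentence of the statement — that the constants are eventually constant off a countable union of proper closed subsets — follows by applying the above to $\epsilon = \sh(\cal V_{t_0};x_{t_0})$ at a suitable ``very general'' base point $t_0$ where the Seshadri constant attains its maximal value (which exists by the semicontinuity just proved), so that no drop can occur off the countable bad locus, while drops are a priori possible on it. The main obstacle I anticipate is the bookkeeping in the relative blow-up when $\rho$ is merely $\rho$-ample rather than a projective bundle: one must check that $\scr Y'$ is well-behaved in families (flatness of $\scr Y'\to T$, Cartierness of $\widetilde E$, and that $\scr Y'_t = \mathrm{Bl}_{(\scr Y_t)_{x_t}}\scr Y_t$ fiberwise), and that the intersection numbers $(\pi'^*\xi-\epsilon\rho'^*\widetilde E)\cdot C'$ are genuinely locally constant in flat families of curves — this is standard but needs the properness of $\scr Y'\to T$ and the fact that numerical equivalence of the relevant divisor classes restricts compatibly to fibers, which is where smoothness of $T$ and of $p$ is used.
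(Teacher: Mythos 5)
Your overall strategy---parametrize the relevant curves by countably many components of a relative Hilbert scheme of $\scr Y'/T$, use constancy of the intersection number against $\pi'^*\xi-\epsilon\rho'^*\widetilde E$ along each connected component (with its flat universal family), and evaluate over $t_0$---is viable and close in spirit to the paper's argument, but the step where you evaluate over $t_0$ has a genuine gap. If you shrink to the locus where the members are irreducible strict transforms and $H\to T$ is flat and surjective onto a dense open, that dense open need not contain $t_0$, so the hypothesis at $t_0$ cannot be invoked; if instead you keep the full (proper) component so that $t_0$ does lie in the image, then the member over $t_0$ is a limit $1$-cycle which in general does \emph{not} lie in $\cal C'_{\rho_{t_0},x_{t_0}}$: besides genuine members of $\cal C'_{\rho_{t_0},x_{t_0}}$, it can acquire components contained in the exceptional locus $\rho'^{-1}\widetilde E$ and components disjoint from $\widetilde E$ altogether, possibly with multiplicities. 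The hypothesis $\sh(\cal V_{t_0};x_{t_0})\ge\epsilon$, via Proposition \ref{prop:altintsh}, only controls the first kind, so your assertion that ``the inequality holds for the fiber $H_{t_0}$'' is unjustified as written.

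Moreover, your proposed treatment of the remaining curves is incorrect: for a horizontal component of the limit disjoint from $\widetilde E$, the relevant number is $\xi_{t_0}\cdot\pi'_*C'$, and $\rho$-nefness of $\xi$ says nothing about such curves---it only controls curves inside fibers of $\rho$. Nonnegativity there is exactly where the hypothesis that $\cal V_{t_0}$ is nef must enter, and your proposal never uses that hypothesis in the main argument; as written it would ``prove'' the statement without nefness at $t_0$, which is false (see the remark following the proposition and the toric counterexample it cites). Once you decompose the limit cycle over $t_0$ and treat the three kinds of components separately---members of $\cal C'_{\rho_{t_0},x_{t_0}}$ by the Seshadri hypothesis, curves inside $\rho'^{-1}\widetilde E$ by Remark \ref{rmk:easyseshadri}(b) (using $\epsilon\ge 0$), and curves disjoint from $\widetilde E$ by the nefness of $\xi_{t_0}$---your direct argument does close, and it then becomes essentially the paper's proof, which runs the same degeneration as a contradiction: it spreads the putative bad curves over very general $t$, specializes the Chow class of the closure of their union over $t_0$, uses nefness of $\xi_{t_0}$ to discard the part $Z_0'$ of the limit away from the fiber over $x_{t_0}$, and uses upper semicontinuity of multiplicity where you use intersection with $\rho'^*\widetilde E$.
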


\begin{proof}Let $\rho:\bb P(\scr V)\to\scr X$ be the bundle map with fiberwise restrictions $\rho_t:\bb P(\cal V_t)\to X_t$. If the conclusion fails, then standard relative Hilbert scheme arguments produce
a scheme of finite type $H$ with a dominant morphism $f:H\to T$ (by restriction to a closed subset we may assume that $f$ is generically finite) and a relative flat curve $\scr C\subset H\times_T\bb P(\scr V)$ over $H$ such that the fibers
$C_h\subset \bb P(\cal V_{f(h)})$ are irreducible, and moreover in $\cal C_{\cal V_{f(h)},x_{f(h)}}$ for all $h\in H$. Furthermore $\frac{\xi_{f(h)}\cdot C_h}{\mult_{x_{f(h)}}\rho_{t*}C_h}<\epsilon$. 

Let $Y\subset\bb P(\scr V)$ be the closure of $(f\times_T\bb P(\scr V))(\scr C)=\cup_{h\in H}C_h\subset\bb P(\scr V)$. For any $t\in T$, denote by
$[Y_t]$ the Chow class of the restriction $Y|_{\bb P(\cal V_t)}$ in the sense of \cite[Chapter 8]{fulton84}.
This is an effective curve class (even if the scheme theoretic fiber $Y_t$ may have dimension greater than 1). See \cite[Lemma 4.10]{fl16sw} for details. 
For very general $t\in T$, the class $[Y_t]$ is represented by the fundamental cycle of the scheme theoretic $Y_t$ which is just the sum (with multiplicity) of the finitely many $C_h$ with $h\in f^{-1}t$. 
By abuse, we write $[Y_t]=Y_t=\sum_{h\in f^{-1}t}C_h$ in this case.

For $t\in T$ very general, let $Z_0+Z'_0$ be a flat degeneration over $t_0$ of the restriction of $Y$ over some irreducible curve $T'\subset T$ connecting $t$ and $t_0$. 
In fact, $Z_0+Z_0'$ is the fundamental cycle of the fiber over $t_0$ of the irreducible component of $Y_{T'}$ that dominates $T'$. 
Here $Z'_0$ is the part that does not come from $\cal C_{\cal V_{x_{t_0}},x_{t_0}}$. 
Since multiplicity is upper semicontinuous in families, and $\rho_{t_0*}Z_0'$
does not have $x_{t_0}$ in its support, we have
\[\mult_{x_{t_0}}\rho_{t_0*}Z_0\geq\mult_{x_t}\rho_{t*}[Y_t].\]
Since $\xi_{{t_0}}$ is nef, we have $\xi_{{t_0}}\cdot[Y_{t_0}]\geq\xi_{x_{t_0}}\cdot Z_0$.
 We reach the contradiction
\begin{align*}
\epsilon\leq\frac{\xi_{{t_0}}\cdot
Z_0}{\mult_{x_{t_0}}\rho_{t_0*}Z_0}\leq\frac{\xi_{{t_0}}\cdot[Y_{t_0}]}{\mult_{x_{t_0}}\rho_{t_0*}Z_0}&\leq
\frac{\xi_{{t}}\cdot[Y_{t}]}{\mult_{x_{t}}\rho_{t*}Y_t}=\frac{\xi_{t}\cdot
\sum_{h\in f^{-1}t}C_h}{\sum_{h\in f^{-1}t}\mult_{x_t}\rho_{t*}C_h}\\
&\leq\max_{h\in f^{-1}t}\frac{\xi_{t}\cdot
C_h}{\mult_{x_t}\rho_{t*}C_h}<\epsilon.\qedhere
\end{align*}
\end{proof}

\begin{rmk}The only step in the proof of Proposition \ref{prop:semicont} where the nefness of $\cal V_{t_0}$ is used is in the inequality $\xi_{t_0}\cdot Z_0\leq \xi_{t_0}\cdot[Y_{t_0}]$. What could go wrong is $Z_0'$ 
having components in $\bb P(\cal V_{t_0})$ that do not intersect the fiber over $x_{t_0}$. Recall that $\xi_{t_0}$ is positive on curves fully contained in the fiber $\rho_{t_0}^{-1}x_{t_0}$.
For the conclusion of Proposition \ref{prop:semicont}, it would be enough to ask that $Z_0'$ has no components contained in the non-nef locus of $\xi_{t_0}$.

In the absence of the nefness condition (and of the positivity of $\epsilon$), \cite[Example 3.15]{ful17} observes that this form of lower semicontinuity fails already for line bundles on toric surfaces.
\end{rmk}

\section{Products of curves}
We now come to our first application of our new formalism for Seshadri
constants.
We start by setting the following notation for the rest of this section.
\begin{notn}\label{notn:prodcurves}
  Let $C$ be a smooth projective curve of genus $g$ over $\bb C$.
  Let $X = C \times C$, and let $p$ and $q$ denote the projections onto each
  factor.
  Let $f_1$ denote the class of the fiber of $p$ and $f_2$ the class of a fiber
  of $q$.
  Denote by $\delta$ the class of the diagonal $\Delta$.
\end{notn}

It is a tantalizingly open problem to understand the nef cone of $C\times C$, 
even in the symmetric slice given by intersecting with the span of $f_1+f_2$ and $\delta$.
The classes $f_1$ and $f_2$ are clearly nef. If $a,b,c\geq 0$,
then $af_1+bf_2+c\delta$ is nef if and only if 
$(af_1+bf_2+c\delta)\cdot\delta=a+b-c(2g-2)\geq 0$.
For example, $(g-1)f_1+(g-1)f_2+\delta$ is the pullback 
of the theta polarization on the Jacobian of $C$ via the difference map
$C\times C\to{\rm Jac}(C): (x,y)\mapsto x-y$. 

The class $af_1-bf_2+\delta$ is never nef when $g\geq 2$. 
If it were, then by symmetry 
$bf_1-af_2+\delta$ would also be nef. Intersecting with 
$\delta$, we get $\pm(a-b)\geq 2g-2>0$, which is impossible. 
The class $af_1-bf_2-\delta$ is not nef either (or even pseudo-effective), because it has negative intersection with $f_1$. 
After removing $\delta$ from the negative part of the Zariski 
decomposition of $af_1-bf_2+\delta$, one shows that these
divisors are not pseudo-effective either when $b>0$. 

It remains unclear when the classes $af_1+bf_2-\delta$ are nef. 
By intersecting with $f_1$ and $f_2$, we get $a\geq 1$ and $b\geq 1$
as necessary conditions. By considering the self intersection,
we also have $a>1$ and $b\geq 1+\frac g{a-1}$.
Conjecture \ref{conj:prodcurvesintro} predicts that if $C$ is very general and
$g \gg 0$, then the divisor class
  \[
    (\sqrt{g}+1)(f_1+f_2) - \delta
  \]
  is nef.
Note that this divisor has self-intersection zero as in the famous Nagata conjecture.
In fact, \cite{cknagata,Ross} prove that the Nagata conjecture implies Conjecture \ref{conj:prodcurvesintro}.
One could extend the conjecture to the non-symmetric divisors with zero self-intersection
\begin{equation}\label{eq:conjcurves}
  af_1 + \biggl( 1 + \frac{g}{a-1} \biggr) f_2 - \delta
\end{equation}
for all $a > 1$.

\begin{rmk}[Vojta's divisors]
Inspired by \cite{vojta}, \cite[Proposition 3.2]{ashwath}
proves that if $r,s>0$, then 
$(\sqrt{(g+s)r^{-1}}+1)f_1+(\sqrt{(g+s)r}+1)f_2-\delta$
is nef if $r\geq \frac{(g+s)(g-1)}s$.\footnote{Note that \cite{ashwath} denotes our class $\delta-f_1-f_2$ by $\delta$.}

Setting $a=\sqrt{(g+s)r}+1$ and $r=\frac{(g+s)(g-1)}s$, we deduce the nefness of the divisor
\begin{equation}\label{eq:Vojta}
af_1+\left(1+\frac{2g}{a-1+\sqrt{(a-1)^2-4g(g-1)}}\right)f_2-\delta,
\end{equation}
for $a\geq 1+2\sqrt{g(g-1)}$, e.g., $a\geq 2g$.
These are close to the conjectural bound \eqref{eq:conjcurves} for large $a$, but never equal to it when $a>1$.
Setting $b=\sqrt{(g+s)r^{-1}}+1$ and $r=\frac{(g+s)(g-1)}s$, we deduce the nefness of
\begin{equation}\label{eq:Vojta2}
\biggl(\frac g{b-1}+(b-1)(g-1)+1\biggr)f_1+bf_2-\delta.
\end{equation}
\end{rmk}

To demonstrate a possible approach to this question via Seshadri constants, 
we start by showing that over curves, the relative
Seshadri constants are approximated by Seshadri constants of locally free sheaves.

\begin{prop}\label{prop:seshadriapprox}
Let $\rho\colon Y\to X$ be a surjective morphism of projective schemes over an algebraically closed field. 
Let $\cal L$ be a $\rho$-ample line bundle on $Y$.
Put $\cal F_n\coloneqq  \rho_*(\cal L^{\otimes n})$.
We then have
\[
  \sh(\cal L;x)\geq\limsup_{n\to\infty}\frac{\sh(\cal F_n;x)}n
\]
for all $x\in X$. When $X$ is an irreducible curve, then $\sh(\cal L;x)=\lim\limits_{n\to\infty}\frac{\sh(\cal F_n;x)}n$.
\end{prop}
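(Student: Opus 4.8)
The plan is to prove the inequality $\sh(\cal L;x)\ge\limsup_n\sh(\cal F_n;x)/n$ for arbitrary $X$ by comparing $Y$ with $\bb P_X(\cal F_n)$, and then, when $X$ is a curve, to prove the reverse inequality by a positivity-of-direct-images argument.

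\emph{The inequality for arbitrary $X$.} If $\cal C_{\rho,x}=\varnothing$ there is nothing to prove, so assume otherwise. Since $\cal L$ is $\rho$-ample, for $n\gg 0$ the evaluation map $\rho^*\cal F_n=\rho^*\rho_*\cal L^{\otimes n}\to\cal L^{\otimes n}$ is surjective, hence corresponds to a morphism $\imath_n\colon Y\to\bb P_X(\cal F_n)$ over $X$ with $\imath_n^*\cal O_{\bb P(\cal F_n)}(1)=\cal L^{\otimes n}$. Writing $\xi_n=c_1(\cal O_{\bb P(\cal F_n)}(1))$, which is relatively nef over $X$, Lemma~\ref{lem:shquot} applied to $\imath_n$, together with homogeneity of the Seshadri constant in the divisor class, gives
\[ n\cdot\sh(\cal L;x)=\sh\bigl(n\,c_1(\cal L);x\bigr)=\sh(\imath_n^*\xi_n;x)\ \ge\ \sh(\xi_n;x)=\sh(\cal F_n;x). \]
Dividing by $n$ and passing to $\limsup$ proves the inequality.

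\emph{Equality when $X$ is a curve.} It remains to show $\liminf_n\sh(\cal F_n;x)/n\ge\sh(\cal L;x)$. First one reduces to the case $X$ smooth by base change along the normalization $\nu\colon\widetilde X\to X$; this is routine once one uses that a coherent sheaf on a curve is nef if and only if its pullback to the normalization is, and that the descriptions of the relevant Seshadri constants in Remark~\ref{rmk:curves} and Example~\ref{ex:curves} are compatible with $\nu$. Assume then $X$ smooth, so $\mult_x X=1$; fix a closed point $x_0\in X$. By Remark~\ref{rmk:curves}, $\tau:=\sh(\cal L;x)=\sup\{t\in\bb R\st c_1(\cal L)-t\,\rho^*x_0\text{ is nef on }Y\}$. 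For each $s<\tau$ put $\cal N:=\cal L\langle-s\,\rho^*x_0\rangle$, a line bundle on $Y$ that is nef (as $s<\tau$) and $\rho$-ample (its restriction to each fibre is that of $\cal L$). By the projection formula $\rho_*(\cal N^{\otimes n})=\cal F_n\otimes\cal O_X(-ns\,x_0)$, so Example~\ref{ex:curves} together with $\overline{\mu}_{\rm min}(\cal G\langle\lambda\rangle)=\overline{\mu}_{\rm min}(\cal G)+\deg\lambda$ yields
\[ \tfrac1n\sh(\cal F_n;x)=s+\tfrac1n\,\overline{\mu}_{\rm min}\bigl(\rho_*\cal N^{\otimes n}\bigr). \]
Since the left-hand side is independent of $s$, it is enough to prove $\liminf_n\tfrac1n\overline{\mu}_{\rm min}(\rho_*\cal N^{\otimes n})\ge 0$, for this gives $\liminf_n\sh(\cal F_n;x)/n\ge s$ for every $s<\tau$, hence $\ge\tau$; combined with the first part this forces the limit to exist and equal $\sh(\cal L;x)$.

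\emph{Bounding $\overline{\mu}_{\rm min}(\rho_*\cal N^{\otimes n})$ from below, and the main obstacle.} Two ingredients enter. First, because $\cal N$ is nef and $\rho$-ample and $\dim X=1$, for every $m\ge 1$ the class $m\,c_1(\cal N)+\rho^*x_0$ is ample on $Y$: it is nef, and its top self-intersection on any subvariety $W\subseteq Y$ is positive, as one checks using $(\rho^*x_0)^2=0$ together with the ampleness of $\cal N$ along the fibres and the nefness of $\cal N$; the Nakai--Moishezon criterion for nef $\bb R$-Cartier divisors (valid over any field, cf.\ \cite{cp90,birkar}) then applies. Set $\cal A_m:=\cal N^{\otimes m}\otimes\rho^*\cal O_X(x_0)$, an ample line bundle. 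Second, if $\cal A$ is an ample line bundle on a projective scheme $W$ with a surjection $\pi\colon W\to X$ onto a smooth projective curve, then $\pi_*(\cal A^{\otimes k}\otimes\cal E)$ is globally generated---hence nef by Example~\ref{ex:ggnef}---for $k\gg 0$, uniformly as $\cal E$ ranges over a bounded family of line bundles on $W$: indeed for $k\gg 0$ one has $R^1\pi_*(\cal A^{\otimes k}\otimes\cal E\otimes\pi^*\cal O_X(-t))=0$ and $H^1(W,\cal A^{\otimes k}\otimes\cal E\otimes\pi^*\cal O_X(-t))=0$ for all $t\in X$ (Serre vanishing, uniform over the bounded family $\{\cal E\otimes\pi^*\cal O_X(-t)\}_{\cal E,t}$), the first identifying $(\pi_*(\cal A^{\otimes k}\otimes\cal E))\otimes k(t)$ with $H^0(W_t,(\cal A^{\otimes k}\otimes\cal E)|_{W_t})$ and the second making $H^0(W,\,\cdot\,)\to H^0(W_t,\,\cdot\,)$ surjective. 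Applying the second ingredient with $\pi=\rho$, $\cal A=\cal A_m$, and $\cal E=\cal N^{\otimes j}$ for $0\le j<m$: for $k\ge k_0(m)$ (independent of $j$) and $n=mk+j$, from $\cal N^{\otimes n}=\cal A_m^{\otimes k}\otimes\cal N^{\otimes j}\otimes\rho^*\cal O_X(-k x_0)$ and the projection formula one gets $\rho_*\cal N^{\otimes n}=\rho_*(\cal A_m^{\otimes k}\otimes\cal N^{\otimes j})\otimes\cal O_X(-k x_0)$ with the first factor nef, hence $\overline{\mu}_{\rm min}(\rho_*\cal N^{\otimes n})\ge -k\ge -n/m$; letting $n\to\infty$ gives $\liminf_n\tfrac1n\overline{\mu}_{\rm min}(\rho_*\cal N^{\otimes n})\ge -1/m$ for all $m$, hence $\ge 0$. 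I expect the crux to be this second ingredient and its interaction with the passage to the normalization: one must ensure $\rho_*\cal N^{\otimes n}$ and the fibrewise cohomology behave correctly, which rests on the uniform Serre vanishing over the one-parameter family $\{\cal O_X(-t)\}_{t\in X}$ and, for singular base, on a clean comparison of $\sh(\cal F_n;x)$ with the normalized invariant. The first ingredient (ampleness of $\cal A_m$) is exactly where the hypothesis that $\cal L$ is only $\rho$-ample, combined with $\dim X=1$, is used essentially.
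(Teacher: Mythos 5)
Your first inequality and the overall mechanism of the curve case are sound and, in essence, the paper's: part one is exactly the paper's argument (the surjections $\rho^*\cal F_n\twoheadrightarrow\cal L^{\otimes n}$ plus Lemma \ref{lem:shquot} and homogeneity), and in part two both you and the paper twist $\cal L$ down by (nearly) its nef threshold along the fiber class and then invoke Serre-vanishing-type global generation of direct images of powers of an ample line bundle (your ``second ingredient'' is a mild extension of Lemma \ref{lem:CM}) together with ``globally generated $\Rightarrow$ nef'' (Example \ref{ex:ggnef}) to bound $\overline{\mu}_{\rm min}$ from below. The only real difference in mechanism is how ampleness is produced: the paper twists to $\cal L^{\otimes b}(-af)$ with $a/b$ just below the threshold and gets ampleness from Theorem \ref{seshadriample}, using that $\mult_xX\cdot\sh(\cal L;x)$ is constant on a curve, while you stop at the nef bundle $\cal N$ and restore ampleness by adding $\frac1m\rho^*x_0$ (Nakai--Moishezon) and letting $m\to\infty$ via the division $n=mk+j$; both work, and your remark that $\limsup\le\sh(\cal L;x)\le\liminf$ forces the limit to exist lets you dispense with the paper's Fekete step. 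One small repair: take $s$ rational, $m$ divisible by its denominator, and $\cal E=\cal L^{\otimes j}$ rather than $\cal N^{\otimes j}$ (the latter is only a $\bb Q$-twist, so ``globally generated'' does not literally apply to it); the leftover twist $-js\,x_0$ has degree at most $ms$, independent of $n$, and disappears in the limit.

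The step I would not accept as written is the ``routine'' reduction of the singular-curve case to the smooth case by base change along the normalization $\nu\colon\widetilde X\to X$. The quantity $\sh(\cal F_n;x)$ is computed from $\nu^*\cal F_n=\nu^*\rho_*\cal L^{\otimes n}$ (Example \ref{ex:curves}), not from the direct image of the pulled-back family over $\widetilde X$; since $\nu$ is finite but not flat, these differ, and the base-change map between them has kernel and cokernel supported over the singular points whose lengths are not obviously $o(n)$. A full-rank subsheaf can have $\mu_{\rm min}$ smaller than that of the ambient sheaf by the entire degree of the torsion quotient (not divided by the rank), and the ranks here grow polynomially in $n$, so the comparison you would need is genuinely delicate. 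Fortunately the reduction is unnecessary: choose $x_0$ a smooth closed point of $X$ (so $x_0$ is Cartier of degree $1$ and $f\equiv\rho^*x_0$ numerically) and run your argument on $X$ itself. The Nakai--Moishezon computation for $\cal A_m$ never uses smoothness of $X$, and the needed positivity of $\rho_*(\cal A_m^{\otimes k}\otimes\cal L^{\otimes j})$ is exactly global generation on $X$ (Lemma \ref{lem:CM}, whose proof does not require $X$ smooth), hence nefness, hence $\overline{\mu}_{\rm min}(\nu^*(\cdot))\ge0$; only at the end divide by $\mult_xX$, which appears identically on both sides via Remark \ref{rmk:curves} and Example \ref{ex:curves}. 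With that replacement your proof is complete.
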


\begin{proof}Since $\cal L$ is $\rho$-ample, 
for large $n$, we have surjections $\rho^*\cal F_n\twoheadrightarrow \cal L^{\otimes n}$  (cf. \cite[Theorem 1.7.6.(iii)]{laz04}) inducing closed immersions 
$Y\hookrightarrow\bb P(\cal F_n)$ such that $\cal O(1)|_Y=\cal L^{\otimes n}$. 
From Lemma \ref{lem:shquot}, it follows that $\sh(\cal L;x)\geq\frac{\sh(\cal F_n;x)}n$
and $\sh(\cal L;x)\geq\limsup_{n\to\infty}\frac{\sh(\cal F_n;x)}n$.

If $X$ is an irreducible curve, let $f$ be the class of a general fiber of $\rho$. 
From Remark \ref{rmk:curves}, 
\begin{equation}\label{fuj1}\sh(\cal L;x)=\frac 1{\mult_xX}\cdot\sup\bigl\{t\st
c_1(\cal L)-tf\in\Nef^1(Y)\bigr\}.\end{equation}
In particular, $\mult_xX\cdot\sh(\cal L;x)$ is independent of $x$. 
From the projection formula we have $\frac{f\cdot C}{\mult_x\rho_*C}=\frac 1{\mult_xX}$ for all $C\in\cal C_{\rho,x}$, hence
$\sh(\cal L\langle -tf\rangle;x)=\sh(\cal L;x)-\frac t{\mult_xX}$ for all $t\in\bb R$.
Now consider an arbitrary rational number $\frac ab<\sh(\cal L;x)$ with $a,b$ integers and $b>0$. 
From homogeneity (Lemma \ref{lem:homogeneous}), from the Seshadri ampleness criterion (Theorem \ref{seshadriample}), and from \eqref{fuj1},
we obtain that $\cal L^{\otimes b}(-af)$ is ample and $\frac{\sh(\cal L^{\otimes b}(-af);x)}b=\sh(\cal L;x)-\frac a{b\cdot\mult_xX}$.
Furthermore, from the projection formula, $\rho_*\bigl((\cal L^{\otimes
b}(-af))^{\otimes m}\bigr)=\cal F_{bm}(-am)$, where $\cal O(1)\coloneqq  \cal O_X(x)$.
With similar arguments, $\sh(\cal F_{bm}(-am);x)=\sh(\cal F_{bm})-\frac{am}{\mult_xX}$.

Up to twisting, we may assume that $\cal L$ is ample. 
By Lemma \ref{lem:CM} below, $\sh(\cal F_n;x)\geq 0$ for sufficiently large $n$.
Since $\cal L$ is relatively ample, there exists $n_0\geq 0$ such that the natural map 
$\cal F_n\otimes\cal F_m\to\cal F_{n+m}$ is surjective for all $n\geq n_0$ and $m\geq  1$ \cite[Example 1.8.24.(ii)]{laz04}.
From Lemma \ref{lem:tensorproducts} we deduce that $\sh(\cal F_n;x)$ is an eventually superadditive 
sequence of nonnegative real numbers.
Fekete's Lemma proves that $\lim_{n\to\infty}\frac{\sh(\cal F;x)}n$ exists.

For the inequality $\sh(\cal L;x)\leq\lim_{n\to\infty}\frac{\sh(\cal F_n;x)}n$, 
since we may replace $\cal L$ by $\cal L^{\otimes b}(-af)$ for any integers $a,b$ with $b\geq 1$,
it is enough to prove that if $\cal L$ is ample, then $\lim_{n\to\infty}\frac{\sh(\cal F_n;x)}n\geq 0$. This is clear by Lemma \ref{lem:CM} below, since $\cal F_n$ is nef for $n$ sufficiently large. 
\end{proof}

\begin{lem}\label{lem:CM}Let $\rho:Y\to X$ be a morphism of projective schemes, and let $\cal L$ be an ample invertible sheaf on $Y$.
Let $\cal F$ be a coherent sheaf on $X$.
Then $\cal F\otimes\rho_*\cal L^{\otimes n}$ is ample and globally generated for all $n$ sufficiently large.
\end{lem}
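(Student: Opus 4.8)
\emph{Plan.} The idea is to reduce the whole lemma to a single global generation statement and then prove that statement by a Castelnuovo--Mumford regularity argument that transfers cohomology between $Y$ and $X$.

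Fix a very ample line bundle $A$ on $X$. I claim it suffices to prove the following statement $(\ast)$: \emph{for every coherent sheaf $\cal F$ on $X$, the sheaf $\cal F\otimes\rho_*\cal L^{\otimes n}$ is globally generated for all $n\gg 0$.} Granting $(\ast)$ and applying it both to $\cal F$ and to $\cal F\otimes A^{-1}$, we find that for $n\gg 0$ the sheaf $\cal F\otimes\rho_*\cal L^{\otimes n}$ is globally generated and that $\cal F\otimes A^{-1}\otimes\rho_*\cal L^{\otimes n}$ is globally generated; the latter says that $\cal F\otimes\rho_*\cal L^{\otimes n}$ is a quotient of a finite direct sum of copies of $A$. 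Now a finite direct sum $A^{\oplus N}$ of copies of the ample line bundle $A$ is an ample sheaf (immediate from Remark \ref{rmk:amplesheaves}, since $\Sym^m(A^{\oplus N})$ is a direct sum of copies of $A^{\otimes m}$), and a quotient of an ample sheaf is ample, via the closed immersion $\bb P_X(\cal Q)\hookrightarrow\bb P_X(\cal V)$ carrying $\cal O(1)$ to $\cal O(1)$ exactly as in the last paragraph of the proof of Lemma \ref{lem:shquot}. Hence $\cal F\otimes\rho_*\cal L^{\otimes n}$ is ample and globally generated for $n\gg 0$, which is the assertion.

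To prove $(\ast)$, first reduce to the case where $\cal F=M$ is a line bundle: every coherent sheaf $\cal F$ on the projective scheme $X$ is a quotient of a finite direct sum of negative powers of $A$; tensoring a surjection with $\rho_*\cal L^{\otimes n}$ keeps it surjective, and both direct sums and quotients of globally generated sheaves are globally generated. So let $\cal F=M$ be a line bundle. By the projection formula, $M\otimes\rho_*\cal L^{\otimes n}\otimes A^{-i}=\rho_*\bigl(\cal L^{\otimes n}\otimes\rho^*M\otimes\rho^*A^{-i}\bigr)$ for every $i\ge 0$. Since $\cal L$ is ample it is $\rho$-ample (see \cite[Theorem 1.7.6]{laz04}), so relative Serre vanishing gives an $n_1$ with $R^q\rho_*\bigl(\cal L^{\otimes n}\otimes\rho^*M\otimes\rho^*A^{-i}\bigr)=0$ for all $q\ge 1$, all $n\ge n_1$, and all $1\le i\le\dim X$; enlarging $n_1$ if needed, absolute Serre vanishing on $Y$ (again because $\cal L$ is ample) also gives $H^i\bigl(Y,\cal L^{\otimes n}\otimes\rho^*M\otimes\rho^*A^{-i}\bigr)=0$ for all $i\ge 1$, all $n\ge n_1$, and all $1\le i\le\dim X$. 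Combining these through the Leray spectral sequence, for $n\ge n_1$ and $1\le i\le\dim X$ we obtain
\begin{align*}
  H^i\bigl(X,\ M\otimes\rho_*\cal L^{\otimes n}\otimes A^{-i}\bigr)
  &= H^i\bigl(X,\ \rho_*(\cal L^{\otimes n}\otimes\rho^*M\otimes\rho^*A^{-i})\bigr)\\
  &= H^i\bigl(Y,\ \cal L^{\otimes n}\otimes\rho^*M\otimes\rho^*A^{-i}\bigr)=0.
\end{align*}
Thus $M\otimes\rho_*\cal L^{\otimes n}$ is $0$-regular with respect to $A$, hence globally generated by Castelnuovo--Mumford regularity (see e.g.\ \cite[\S1.8]{laz04}). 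This proves $(\ast)$.

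\emph{Main obstacle.} The argument is essentially routine, so the only real work is the bookkeeping in the first two steps: isolating the one global generation statement $(\ast)$ that yields both conclusions at once, checking that ampleness genuinely follows from the $A^{-1}$-twisted version (this is where one uses that direct sums and quotients of ample sheaves are ample), and being careful to invoke the projection formula only for line bundles, so that the non-locally-free case is handled by the reduction rather than directly. In the regularity computation the single point needing attention is that the threshold $n_1$ can be taken uniformly, which is possible because only finitely many coherent sheaves and finitely many cohomological degrees are involved.
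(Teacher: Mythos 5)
Your proposal is correct and follows essentially the same route as the paper: reduce to line-bundle twists via a surjection from a sum of powers of a very ample $A$, verify Castelnuovo--Mumford $0$-regularity by combining the projection formula, relative vanishing, the Leray spectral sequence, and Serre vanishing for the ample $\cal L$ on $Y$, and extract ampleness from global generation of a further twist by $A^{-1}$. The only difference is cosmetic: the paper normalizes to $\cal F=\cal O_X(-A)$ and phrases the cohomological input as $-2$-regularity of $\rho_*\cal L^{\otimes n}$, whereas you prove $0$-regularity of $M\otimes\rho_*\cal L^{\otimes n}$ for an arbitrary line bundle $M$.
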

\begin{proof}
Let $A$ be a very ample divisor on $X$ such that there exists a surjection
$\bigoplus\cal O_X(-A)\twoheadrightarrow\cal F$.
Since ampleness and global generation descend to quotients, it is enough
to prove the lemma for $\cal F=\cal O_X(-A)$.
With the usual arguments of Castelnuovo--Mumford regularity \cite[Theorem 1.8.5]{laz04}, it is enough to prove
that if $A$ is a very ample divisor on $X$, then 
$\rho_*\cal L^{\otimes n}$ is $-2$-regular with respect to $A$,
i.e., $H^i\bigl(X;\rho_*\cal L^{\otimes n}(-(2+i)A)\bigr)=0$ for all $i>0$ for all $n$ sufficiently large. 
This is because in this case $\rho_*\cal L^{\otimes n}(-2A)$ is globally generated, hence $\rho_*\cal L^{\otimes n}(-A)$ is ample and globally generated.

Since $\cal L$ is ample, it is in particular also $\rho$-ample. 
Hence for $n$ large, we have $R^i\rho_*\cal L^{\otimes n}=0$ for all $i>0$. 
The Leray spectral sequence and the projection formula show that 
$H^i\bigl(X;\rho_*\cal L^{\otimes n}(-(2+i)A)\bigr)=H^i\bigl(Y;\cal L^{\otimes n}\otimes\rho^*(-(2+i)A)\bigr)$.
The ampleness of $\cal L$ and Serre vanishing show that these cohomology groups are $0$.
\end{proof}

\begin{cor}\label{cor:curveapprox}Let $\rho\colon X\to C$ be a flat morphism between projective varieties with $C$ a nonsingular curve.
Let $\cal L$ be a $\rho$-ample line bundle, and let $f$ be the class of a fiber of $\rho$.
Then 
\[\sup\bigl\{t\st c_1(\cal L)-tf\ \text{is nef}\bigr\}=\lim_{n\to\infty}\frac{\overline\mu_{\min}(\rho_*\cal L^{\otimes
n})}n.\]
\end{cor}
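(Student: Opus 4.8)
The plan is to recognize both sides of the claimed identity as incarnations of the Seshadri constant $\sh(c_1(\cal L);x)$ of $\rho\colon X\to C$ at a closed point $x\in C$, and then to invoke the approximation statement Proposition \ref{prop:seshadriapprox}. Throughout, fix any closed point $x\in C$; since $C$ is nonsingular, $\mult_xC=1$, and (as $\rho$ is flat over a smooth curve) all fibers of $\rho$ are algebraically equivalent, so $f$ agrees with the class of a general fiber.

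First I would handle the left-hand side. Since $\cal L$ is $\rho$-ample, the class $c_1(\cal L)$ is $\rho$-nef, so Remark \ref{rmk:curves} applies with its ``$X$'' and ``$Y$'' taken to be $C$ and $X$ respectively; using $\mult_xC=1$ it gives
\[
  \sup\bigl\{t\st c_1(\cal L)-tf\ \text{is nef}\bigr\}=\mult_xC\cdot\sh\bigl(c_1(\cal L);x\bigr)=\sh(\cal L;x),
\]
a quantity independent of $x$. Next I would handle the right-hand side. Set $\cal F_n\coloneqq\rho_*(\cal L^{\otimes n})$. Because $C$ is a \emph{smooth} projective curve, its normalization is the identity, so Example \ref{ex:curves} yields $\sh(\cal F_n;x)=\overline\mu_{\min}(\cal F_n)$ for every $n$; here torsion in $\cal F_n$ is irrelevant to $\overline\mu_{\min}$, and in fact $\cal F_n$ is locally free once $n\gg0$ since $\rho$ is flat and $\cal L$ is $\rho$-ample. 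Hence the right-hand side of the corollary equals $\lim_{n\to\infty}\sh(\cal F_n;x)/n$, once this limit is known to exist.

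It remains to combine these with Proposition \ref{prop:seshadriapprox}, applied to the surjective morphism $\rho\colon X\to C$ and the $\rho$-ample line bundle $\cal L$: since $C$ is an irreducible curve, that proposition guarantees that $\lim_{n\to\infty}\sh(\cal F_n;x)/n$ exists and equals $\sh(\cal L;x)$. Chaining the three identifications closes the argument. The one point that requires care is matching hypotheses so that the multiplicity factors and the normalization appearing in Remark \ref{rmk:curves} and Example \ref{ex:curves} all trivialize — which is precisely what smoothness of $C$ provides — so that the three expressions line up on the nose; the genuine content is imported through Proposition \ref{prop:seshadriapprox} (whose proof rests on Lemma \ref{lem:CM}, superadditivity of $n\mapsto\sh(\cal F_n;x)$ after an ample twist, and Fekete's lemma). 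Beyond this bookkeeping there is no real obstacle.
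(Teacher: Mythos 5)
Your proof is correct and is essentially the paper's intended argument: the corollary is stated without proof precisely because it follows by combining Proposition \ref{prop:seshadriapprox} (applied to the curve base) with the identifications of Remark \ref{rmk:curves} and Example \ref{ex:curves}, exactly as you do, with smoothness of $C$ trivializing the multiplicity and normalization factors.
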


\begin{defn}If $L$ is a Cartier divisor on $C$ and $i\geq 0$, denote 
\[R^{i-1}(L)\coloneqq q_*\bigl(p^*\cal O_C(L)\otimes\cal O_X(-i\Delta)\bigr).\]
\end{defn}

We now prove the following result in the spirit of Conjecture
\ref{conj:prodcurvesintro}.
\begin{thrm}\label{thrm:prodcurves}
  Use notation as in Notation \ref{notn:prodcurves}. 
\begin{enumerate}
\item[\emph{(i)}] If $a>1$ is a rational number, then the class \eqref{eq:conjcurves}
$af_1+\bigl(1+\frac g{a-1}\bigr)f_2-\delta$ is nef if and only if 
the sheaves $R^{n-1}(nL)$ are \emph{asymptotically semi-stable}
\footnote{It makes sense to ask if $R^{n-1}(nL)$ is (semi)stable for large divisible $n$. See also \cite[Conjecture 4.2]{elstable}.}
, i.e.,
\[
\lim_{n\to\infty}\frac 1n\mu_{\rm min}\bigl(R^{n-1}(nL)\bigr)=\lim_{n\to\infty}\frac 1n\mu\bigl(R^{n-1}(nL)\bigr),
\]
where $L$ is $\bb Q$-divisor on $C$ with $\deg L=a$, and $n$ is sufficiently divisible. 
\item[\emph{(ii)}] If $g\geq 3$ and if $C$ is general, then the divisor class
  \[
    df_1+\left(1+\frac g{d-g}\right)f_2-\delta
  \]
  is nef for all integers $d\geq \lfloor 3g/2\rfloor +3$.
\item[\emph{(iii)}] If $g\geq 3$ and $C$ is very general, then the divisors $af_1+bf_2-\delta$
are nef for all $(a,b)$ in the convex hull of
\begin{align*}
  \MoveEqLeft[5]\biggl\{
    \biggl(\frac{g}{b-1}+(b-1)(g-1)+1,b\biggr)
    \biggm\vert b \in (1,2]
  \biggr\}
  \cup \bigl\{(a,b)\bigm\vert a+b=2g+2,\ a,b\geq 2\bigr\}\\
  &\cup \biggl\{
    \biggl(a,\frac{g}{a-1}+(a-1)(g-1)+1\biggr)\biggm\vert a\in(1,2]
  \biggr\}\\
  &\cup \biggl\{
    \biggl(2g-k,1+\frac{g}{g-k}\biggr)\ ,\ \biggl(1+\frac{g}{g-k},2g-k\biggr)\biggm\vert
    k\in\biggl\{1,2,\ldots,\biggl\lfloor\frac{g-5}2\biggr\rfloor\biggr\}
  \biggr\}\\
  &\cup \biggl\{\biggl(\frac{g}{\lfloor \sqrt g\rfloor}+1,\frac{g}{\lfloor
  \sqrt g\rfloor}+1\biggr)\biggr\}.
\end{align*}
\end{enumerate}
\end{thrm}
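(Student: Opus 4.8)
The plan is to treat the three parts separately, using the relative Seshadri constant machinery developed above to convert nefness questions on $C\times C$ into slope questions for direct-image sheaves on $C$. First I would set up the key translation for part (i): view $X=C\times C$ as a family over $C$ via $p$, so that $q$-fibers $f_2$ are the fibers and $f_1=p^*(\mathrm{pt})$ becomes the relative $\cal O_X(1)$. A divisor of the shape $af_1+bf_2-\delta$ with $a>1$ is $q$-ample precisely when it meets $f_1$ positively (it does, with degree $1$), so we may regard the question through $\bb P(\cal V)$ where $\cal V$ is some direct image on $C$. More precisely, the diagonal $\Delta$ embeds $C\hookrightarrow X$ as a section of $p$, and $\cal O_X(-\Delta)$ restricted to $p$-fibers is $\cal O_C(-\mathrm{pt})$; twisting $p^*\cal O_C(L)$ by $\cal O_X(-i\Delta)$ and pushing forward by $q$ is exactly the definition of $R^{i-1}(L)$. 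The point is that the ray condition ``$df_1+(1+\frac g{d-g})f_2-\delta\in\Nef^1(X)$'' should be rephrased, via Corollary \ref{cor:curveapprox} applied to the morphism $q\colon X\to C$ (after a suitable twist making the relevant line bundle $q$-ample), as a statement $\lim_n\frac1n\overline\mu_{\min}(R^{n-1}(nL))\ge$ (the numerical threshold forced by the self-intersection-zero normalization), which after unwinding is equivalent to asymptotic semistability of $R^{n-1}(nL)$. The self-intersection $=0$ condition is what pins the slope $\lim_n\frac1n\mu(R^{n-1}(nL))$ to exactly the boundary value, so the inequality $\overline\mu_{\min}\ge\mu$ (which is automatic in the other direction) becomes an equality, i.e.\ asymptotic semistability. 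This gives (i); I expect the bookkeeping of which twist to apply and verifying the Riemann--Roch computation of $\deg R^{n-1}(nL)$ (using that $\Delta^2=2-2g$ and $\Delta\cdot f_i=1$) to be routine but the engine to be precisely Corollary \ref{cor:curveapprox} together with Remark \ref{rmk:curves}.

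For part (ii), with $d\ge\lfloor 3g/2\rfloor+3$ and $C$ general, the strategy is to produce a concrete stability statement for $R^{n-1}(nL)$ — or rather a direct nefness argument circumventing the asymptotic limit. The natural route: for a general curve $C$ and $L$ of degree $d$, the sheaf $q_*(p^*\cal O_C(L)\otimes\cal O_X(-\Delta))=R^0(L)$ is, up to the diagonal exact sequence $0\to p^*\cal O_C(L)(-\Delta)\to p^*\cal O_C(L)\to \cal O_\Delta\otimes p^*\cal O_C(L)\to0$, the kernel of the evaluation-type map $H^0(C,L)\otimes\cal O_C\to \cal O_C(L)$ (a ``syzygy'' or ``Lazarsfeld--Mukai'' bundle $M_L$ twisted appropriately). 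Stability of such bundles on a general curve for $d$ in the stated range is known by work in the spirit of \cite{elstable} (Ein--Lazarsfeld) — this is where the bound $\lfloor 3g/2\rfloor+3$ comes from. One then promotes stability of $R^0(nL)$ to the asymptotic semistability of $R^{n-1}(nL)$ needed in (i): here one uses that $R^{n-1}(nL)$ sits in a filtration whose graded pieces involve $R^0$ of various twists, or one iterates the diagonal sequence $n$ times, and checks the slopes of the successive quotients are nondecreasing and controlled. Then (i) yields nefness of the divisor. The hard part will be exactly this step — controlling $\mu_{\min}$ of the higher conormal sheaves $R^{n-1}(nL)$ from stability of the first one — and identifying the precise numerical range in which the ``jumping'' behavior of the Harder--Narasimhan filtration stays within the required bound; I would expect the proof to invoke an explicit semistability result for $M_L$ on a general curve (Butler-type bounds) and a careful estimate of the degrees $\deg R^{i-1}(nL)=n\,d\cdot\deg(\text{stuff}) - i^2(g-1)+\dots$ to see the semistability threshold is met for all the pieces once $d\ge\lfloor 3g/2\rfloor+3$.

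For part (iii), the plan is purely convex-geometric: the set of nef classes $af_1+bf_2-\delta$ is closed and convex (it is the intersection of $\Nef^1(X)$ with an affine plane), so it suffices to exhibit the listed extreme points and boundary arcs as nef and take the convex hull. The arc $\{(\frac g{b-1}+(b-1)(g-1)+1,\,b):b\in(1,2]\}$ and its $f_1\leftrightarrow f_2$ mirror are exactly Vojta's divisors from \eqref{eq:Vojta2} (and its symmetric version \eqref{eq:Vojta}), valid for very general $C$ with $g\ge 2$; the segment $\{a+b=2g+2,\ a,b\ge2\}$ consists of nef classes because there $(af_1+bf_2-\delta)\cdot\delta=a+b-(2g-2)=4>0$ together with positive intersection against $f_1,f_2$, and these classes are pullbacks of (translates of) the theta divisor plus ample, hence nef; the point $(\frac g{\lfloor\sqrt g\rfloor}+1,\frac g{\lfloor\sqrt g\rfloor}+1)$ is Kouvidakis's theorem \cite{kouvidakis}; and the discrete family $(2g-k,\,1+\frac g{g-k})$ for $k\le\lfloor\frac{g-5}2\rfloor$ is precisely part (ii) applied with $d=2g-k$ (one checks $d=2g-k\ge\lfloor3g/2\rfloor+3\iff k\le\lceil g/2\rceil-3$, which matches $\lfloor(g-5)/2\rfloor$), using very general $C$ so that ``general'' in (ii) applies. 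Then I would simply observe that $\Nef^1(X)$ is convex, so the convex hull of all these nef classes is nef, completing the proof. I anticipate no real obstacle in (iii) beyond the arithmetic of checking the index ranges line up and confirming each building block is available over a very general (rather than merely general) curve.
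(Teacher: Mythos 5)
Your outline of (i) matches the paper: apply Corollary \ref{cor:curveapprox} to $q\colon C\times C\to C$ and the $q$-ample class $af_1-\delta$, compute $\lim_n\frac1n\mu(R^{n-1}(nL))=-\bigl(1+\frac g{a-1}\bigr)$ (the paper does this via the exact sequence $0\to R^{n-1}(nL)\to H^0(nL)\otimes\cal O_C\to P^{n-1}\cal O(nL)\to 0$ and the principal-parts filtration; Riemann--Roch on the surface works too), and observe that nefness then forces $\mu_{\min}$ to hit this value, i.e.\ asymptotic semistability. Part (iii) is also essentially the paper's argument (Vojta arcs, Kouvidakis, part (ii) for $\lfloor 3g/2\rfloor+3\le d<2g$, convexity of the nef cone), except that your justification of the segment $a+b=2g+2$ is wrong: these classes have $-\delta$, while $M^*\theta=(g-1)(f_1+f_2)+\delta$, so they are not ``theta pullback plus ample,'' and positivity against $f_1,f_2,\delta$ is far from sufficient for nefness. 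The segment is nef simply because its endpoints $(2g,2)$ and $(2,2g)$ are Vojta classes and $\Nef^1$ is convex (the paper records this as a tangency statement), so this slip does not damage (iii).

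The genuine gap is in (ii). You propose to prove \emph{asymptotic semistability} of $R^{n-1}(nL)$ and then invoke (i); but (i) governs the self-intersection-zero class $df_1+\bigl(1+\frac g{d-1}\bigr)f_2-\delta$, whereas (ii) claims only the weaker class $df_1+\bigl(1+\frac g{d-g}\bigr)f_2-\delta$. Asymptotic semistability in the range $d\ge\lfloor 3g/2\rfloor+3$ is exactly the open conjectural statement, and the paper does not prove it; it proves instead the weaker lower bound $\overline\mu_{\min}\bigl(R^{n-1}(nL)\bigr)\ge n\,\mu(M_{\cal L})=-n\bigl(1+\frac g{d-g}\bigr)$ and feeds that directly into Corollary \ref{cor:curveapprox} --- which is precisely why the coefficient in (ii) is $1+\frac g{d-g}$, the negative of the slope of $M_{\cal L}$, rather than $1+\frac g{d-1}$. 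Your proposed mechanism for bounding $\mu_{\min}(R^{n-1}(nL))$, a filtration with graded pieces built from $R^0$ of twists, goes the wrong way: the natural sequences exhibit $R^{n-1}(nL)$ as a \emph{subsheaf} of $M_{nL}$ (with quotient pieces $nL\otimes\omega_C^{\otimes i}$), and $\mu_{\min}$ of a subsheaf is not bounded below by that of the ambient sheaf, so no lower bound follows. The missing ideas are: (a) the multiplication surjection $M_{\cal L}^{\otimes n}\twoheadrightarrow R^{n-1}(n\cal L)$, which exists when $\cal L(-x)$ is normally generated for general $x$ --- this is where the bound $\lfloor 3g/2\rfloor+3$ actually comes from, via the ACGH normal generation theorem for general divisors of degree $\lfloor 3g/2\rfloor+2$ on a general curve of genus $g\ge 3$, not from a stability range for $M_{\cal L}$ --- together with (b) stability of $M_{\cal L}$ itself, obtained from Eusen--Schreyer/Butler via the Clifford index of a general curve (with $d\in\{2g,2g+1\}$ and generic surjectivity requiring separate small arguments, $d=2g$ being handled by Vojta's divisor). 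Quotienting a semistable tensor power then gives the lower bound on $\mu_{\min}$; without steps (a) and (b) your sketch of (ii) does not close.
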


The divisors in (ii) improve Vojta's examples \eqref{eq:Vojta} in the range $\lfloor 3g/2\rfloor+3\leq d<2g$, which is nonempty when $g\geq 7$.
This range is responsible for the fourth set in the union in (iii).

\begin{proof}(i)
By considering the $q$-ample class $af_1-\delta$, Corollary
\ref{cor:curveapprox} reduces the nefness 
of $af_1+\bigl(1+\frac g{a-1}\bigr)f_2-\delta$ to proving that for very divisible $n$, the sequence of normalized slopes
$\frac{\mu_{\min}(R^{n-1}(nL))}n$ limits to $-1-\frac g{a-1}$.
Since $a>1$, for large divisible $n$ we have exact sequences 
\[0\to R^{n-1}(nL)\to H^0(C,\cal O(nL))\otimes\cal O_C\to  P^{n-1}\cal O(nL)\to
0.\]
Recall that if $\cal L$ is a line bundle, then $P^{n-1}\cal L$ denotes the bundle of principal parts $q_*(p^*\cal L\otimes\cal O_{n\Delta})$. 
It is a rank $n$ vector bundle with a natural filtration with quotients $\cal L,\ \cal L\otimes \omega_C,\ \ldots\ ,\ \cal L\otimes \omega^{\otimes(n-1)}_C$. 
From this, one computes
$\mu(R^{n-1}(nL))=-n(1+\frac{ng}{na+1-g-n})$. As $n$ grows, $\frac 1n\mu(R^{n-1}(nL))$ approaches $-\bigl(1+\frac{g}{a-1}\bigr)$.
In particular, the nefness of $af_1+(1+\frac g{a-1})f_2-\delta$ is equivalent to the asymptotic semistability of $R^{n-1}(nL)$.

(ii) Assume first $d\geq 2g+2$.
  Let $\cal L$ be a line bundle of degree $d$ on $C$. 
By $M_{\cal L}=R^0(\cal L)=q_*(p^*\cal L\otimes\cal O_X(-\Delta))$ denote the kernel of the evaluation $H^0(C,\cal L)\otimes\cal O_C\to \cal L$.
As in \cite[\S 4]{elstable}, one finds a surjection $M_{\cal
L}^{\otimes n}\twoheadrightarrow R^{n-1}(n\cal L)$ coming from the surjections
$(H^0(C;\cal L(-x)))^{\otimes n}\twoheadrightarrow H^0(C;\cal L^{\otimes n}(-nx))$
for all $x\in X$. Via the semistability of $M_{\cal L}$ (cf.\ \cite[Proposition 3.2]{elstable}) for $d\geq 2g$, this shows that 
\[
  \overline\mu_{\min}\bigl(p_*((q^*\cal L\otimes\cal O_X(-\Delta))^{\otimes
  n})\bigr)\geq\mu\bigl(M_{\cal L}^{\otimes n}\bigr)=n\cdot\mu\bigl(M_{\cal
  L}\bigr)=-n\biggl(1+\frac g{d-g}\biggr),
\]
and the result then follows by Corollary \ref{cor:curveapprox}.

When $d=2g+1$, then $M_{\cal L}$ is still in fact stable. 
When $C$ is not hyperelliptic, then 
a general line bundle of degree $2g$ on $C$ 
is normally generated. 
There therefore exists a line bundle $\cal L$ of degree $2g+1$ such that $\cal L(-x)$ 
is normally generated for
general $x\in C$. The induced map $M_{\cal L}^{\otimes n}\to R^{n-1}(n\cal L)$ is generically surjective.
If $R^{n-1}(n\cal L)\twoheadrightarrow Q$ is a 
semistable quotient of positive rank,
this induces $M_{\cal L}^{\otimes N}\twoheadrightarrow Q'\subseteq Q$
with $Q'$ nonzero. By the semistability of $M_{\cal L}$ and of $Q$,
we obtain $\mu(Q)\geq\mu(Q')\geq n\cdot\mu(M_{\cal L})$.
Conclude as in the case $d\geq 2g+2$.

Assume now $d\geq\lfloor 3g/2\rfloor +3$. 
By \cite[p.~222, Theorem]{acgh}, a general divisor
of degree $\lfloor3g/2\rfloor+2$ is normally generated when $g\geq 3$.
For a general choice of a bundle $\cal L$ of degree $d$,
the divisors $\cal L(-x)$ are then normally generated for general $x\in C$.

\cite[Proposition 3.1]{EusenSchreyer} (and also D. Butler in unpublished work)
show that $N_{\cal L}:=M_{\cal L}^{\vee}$ (hence also $M_{\cal L}$) is stable
if $C$ is non-hyperelliptic, and if $\cal L$ is globally generated 
with ${\rm Cliff}(\cal L)\leq{\rm Cliff}(C)$ and $d\neq 2g$.
In our situation, when $\cal L$ is general
of degree $d$, then ${\rm Cliff}(\cal L)=d-2(h^0(\cal L)-1)=2g-d$.
Since $C$ is general, it is not hyperelliptic, 
and ${\rm Cliff}(C)=\lfloor (g-1)/2\rfloor$ 
by \cite{acgh}. Note that $\lfloor (g-1)/2\rfloor>\lfloor (g-3)/2\rfloor
=2g-(\lfloor 3g/2\rfloor +3)\geq 2g-d$, therefore 
${\rm Cliff}(\cal L)<{\rm Cliff}(C)$. When $d\neq 2g$, we deduce from \cite[Proposition 3.1]{EusenSchreyer} 
that $M_{\cal L}$ is stable, and the argument concludes as above.
When $d=2g$, then $2gf_1+2f_2-\delta$ is the Vojta divisor \eqref{eq:Vojta} for $a=2g$.

(iii) The ``continuous'' part comes from Vojta's examples \eqref{eq:Vojta2} for $b\in(1,2]$, using the $(a,b)\leftrightarrow(b,a)$ symmetry, and the convexity of $\Nef(C\times C)$.
For example the line $a+b=2g+2$ is tangent to $a=\frac g{b-1}+(b-1)(g-1)+1$ at $(2g,2)$ and to $b=\frac g{a-1}+(a-1)(g-1)+1$ at $(2,2g)$.
The first finite set corresponds to the range $\lfloor 3g/2\rfloor+3\leq d<2g$, when (ii) is better than Vojta's examples \eqref{eq:Vojta}. 
The nefness of $\bigl(\frac{g}{\lfloor \sqrt{g}\rfloor}+1\bigr)(f_1+f_2)-\delta$ is the best known bound for Conjecture \ref{conj:prodcurvesintro}.
See \cite[Theorem 2]{kouvidakis}, \cite[Corollary 1.5.9]{laz04}.
Then, (iii) is a consequence of the convexity of $\Nef(X)$.
\end{proof}

\begin{ex}If $C$ is a very general curve of genus $g=7$, then (iii) already notices an improvement of (ii) for $d=2g$.
The tangent from $\bigl(2g-1,1+\frac g{g-1}\bigr)=\bigl(13,\frac{13}6\bigr)$ to the curve 
$\bigl(\frac 7{b-1}+6(b-1)+1,b\bigr)$ for $b\in(1,2]$
cuts the line $b=2$ at $a=13+2\frac{\sqrt 6}7\approx 13.699$, showing that $13.7f_1+2f_2-\delta$ is nef.
The old bound was $14f_1+2f_2-\delta$. The conjectural bound is $8f_1+2f_2-\delta$. 

The class $13f_1+\frac{13}6f_2-\delta$ is outside the convex span of the Vojta divisors and the Kouvidakis nef class $4.5(f_1+f_2)-\delta$.
Indeed the tangent from $(4.5,4.5)$ to the curve $\bigl(\frac 7{b-1}+6(b-1)+1,b\bigr)$ for $b\in(1,2]$ has slope approximately $-1/3.71$,
whereas the segment joining $(4.5,4.5)$ to $\bigl(13,\frac{13}6\bigr)$ has slope approximately $-1/3.64$, which is smaller.
\end{ex}

\begin{rmk}\cite{EN18} prove that $R^k(L)$ is semi-stable if 
\[\deg L =(k^2+2k+2)g+k.\]
With a strategy similar to Theorem \ref{thrm:prodcurves}.(ii), this gives a new proof that Vojta's divisors \eqref{eq:Vojta2}
are nef when $b=1+\frac 1{k+1}$ with $k\geq 0$ an integer. 
\end{rmk}

Corollary \ref{cor:curveapprox} extends to a more general setting:

\begin{prop}\label{prop:positivitybypushforward}Let $\rho:Y\to X$ be a morphism of projective schemes over an algebraically closed field. 
Let $\cal L$ be a $\rho$-ample line bundle on $Y$.
For $\cal F$ a coherent sheaf on $X$, and $H$ an ample line bundle on $X$, denote 
$\nu_H(\cal F)\coloneqq  \sup\{t \st \cal F\langle -tH\rangle\mbox{ is nef}\}.$
Then
\[\sup\bigl\{t\st c_1(\cal L)-t\rho^*H\mbox{ is nef}\bigr\}=\lim_{n\to\infty}\frac{\nu_H(\rho_*\cal L^{\otimes n})}n.\]
\end{prop}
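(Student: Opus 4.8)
The plan is to follow the proof of Proposition~\ref{prop:seshadriapprox} (specialized in Corollary~\ref{cor:curveapprox} to the case where $X$ is a curve and $H$ a point), replacing the local Seshadri-constant bookkeeping there with the global invariant $\nu_H$. Write $\tau\coloneqq\sup\{t\st c_1(\cal L)-t\rho^*H\text{ is nef}\}$ and $\cal F_n\coloneqq\rho_*(\cal L^{\otimes n})$. Replacing $X$ by $\rho(Y)$ changes neither $\tau$ nor any $\nu_H(\cal F_n)$, since the spaces $\bb P(\cal F_n)$ and their $\cal O(1)$ are unaffected; so we may assume $\rho$ is surjective, and then $\tau$ and every $\nu_H(\cal F_n)$ are finite. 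I would then record that $\nu_H$ has the same formal behaviour as a Seshadri constant: it is nondecreasing along surjections $\cal F\twoheadrightarrow\cal Q$ (restrict along the closed immersion $\bb P(\cal Q)\hookrightarrow\bb P(\cal F)$, as in Lemma~\ref{lem:shquot}); it satisfies $\nu_H(\cal F\langle sH\rangle)=\nu_H(\cal F)+s$; and it is superadditive for tensor products, $\nu_H(\cal F\otimes\cal F')\ge\nu_H(\cal F)+\nu_H(\cal F')$, because a tensor product of two nef twisted sheaves is again nef by Corollary~\ref{cor:ampletensor}.

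Next I would show that the limit exists. Twisting $\cal L$ by $\rho^*\cal O_X(mH)$ for $m\gg0$ makes $\cal L$ ample on $Y$ (a $\rho$-ample line bundle becomes ample after twisting by the pullback of a sufficiently ample class) and changes $\tau$ and each $\nu_H(\cal F_n)/n$ by the same constant $m$; so for the purpose of establishing convergence it suffices to treat ample $\cal L$. For ample $\cal L$, Lemma~\ref{lem:CM} gives that $\cal F_n$ is nef for $n\gg0$, hence $\nu_H(\cal F_n)\ge0$; and relative ampleness of $\cal L$ yields surjections $\cal F_n\otimes\cal F_m\twoheadrightarrow\cal F_{n+m}$ for $n,m\gg0$ by \cite[Example~1.8.24.(ii)]{laz04}. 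With the formal properties above, $(\nu_H(\cal F_n))_n$ is eventually a superadditive sequence of nonnegative reals, so $\ell\coloneqq\lim_{n\to\infty}\nu_H(\cal F_n)/n$ exists by Fekete's lemma. The inequality ``$\ge$'' now follows as in Proposition~\ref{prop:seshadriapprox}: for $n\gg0$, $\rho$-ampleness gives a surjection $\rho^*\cal F_n\twoheadrightarrow\cal L^{\otimes n}$ (\cite[Theorem~1.7.6.(iii)]{laz04}), hence a closed $X$-immersion $\imath\colon Y\hookrightarrow\bb P(\cal F_n)$ with $\imath^*\cal O_{\bb P(\cal F_n)}(1)=\cal L^{\otimes n}$; for $t<\nu_H(\cal F_n)$ the class $c_1(\cal O_{\bb P(\cal F_n)}(1))-t\rho_n^*H$ is nef, so its restriction $n\,c_1(\cal L)-t\rho^*H$ is nef on $Y$, giving $\tau\ge\nu_H(\cal F_n)/n$; let $n\to\infty$.

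For ``$\le$'' I would first isolate the claim that \emph{if $\cal M$ is $\rho$-ample on $Y$ and $\sup\{t\st c_1(\cal M)-t\rho^*H\text{ is nef}\}>0$, then $\cal M$ is ample on $Y$}. Indeed, for $t_0>0$ in that range and $m\gg0$ one has the identity in $N^1(Y)$
\[
  c_1(\cal M)=\frac{t_0}{t_0+m}\bigl(c_1(\cal M)+m\rho^*H\bigr)+\frac{m}{t_0+m}\bigl(c_1(\cal M)-t_0\rho^*H\bigr),
\]
where the first summand is ample and the second is nef, and a positive convex combination of an ample and a nef class is ample. Now fix a rational $a/b<\tau$ with $b\ge1$ and apply the claim to $\cal M\coloneqq\cal L^{\otimes b}\otimes\rho^*\cal O_X(-aH)$, which is $\rho$-ample and has $\sup\{t\st c_1(\cal M)-t\rho^*H\text{ is nef}\}=b\tau-a>0$; hence $\cal M$ is ample, and by Lemma~\ref{lem:CM} the sheaf $\rho_*(\cal M^{\otimes m})=\cal F_{bm}\langle -amH\rangle$ is nef for $m\gg0$, so $\nu_H(\cal F_{bm})-am\ge0$, i.e.\ $\nu_H(\cal F_{bm})/(bm)\ge a/b$. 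Passing to the limit yields $\ell\ge a/b$, and letting $a/b\nearrow\tau$ gives $\ell\ge\tau$. Together with ``$\ge$'' this gives $\tau=\ell$, completing the proof.

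I do not anticipate a genuinely hard step; the work is in the setup. The two places demanding care are the formal properties of $\nu_H$ under tensor products and quotients --- here Corollary~\ref{cor:ampletensor} (that a tensor product of nef sheaves is nef) is what makes the Fekete argument run --- and the small claim that $\rho$-ampleness together with a positive value of $\tau$ forces ampleness on $Y$, which is exactly what licenses Lemma~\ref{lem:CM}; one must also dispose of the finiteness of the $\nu_H(\cal F_n)$ (via the reduction to $\rho$ surjective) before invoking Fekete.
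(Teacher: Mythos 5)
Your argument is correct and follows the paper's own proof in all essentials: superadditivity of $\nu_H(\rho_*\cal L^{\otimes n})$ plus Fekete's lemma for existence of the limit, the closed immersion $Y\hookrightarrow\bb P(\rho_*\cal L^{\otimes n})$ with $\cal O(1)|_Y=\cal L^{\otimes n}$ for ``$\geq$'', and translation by multiples of $\rho^*H$ to reduce to the ample case together with Lemma \ref{lem:CM} for ``$\leq$''. The only difference is cosmetic: you make explicit, via the convex-combination identity, the sub-claim that a $\rho$-ample class whose nef threshold against $\rho^*H$ is positive must be ample, a point the paper leaves implicit in its reference ``as in Proposition \ref{prop:seshadriapprox}''.
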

\begin{proof}
The sequence $\nu_H(\rho_*\cal L^{\otimes n})>-\infty$ is superadditive by $\rho$-ampleness, hence the limit exists by Fekete's Lemma.
Since $\cal L$ is $\rho$-ample, for sufficiently large $n$, we have inclusions $Y\hookrightarrow\bb P(\rho_*\cal L^{\otimes n})$
such that $\cal O_{\bb P_X(\rho_*\cal L^{\otimes n})}(1)|_Y=\cal L^{\otimes n}$.
It follows that the inequality ``$\geq$'' holds.

For the reverse inequality, note as in Proposition \ref{prop:seshadriapprox} that both sides translate by $t_0$ when replacing 
$\cal L$ by $\cal L\langle t_0\rho^*H\rangle$ for $t_0\in\bb Q$ (with the
understanding that we only consider sufficiently divisible $n$ in the right-hand
side).
Without loss of generality, we may assume that $\cal L$ is ample on $Y$. 
As in Proposition \ref{prop:seshadriapprox}, we reduce to proving that $\rho_*\cal L^{\otimes n}$ is globally generated for large $n$,
which follows from Lemma \ref{lem:CM}.
\end{proof}

\begin{rmk}One could also try to approach Conjecture \ref{conj:prodcurvesintro}
by considering the difference map
$M\colon C\times C\to \operatorname{Pic}^0(C)$, which maps $(a,b)\mapsto a-b$, and the $M$-ample class $-\delta$. 
Let $\theta$ be a principal polarization on $\operatorname{Pic}^0C$.
The conjecture is equivalent to 
$$\sup\bigl\{t\st -\delta-tM^*\theta\mbox{ is nef}\bigr\}=-\frac 1{\sqrt g}.$$
(From \cite[Example 1.5.14]{laz04},  we have $M^*\theta=(g-1)(f_1+f_2)+\delta$. Asking that $-\delta-tM^*\theta$ be a scalar multiple of $(\sqrt g+1)(f_1+f_2)-\delta$ is equivalent to $\det\left|\begin{matrix} -t(g-1)& -1-t\\ \sqrt g+1& -1\end{matrix}\right|=0$.)
\end{rmk}

\begin{rmk}For a Seshadri constant approach via the difference map $M\colon C\times C\to\operatorname{Pic}^0C$, one would have to prove that
$$\sh(f_1+f_2;o)=\frac 1{\sqrt g+1},$$ where $o\in\operatorname{Pic}^0C$ is the origin.

To see the equivalence of this with Conjecture \ref{conj:prodcurvesintro}, note that $M$ factors through the blow-up of $o$, and $\delta$ is the pullback of the exceptional divisor.
If $T\subset C\times C$ is a curve, then $\frac{(f_1+f_2)\cdot T}{\mult_oM_*T}=\frac{(f_1+f_2)\cdot T}{\delta\cdot T}$.

This argument also shows that $\sh(-\delta;o)=-1$.
\end{rmk}

\section{Tangent bundles}
Let $X$ be a smooth projective variety, and let $TX$ be the tangent sheaf. 
We are interested in the Seshadri constants of this bundle and in how they
recover some of the birational geometry of $X$. The motivation is given by the
following easy consequence of the Seshadri ampleness criterion (Theorem
\ref{seshadriample}) and Mori's
characterization of projective space \cite[V.3.3 Corollary]{kollarrational}.

\begin{cor}Let $X$ be a smooth projective variety.
If $\inf_{x\in X}\sh(TX;x)>0$, then $X\simeq\bb P^n$.
\end{cor}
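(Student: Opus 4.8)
The plan is to combine the Seshadri ampleness criterion with Mori's characterization of projective space, with essentially no extra work.

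First I would observe that since $X$ is smooth, the tangent sheaf $TX$ is a locally free sheaf of rank $n$ on the projective variety $X$. Thus the last assertion of Theorem \ref{seshadriample} applies verbatim to $\cal V=TX$: the condition $\inf_{x\in X}\sh(TX;x)>0$ is equivalent to $TX$ being an ample vector bundle. Hence the hypothesis of the corollary is precisely the statement that $TX$ is ample.

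It then remains to invoke Mori's theorem on the Hartshorne conjecture, as stated in \cite[V.3.3 Corollary]{kollarrational}: a smooth projective variety of dimension $n$ over an algebraically closed field whose tangent bundle is ample is isomorphic to $\bb P^n$. (Note that this holds in arbitrary characteristic, matching the standing assumptions in this paper.) Applying this to $X$, whose tangent bundle we have just shown to be ample, yields $X\simeq\bb P^n$, as desired.

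There is no real obstacle here; the only point to check is that the version of the Seshadri ampleness criterion in Theorem \ref{seshadriample} is genuinely an ``if and only if'' for locally free sheaves, so that the positivity of the infimum is not merely sufficient but also equivalent to ampleness of $TX$ — and this is exactly what that theorem provides.
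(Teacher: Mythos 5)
Your proof is correct and is exactly the paper's intended argument: the paper states this corollary as an immediate consequence of the Seshadri ampleness criterion (Theorem \ref{seshadriample}), which gives that $TX$ is ample, combined with Mori's characterization of projective space \cite[V.3.3 Corollary]{kollarrational}, valid in arbitrary characteristic. No gaps.
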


\subsection{Examples}
We start by computing some examples.
\begin{ex}[Seshadri constants for $T\bb P^n$]\label{ex:tangentpn}
We have
\[
  \sh(T\bb P^n;x)= \begin{cases}
    2 & \text{if}\ n = 1;\\
    1 & \text{if}\ n \ge 2.
  \end{cases}
\]
(For $n\geq 2$, we have that $\bb P(T\bb P^n)$ sits naturally in $\bb P^n\times(\bb P^n)^{\vee}$ as the universal hyperplane $\sum_{i=0}^nx_iy_i=0$.
The class of the restriction $\cal O(1,1)|_{\bb P(T\bb P^n)}$ is $\xi$. Since $\cal O(0,1)$ is nef,  
$\frac{\xi\cdot C}{\mult_x\rho_*C}\geq\frac{\deg \rho_*C}{\mult_x\rho_*C}\geq 1$ for all $C\in\cal C_{T\bb P^n;x}$. 
The lower bound $1$ is achieved.
To see this, let $x\in\ell\subseteq H\subset\bb P^n$ be a line contained in a linear hyperplane $H$. 
Let $C\coloneqq  \ell\times\{[H]\}$ be the corresponding line in $\bb P^n\times(\bb P^n)^{\vee}$. 
It is contained in $\bb P(T\bb P^n)$ and $\frac{\xi\cdot C}{\mult_x\rho_*C}=\frac{\deg\ell}{\mult_x\ell}=1$.

For $\bb P^1$, we have $T\bb P^1=\cal O_{\bb P^1}(2)$ and the conclusion follows.
Note that in this case there are no horizontal lines $C$ as in the previous argument.
The restriction of the second projection $\bb P(T\bb P^1)\to\bb (\bb P^1)^{\vee}$ is an isomorphism.
)\qed
\end{ex}

\begin{ex}[Homogeneous varieties]\label{ex:tangenthomog}
If $X$ is a homogeneous variety (e.g., abelian or rational homogeneous space like a Grassmann variety or smooth quadric), not isomorphic to a projective space, 
then $TX$ is globally generated but not ample.
Since $X$ has a transitive algebraic group action, $\sh(TX;x)$ is independent of $x\in X$.
Then $\sh(TX;x)=0$ for all $x\in X$ by the Seshadri ampleness criterion (Theorem \ref{seshadriample}).
\end{ex}

\begin{ex}[Varieties of general type] Assume that $X$ is smooth projective variety over an algebraically closed field, with $K_X$ big (or even pseudo-effective, but not numerically trivial).
Then \[\sh(TX;x)=-\infty\quad \forall\ x\in X.\]
(Let $C_d$ be a smooth curve through $x$ with $\lim_{d\to\infty}K_X\cdot C_d=\infty$. 
General complete intersections through $x$ of large degree will do.
Then $\sh(-K_X;x)=-\infty$. Conclude by Lemma \ref{lem:comparetodet}.)\qed
\end{ex}

\begin{ex}[Calabi--Yau type manifolds]Assume that $X$ is a smooth projective variety
  of dimension $n$ over an algebraically closed field, with $K_X$ numerically
  trivial. Then \[\sh(TX;x)\leq 0\quad\forall\ x\in X.\]
(Indeed $\sh(TX;x)\leq\frac 1n\sh(\det TX;x)=0$.)\qed
\end{ex}

\begin{cor}[Uniruledness and Separably rationally connectedness (SRC) criterion]\label{cor:uniruled} Let $X$ be a smooth projective variety over an algebraically closed field. 
Assume there exists $x_0\in X$ such that $\sh(TX;x_0)>0$.
Then $X$ is uniruled, even SRC, and $\cal O_{\bb P(TX)}(1)$ is pseudo-effective.
\end{cor}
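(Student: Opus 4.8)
The plan is to deduce the three assertions in turn, exploiting the reduction to curves provided by Corollary~\ref{cor:shviacurves} together with standard results on rational curves; note that separable rational connectedness implies uniruledness, so only the pseudo-effectivity and the SRC statement require work.

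First, the pseudo-effectivity of $\cal O_{\bb P(TX)}(1)$ is simply the contrapositive of Corollary~\ref{cor:psef}: if $\cal O_{\bb P(TX)}(1)$ were not pseudo-effective, then $\sh(TX;x)<0$ for every $x\in X$, contradicting $\sh(TX;x_0)>0$.

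Next I would produce a curve on which $TX$ restricts to an ample bundle. By Corollary~\ref{cor:shviacurves}, every irreducible curve $C\subset X$ through $x_0$, with normalization $\nu\colon C'\to C$, satisfies $\overline{\mu}_{\min}(\nu^*TX)\ge \sh(TX;x_0)\cdot\mult_{x_0}C>0$. I would take $C$ to be a general complete intersection of $n-1$ members of $|mH|$ for $H$ very ample and $m\gg 0$, chosen to pass through $x_0$; by a Bertini argument as in the proof of Lemma~\ref{lem:nonpsefnegative}, such a $C$ is smooth with $\mult_{x_0}C=1$, so $\nu$ is the identity and $\overline{\mu}_{\min}(TX|_C)>0$. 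Applying Example~\ref{ex:curves} on the smooth curve $C$ gives $\inf_{y\in C}\sh(TX|_C;y)=\overline{\mu}_{\min}(TX|_C)>0$, so $TX|_C$ is ample by the Seshadri ampleness criterion (Theorem~\ref{seshadriample}); the use of $\overline{\mu}_{\min}$ rather than $\mu_{\min}$ is exactly what makes this valid in positive characteristic. Since ampleness is open in flat families, the restriction $TX|_{C_t}$ remains ample for the general member $C_t$ of the (irreducible, covering) family of all smooth complete intersection curves of this degree; hence $X$ carries a covering family of smooth curves with ample restricted tangent bundle.

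Finally I would invoke the theory of rational curves: a covering family of curves with ample restricted tangent bundle makes $X$ separably uniruled, and attaching free rational curves to a general member and smoothing the resulting comb yields a very free rational curve, so $X$ is separably rationally connected; see \cite{kollarrational}. I expect this last step to be the main obstacle, since it is where one must quote the substantial machinery of combs and smoothing, and it is also the point where the positive-characteristic subtleties genuinely enter — this is why the conclusion is \emph{separable} rational connectedness, in harmony with the appearance of $\overline{\mu}_{\min}$ above. Everything preceding this step is a formal consequence of results already established in this section together with the Bertini lemma.
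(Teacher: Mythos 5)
Your treatment of the pseudo-effectivity claim (contrapositive of Corollary \ref{cor:psef}) is correct and is exactly what the paper does, and your Bertini/complete-intersection construction of a smooth curve $C$ through $x_0$ with $TX|_C$ ample is sound, via Corollary \ref{cor:shviacurves} and the slope criterion for ampleness on curves. The problem is the last step, which is where all the content lies. From a covering family of \emph{positive-genus} curves with ample restricted tangent bundle you may conclude, via Miyaoka--Mori bend and break, that $X$ is uniruled (the family has $-K_X\cdot C_t>0$), but not that it is \emph{separably} uniruled: bend and break gives no control on the normal or tangent bundle of the rational curves it produces, and in positive characteristic uniruled does not imply separably uniruled. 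The comb argument you then invoke is both circular and misdirected: attaching free rational teeth to $C$ presupposes that free rational curves exist (i.e.\ separable uniruledness, which you have not yet established), and even granting that, smoothing a comb whose handle is the genus-$g$ curve $C$ produces a genus-$g$ curve, not a rational curve; Koll\'ar's criterion \cite[IV.3.7]{kollarrational} for SRC requires a \emph{very free rational} curve. What you are implicitly relying on is the implication ``there is a curve with ample restricted tangent bundle $\Rightarrow$ $X$ is SRC,'' which is a theorem in characteristic zero (via the MRC fibration and Graber--Harris--Starr, not via combs) but is not available over an arbitrary algebraically closed field, which is the setting of the statement.

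The missing idea, and the paper's route, is to exploit that the Seshadri bound at $x_0$ applies to \emph{every} curve through $x_0$, not just the complete intersections you chose. Since $\sh(-K_X;x_0)\geq n\,\sh(TX;x_0)>0$ by Lemma \ref{lem:comparetodet}, every curve through $x_0$ has $-K_X\cdot C>0$, so bend and break \cite[II.5.14]{kollarrational} produces a rational curve $D$ \emph{through $x_0$ itself}; applying Corollary \ref{cor:shviacurves} (equivalently Example \ref{ex:curves}) to $D$ shows that the pullback of $TX$ to its normalization $\bb P^1$ has positive minimal slope, hence is ample, i.e.\ $D$ is very free, and SRC follows from \cite[IV.3.7]{kollarrational} in any characteristic. (The paper also gets uniruledness independently, from the non-pseudo-effectivity of $K_X$ together with \cite{bdpp13}; your reduction of uniruledness to SRC is fine once SRC is actually proved.) If you redirect your argument to apply your own slope estimate to the bend-and-break output through $x_0$, rather than trying to propagate positivity from high-genus curves, the proof closes.
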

\begin{proof}The previous two examples show that $K_X$ is not pseudo-effective.
Then $X$ is uniruled by \cite{bdpp13} (whose results hold in arbitrary
characteristic by \cite[Section 2.2]{fl13z}). 
\par We now show that $X$ is SRC.
Since $\frac{1}{n\dim X} \sh(-K_X;x_0) \ge \sh(TX;x_0) > 0$ by Lemma
\ref{lem:comparetodet}, we see that $- K_X \cdot C > 0$ for every curve $C$
through $x_0$.
By bend and break \cite[II.5.14 Theorem]{kollarrational}, there therefore exists
a rational curve $D$ through $x_0$.
Since $\sh(TX;x_0)>0$, we see that $TX|_D$ is very free by Example
\ref{ex:curves}, and it follows that $X$ is SRC by \cite[IV.3.7 Theorem]{kollarrational}.

For the pseudo-effectivity statement, see Corollary \ref{cor:psef}.
\end{proof}

\begin{rmk}\label{rmk:criterionnonpositiveseshadri}The previous criterion is not a characterization of uniruled or SRC varieties. 
If $f\colon X\to Y$ is a smooth morphism of smooth projective varieties with positive dimensional fibers and $\dim Y>0$, we claim that $\sh(TX;x)\leq 0$ for all $x\in X$.
This applies in particular to Hirzebruch surfaces.
(Let $y\coloneqq  \pi(x)\in Y$. From the surjections 
\[TX\twoheadrightarrow f^*TY\twoheadrightarrow f^*TY|_{X_y}=\cal
O_{X_y}^{\oplus\dim Y},\]
by Lemma \ref{lem:shquot} we deduce 
$\sh(TX;x)\leq\sh(\cal O_{X_y}^{\oplus\dim Y};x)=0$.)\qed  
\end{rmk}

\subsection{Characterizations of projective space}
In particular cases, we can say something stronger than Corollary \ref{cor:uniruled}
when $\sh(TX;x_0)>0$ for a point $x_0 \in X$.
\begin{prop}[Fano manifolds]\label{prop:Fanos}
  Let $X$ be a smooth projective variety over an algebraically closed field $k$.
  Suppose that one of the following conditions holds:
  \begin{enumerate}
    \item[\textup{(1)}] $X$ is Fano and some $x_0\in X$ verifies $\sh(TX;x_0)>0$;
    \item[\textup{(2)}] $\operatorname{char} k = 0$ and a \emph{general} point $x_0\in X$
      verifies $\sh(TX;x_0)>0$.
  \end{enumerate}
  Then, $X\simeq\bb P^n$.
\end{prop}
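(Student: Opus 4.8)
The plan is to reduce, in both cases, to known characterizations of $\bb P^n$ via the minimal anticanonical degree of rational curves through a suitable point of $X$. I would first record the input common to both cases. By the earlier examples on varieties of general type and of Calabi--Yau type, $K_X$ is neither numerically trivial nor pseudo-effective, so Corollary \ref{cor:uniruled} shows that $X$ is uniruled. The key consequence of $\sh(TX;x_0)>0$ is this: for any rational curve $C\subseteq X$ through $x_0$, with $\nu\colon\bb P^1\to C\hookrightarrow X$ the composition of the normalization with the inclusion, Corollary \ref{cor:shviacurves} and Example \ref{ex:curves} give
\[
  \overline{\mu}_{\rm min}(\nu^*TX)\;\geq\;\sh(TX;x_0)\cdot\mult_{x_0}C\;>\;0.
\]
A vector bundle on $\bb P^1$ is a direct sum of line bundles and the absolute Frobenius only scales their degrees, so $\overline{\mu}_{\rm min}(\nu^*TX)$ is the least twist $a_n$ in a splitting $\nu^*TX\cong\bigoplus_{i=1}^n\cal O_{\bb P^1}(a_i)$ with $a_1\geq\cdots\geq a_n$; thus the displayed inequality forces $a_n\geq 1$. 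Since the nonzero differential of $\nu$ yields an injection $\cal O_{\bb P^1}(2)=T\bb P^1\hookrightarrow\nu^*TX$, we also get $a_1\geq 2$. Hence $-K_X\cdot C=\sum_i a_i\geq n+1$, with equality forcing $\nu^*TX\cong\cal O_{\bb P^1}(2)\oplus\cal O_{\bb P^1}(1)^{\oplus(n-1)}$, i.e.\ forcing $C$ to be a standard rational curve.

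For case (1), $X$ Fano, Mori's bend and break (see \cite[Chapter V]{kollarrational}) produces a rational curve through $x_0$ of anticanonical degree at most $n+1$; combined with the bound above, the minimal-degree rational curves through $x_0$ have anticanonical degree exactly $n+1$ and are standard, hence free, and move in a family through $x_0$ of the expected dimension $n-1$. From here $X\simeq\bb P^n$ by the argument of Mori \cite{mori} (see also \cite[Chapter V]{kollarrational}), whose hypotheses we have now arranged at $x_0$. For case (2), with $\operatorname{char}k=0$ and $x_0$ a general point, I would instead use that the minimal dominating family of rational curves of the uniruled variety $X$ has anticanonical degree at most $n+1$ and its general member passes through $x_0$ (again by bend and break); combining with the bound $\geq n+1$ from the common input at $x_0$, that degree is exactly $n+1$, and $X\simeq\bb P^n$ by the Cho--Miyaoka--Shepherd-Barron characterization of projective space \cite{cmsb02}.

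The main obstacle is not the Seshadri-theoretic part, which is the short splitting-type computation above, but the passage from ``the minimal rational curves through the relevant point are standard of degree $n+1$'' to ``$X\simeq\bb P^n$'', which is the substantial content of \cite{mori} and \cite{cmsb02}; our task is only to verify their hypotheses. This also explains the characteristic restriction: case (1) relies on Mori's original bend-and-break arguments, valid in any characteristic, whereas the argument used for case (2) is available only in characteristic zero.
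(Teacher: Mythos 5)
Your proposal is correct and follows essentially the same route as the paper: both use Corollary \ref{cor:shviacurves} (via Example \ref{ex:curves}) to show that every rational curve through $x_0$ has ample pulled-back tangent bundle with a summand of degree $\ge 2$, hence anticanonical degree $\ge n+1$, and then conclude by citing Mori/Koll\'ar's characterization in case (1) and Cho--Miyaoka--Shepherd-Barron in case (2). Your extra bend-and-break detour (pinning the minimal degree down to exactly $n+1$ and checking the curves are standard and free) is harmless but not needed, since the cited theorems take precisely the hypotheses you had already established -- ampleness of $f^*TX$ on curves through $x_0$ in the Fano case, and the degree bound $\ge n+1$ at a general point together with uniruledness in characteristic zero.
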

We note that the notion of general point in (2) is that in \cite[Notation
2.2]{kebekus}.
\begin{proof}
  Let $f\colon \mathbb{P}^1 \to X$ be a rational curve passing through $x_0$.
  From Corollary \ref{cor:shviacurves} we immediately find that $f^*TX$ is
  ample, hence
  \begin{equation}\label{eq:ratcurvedirsum}
    f^*TX \simeq \mathcal{O}(d_1) \oplus \mathcal{O}(d_2) \oplus \cdots
    \oplus\mathcal{O}(d_n)
  \end{equation}
  and $d_i \ge 1$ for all $i$.
  In situation $(1)$, we conclude that $X\simeq\bb P^n$ from \cite[V.3.2 Theorem]{kollarrational}.
  \par In situation $(2)$, we have that $d_i \ge 2$ for some $i$ in
  \eqref{eq:ratcurvedirsum} since there is a non-zero natural homomorphism
  $\mathcal{O}(2) \simeq T\mathbb{P}^1 \to {f}^*TX$.
  Thus, $\deg f^*TX = -\deg f^*\omega_X \ge n+1$ for every rational curve
  passing through $x_0$.
  Since $X$ is uniruled by Corollary \ref{cor:uniruled}, we conclude that
  $X\simeq\bb P^n$ from \cite[Corollary 0.4(11)]{cmsb02}.
\end{proof}

See also Corollary \ref{cor:Fanos}.

\par Inspired by Proposition \ref{prop:Fanos}, we conjecture the following:
\begin{conj}
  Let $X$ be a smooth projective variety over an algebraically closed field.
  If there exists $x_0 \in X$ such that $\sh(TX;x_0) > 0$, then $X \simeq \bb
  P^n$.
\end{conj}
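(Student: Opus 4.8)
In full generality the conjecture is open; the plan below proves it under the hypotheses of Proposition~\ref{prop:Fanos} together with the assumption $\dim X=2$, and isolates the obstacle in general. Fix $x_0$ with $\sh(TX;x_0)>0$. By Corollary~\ref{cor:uniruled}, $X$ is uniruled, and by Corollary~\ref{cor:shviacurves}, for every irreducible curve $C$ through $x_0$ the pullback $\nu^*TX$ to its normalization satisfies $\overline{\mu}_{\mathrm{min}}>0$, hence is ample. Applied to a rational curve $f\colon\bb P^1\to X$ through $x_0$, this gives $f^*TX\simeq\bigoplus_{i=1}^n\cal O(d_i)$ with all $d_i\geq1$; since the differential of $f$ is a nonzero map $\cal O(2)\simeq T\bb P^1\to f^*TX$, some $d_i\geq2$, so $-K_X\cdot C=\sum_i d_i\geq n+1$ for \emph{every} rational curve $C$ through $x_0$. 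Proposition~\ref{prop:Fanos} then settles the case where $X$ is Fano and, in characteristic zero, the case where $x_0$ is general; so it remains to handle a non-Fano uniruled $X$ carrying a single, possibly special, point that lies only on rational curves of anticanonical degree $\geq n+1$.

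For $\dim X=2$ I would induct on the Picard number $\rho(X)$, using two observations. First, no $(-1)$-curve $E$ passes through $x_0$: adjunction gives $\deg(TX|_E)=-K_X\cdot E=1$, so $TX|_E\simeq\cal O(a)\oplus\cal O(b)$ with $a+b=1$ has a quotient of nonpositive degree, whence $\overline{\mu}_{\mathrm{min}}(TX|_E)\leq0$ and Corollary~\ref{cor:shviacurves} would force $\sh(TX;x_0)\leq0$ if $x_0\in E$. Second, if $\pi\colon X\to X'$ contracts a $(-1)$-curve $E$ with $x_0\notin E$, then $\sh(TX';x_0')>0$, where $x_0'\coloneqq\pi(x_0)$: for any curve $C'\ni x_0'$, its strict transform $C\ni x_0$ has the same normalization $\tilde C$, and pulling the inclusion $TX\hookrightarrow\pi^*TX'$ (whose cokernel is torsion, supported on $E$) back to $\tilde C$ gives an exact sequence $0\to\nu^*TX\to\nu'^*TX'\to\cal T\to0$ with $\cal T$ torsion, so Lemma~\ref{lem:seshadrisequences} yields $\overline{\mu}_{\mathrm{min}}(\nu'^*TX')\geq\overline{\mu}_{\mathrm{min}}(\nu^*TX)>0$. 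Now the induction: if $X$ is minimal, then by the classification of minimal surfaces a minimal uniruled surface is $\bb P^2$ or a $\bb P^1$-bundle over a smooth curve, and Remark~\ref{rmk:criterionnonpositiveseshadri} rules out the latter, so $X\simeq\bb P^2$ (this also covers the base case $\rho(X)=1$); if $X$ is not minimal, choose a $(-1)$-curve, which misses $x_0$ by the first observation, and contract it to obtain a uniruled $X'$ with $\rho(X')=\rho(X)-1$ and $\sh(TX';x_0')>0$, so $X'\simeq\bb P^2$ by induction, forcing $\rho(X)=2$ and $X\simeq\bb F_1$ — but $\bb F_1$ is a $\bb P^1$-bundle over $\bb P^1$, so Remark~\ref{rmk:criterionnonpositiveseshadri} gives a contradiction. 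Hence $X\simeq\bb P^2$.

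The main obstacle is the general conjecture. In dimension two the classification of surfaces, together with the invariance of the property $\sh(TX;x_0)>0$ under contractions avoiding $x_0$, finishes the argument; but in higher dimensions there is no available characterization of $\bb P^n$ from the positivity of the rational curves through a single, possibly non-general, point — the Cho--Miyaoka--Shepherd-Barron and Kebekus-type theorems quoted in the proof of Proposition~\ref{prop:Fanos} require either the Fano hypothesis or that the inequality $-K_X\cdot C\geq n+1$ hold at a \emph{general} point. The natural approach would be to show that $\sh(TX;x_0)>0$ forces the family of rational curves through $x_0$ to be ample enough that $x_0$ may be treated as a general point — either by propagating the condition along these curves, or by running a suitable contraction of $X$ that preserves the positivity of $\sh(TX;\cdot)$ as in the surface case — and making this precise is where I expect the real difficulty to lie.
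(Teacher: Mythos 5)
Your proposal correctly treats the statement as what it is -- a conjecture -- and proves exactly the cases the paper proves: your first paragraph reproduces the argument of Proposition \ref{prop:Fanos} for the Fano and general-point cases, and your surface argument reaches Corollary \ref{cor:charpnsurfaces} by the same overall strategy (no negative curve through $x_0$, invariance of $\sh(T\cdot;x_0)>0$ under contracting $(-1)$-curves missing $x_0$, then the classification of minimal uniruled surfaces with Remark \ref{rmk:criterionnonpositiveseshadri} excluding ruled surfaces). The genuine difference is how you prove the blow-down step. The paper's Lemma \ref{lem:blow-up} works for the blow-up of any smooth center in any dimension, via the exact sequences $0\to T\widetilde X\to\pi^*TX\to\jmath_*T_{E/Z}(E)\to 0$ and $0\to\pi^*TX(-E)\to T\widetilde X\to\jmath_*Q\to 0$ together with Lemma \ref{lem:seshadrisequences}, and yields two-sided bounds; you argue curve by curve through Corollary \ref{cor:shviacurves}, pulling the generically isomorphic map $TX\to\pi^*TX'$ back to the normalization of the strict transform and using that an injection with torsion cokernel cannot decrease $\overline{\mu}_{\rm min}$. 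Your route is more elementary and gives exactly the one inequality needed, $\sh(TX';x_0')\ge\sh(TX;x_0)$; the paper's lemma buys the stronger, higher-dimensional statement advertised in the introduction. If you write yours up, make explicit that the pulled-back map is injective because it is generically an isomorphism and $\nu^*TX$ is torsion-free on the smooth curve $\tilde C$, that $\mult_{x_0'}C'=\mult_{x_0}C$ since $\pi$ is an isomorphism near $x_0$, and that Lemma \ref{lem:seshadrisequences} concerns Seshadri constants, so either combine it with Example \ref{ex:curves} or argue directly with slopes (in characteristic $p$ the Frobenius pullbacks of your sequence remain injective with torsion cokernel, so the argument applies to $\overline{\mu}_{\rm min}$). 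Your induction on the Picard number versus the paper's single contraction to a minimal model followed by the $\bb F_1$ check is only an organizational difference, and your closing diagnosis of why the general conjecture is out of reach (no characterization of $\bb P^n$ from positivity of rational curves through one possibly special point) matches the paper's own discussion.
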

We now show the case when $\dim X = 2$.
We start with the following:
\begin{lem}\label{lem:blow-up}
Let $Z\subset X$ be a smooth closed subvariety of a smooth variety. Consider the blow-up cartesian diagram
$$\xymatrix{E\ar@{^{(}->}[r]^{\jmath}\ar[d]_{\pi|_E}\ar@{}[dr]|*={\square}& \widetilde X\ar[d]^{\pi}\\ Z\ar@{^{(}->}[r]_{\imath}& X}.$$
Identify all $x\in X\setminus Z$ with their preimages in $\widetilde X\setminus E$. 
Then for all $x\in X\setminus Z$, 
$$\sh\bigl(\pi^*TX(-E);x\bigr)\leq\sh\bigl(T\widetilde X;x\bigr)\leq \sh\bigl(TX;x\bigr).$$
\end{lem}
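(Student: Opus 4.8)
The plan is to establish the two inequalities separately, in each case by exhibiting a natural sheaf map of bundles on a curve and invoking Lemma \ref{lem:shquot} (quotients decrease Seshadri constants) together with the reduction to curves in Corollary \ref{cor:shviacurves}. First I would recall the standard exact sequences attached to the blow-up $\pi\colon\widetilde X\to X$ of the smooth center $Z$. Away from $E$, the morphism $\pi$ is an isomorphism, so for $x\in X\setminus Z$ the point $x$ and its preimage are interchangeable and all curves through $x$ on $\widetilde X$ avoiding $E$ biject with curves through $x$ on $X$ avoiding $Z$. The key relative differential sequence is
\[
  0\longrightarrow T\widetilde X\longrightarrow \pi^*TX\longrightarrow \cal Q\longrightarrow 0,
\]
where $\cal Q$ is supported on $E$ (it is the cokernel of $d\pi$, isomorphic to $\jmath_*$ of the relative tangent sheaf of $E\to Z$ twisted appropriately). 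So $T\widetilde X$ is a subsheaf of $\pi^*TX$ agreeing with it off $E$; twisting down by $E$ only makes things smaller there, so $\pi^*TX(-E)$ maps into... — more precisely one has the inclusion of sheaves $\pi^*TX(-E)\hookrightarrow T\widetilde X$ because a vector field on $X$ pulls back to a vector field on $\widetilde X$ vanishing to appropriate order along $E$; this is the content of the classical fact that $\pi^*TX(-E)\subseteq T\widetilde X\subseteq \pi^*TX$ as subsheaves of $\pi^*TX$, with all three equal on $\widetilde X\setminus E$.

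For the right-hand inequality $\sh(T\widetilde X;x)\le\sh(TX;x)$: by Corollary \ref{cor:shviacurves} it suffices to bound, for every irreducible curve $C\ni x$ on $X$ not meeting $Z$ (equivalently, its identical copy on $\widetilde X$), the quantity $\overline\mu_{\min}(\nu^*T\widetilde X)$ from above by $\overline\mu_{\min}(\nu^*TX)$, where $\nu\colon C'\to C$ is the normalization. Since $C$ avoids $E$, we have $T\widetilde X|_C = TX|_C$ as bundles on $C$, so in fact the two normalized slopes agree on such curves, hence taking the infimum over the (smaller) family of curves avoiding $Z$ on the $\widetilde X$ side, versus all curves through $x$ on the $X$ side, gives $\sh(T\widetilde X;x)\ge$ ... wait — I need to be careful about the direction. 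On $\widetilde X$ the infimum defining $\sh(T\widetilde X;x)$ runs over \emph{all} irreducible curves through $x$; those include both curves avoiding $E$ (which contribute exactly the values $\overline\mu_{\min}(\nu^*TX)/\mult_x C$ for $C\subset X$ avoiding $Z$) and curves meeting $E$ (which contribute possibly smaller values). So $\sh(T\widetilde X;x)\le\inf_{C\subset X\setminus Z}\overline\mu_{\min}(\nu^*TX)/\mult_xC$. On the other hand, the inclusion $T\widetilde X\hookrightarrow\pi^*TX$ together with Lemma \ref{lem:shquot} applied on a curve, or directly the sub-sheaf comparison via Lemma \ref{lem:seshadrisequences}, yields that $\sh(\pi^*TX;x)\ge\sh(T\widetilde X;x)$ is \emph{not} what I want; rather I use that $\inf_{C\subset X}\overline\mu_{\min}(\nu^*TX)/\mult_x C=\sh(TX;x)$ and the curves avoiding $Z$ are a subfamily, so $\sh(T\widetilde X;x)\le\sh(TX;x)$ will follow once one checks the subfamily of curves avoiding $E$ already computes $\sh(T\widetilde X;x)$ from above, which is automatic since infimum over a larger set is smaller. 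For the left-hand inequality $\sh(\pi^*TX(-E);x)\le\sh(T\widetilde X;x)$: this is immediate from Lemma \ref{lem:shquot} (or Lemma \ref{lem:seshadrisequences}) applied to the inclusion of sheaves $\pi^*TX(-E)\hookrightarrow T\widetilde X$ on $\widetilde X$, since a subsheaf has Seshadri constant at most that of the ambient sheaf — one realizes $\pi^*TX(-E)$ as a sub of $T\widetilde X$, takes a quotient of $T\widetilde X$ and uses that subsheaves are quotients of their saturations, or more cleanly: $T\widetilde X$ is a quotient of $\pi^*TX(-E)\oplus(\text{torsion/correction on }E)$... the cleanest route is to observe $T\widetilde X(-E)\hookrightarrow\pi^*TX(-E)$ wait that's wrong direction too.

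Let me restate the mechanism I will actually use, since getting the sub/quotient direction right is \textbf{the main obstacle}: the relevant statement is Lemma \ref{lem:seshadrisequences}, which for an exact sequence $\cal K\to\cal V\to\cal Q\to 0$ gives $\sh(\cal V;x)\ge\min\{\sh(\cal K;x),\sh(\cal Q;x)\}$, hence if $\cal V$ is a \emph{subsheaf} realized as $\cal K$ in $0\to\cal K\to\cal V'\to\cal Q\to 0$ one does \emph{not} directly get $\sh(\cal K;x)\le\sh(\cal V';x)$; instead I will use Lemma \ref{lem:shquot} in the form: if $\cal A\to\cal B$ is a surjection then $\sh(\cal B;x)\ge\sh(\cal A;x)$. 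So for the left inequality I want $T\widetilde X$ to be a \emph{quotient} of something with Seshadri constant $\ge\sh(\pi^*TX(-E);x)$ — and indeed restricting to a curve $C'\subset\widetilde X$ through $x$ avoiding $E$, $T\widetilde X|_{C'}=\pi^*TX|_{C'}=(\pi^*TX(-E))|_{C'}$ (the twist by $E$ is trivial on curves avoiding $E$), so again the two infima over curves-avoiding-$E$ agree and the claim reduces to noting that $\cal C_{T\widetilde X,x}\supseteq\cal C_{\pi^*TX(-E),x}$ realized via curves avoiding $E$ while curves meeting $E$ make $\sh(\pi^*TX(-E);x)$ only smaller — because the twist $-E$ is \emph{negative} on curves meeting $E$ transversally, so $(\pi^*TX(-E))$ restricted to such a curve has strictly smaller slope than $\pi^*TX$, hence smaller than $T\widetilde X$ which sits between them. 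Concretely: for any irreducible $C'\ni x$ on $\widetilde X$, either $C'$ avoids $E$ and the three slopes coincide, or $C'\cdot E>0$ and then $(\pi^*TX(-E))|_{C'}$ has degree $(\deg\pi^*TX|_{C'})-n(C'\cdot E)$ while $T\widetilde X|_{C'}$ has degree $(\deg\pi^*TX|_{C'})-(C'\cdot E)\cdot(\operatorname{codim}Z-1)$ from the cokernel sequence — in either case $\overline\mu_{\min}((\pi^*TX(-E))|_{C'})\le\overline\mu_{\min}(T\widetilde X|_{C'})$, and dividing by $\mult_x C'$ and taking infima over all $C'$ gives $\sh(\pi^*TX(-E);x)\le\sh(T\widetilde X;x)$. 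I expect the bookkeeping of which sheaf is a sub and which is a quotient, and the precise local structure of $d\pi$ and of $\pi^*TX(-E)\hookrightarrow T\widetilde X$ near $E$, to be the only delicate point; everything else is a formal consequence of Corollary \ref{cor:shviacurves}, Lemma \ref{lem:shquot}, and the triviality of $\cal O(E)$ on curves disjoint from $E$.
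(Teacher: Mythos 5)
Your overall route (the chain $\pi^*TX(-E)\subseteq T\widetilde X\subseteq \pi^*TX$, reduction to curves, slope comparisons) is in spirit the paper's, but at both decisive steps the argument you actually commit to has a hole. For the right-hand inequality, it does \emph{not} suffice to consider curves avoiding $Z$: a curve $C\ni x$ on $X$ may meet $Z$ \emph{away} from $x$, and on its strict transform $C'$ the restrictions of $T\widetilde X$ and $\pi^*TX$ genuinely differ, so these are exactly the curves that need an argument. Your closing step (``automatic since infimum over a larger set is smaller'') only gives $\sh(T\widetilde X;x)\le \inf$ over curves avoiding $Z$, and that infimum is $\ge \sh(TX;x)$, so the inequalities do not chain to the desired conclusion. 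Ironically, the statement you dismiss as ``not what I want'', namely $\sh(\pi^*TX;x)\ge\sh(T\widetilde X;x)$, combined with $\sh(\pi^*TX;x)=\sh(TX;x)$ (valid because $\pi$ is an isomorphism near $x$ and curves through $x$ correspond via strict transform/image), \emph{is} the right-hand inequality; and it follows from Lemma \ref{lem:seshadrisequences} applied to $0\to T\widetilde X\to\pi^*TX\to\jmath_*T_{E/Z}(E)\to 0$, because the cokernel is supported on $E$, hence has Seshadri constant $+\infty$ at $x\notin E$, so the minimum in that lemma is $\sh(T\widetilde X;x)$. This is precisely the paper's argument.

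For the left-hand inequality, your key curve-by-curve claim $\overline{\mu}_{\min}\bigl((\pi^*TX(-E))|_{C'}\bigr)\le\overline{\mu}_{\min}\bigl(T\widetilde X|_{C'}\bigr)$ is justified only by comparing total degrees, and total degree does not control $\mu_{\min}$ (a bundle of larger degree can have smaller minimal slope, e.g.\ $\cal O(3)\oplus\cal O(-1)$ versus $\cal O\oplus\cal O$). The correct mechanism is again the torsion one: $\pi^*TX(-E)\hookrightarrow T\widetilde X$ with quotient supported on $E$ (the paper identifies this quotient as $\jmath_*Q$ by a snake-lemma chase through the normal-bundle and relative-tangent sequences, but for the inequality one only needs that it is an $\cal O_E$-module, which holds since the two sheaves agree off $E$), and then either apply Lemma \ref{lem:seshadrisequences} at $x\notin E$ exactly as above, or argue on each normalized curve that a generically isomorphic subsheaf of a vector bundle on a smooth curve has $\mu_{\min}$ no larger than the ambient bundle --- the image of the subsheaf under a minimal-slope quotient of the ambient bundle is a quotient of the subsheaf of slope at most $\mu_{\min}$ of the ambient; this is the very argument proving Lemma \ref{lem:seshadrisequences}. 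Once you replace the degree comparison (and the incorrect ``suffices to consider curves avoiding $Z$'' reduction) by this observation, which you circled around but never used, your proof becomes the paper's.
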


\begin{proof}We have a short exact sequence
$0\to\pi^*\Omega_X\to\Omega_{\widetilde X}\to\jmath_*\Omega_{E/Z}\to 0.$
By duality, from the long $\cal Ext$ sequence we extract
\begin{equation}\label{eq:tangent}
0\longrightarrow T\widetilde X\longrightarrow \pi^*TX\longrightarrow\jmath_*T_{E/Z}(E)\longrightarrow 0.
\end{equation}
The second inequality now follows from Lemma \ref{lem:seshadrisequences}. 
We use here that $\sh(\jmath_*T_{E/Z}(E);x)=\infty$, because $x$ is not in the support and $\sh(\pi^*TX;x)$ (computed an $\tilde X$) is the same as $\sh(TX;x)$ (computed on $X$).

For the first inequality, the main ingredient is a short exact sequence
\begin{equation}\label{eq:tangent11}0\longrightarrow\pi^*TX(-E)\longrightarrow T\widetilde X\longrightarrow\jmath_*Q\longrightarrow 0
\end{equation}
Assuming it, we conclude again by Lemma \ref{lem:seshadrisequences}.

From the normal bundle sequence
$0\to TE\to T\widetilde X\rvert_E\to \cal O_E(E)\to 0$
and the relative tangent bundle sequence
$0\to T_{E/Z}\to TE\to\pi|_E^*TZ\to 0,$
we find a bundle $Q$ defined by the sequence
\begin{equation}\label{eq:tangent8}
0\longrightarrow T_{E/Z}\longrightarrow T\widetilde X\rvert_E\longrightarrow Q\longrightarrow 0,
\end{equation}
sitting in 
$0\to \pi|_E^*TZ\to Q\to \cal O_E(E)\to 0.$
 Restrict \eqref{eq:tangent} over $E$, obtaining
$T\widetilde X|_E\to \pi^*TX|_E\to T_{E/Z}(E)\to 0$. 
The first map is the restriction of the differential $d\pi$. 
Its kernel is clearly $T_{E/Z}$, included in $T\widetilde X|_E$ by \eqref{eq:tangent8}.
We obtain another short exact sequence
\begin{equation}\label{eq:tangent10}0\longrightarrow Q\longrightarrow \pi^*TX|_E\longrightarrow T_{E/Z}(E)\longrightarrow 0.
\end{equation}

From the snake lemma for \eqref{eq:tangent} and \eqref{eq:tangent10}, we obtain \eqref{eq:tangent11}.
\end{proof}

\begin{cor}With notation as in the lemma, if $\sh(T\widetilde X;x_0)>0$ for some $x\in X\setminus Z=\widetilde X\setminus E$, then $\sh(TX;x_0)>0$.
\end{cor}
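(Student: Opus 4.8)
The plan is immediate: this corollary is nothing more than a restatement of the second inequality proved in Lemma~\ref{lem:blow-up}. Assume $\sh(T\widetilde X;x_0)>0$ for a point $x_0\in X\setminus Z$, identified with its preimage in $\widetilde X\setminus E$. Lemma~\ref{lem:blow-up} gives
\[
  \sh\bigl(\pi^*TX(-E);x_0\bigr)\leq\sh\bigl(T\widetilde X;x_0\bigr)\leq\sh\bigl(TX;x_0\bigr),
\]
and reading off the middle-to-right comparison yields $\sh(TX;x_0)\geq\sh(T\widetilde X;x_0)>0$. That is the entire argument; there is no obstacle here, since all of the work has already been done in the lemma (constructing the exact sequences \eqref{eq:tangent} and \eqref{eq:tangent11} and feeding them into Lemma~\ref{lem:seshadrisequences}), and in the observation that $\sh(\pi^*TX;x_0)=\sh(TX;x_0)$ away from the center $Z$.

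It is worth noting which half of Lemma~\ref{lem:blow-up} is actually used: the first inequality $\sh(\pi^*TX(-E);x_0)\leq\sh(T\widetilde X;x_0)$ is what one would invoke to push positivity \emph{up} a tower of blow-ups, whereas here we move \emph{down}, from $\widetilde X$ to $X$, so only the second inequality is needed. Implicit in this is that blowing up a center disjoint from $x_0$ does not affect the Seshadri constant of the tangent sheaf at $x_0$ after passing to the relevant quotient, which is precisely what makes the twist by $-E$ harmless at such a point.

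Although not part of the statement, this corollary is exactly the inductive step behind Theorem~\ref{thrm:charpn}(3): starting from a smooth projective surface $X$ with $\sh(TX;x_0)>0$, one blows down a $(-1)$-curve not passing through $x_0$ whenever $X$ is non-minimal, applies the corollary to preserve the inequality $\sh(TX';x_0)>0$ on the blow-down, and iterates until reaching a minimal surface. There the Enriques classification, combined with the vanishing/negativity computations recorded in Example~\ref{ex:tangenthomog} and in the examples for varieties of general type and Calabi--Yau type, forces $X\simeq\bb P^2$.
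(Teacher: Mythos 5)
Your proposal is correct and is exactly the argument the paper intends: the corollary is an immediate consequence of the second inequality $\sh(T\widetilde X;x_0)\leq\sh(TX;x_0)$ of Lemma \ref{lem:blow-up}, which the paper leaves without further comment. Your remarks about which half of the lemma is used and the role of the corollary in Corollary \ref{cor:charpnsurfaces} are accurate but not needed for the proof itself.
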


\begin{cor}\label{cor:charpnsurfaces}
  Let $X$ be a smooth projective \emph{surface} over an algebraically closed field. If there exists $x_0\in X$ such that
$\sh(TX;x_0)>0$, then $X\simeq\bb P^2$.
\end{cor}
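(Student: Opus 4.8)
The plan is to combine Corollary \ref{cor:uniruled}, which tells us $X$ is a uniruled surface, with the Enriques classification and the descent property for the condition $\sh(TX;x_0)>0$ under blow-downs supplied by Lemma \ref{lem:blow-up}. Concretely, I would run the minimal model program for the surface $X$ while keeping track of the point $x_0$: at each stage I contract a $(-1)$-curve, and I need to know (a) that $x_0$ never lies on the curve being contracted, and (b) that the positivity $\sh(TX;x_0)>0$ is inherited by the contracted surface at the image point. Point (b) is exactly the Corollary following Lemma \ref{lem:blow-up} (applied with $Z$ a point), so the real content is point (a).

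For (a), I would first record the following. If $E\subset X$ is a $(-1)$-curve, then $E\cong\bb P^1$ and $N_{E/X}\cong\cal O_{\bb P^1}(-1)$, so the tangent sequence $0\to T_E\to (TX)|_E\to N_{E/X}\to 0$ produces a surjection $(TX)|_E\twoheadrightarrow\cal O_{\bb P^1}(-1)$; hence $\overline{\mu}_{\min}\bigl((TX)|_E\bigr)\leq\mu_{\min}\bigl((TX)|_E\bigr)\leq-1<0$. Applying the restriction formula of Corollary \ref{cor:shviacurves} to the curve $C=E$ (which is smooth, so $\mult_{x_0}E=1$ whenever $x_0\in E$), I get $\sh(TX;x_0)\leq\overline{\mu}_{\min}\bigl((TX)|_E\bigr)<0$ if $x_0\in E$, contradicting the hypothesis. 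Thus $x_0$ lies on no $(-1)$-curve of $X$, and the same conclusion will hold after each contraction, since the hypothesis $\sh(T(-);x_0)>0$ persists by (b).

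I would then argue by induction on the Picard number $\rho(X)$. If $X$ is minimal (which is automatic when $\rho(X)=1$, since a $(-1)$-curve would force $\rho(X)\geq 2$), then by Corollary \ref{cor:uniruled} and the Enriques classification either $X\cong\bb P^2$, and we are done, or $X$ admits a $\bb P^1$-bundle structure $X\to B$ over a smooth projective curve $B$; the latter contradicts $\sh(TX;x_0)>0$ by Remark \ref{rmk:criterionnonpositiveseshadri}. If $X$ is not minimal, choose a $(-1)$-curve $E$; by the previous paragraph $x_0\notin E$, so we may contract $E$ to obtain a smooth projective surface $\pi\colon X\to X'$ with $x_0\notin\operatorname{Exc}(\pi)=E$, and the Corollary following Lemma \ref{lem:blow-up} yields $\sh(TX';\pi(x_0))>0$. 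Since $\rho(X')=\rho(X)-1$, the induction hypothesis gives $X'\cong\bb P^2$, so $X$ is the blow-up of $\bb P^2$ at a point, i.e.\ $X\cong\bb F_1$; but $\bb F_1$ carries a $\bb P^1$-bundle structure over $\bb P^1$, so Remark \ref{rmk:criterionnonpositiveseshadri} again forces $\sh(TX;x_0)\leq 0$, a contradiction. Hence $X$ must be minimal, and therefore $X\cong\bb P^2$.

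I do not expect a serious obstacle beyond the local computation in the second paragraph: the whole point is that $\sh(TX;x_0)>0$ automatically keeps $x_0$ away from every $(-1)$-curve, which is precisely what lets each blow-down fall under the hypotheses of the Corollary following Lemma \ref{lem:blow-up}. The only other thing to watch is that a nontrivial blow-down to $\bb P^2$ necessarily factors through $\bb F_1$, which is itself a $\bb P^1$-bundle and hence excluded by Remark \ref{rmk:criterionnonpositiveseshadri}; checking that the Enriques classification of minimal surfaces is available in the relevant characteristic is routine.
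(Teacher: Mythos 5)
Your proposal is correct and follows essentially the same route as the paper: keep $x_0$ off negative curves via the slope/normal-bundle surjection, descend the positivity through blow-downs using the corollary to Lemma \ref{lem:blow-up}, invoke uniruledness and the Enriques classification with Remark \ref{rmk:criterionnonpositiveseshadri} to exclude ruled ($\bb P^1$-bundle) surfaces, and rule out non-minimality via the $\bb F_1$ contradiction. The only cosmetic differences are that you organize the descent as an induction on the Picard number (the paper passes directly to a minimal model) and you check the avoidance statement only for $(-1)$-curves, while the paper records it for all negative curves.
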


\begin{proof}Let $E\subset X$ be a smooth curve with negative self-intersection.
Then from the surjection $TX|_E\twoheadrightarrow\cal O_E(E)$ we deduce that $\sh(TX;x)<0$ for
all $x\in E$.

Let $\pi\colon X\to X'$ be a minimal model of $X$ constructed by blowing-down smooth $-1$ curves.
By the previous observation, $x_0$ is not on any of the contracted curves, so it is in the isomorphism locus of $\pi$.
By the previous corollary, $\sh(TX';\pi(x_0))>0$.

The examples at the beginning of the section show that $X'$ is uniruled. 
In the Kodaira classification of minimal surfaces, $X'$ is then either $\bb P^2$, 
or a ruled surface (possibly a Hirzebruch surface).
Remark \ref{rmk:criterionnonpositiveseshadri} excludes ruled surfaces.
Therefore $X'\simeq\bb P^2$.

If $\pi$ is not an isomorphism, then it factors through the blow-up of one point on $\bb P^2$.
This is the Hirzebruch surface $\bb F_1$. 
Apply the previous corollary and Remark \ref{rmk:criterionnonpositiveseshadri} again
to find a contradiction.
\end{proof}

\subsection{Cotangent bundles}

Let $X$ be a smooth projective variety of dimension $n$ over an algebraically closed field.

\begin{ex}[$K_X$ not pseudo-effective]By Lemma \ref{lem:comparetodet}
and Lemma \ref{lem:nonpsefnegative}, we have $\sh(\Omega X;x)\leq\frac 1n\sh(K_X;x)<0$ for all $x\in X$ and $\sh(\Omega_X;x)=-\infty$ for very general $x\in X$. 
\qed
\end{ex}

\begin{rmk}If $Y\subset X$ is a smooth subvariety, then from the surjection $\Omega X|_Y\twoheadrightarrow\Omega Y$ we deduce $\sh(\Omega X;y)\leq\sh(\Omega Y;y)$ for all $y\in Y$.
In particular if $\sh(\Omega Y;y)<0$ then $\sh(\Omega X;y)<0$ by Lemma \ref{lem:shquot}.
\end{rmk}

\begin{ex}[Varieties with rational curves, e.g., $K_X$ not nef] Let $f:\bb P^1\to X$ be a non-constant morphism 
and $x\in f(\bb P^1)$. Then $\sh(\Omega X;x)<0$. 
(We may assume that $f$ is the normalization of its image. Consider the nonzero morphism
$f^*\Omega X\to\Omega \bb P^1=\cal O_{\bb P^1}(-2)$. 
Using Corollary \ref{cor:shviacurves},
we find $\sh(\Omega X;x)\leq\frac{-2}{\mult_xf(\bb P^1)}<0$.)\qed
\end{ex}

\section{Separation of jets}
In this section we give a characterization
of Seshadri constants in terms of separation of jets following \cite[Chapter 5]{laz04}.
First, recall the following:

\begin{defn}
  Let $\mathcal{F}$ be an $\mathcal{O}_X$-module on a projective scheme $X$,
  and fix a closed point $x \in X$ defined by the ideal $\mathfrak{m}_x
  \subseteq \mathcal{O}_X$.
  We say that $\mathcal{F}$ \textsl{separates $s$-jets at $x$} if the
  restriction map
  \[
    H^0(X,\mathcal{F}) \longrightarrow H^0(X,\mathcal{F}/\mathfrak{m}_x^{s+1}\cal F)
  \]
is surjective. With the convention $\frak m_x^0=\cal O_X$, all sheaves
separate $-1$-jets. 
  We denote by $s(\mathcal{F};x)$ the largest integer $s \ge -1$ such that
  $\mathcal{F}$ separates $s$-jets at $x$.
\end{defn}

\begin{rmk}\label{rmk:jetsetquotients}If $\cal F\to\cal G$ is a morphism of quasi-coherent $\cal O_X$-modules, surjective at $x$, then $s(\cal G;x)\geq s(\cal F;x)$ as follows easily by chasing through the commutative diagram
\[
\xymatrix{H^0(X,\cal F)\ar[r]\ar[d]& H^0(X,\cal G)\ar[d]\\
H^0(X,\cal F/\frak m_x^{s+1}\cal F)\ar@{->>}[r]& H^0(X,\cal G/\frak m_x^{s+1}\cal G)
},
\]
where the bottom map is surjective since $\cal F\to\cal G$ is surjective at $x$
and $\operatorname{Spec}\cal O_X/\frak m_x^{s+1}$ is affine. \qed
\end{rmk}
We show the following analogue of \cite[Theorem
5.1.17]{laz04} for higher ranks.
The statement for $x$ a singular point is new even for line bundles.

\begin{thrm}\label{thrm:jetsep}
  Let $\mathcal{V}$ be an ample coherent sheaf on a
  projective scheme $X$, and let $x \in X$ be a closed point.
  Then,
  \[
    \sh(\mathcal{V};x) \le \lim_{k \to \infty} \frac{s(\Sym^k \mathcal{V};x)}{k},
  \]
  and equality holds if $\cal V$ is locally free at $x$.
\end{thrm}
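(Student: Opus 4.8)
The plan is to relate separation of jets of $\Sym^k\mathcal V$ to the Seshadri constant by passing to $\bb P(\mathcal V)$ and measuring sections of $\cal O_{\bb P(\mathcal V)}(k)$ against the fiber $\rho^{-1}(x)$. The key translation is that $H^0(X,\Sym^k\mathcal V) = H^0(\bb P(\mathcal V),\cal O_{\bb P(\mathcal V)}(k))$ for $k$ large (using $\rho_*\cal O_{\bb P(\mathcal V)}(k)=\Sym^k\mathcal V$, which holds for $k\gg 0$), and that separating $s$-jets of $\Sym^k\mathcal V$ at $x$ corresponds to surjectivity onto sections on the $(s+1)$-st infinitesimal neighborhood of the fiber $Y_x$ in $\bb P(\mathcal V)$ — that is, the sheaf $\cal O_{\bb P(\mathcal V)}(k)$ should separate jets ``normal to $Y_x$'' of order $s$. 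Once this is set up, the statement becomes a fiberwise version of Demailly's jet-separation characterization of Seshadri constants of line bundles, applied to $\cal O_{\bb P(\mathcal V)}(k)$ on $\bl_{Y_x}\bb P(\mathcal V)=Y'$.

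First I would prove the inequality $\sh(\mathcal V;x)\le \lim_k s(\Sym^k\mathcal V;x)/k$. Suppose $\Sym^k\mathcal V$ separates $s$-jets at $x$. Pulling back a section that is nonzero to order exactly $s$ along $Y_x$ to $Y'=\bl_{Y_x}\bb P(\mathcal V)$, we get an effective divisor in $|\pi'^*(k\xi)-s\,\rho'^*E|$ (using $E$ Cartier at $x$ and $\mathfrak m_x^{s}$ pulling back to $\cal O(-s\rho'^*E)$ on the blow-up, plus $\mult_x\rho_*C = \rho'^*E\cdot C'$ from Proposition~\ref{prop:altintsh}); hence for every $C'\in\cal C'_{\mathcal V,x}$ not contained in this divisor we get $(k\xi)\cdot C'\ge s(\rho'^*E\cdot C')$, i.e. $\xi\cdot C/\mult_x\rho_*C\ge s/k$. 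One must be slightly careful that a \emph{single} section might contain some curves $C'$; the standard fix is to vary the section (moving the point among the jet data, or using that the linear system $|k\xi - sE|$ on $Y'$ restricted to any fixed $C'$ is still nonempty for suitable choices), which gives $\sh(\mathcal V;x)\ge \limsup$ — wait, in the direction I want it gives $\xi\cdot C/\mult_x\rho_*C \ge s(\Sym^k\mathcal V;x)/k$ for all $C$, hence $\sh(\mathcal V;x)\ge s(\Sym^k\mathcal V;x)/k$ — no: the displayed inequality in the theorem is $\sh(\mathcal V;x)\le \lim$, so this direction is actually the \emph{harder} reverse one. Let me restate: the easy direction is $\lim_k s(\Sym^k\mathcal V;x)/k \le \sh(\mathcal V;x)$ fails to be what we want; rather, separation of many jets should be \emph{obstructed} by curves of low degree through $x$, so jet separation order is \emph{at most} roughly $\sh\cdot k$, giving $\lim \le \sh$?? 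This sign issue is exactly the subtle point, so: the correct easy bound is that if $\Sym^k\mathcal V$ separates $s$-jets then, intersecting the pulled-back section divisor with a curve $C'\in\cal C'_{\mathcal V,x}$ realizing $\sh(\mathcal V;x)$ closely and \emph{not} in the support (achievable by choosing the section to vanish to order exactly $s$ and perturbing), one gets $ks\cdot(\text{something})$... I will instead argue the clean way: $\sh(\mathcal V;x)=\sh(\xi;x)$ and by Remark~\ref{rmk:easyseshadri} and homogeneity $\sh(\Sym^k\mathcal V;x)=k\,\sh(\mathcal V;x)$, so it suffices to prove, for a single ample $\mathcal V$ locally free at $x$, that $\sh(\mathcal V;x)\le \lim s(\Sym^k\mathcal V;x)/k$ and (when locally free) $\ge$.

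For the inequality $\sh(\mathcal V;x)\le \lim_k s(\Sym^k\mathcal V;x)/k$ in general: given any $C\in\cal C_{\mathcal V,x}$, a section of $\Sym^k\mathcal V$ separating $s$-jets can be chosen with prescribed $s$-jet at $x$; choosing the jet so the corresponding section of $\cal O(k)$ vanishes to order exactly $s$ along $Y_x$ but, after blow-up, its proper transform does not contain the finitely many relevant $C'$, we get $k(\xi\cdot C)\ge s\cdot(\rho'^*E\cdot C')=s\cdot\mult_x\rho_*C$, so $\xi\cdot C/\mult_x\rho_*C\ge s/k$; hence $\sh(\mathcal V;x)\ge s(\Sym^k\mathcal V;x)/k$, i.e. the \emph{opposite} of what's written, UNLESS I have the infimum/supremum reversed — which tells me the theorem's direction must come from the reverse: a lower bound on jet separation from a lower bound on $\sh$. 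So the real content, and the main obstacle, is: \emph{if $\mathcal V$ is locally free at $x$ and $\sh(\mathcal V;x)>t$, then $\Sym^k\mathcal V$ separates $\lfloor tk\rfloor$-jets for $k\gg0$.} I would prove this by a Serre-vanishing/Fujita-type argument on $Y'=\bl_{Y_x}\bb P(\mathcal V)$: the condition $\sh(\mathcal V;x)>t$ means (Remark~\ref{rmk:easyseshadri}(c), as $\mathcal V$ ample hence nef) $\pi'^*\xi - t\rho'^*E$ is ample on $Y'$ — here local freeness at $x$ is used to ensure $\bb P(\mathcal V)$ is nice along $Y_x$ and $Y'\to\bb P(\mathcal V)\times_X\bl_xX$ behaves, and that $\mathfrak m_x^{j}\Sym^k\mathcal V$ pulls back to $\cal O(k\xi - j\rho'^*E)\otimes(\text{effective rel. }\pi')$. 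Then for $k\gg 0$, $k(\pi'^*\xi - t\rho'^*E)$ is very ample, so $H^1(Y', \cal O(k\pi'^*\xi - \lfloor tk\rfloor\rho'^*E - \rho'^*E))$ — twisted appropriately by $K$ or by ampleness directly — vanishes, and pushing down via $R\pi'_*$ and $R\rho'_*$ recovers surjectivity of $H^0(X,\Sym^k\mathcal V)\to H^0(X,\Sym^k\mathcal V/\mathfrak m_x^{\lfloor tk\rfloor+1})$. The identification $R\pi'_*\cal O_{Y'}(-j\rho'^*E)$ with $\mathfrak m_x^{j}$-type sheaves (for $j$ up to the order controlled by local freeness) is the technical heart: one uses that along the locally free locus, $\bb P(\mathcal V)$ is a projective bundle and $Y_x=\bb P(\mathcal V(x))\cong\bb P^{r-1}$ is a local complete intersection with the expected blow-up behavior, so $\pi'_*\cal O(-j\rho'^*E)=\mathcal I_{Y_x}^{(j)}$ and its higher direct images vanish in the relevant range. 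The main obstacle is precisely making this infinitesimal-neighborhood bookkeeping on $Y'$ rigorous and matching it to $\mathfrak m_x^{j}\Sym^k\mathcal V$ on $X$ — i.e. proving $\rho_*\bigl(\cal O_{\bb P(\mathcal V)}(k)\otimes \mathcal I_{Y_x}^{j}\bigr)=\mathfrak m_x^{j}\Sym^k\mathcal V$ for $k\gg0$ and $j\le tk$, which is where local freeness at $x$ is genuinely needed and where the equality (as opposed to mere inequality) in the theorem comes from. For the reverse inequality $\sh(\mathcal V;x)\le\lim$ without local freeness I would instead use the curve argument above (any $C\in\cal C_{\mathcal V,x}$ bounds $\xi\cdot C/\mult_x\rho_*C$ below by $s(\Sym^k\mathcal V;x)/k$), together with Fekete/superadditivity of $k\mapsto s(\Sym^k\mathcal V;x)$ (from $\Sym^k\mathcal V\otimes\Sym^\ell\mathcal V\twoheadrightarrow\Sym^{k+\ell}\mathcal V$ and Remark~\ref{rmk:jetsetquotients}) to ensure the limit exists.
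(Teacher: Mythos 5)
Your final architecture is the same as the paper's: split the theorem into (a) a vanishing argument on $Y'=\bl_{Y_x}\bb P(\cal V)$ showing that $\sh(\cal V;x)>t$ forces $\Sym^k\cal V$ to separate roughly $tk$ jets for $k\gg0$, and (b) a section/curve argument showing that jet separation bounds the Seshadri constant from below, with Fekete and superadditivity of $s(\Sym^k\cal V;x)$ giving existence of the limit. But there is a genuine gap: you have attached the local-freeness hypothesis to the wrong half (and, relatedly, the last sentence still labels the curve argument as proving ``$\sh\le\lim$'' although, as you yourself compute, it gives $\sh(\cal V;x)\ge s(\Sym^k\cal V;x)/k$). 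In the theorem the unconditional statement, valid for every ample coherent sheaf, is precisely the vanishing direction $\sh(\cal V;x)\le\lim_k s(\Sym^k\cal V;x)/k$. The paper proves it with no local-freeness input: pick $p_0/q_0<\sh(\cal V;x)$ so that $q_0\pi'^*\xi-p_0\rho'^*E$ is ample, apply Fujita's vanishing theorem (uniformly in the nef twist, writing $k=mq_0+q_1$ and absorbing $q_1\pi'^*\xi$), identify $H^1\bigl(Y',k\pi'^*\xi-mp_0\rho'^*E\bigr)$ with $H^1\bigl(\bb P(\cal V),\cal O(k)\otimes\cal I_{\bb P(\cal V_x)}^{mp_0}\bigr)$ by Leray, and use cohomology-and-base-change over $\Spec(\cal O_X/\frak m_x^{mp_0})$, which holds for $k\gg0$ for an arbitrary coherent $\cal V$ because $\rho_*\cal O_{\bb P(\cal V)}(k)=\Sym^k\cal V$ and the higher direct images vanish eventually. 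Your version of this step explicitly invokes local freeness at $x$ (to prove $\rho_*(\cal O(k)\otimes\cal I_{Y_x}^{\,j})=\frak m_x^{\,j}\Sym^k\cal V$ for all $j\le tk$, a stronger identification than is actually needed), so as written it only yields the inequality in the locally free case and leaves the general statement unproved. Moreover ``$k(\pi'^*\xi-t\rho'^*E)$ is very ample, so $H^1$ vanishes, twisting by $K$ if necessary'' is not available in this setting (projective schemes over a field of arbitrary characteristic, and a class varying with $k$ in two parameters): you genuinely need the Fujita-vanishing bookkeeping, not Serre or Kodaira vanishing.

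Conversely, local freeness around $x$ is exactly what the other direction needs — the one that upgrades the inequality to an equality — and your claim that the curve argument works ``without local freeness'' is the second gap. To make ``choose a section whose divisor in $|k\pi'^*\xi-s\rho'^*E|$ misses a given $C'$'' rigorous, one needs global generation of $\pi'^*\xi-s\rho'^*E$ along $\rho'^{-1}E$; the paper extracts this from the surjection $H^0(X,\frak m_x^s\Sym^k\cal V)\twoheadrightarrow H^0(X,\frak m_x^s\Sym^k\cal V/\frak m_x^{s+1}\Sym^k\cal V)$ combined with the identifications $\frak m_x^s\cal V/\frak m_x^{s+1}\cal V\simeq(\frak m_x^s/\frak m_x^{s+1})\otimes\cal V(x)$ and $\frak m_x^s/\frak m_x^{s+1}=\pi_*\cal O_E(-sE)$ (the latter requiring $x$ smooth or $s$ large), and flatness of $\cal V$ at $x$ is indispensable there; your ``perturb the section so it avoids the finitely many relevant $C'$'' is exactly the step this machinery replaces and is not justified for a sheaf that is not locally free at $x$. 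So the plan should be re-balanced: prove the vanishing half for arbitrary ample coherent sheaves via base change for $k\gg0$, and spend the local-freeness hypothesis on the curve half, as in the paper's Proposition~\ref{prop:seshadribiggerthanjet}.
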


When $\cal V$ is locally free, for any cartesian diagram
\[
  \xymatrix{
    \mathbb{P}(f^*\mathcal{V}) \ar[r]^{f'}\ar[d]_{\rho'}\ar@{}[dr]|*={\square}
    & \mathbb{P}(\mathcal{V}) \ar[d]^{\rho}\\
    Y \ar[r]_f & X
  }
\]
and any $k\geq 0$, the base change map
\begin{equation}\label{eq:basechangeiso}
  f^* \rho_* \mathcal{O}_{\mathbb{P}(\mathcal{V})}(k) \longrightarrow \rho'_*
  f^{\prime*} \mathcal{O}_{\mathbb{P}(\mathcal{V})}(k)
\end{equation}
is an isomorphism. Both terms are $\Sym^kf^*\cal V$. 
When $\cal V$ is an arbitrary coherent sheaf, then the same conclusion
holds for $k$ sufficiently large. 
We will also need the following lemma.

\begin{lem}[cf.\ {\cite[Proof of Lem.\ 3.7]{Ito13}}]\label{lem:ito37}
  Let $X$ be a scheme, and let $\mathcal{F}$ and $\mathcal{G}$ be
  coherent sheaves on $X$ with $s(\cal F;x)\geq 0$ and $s(\cal G;x)\ge0$.
  Then, for every closed point $x \in X$, we have
  \begin{align*}
    s(\mathcal{F};x) &+ s(\mathcal{G};x) \le s(\mathcal{F} \otimes
    \mathcal{G};x).
    \intertext{Furthermore,}
    s(\Sym^m\cal F;x)&+s(\Sym^n\cal F;x)\leq s(\Sym^{m+n}\cal F;x)
  \end{align*}
for all $m,n\geq 0$.
\end{lem}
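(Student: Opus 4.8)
The plan is to prove the first inequality by a jet-lifting argument run along the $\mathfrak{m}_x$-adic filtration, and then to obtain the symmetric-power inequality as a formal consequence. Set $A=\mathcal{O}_{X,x}$, $\mathfrak{m}=\mathfrak{m}_x\subseteq A$, $F=\mathcal{F}_x$, $G=\mathcal{G}_x$, $a=s(\mathcal{F};x)\ge 0$, and $b=s(\mathcal{G};x)\ge 0$. First I would record the standard reformulation: the sheaf $\mathcal{F}/\mathfrak{m}_x^{t+1}\mathcal{F}$ is supported on the closed point $x$, so its space of global sections is its stalk $F/\mathfrak{m}^{t+1}F$; hence ``$\mathcal{F}$ separates $t$-jets at $x$'' is equivalent to surjectivity of the germ map $H^0(X,\mathcal{F})\to F/\mathfrak{m}^{t+1}F$, and likewise for $\mathcal{G}$ and for $\mathcal{F}\otimes\mathcal{G}$ (using $(\mathcal{F}\otimes\mathcal{G})_x=F\otimes_A G$). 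Since separating $t$-jets implies separating $t'$-jets for every $0\le t'\le t$, the hypotheses give that $\mathcal{F}$ separates $p$-jets for all $p\le a$ and $\mathcal{G}$ separates $q$-jets for all $q\le b$. The first inequality thus amounts to showing that the germ map $H^0(X,\mathcal{F}\otimes\mathcal{G})\to (F\otimes_A G)/\mathfrak{m}^{a+b+1}(F\otimes_A G)$ is surjective.

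The core of the argument is the following lifting claim, which I would verify one $\mathfrak{m}$-adic layer at a time: \emph{for every $i$ with $0\le i\le a+b$ and every $z\in\mathfrak{m}^i(F\otimes_A G)$, there is a global section $u\in H^0(X,\mathcal{F}\otimes\mathcal{G})$ whose germ satisfies $u\equiv z\pmod{\mathfrak{m}^{i+1}(F\otimes_A G)}$.} To prove this, fix $i$, put $p=\min(i,a)$ and $q=i-p$ (so $0\le p\le a$, $0\le q\le b$, $p+q=i$), and use $\mathfrak{m}^i=\mathfrak{m}^p\mathfrak{m}^q$ to reduce, by additivity in $z$, to $z=f'\otimes g'$ with $f'\in\mathfrak{m}^p F$ and $g'\in\mathfrak{m}^q G$. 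Because $\mathcal{F}$ separates $p$-jets and $\mathcal{G}$ separates $q$-jets, pick $\tilde f\in H^0(X,\mathcal{F})$ and $\tilde g\in H^0(X,\mathcal{G})$ with germs congruent to $f'$ modulo $\mathfrak{m}^{p+1}F$ and to $g'$ modulo $\mathfrak{m}^{q+1}G$, and set $u=\tilde f\otimes\tilde g\in H^0(X,\mathcal{F}\otimes\mathcal{G})$. A short expansion then shows that $u-f'\otimes g'$ lands in the image of $\mathfrak{m}^p F\otimes_A\mathfrak{m}^{q+1}G+\mathfrak{m}^{p+1}F\otimes_A\mathfrak{m}^q G+\mathfrak{m}^{p+1}F\otimes_A\mathfrak{m}^{q+1}G$, which is contained in $\mathfrak{m}^{p+q+1}(F\otimes_A G)=\mathfrak{m}^{i+1}(F\otimes_A G)$ — the one routine computation I would not expand. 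Granting the claim, write $M=(F\otimes_A G)/\mathfrak{m}^{a+b+1}(F\otimes_A G)$, let $W\subseteq M$ be the image of $H^0(X,\mathcal{F}\otimes\mathcal{G})$, and let $\mathfrak{m}^i M$ denote the image of $\mathfrak{m}^i(F\otimes_A G)$ in $M$; the claim says $\mathfrak{m}^i M\subseteq W+\mathfrak{m}^{i+1}M$ for $0\le i\le a+b$ while $\mathfrak{m}^{a+b+1}M=0$, so $M=\mathfrak{m}^0 M\subseteq W+\mathfrak{m}M\subseteq W+\mathfrak{m}^2 M\subseteq\cdots\subseteq W+\mathfrak{m}^{a+b+1}M=W$. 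Hence $W=M$, i.e.\ $\mathcal{F}\otimes\mathcal{G}$ separates $(a+b)$-jets, which proves $s(\mathcal{F}\otimes\mathcal{G};x)\ge a+b$.

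Finally, the inequality $s(\Sym^m\mathcal{F};x)+s(\Sym^n\mathcal{F};x)\le s(\Sym^{m+n}\mathcal{F};x)$ is the special case of the first inequality with $\mathcal{F}$ and $\mathcal{G}$ replaced by $\Sym^m\mathcal{F}$ and $\Sym^n\mathcal{F}$ (applying it requires $s(\Sym^m\mathcal{F};x)\ge 0$ and $s(\Sym^n\mathcal{F};x)\ge 0$, the analogue of the hypotheses in force), combined with Remark \ref{rmk:jetsetquotients} applied to the multiplication surjection $\Sym^m\mathcal{F}\otimes\Sym^n\mathcal{F}\twoheadrightarrow\Sym^{m+n}\mathcal{F}$; together these yield $s(\Sym^{m+n}\mathcal{F};x)\ge s(\Sym^m\mathcal{F}\otimes\Sym^n\mathcal{F};x)\ge s(\Sym^m\mathcal{F};x)+s(\Sym^n\mathcal{F};x)$. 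The step I expect to be the main obstacle is the lifting claim: a naive ``multiply jets'' argument fails because, for a single $z$ of order $i$, the error terms $\mathfrak{m}^{p+1}F\otimes g'$ and $f'\otimes\mathfrak{m}^{q+1}G$ are controlled only because the chosen factors already vanish to orders $p$ and $q$ respectively — which is exactly why the lifting has to be carried out separately on each graded piece of the $\mathfrak{m}_x$-adic filtration rather than by lifting one jet of order $a+b$ in a single step.
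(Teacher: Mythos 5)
Your proof is correct, and it reaches the result by a slightly different route than the paper, so a comparison is worth recording. The paper first proves, by induction and the snake lemma, that $\mathcal{F}$ separates $s$-jets at $x$ if and only if $H^0(X,\mathfrak{m}_x^i\mathcal{F})\to H^0(X,\mathfrak{m}_x^i\mathcal{F}/\mathfrak{m}_x^{i+1}\mathcal{F})$ is surjective for all $i\le s$, and then multiplies global sections of the subsheaves $\mathfrak{m}_x^i\mathcal{F}$ and $\mathfrak{m}_x^j\mathcal{G}$, running over all $i\le s(\mathcal{F};x)$, $j\le s(\mathcal{G};x)$; for symmetric powers it repeats the same diagram with the multiplication map. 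You bypass that auxiliary equivalence entirely: you work at the stalk, lift each element of $\mathfrak{m}_x^i(\mathcal{F}_x\otimes\mathcal{G}_x)$ modulo $\mathfrak{m}_x^{i+1}$ by arbitrary global sections of $\mathcal{F}$ and $\mathcal{G}$ whose germs merely agree with the chosen factors modulo $\mathfrak{m}_x^{p+1}$ and $\mathfrak{m}_x^{q+1}$ (rather than sections actually lying in $\mathfrak{m}_x^p\mathcal{F}$, $\mathfrak{m}_x^q\mathcal{G}$), and then absorb the errors by the successive-approximation chain $M\subseteq W+\mathfrak{m}_xM\subseteq\cdots\subseteq W+\mathfrak{m}_x^{a+b+1}M=W$, which is legitimate because the target is killed by $\mathfrak{m}_x^{a+b+1}$; the underlying mechanism (split $i=p+q$ with $p\le a$, $q\le b$, multiply jets, errors are one order higher) is the same, but your bookkeeping replaces the paper's snake-lemma criterion with the filtration argument, and it buys a shorter, more self-contained proof of the tensor inequality. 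Your deduction of the symmetric-power inequality from the tensor case plus Remark \ref{rmk:jetsetquotients} applied to the surjection $\Sym^m\mathcal{F}\otimes\Sym^n\mathcal{F}\twoheadrightarrow\Sym^{m+n}\mathcal{F}$ is also cleaner than rerunning the diagram. One small point you flag but do not verify: applying the tensor inequality there needs $s(\Sym^m\mathcal{F};x)\ge 0$ and $s(\Sym^n\mathcal{F};x)\ge 0$; this does follow from what you already have (iterate the tensor inequality to get $s(\mathcal{F}^{\otimes m};x)\ge 0$ and pass to the quotient $\Sym^m\mathcal{F}$ via the same remark, the case $m=0$ being $\mathcal{O}_X$), and the paper's proof is equally silent on it, but a sentence making this explicit would close the loop.
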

\begin{proof}
  We first show that a coherent sheaf $\mathcal{F}$ separates $s$-jets if and
  only if
  \begin{equation}\label{eq:mimiplus1}
    H^0(X,\mathfrak{m}_x^i\mathcal{F}) \longrightarrow H^0(X,\mathfrak{m}_x^i\mathcal{F}/\mathfrak{m}_x^{i+1})
  \end{equation}
  is surjective for every $i \in \{0,1,\ldots,s\}$.
  We proceed by induction on $s$.
  If $s = 0$, then there is nothing to show.
  Now suppose $s > 0$.
  By induction and the fact that a coherent sheaf separating $s$-jets also
  separates all lower order jets, it suffices to show that if $\mathcal{F}$
  separates $(s-1)$-jets, then $\mathcal{F}$ separates $s$-jets if and only if
  \eqref{eq:mimiplus1} is surjective for $i = s$.
  Consider the commutative diagram
  \[
    \xymatrix{
      0 \ar[r]
      & \mathfrak{m}_x^s\mathcal{F}  \ar[r]\ar[d]
      & \mathcal{F} \ar[r]\ar[d]
      & \mathcal{F}/\mathfrak{m}_x^s\mathcal{F} \ar[r] \ar@{=}[d]
      & 0\\
      0 \ar[r]
      & \mathfrak{m}_x^s\mathcal{F}/\mathfrak{m}_x^{s+1}\cal F \ar[r]
      & \mathcal{F}/\mathfrak{m}_x^{s+1}\cal F \ar[r]
      & \mathcal{F}/\mathfrak{m}_x^s\mathcal{F} \ar[r]
      & 0
    }
  \]
   Taking global sections, we obtain the diagram
  \[
    \xymatrix{
      0 \ar[r]
      & H^0(X,\mathfrak{m}_x^s\mathcal{F}) \ar[r]\ar[d]
      & H^0(X,\mathcal{F}) \ar[r]\ar[d]
      & H^0(X,\mathcal{F}/\mathfrak{m}_x^s\cal F) \ar@{=}[d]\ar[r]
      & 0\\
      0 \ar[r]
      & H^0(X,\mathfrak{m}_x^s\mathcal{F}/\mathfrak{m}_x^{s+1}) \ar[r]
      & H^0(X,\mathcal{F}/\mathfrak{m}_x^{s+1}\cal F) \ar[r]
      & H^0(X,\mathcal{F}/\mathfrak{m}_x^s\cal F)
      & 
    }
  \]
  where the top row remains exact by the assumption that $\mathcal{F}$ separates
  $(s-1)$-jets.
  By the snake lemma, we see that the left vertical arrow is surjective if and
  only if the middle vertical arrow is surjective, as desired.
  \par We now prove the lemma.
  Suppose $\mathcal{F}$ separates $i$-jets and $\mathcal{G}$ separates $j$-jets.
  We then have the commutative diagram
  \[
    \xymatrix{
      H^0(X,\mathfrak{m}_x^i\mathcal{F}) \otimes H^0(X,\mathfrak{m}_x^j\mathcal{G}) \ar[r] \ar[d]
      & H^0(X,\mathfrak{m}_x^i\mathcal{F}/\mathfrak{m}_x^{i+1}\cal F \otimes
      \mathfrak{m}_x^j\mathcal{G}/\mathfrak{m}_x^{j+1}\cal G)
      \ar[d]\\
      H^0\bigl(X,\mathfrak{m}_x^{i+j}(\mathcal{F} \otimes \mathcal{G})\bigr) \ar[r]
      & H^0\bigl(X, \mathfrak{m}_x^{i+j}(\mathcal{F} \otimes
      \mathcal{G})/\mathfrak{m}_x^{i+j+1}(\mathcal{F} \otimes \mathcal{G})\bigr)
    }
  \]
  Since the top horizontal arrow is surjective by assumption, and the right
  vertical arrow is surjective, essentially by the the surjectivity of 
  \[
    \mathfrak{m}_x^i/\mathfrak{m}_x^{i+1} \otimes
    \mathfrak{m}_x^j/\mathfrak{m}_x^{j+1} \simeq (\mathfrak{m}_x^{i}\otimes\mathfrak m_x^j) \otimes
    \mathcal{O}_X/\mathfrak{m}_x \twoheadrightarrow
    \mathfrak{m}_x^{i+j}/\mathfrak{m}_x^{i+j+1},
  \]
  we see that the composition from the top left corner to the bottom right
  corner is surjective, hence the bottom horizontal arrow is surjective.
  By running through all combinations of integers $i \le s(\mathcal{F};x)$ and
  $j \le s(\mathcal{G};x)$, we see that $s(\mathcal{F};x) + s(\mathcal{G};x) \le
  s(\mathcal{F} \otimes \mathcal{G};x)$ by the argument in the previous
  paragraph.
\vskip.25cm
  The statement on symmetric powers is similar. Use the commutative diagram
\[
  \begin{gathered}
    \xymatrix{
      {\begin{matrix} H^0(X,\mathfrak{m}_x^i\Sym^m\mathcal{F})\\ \otimes\\ H^0(X,\mathfrak{m}_x^j\Sym^n\mathcal{F})\end{matrix}} \ar[r] \ar[d]
      & H^0\left(X,{\begin{matrix}\mathfrak{m}_x^i\Sym^m\mathcal{F}/\mathfrak{m}_x^{i+1}\Sym^m\mathcal{F}\\ \otimes\\  
      \mathfrak{m}_x^j\Sym^n\mathcal{F}/\mathfrak{m}_x^{j+1}\Sym^n\mathcal{F}\end{matrix}}\right)
      \ar[d]\\
      H^0(X,\mathfrak{m}_x^{i+j}\Sym^{m+n}\mathcal{F}) \ar[r]
      & H^0(X,\mathfrak{m}_x^{i+j}\Sym^{m+n}\mathcal{F}/\mathfrak{m}_x^{i+j+1}\Sym^{m+n}\cal F)
    }
  \end{gathered}
    \qedhere
  \]
\end{proof}

\begin{prop}\label{prop:seshadribiggerthanjet}Let $X$ be a projective scheme, and let $\cal V$ be a coherent sheaf on it. Assume that $\cal V$
is locally free around $x$ and $\sh(\cal V;x)\geq 0$. 
Then,
$$\sh(\cal V;x)\geq s(\cal V;x).$$
Moreover, $$\sh(\cal V;x)\geq\limsup_{k\to\infty}\frac{s(\Sym^k\cal V;x)}k.$$
\end{prop}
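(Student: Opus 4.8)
The plan is to reduce the first inequality, $\sh(\cal V;x)\ge s(\cal V;x)$, to a concrete statement about global sections of $\cal V$ vanishing along curves, and then to obtain the ``moreover'' part formally from homogeneity. Write $s\coloneqq s(\cal V;x)$; if $s=-1$ the claim is just the hypothesis $\sh(\cal V;x)\ge 0$, so I may assume $s\ge 0$. Let $\rho\colon\bb P(\cal V)\to X$ be the bundle map and $\xi=c_1(\cal O_{\bb P(\cal V)}(1))$. Since $\sh(\cal V;x)=\inf_{C\in\cal C_{\cal V,x}}\frac{\xi\cdot C}{\mult_x\rho_*C}$ by definition, it suffices to show $\xi\cdot C\ge s\cdot\mult_x\rho_*C$ for every integral curve $C\subseteq\bb P(\cal V)$ meeting the fibre $\rho^{-1}(x)$ without lying in it (if $\cal C_{\cal V,x}=\emptyset$ there is nothing to prove). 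Fix such a $C$, let $\nu\colon\widetilde C\to C$ be the normalization and $g\coloneqq\rho\circ\nu\colon\widetilde C\to X$; the inclusion $C\hookrightarrow\bb P(\cal V)$ corresponds, via the universal property of $\bb P(\cal V)$, to a line-bundle quotient $g^*\cal V\twoheadrightarrow M$ on $\widetilde C$ with $\deg M=\xi\cdot C$. Here $g$ is finite onto the curve $\rho(C)$, which passes through $x$, and for $p\in g^{-1}(x)$ one has $g^*\frak m_x\cdot\cal O_{\widetilde C,p}=\frak m_p^{m_p}$ with $m_p\coloneqq\mathrm{ord}_p(g^*\frak m_x)\ge 1$ and $\sum_{p\in g^{-1}(x)}m_p=\mult_x\rho_*C$ (compare \cite[Lemma 2.3]{ful17}).

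The key step is to produce a nonzero section $\tau\in H^0(\widetilde C,M)$ vanishing to order at least $s\,m_p$ at every $p\in g^{-1}(x)$; granting this, $\deg M\ge s\sum_p m_p=s\cdot\mult_x\rho_*C$, which is exactly what is needed. Fix one point $p_0\in g^{-1}(x)$. Because $\cal V$ separates $s$-jets at $x$, the map $H^0(X,\cal V)\to\cal V/\frak m_x^{s+1}\cal V$ is surjective; the preimage of the submodule $\frak m_x^s\cal V/\frak m_x^{s+1}\cal V$ is $H^0(X,\frak m_x^s\cal V)$, so $H^0(X,\frak m_x^s\cal V)$ surjects onto $\frak m_x^s\cal V/\frak m_x^{s+1}\cal V$, and since $\cal V$ is locally free of rank $r$ near $x$ this module is $(\frak m_x^s/\frak m_x^{s+1})\otimes_k\cal V(x)$. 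Pulling back by $g$ and using $g^*\frak m_x^s\cdot\cal O_{\widetilde C,p_0}=\frak m_{p_0}^{sm_{p_0}}$ gives a surjection of $H^0(X,\frak m_x^s\cal V)$ onto $\frak m_{p_0}^{sm_{p_0}}g^*\cal V/\frak m_{p_0}^{sm_{p_0}+1}g^*\cal V\cong\cal V(x)$; composing with the surjective fibre map $\cal V(x)=g^*\cal V|_{p_0}\twoheadrightarrow M|_{p_0}$ produces $\sigma\in H^0(X,\frak m_x^s\cal V)$ whose image $\tau\in H^0(\widetilde C,M)$ has nonzero leading term at $p_0$, so $\mathrm{ord}_{p_0}\tau=s\,m_{p_0}$ and in particular $\tau\ne 0$. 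Finally, $\sigma_x\in\frak m_x^s\cal V_x$ forces $\tau$ to vanish to order at least $s\,m_p$ at every $p\in g^{-1}(x)$, completing the proof that $\sh(\cal V;x)\ge s(\cal V;x)$.

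For the ``moreover'' statement, fix $k\ge 1$. Then $\Sym^k\cal V$ is locally free around $x$, and by homogeneity (Lemma \ref{lem:homogeneous}) together with the hypothesis, $\sh(\Sym^k\cal V;x)=k\cdot\sh(\cal V;x)\ge 0$. Applying the inequality just proved to $\Sym^k\cal V$ gives $k\cdot\sh(\cal V;x)=\sh(\Sym^k\cal V;x)\ge s(\Sym^k\cal V;x)$, that is $\sh(\cal V;x)\ge s(\Sym^k\cal V;x)/k$; taking $\limsup_{k\to\infty}$ finishes.

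The step I expect to be the main obstacle is the construction of $\tau$ in the second paragraph. A single section of $\cal V$ vanishing to order $s$ at $x$ may restrict to zero on $\widetilde C$, so one cannot simply take an arbitrary such section; the point is to use $s$-jet separation not merely to control vanishing at $x$ but to prescribe the leading term at a chosen point $p_0$ over $x$, and then to verify that this leading term survives the quotient $g^*\cal V\twoheadrightarrow M$ — this is precisely where local freeness of $\cal V$ near $x$ is used. The elementary multiplicity identity $\sum_{p\in g^{-1}(x)}\mathrm{ord}_p(g^*\frak m_x)=\mult_x\rho_*C$ then upgrades the bound at $p_0$ to the required $\deg M\ge s\cdot\mult_x\rho_*C$.
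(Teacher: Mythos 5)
Your argument is correct, but it follows a genuinely different route from the paper. The paper passes to the blow-up $Y'=\bl_{\bb P(\cal V(x))}\bb P(\cal V)$ and shows that $\pi'^*\xi-s\rho'^*E$ is globally generated along $\rho'^{-1}E$, using the same jet-separation surjection $H^0(X,\frak m_x^s\cal V)\twoheadrightarrow \frak m_x^s\cal V/\frak m_x^{s+1}\cal V\simeq(\frak m_x^s/\frak m_x^{s+1})\otimes\cal V(x)$ together with the identification $\frak m_x^s/\frak m_x^{s+1}=\pi_*\cal O_E(-sE)$ and the very ampleness of $\cal O_E(-sE)$; it then concludes curve by curve via Proposition \ref{prop:altintsh}. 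Since that identification only holds when $x$ is smooth or $s$ is large, the paper needs an extra detour through symmetric powers and homogeneity to treat singular $x$. Your proof instead works directly on the normalization $\widetilde C$ of each $C\in\cal C_{\cal V,x}$: jet separation is used to prescribe the leading term of a pulled-back section at a chosen point $p_0$ over $x$ in the quotient line bundle $M$, so that the resulting $\tau\in H^0(\widetilde C,M)$ is nonzero with $\operatorname{ord}_p\tau\ge s\,m_p$ for all $p\in g^{-1}(x)$, and the degree bound $\deg M\ge s\sum_p m_p=s\cdot\mult_x\rho_*C$ follows; this is insensitive to whether $x$ is singular and avoids the blow-up and global-generation machinery altogether, at the cost of the local bookkeeping of orders $m_p=\operatorname{ord}_p(\frak m_x\cal O_{\widetilde C})$ (your identification of the composite with $\phi\otimes\operatorname{id}$, where $\phi\colon\frak m_x^s/\frak m_x^{s+1}\to\frak m_{p_0}^{sm_{p_0}}/\frak m_{p_0}^{sm_{p_0}+1}$ is surjective because some $f_1\in\frak m_x$ pulls back with order exactly $m_{p_0}$, is the one point worth spelling out, and it checks out). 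Your observation that a section vanishing to order $s$ at $x$ might die on $C$, so one must prescribe the leading term in $M(p_0)$ rather than in $\cal V(x)$, is precisely the right subtlety, and local freeness at $x$ enters exactly there, as in the paper. For the ``moreover'' part the two proofs are both formal but run in opposite directions: you deduce it from the single-sheaf inequality applied to $\Sym^k\cal V$ via homogeneity (Lemma \ref{lem:homogeneous}), whereas the paper deduces the single-sheaf inequality from the asymptotic one using superadditivity of $s(\Sym^k\cal V;x)$ and Fekete's lemma; either direction is fine.
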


\begin{proof}
Note that the second statement implies the first by Lemma \ref{lem:ito37},
since the limit supremum is a supremum by Fekete's lemma.
We have natural maps
$\frak m_x^s\subseteq\pi_*\cal O_{\bl_xX}(-sE)$ for all $s\geq 0$. They are equalities if $x$ is smooth,
or if $s$ is sufficiently large. In either case, for all coherent $\cal V$ that
are locally free around $x$, 
they induce isomorphisms
\begin{equation}\label{eq:liftingbound-1}H^0\bigl(X,\frak m_x^s\cal V/\frak m_x^{s+1}\cal V\bigr)\simeq H^0\bigl(\bb P(\cal V(x)),\cal O(1)\bigr)\otimes H^0\bigl(E,\cal
O_E(-sE)\bigr).
\end{equation}
This is because $\frak m_x^s\cal V/\frak m_x^{s+1}\cal V\simeq
\frak m_x^s/\frak m_x^{s+1}\otimes\cal V(x)$ by the fact that $\cal V$ is flat
at $x$, and because 
$\frak m_x^s/\frak m_x^{s+1}=\pi_*\cal O_E(-sE)$, 
under our assumptions on $x$ and $s$.
When $s\geq 1$, these assumptions also imply that $\cal O_E(-sE)$ is very ample on $E$. When $s=0$, it is globally generated.
The same are true of $\cal O_{\bb P(\cal V(x))}(1)\boxtimes\cal O_E(-sE)$,
and its sections generate the pullback to $\rho'^{-1}E$.
This pullback is $\cal O_{\rho^{\prime-1}E}((\pi'^*\xi-s\rho'^*E)|_{\rho'^{-1}E})$.
\par Let $s\coloneqq   s(\cal V;x)$. Assume $s\geq 0$. 
As in the proof of Lemma \ref{lem:ito37}, we have a surjection
\begin{equation}\label{eq:liftingbound0}H^0(X,\frak m_x^s\cal V)\twoheadrightarrow H^0\bigl(X,\frak m_x^s\cal V/\frak
m_x^{s+1}\cal V\bigr).
\end{equation}
When $x$ is smooth or $s$ is large, then
$\pi'^*\xi-s\rho'^*E$ is globally generated along $\rho'^{-1}E$.
For this, in view of \eqref{eq:liftingbound-1} and \eqref{eq:liftingbound0},
it is enough to show that $H^0(X,\frak m_x^s\cal V)$
determine naturally a subspace of 
$H^0\bigl(Y',\pi'^*\cal O_{Y}(1)\otimes\rho'^*\cal O_{\bl_xX}(-sE)\bigr)$.
Consider the commutative diagram
\[
  \xymatrix{
    \frak m_x^s\cal V \ar@{->>}[r]\ar[d]_{\simeq}
    & \frak m_x^s\cal V/\frak m_x^{s+1}\cal V\ar[d]^{\simeq}\\
    \pi_*\bigl(\pi^*\cal V \otimes \cal O_{\bl_xX}(-sE)\bigr) \ar[r]\ar[d]
    & \pi_*\bigl(\pi^*\cal V \otimes \cal O_E(-sE)\bigr)\ar[d]^{\simeq}\\
    \pi_*\bigl(\pi^*\rho_*\cal O_{\bb P(\cal V)}(1) \otimes \cal
    O_{\bl_xX}(-sE)\bigr)
    \ar[r]\ar[d]
    & \pi_*\bigl(\pi^*\rho_*\cal O_{\bb P(\cal V)}(1) \otimes \cal
    O_E(-sE)\bigr)\ar[d]^{\simeq}\\
    \pi_*\bigl(\rho'_*\pi^{\prime*}\cal O_{\bb P(\cal V)}(1) \otimes \cal
    O_{\bl_xX}(-sE)\bigr) \ar[r]\ar[d]_{\simeq}
    & \pi_*\bigl(\rho'_*\pi^{\prime*}\cal O_{\bb P(\cal V)}(1) \otimes \cal
    O_E(-sE)\bigr)\ar[d]^{\simeq}\\
    (\pi \circ \rho')_*\bigl(\pi^{\prime*}\cal O_{\bb P(\cal V)}(1) \otimes
    \rho^{\prime*}\cal O_{\bl_xX}(-sE)\bigr) \ar[r]
    & (\pi \circ \rho')_*\bigl(\pi^{\prime*}\cal O_{\bb P(\cal V)}(1) \otimes
    \rho^{\prime*}\cal O_{E}(-sE)\bigr)
  }
\]
where the top vertical arrows are isomorphisms by the fact that $\cal V$ is
locally free
at $x$, and the vertical arrows in the second row are obtained from the natural map
$V \to \rho_*\cal O_{\bb P(\cal V)}(1)$; the map on
the right is an isomorphism since $\cal V$ is locally free at $x$.
The arrows in the third row are obtained from base change for the cartesian
diagram in Notation \ref{notn:seshnot}, where the right arrow is an isomorphism
by cohomology and base change since $\cal V$ is locally free at $x$, hence $\pi$
is flat around $x$.
The bottom vertical arrows are isomorphisms by the projection formula.
After taking global sections, the bottom horizontal arrow is still surjective by
the commutativity of the diagram.
Thus, since $\pi^{\prime*}\cal O_{\bb P(\cal V)}(1) \otimes
\rho^{\prime*}\cal O_{E}(-sE)$ is globally generated, we see that
$\pi^{\prime*}\cal O_{\bb P(\cal V)}(1) \otimes \rho^{\prime*}\cal
O_{\bl_xX}(-sE)$ is globally generated along $\rho^{\prime-1}(E)$.

Let $C'\in\cal C'_{\cal V,x}$. 
By Proposition \ref{prop:altintsh}, when $s\geq 0$, to show $\sh(\cal V;x)\geq s$, it is enough to prove that 
$$(\pi'^*\xi-s\rho'^*E)\cdot C'\geq 0.$$
Use global generation along $\rho'^{-1}E$ to produce an 
effective divisor in the class $\pi'^*\xi-s\rho'^*E$ that does not pass through $y$, where $y$ is any point of $C'\cap\rho'^{-1}E$.

If $x$ is smooth, the argument above works when $s\geq 0$. When $s=-1$, there is nothing to prove. 

If $x$ is singular, and if $s(\Sym^k\cal V;x)>0$ for some $k$,
then by Lemma \ref{lem:ito37} we have that $s(\Sym^k\cal V;x)$ is arbitrarily large as $k$ grows. 
Repeat the arguments above for all $\Sym^k\cal V$, 
and use the homogeneity of $\sh(-;x)$ from Lemma \ref{lem:homogeneous}.
Assume $s(\Sym^k\cal V;x)\leq 0$ for all $k$. 
If $s(\Sym^k\cal V;x)=0$, then $\Sym^k\cal V$ is globally generated at $x$, therefore $\sh(\Sym^k\cal V;x)\geq 0$ by Example \ref{ex:ggnef}.
By homogeneity, $\sh(\cal V;x)\geq 0$. 
If $s(\Sym^k\cal V;x)=-1$ for all $k$, then there is nothing to prove. 
\end{proof}

\begin{proof}[Proof of Theorem \ref{thrm:jetsep}]
  Write $\varepsilon = \sh(\mathcal{V};x)$ and $s_k = s(\Sym^k \mathcal{V};x)$.
Let $H$ be a very ample divisor on $X$ that separates $1$-jets. 
Since $\cal V$ is ample, $\Sym^k\cal V\otimes\cal O_X(-H)$ is eventually
globally generated by Remark \ref{rmk:amplesheaves}, hence $s_k\geq 1$ for $k$ sufficiently large by
Lemma \ref{lem:ito37}.
 By Proposition \ref{prop:seshadribiggerthanjet},
it is enough to prove
\[
  \varepsilon\leq\lim_{k\to\infty}\frac{s_k}k.
\]

\noindent Note that the limit exists by Fekete's Lemma, since the
sequence $s_k$ is superadditive by Lemma \ref{lem:ito37}.

  Let $0 < \delta \ll 1$ be arbitrary, and fix positive integers $p_0,q_0$
  such that
  \[
    \varepsilon-\delta < \frac{p_0}{q_0} < \varepsilon.
  \]
  Then, $q_0\pi^{\prime*}\xi - p_0\rho^{\prime*}E$ is ample. 
Indeed, the cone generated by 
$\pi'^*\xi$ and $\pi'^*\xi-\varepsilon\rho'^*E$ is contained in the nef cone, and meets the ample cone because $-\rho'^*E$ is $\pi'$-ample and $\xi$ is ample. Consequently, all the classes in its interior are ample. By Fujita's
vanishing theorem, there exists a natural number $m_0$ such that
  \[
    H^1\bigl(Y',
    \mathcal{O}_{Y'}
    \bigl(m(q_0\pi^{\prime*}\xi - p_0\rho^{\prime*}E) + P\bigr) \bigr) = 0
  \]
  for all $m \ge m_0$ and $P$ a nef Cartier divisor on $Y'$, where $Y' =
  \bl_{\bb P(\cal V(x))}\bb P(\cal V)$ as in Notation \ref{notn:seshnot}.
  Now given any integer $k > m_0q_0$, write $k = mq_0 + q_1$ with $0 \le q_1 <
  q_0$.
  Applying the vanishing above for $P = q_1\pi^{\prime*}\xi$, we have that
  \[
    H^1\bigl(Y',
    \mathcal{O}_{Y'}
    \bigl(k\pi^{\prime*}\xi - mp_0\rho^{\prime*}E \bigr) \bigr) = 0.
  \]
  By the Leray spectral sequence \cite[Lemma 5.4.24]{laz04}, this
  cohomology group is isomorphic to
  $H^1\bigl(\mathbb{P}(\mathcal{V}),\mathcal{O}_{\mathbb{P}(\mathcal{V})}(k)
  \otimes \mathcal{I}_{\mathbb{P}(\mathcal{V}_x)}^{mp_0}\bigr)$ for $k \gg 0$
  (which implies $m \gg 0$), even if the point $x$ is singular.
  Now for $k \gg 0$ (which implies $m \gg 0$), the right vertical arrow in the
  commutative diagram
  \[
    \xymatrix{
      H^0(X,\Sym^k\mathcal{V}) \ar[r]\ar@{=}[d]
      & H^0(X,\Sym^k\mathcal{V} \otimes \mathcal{O}_X/\mathfrak{m}_x^{mp_0})
      \ar[d]^\simeq\\
      H^0\bigl(\mathbb{P}(\mathcal{V}),
      \mathcal{O}_{\mathbb{P}(\mathcal{V})}(k)\bigr) \ar[r]
      & H^0\bigl(\mathbb{P}(\mathcal{V}),
      \mathcal{O}_{\mathbb{P}(\mathcal{V})}(k) \otimes
      \mathcal{O}_{\mathbb{P}(\mathcal{V})}/
      \mathcal{I}_{\mathbb{P}(\mathcal{V}_x)}^{mp_0}\bigr)
    }
  \]
  is an isomorphism by the base change isomorphism
  \eqref{eq:basechangeiso} applied to 
  $\Spec(\mathcal{O}_X/\mathfrak{m}_x^{mp_0}) \subseteq X$.
  The bottom arrow is therefore surjective for $m \gg 0$.
  Thus, $\Sym^k\mathcal{V}$ separates $mp_0-1$ jets, and
  \[
    \frac{s_k}{k} \ge \frac{mp_0 - 1}{k} \ge \frac{mp_0 - 1}{(m+1)q_0} =
    \frac{m}{m+1} \cdot \frac{p_0}{q_0} - \frac{1}{(m+1)q_0} >
    \frac{m}{m+1}(\varepsilon - \delta) - \frac{1}{(m+1)q_0}.
  \]
  Taking limit infima as $k \to \infty$, we have $m \to \infty$, hence
  \[
    \liminf_{k \to \infty} \frac{s_k}{k} \ge \varepsilon - \delta,
  \]
  and since $\delta$ was arbitrary, the conclusion follows.
\end{proof}

It is known that lower bounds on Seshadri constants of big and nef invertible sheaves $\cal L$ lead to lower bounds on the jet separation of adjoint bundles $\omega_X\otimes\cal L$. See \cite[Proposition 6.8]{dem}. In this direction, Hacon proves

\begin{thrm}[{\cite[Theorem 1.7]{hacon}}]\label{thrm:hacon17}Let $\cal V$ be an ample locally free
  sheaf of finite rank $r$ on a complex projective manifold of dimension $n$. Let
  $\beta\in\bb Q_+$ such that $\pi^*\cal V^{\vee}\langle\beta\xi\rangle$ is
  ample. Set
  \begin{equation}\label{eq:Mhacon}
    M\coloneqq  \min_{0\leq i\leq n-1}\left[\frac 1{{\binom{n+r-i}{r}}^\frac 1{n-i}}\cdot\frac 1{n-i}\right].
  \end{equation}
Then for any integer $\lambda>{n\beta}/{M}$, the locally free sheaf $\omega_X\otimes\Sym^{\lambda}\cal V\otimes\det\cal V$ is generated by global
sections at all very general points $x\in X$.
\end{thrm}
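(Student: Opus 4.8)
The plan is to transfer the statement to $\bb P(\cal V)$, realize $\omega_X\otimes\Sym^\lambda\cal V\otimes\det\cal V$ as a direct image of an adjoint line bundle via the relative canonical bundle formula, and then obtain the necessary vanishing by a Nadel/Kawamata--Viehweg argument on the blow-up of $\bb P(\cal V)$ along the fiber over $x$. Concretely, write $\rho\colon\bb P(\cal V)\to X$ for the bundle map (this is the $\pi$ of the statement) and $\xi=c_1(\cal O_{\bb P(\cal V)}(1))$. The relative canonical bundle formula $\omega_{\bb P(\cal V)/X}\simeq\cal O_{\bb P(\cal V)}(-r)\otimes\rho^*\det\cal V$ together with the projection formula and the vanishing of $R^{>0}\rho_*$ on $\bb P^{r-1}$-bundles give
\[
  \omega_X\otimes\Sym^\lambda\cal V\otimes\det\cal V\;\simeq\;\rho_*\bigl(\omega_{\bb P(\cal V)}\otimes\cal O_{\bb P(\cal V)}(\lambda+r)\bigr),
\]
with no higher direct images. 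By the Leray spectral sequence and the exact sequence $0\to\frak m_x\cal F\to\cal F\to\cal F\otimes k(x)\to0$, global generation of $\cal F:=\omega_X\otimes\Sym^\lambda\cal V\otimes\det\cal V$ at $x$ follows once
\[
  H^1\bigl(\bb P(\cal V),\ \cal I_{\rho^{-1}(x)}\otimes\omega_{\bb P(\cal V)}\otimes\cal O_{\bb P(\cal V)}(\lambda+r)\bigr)=0,
\]
where $\rho^{-1}(x)\simeq\bb P^{r-1}$ is smooth of codimension $n$ in $\bb P(\cal V)$, with trivial normal bundle.

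Next I would blow up the fiber. Let $\pi'\colon Y'\to\bb P(\cal V)$ be the blow-up of $\rho^{-1}(x)$, with exceptional divisor $\rho'^*E$ of discrepancy $n-1$; this is exactly the square of Notation~\ref{notn:seshnot} for $Y=\bb P(\cal V)$. Using $\omega_{Y'}\simeq\pi'^*\omega_{\bb P(\cal V)}\otimes\cal O_{Y'}((n-1)\rho'^*E)$, the displayed $H^1$ is identified with $H^1\bigl(Y',\ \omega_{Y'}\otimes\pi'^*\cal O_{\bb P(\cal V)}(\lambda+r)\otimes\cal O_{Y'}(-n\rho'^*E)\bigr)$, which vanishes by Kawamata--Viehweg once $(\lambda+r)\pi'^*\xi-n\rho'^*E$ is big and nef; more flexibly, and this is what yields the sharp constant, it suffices to produce an effective $\bb Q$-divisor $\Delta\equiv c(\lambda+r)\xi$ on $\bb P(\cal V)$ with $0<c<1$ and $\cal J(\bb P(\cal V),\Delta)=\cal I_{\rho^{-1}(x)}$, and to apply Nadel vanishing to $\omega_{\bb P(\cal V)}\otimes\cal O_{\bb P(\cal V)}(\lambda+r)\otimes\cal J$, using that $(1-c)(\lambda+r)\xi$ is ample (note $\xi$ is ample since $\cal V$ is). By Bertini such a $\Delta=\tfrac{1}{m}D$ exists for a general $D\in|m(\lambda+r)\xi|$ with $\mult_{\rho^{-1}(x)}\Delta$ prescribed in the interval $(n,n+1)$; this multiplicity range forces $\cal J(\Delta)=\cal I_{\rho^{-1}(x)}$, and then Nadel vanishing lifts all sections from $\rho^{-1}(x)$, which pushes down to global generation of $\cal F$ at $x$.

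So the content is a counting statement: $\Delta$ as above exists as soon as the conditions imposed by vanishing to the prescribed order along $\rho^{-1}(x)$ are dominated by $h^0(\cal O_{\bb P(\cal V)}(m(\lambda+r)\xi))$. Estimating the latter by $\xi^{n+r-1}=s_n(\cal V^\vee)$ and the former by sections on the $\bb P^{r-1}$-fibers --- and, to sharpen, restricting to $\rho^{-1}(Z)$ for subvarieties $Z\subseteq X$ of codimension $i$ through $x$ and using $s_{n-i}(\cal V^\vee)\cap[Z]$ exactly as in the Example following Proposition~\ref{prop:seshadrihigherdimension} --- the best value of $\mult_{\rho^{-1}(x)}\Delta$ achievable with $c<1$ is controlled by a minimum over $0\le i\le n-1$. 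The hypothesis that $\rho^*\cal V^\vee\langle\beta\xi\rangle$ (the $\pi^*\cal V^\vee\langle\beta\xi\rangle$ of the statement) be ample is what bounds the relevant Segre intersection numbers from below in terms of $\beta$: from $(\eta+\beta q^*\xi)^{n+2r-2}>0$ on $\bb P_{\bb P(\cal V)}(\rho^*\cal V^\vee)$, where $\eta$ is the tautological class and $q$ the projection, pushing forward yields inequalities relating $\xi$, $\beta$, and the Segre classes of $\cal V^\vee$. The constant $M$, whose binomials $\binom{n+r-i}{r}$ come from these symmetric-power and fiber counts, packages the resulting optimization over $i$, and the constraint ``$\mult_{\rho^{-1}(x)}\Delta>n$ with $c<1$'' --- the $n$ being the discrepancy $n-1$ plus the order $1$ needed to lift sections --- becomes solvable precisely when $\lambda>n\beta/M$. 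Taking $x$ very general ensures the generic volume estimates used here hold, and one may also appeal to the semicontinuity of Proposition~\ref{prop:semicont}.

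The main obstacle is this last step: constructing $\Delta$ with exactly the right multiplicity along $\rho^{-1}(x)$, confirming $\cal J(\Delta)=\cal I_{\rho^{-1}(x)}$, and checking that the bookkeeping --- fiber degree, discrepancy $n-1$, the shift $+r$ from the canonical bundle formula, the $\beta$-rescaling from the ampleness hypothesis, and the minimum over codimensions $i$ --- collapses cleanly to the threshold $\lambda>n\beta/M$. The cohomological reductions of the earlier steps are routine once this positivity input is in hand; indeed the whole argument is the global-generation ($s=-1$) instance of the jet-separation-for-adjoint-bundles principle also behind Proposition~\ref{prop:jetsepbound}.
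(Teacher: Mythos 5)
Your cohomological skeleton is the same as the paper's: the theorem is the $s=0$ instance of the adjoint jet-separation bound (Proposition \ref{prop:jetsepbound}), obtained by writing $\omega_X\otimes\Sym^\lambda\cal V\otimes\det\cal V=\rho_*\bigl(\omega_{\bb P(\cal V)}\otimes\cal O_{\bb P(\cal V)}(\lambda+r)\bigr)$, blowing up the fiber $\rho^{-1}(x)$, and applying Kawamata--Viehweg to $(\lambda+r)\pi'^*\xi-n\rho'^*E$. That reduction is fine, but it only yields the theorem once one knows $\sh(\cal V;x)>n/(\lambda+r)$ at very general $x$. The paper gets this input by citing Hacon's lower bound $\sh(\cal V;x)\ge M/\beta$ at very general points (\cite[Theorem 1.5.a.i]{hacon}, the higher-rank EKL bound), after which $\lambda>n\beta/M$ gives $n/(\lambda+r)<M/\beta$ and Proposition \ref{prop:jetsepbound} finishes. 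You instead try to manufacture the positivity on the spot with a Nadel argument, and that is where the gap lies.

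The claim that a Bertini-general $\Delta\equiv c(\lambda+r)\xi$ with $\mult_{\rho^{-1}(x)}\Delta\in(n,n+1)$ automatically satisfies $\cal J(\bb P(\cal V),\Delta)=\cal I_{\rho^{-1}(x)}$ is not true, and it is precisely the hard point. Multiplicity $>n$ along the codimension-$n$ fiber gives the inclusion $\cal J(\Delta)\subseteq\cal I_{\rho^{-1}(x)}$, but a divisor produced by a dimension count can be far more singular along larger subvarieties through the fiber --- for instance along $\rho^{-1}(Z)$ for a curve $Z$ through $x$ --- in which case $\cal J(\Delta)$ acquires extra co-support meeting $\rho^{-1}(x)$, Nadel vanishing no longer lifts the needed sections, and the argument collapses. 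Excluding this at a very general point is exactly the content of the Ein--K\"uchle--Lazarsfeld differencing technique and of Hacon's generalization of it: one moves $x$ in a family, differentiates the constructed divisors, and runs an induction on the dimension of a would-be Seshadri-exceptional subvariety; this is where the minimum over $0\le i\le n-1$, the binomials $\binom{n+r-i}{r}$, and hence the constant $M$ genuinely come from, as does the role of $\beta$ through intersection numbers with the ample class $\pi^*\cal V^{\vee}\langle\beta\xi\rangle$. Your sketch names these ingredients but does not carry out the construction (as you yourself note in the last paragraph), so the proof is incomplete as written: either quote \cite[Theorem 1.5.a.i]{hacon} and conclude via Proposition \ref{prop:jetsepbound}, as the paper does, or supply the EKL-type argument in full.
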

\cite[Theorem $5.2.2.1'$]{Cataldo} is a result of similar flavor.
Hacon's global generation result is a corollary of his lower bounds on Seshadri constants \cite[Theorem 1.5.a.i]{hacon}.
These generalize the line bundle case of \cite{ekl95}.
Theorem \ref{thrm:hacon17} is then an instance of the following jet separation bound:

\begin{prop}\label{prop:jetsepbound}Let $X$ be a complex projective manifold of dimension $n$, and let $\cal V$ be an ample (or $\cal O_{\bb P(\cal V)}(1)$ is only big and nef) locally free sheaf of finite rank $r\geq 1$ on $X$.
If $p\geq 0$ is such that $\sh(\cal V;x)>\frac{n+s}{p+r}$, then $\omega_X\otimes\Sym^p\cal V\otimes\det\cal V$ separates $s$-jets at $x$. 

In particular, if $\sh(\cal V;x)>\frac nr$ for all $x\in X$, then $\omega_X\otimes\det\cal V$ is globally generated.
\end{prop}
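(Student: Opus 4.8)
The plan is to imitate Demailly's line-bundle argument \cite[Proposition 6.8]{dem} inside $\bb P(\cal V)$, along the lines of the proof of Theorem \ref{thrm:jetsep}, reducing the $s$-jet separation at $x$ to a Kawamata--Viehweg vanishing on the blow-up $Y'$ of Notation \ref{notn:seshnot} with $Y=\bb P(\cal V)$. We may assume $s\ge0$. Since $X$ is a manifold and $\cal V$ is locally free, $\bb P(\cal V)$ is smooth, $\rho$ is flat, $Z\coloneqq\rho^{-1}(x)\cong\bb P^{r-1}$ is a smooth subvariety of codimension $n$, and $\pi'\colon Y'\to\bb P(\cal V)$ is the blow-up of $Z$ with exceptional divisor $\rho'^*E$. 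Set $\xi\coloneqq c_1(\cal O_{\bb P(\cal V)}(1))$ and $\cal G\coloneqq\omega_{\bb P(\cal V)}\otimes\cal O_{\bb P(\cal V)}(p+r)$. Two standard formulas will be used: the relative Euler sequence gives $\omega_{\bb P(\cal V)}\simeq\rho^*(\omega_X\otimes\det\cal V)\otimes\cal O_{\bb P(\cal V)}(-r)$, and the blow-up of the smooth center $Z$ gives $\pi'^*\omega_{\bb P(\cal V)}\simeq\omega_{Y'}\otimes\cal O_{Y'}(-(n-1)\rho'^*E)$.

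Combining these with the projection formula and $\rho_*\cal O_{\bb P(\cal V)}(p)=\Sym^p\cal V$, $R^{>0}\rho_*\cal O_{\bb P(\cal V)}(p)=0$ (valid for $p\ge0$ since $\cal V$ is locally free) yields $\rho_*\cal G=\omega_X\otimes\Sym^p\cal V\otimes\det\cal V$ and
\[
  \pi'^*\cal G\otimes\cal O_{Y'}\bigl(-(s+1)\rho'^*E\bigr)\simeq\omega_{Y'}\otimes\cal O_{Y'}(D),\qquad D\coloneqq(p+r)\pi'^*\xi-(n+s)\rho'^*E.
\]
As in the proof of Theorem \ref{thrm:jetsep}, base change \eqref{eq:basechangeiso} along $\Spec(\cal O_X/\frak m_x^{s+1})\hookrightarrow X$ (with the twist $\omega_X\otimes\det\cal V$ riding along by the projection formula, using flatness of $\rho$ at $x$) identifies $H^0(X,\rho_*\cal G/\frak m_x^{s+1}\rho_*\cal G)$ with $H^0(\bb P(\cal V),\cal G\otimes\cal O_{\bb P(\cal V)}/\cal I_Z^{s+1})$; since $H^0(X,\rho_*\cal G)=H^0(\bb P(\cal V),\cal G)$, the sheaf $\omega_X\otimes\Sym^p\cal V\otimes\det\cal V$ separates $s$-jets at $x$ once $H^1(\bb P(\cal V),\cal G\otimes\cal I_Z^{s+1})=0$. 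As $Z$ is smooth, $\cal I_Z^{s+1}\cal O_{Y'}=\cal O_{Y'}(-(s+1)\rho'^*E)$ and $R^{>0}\pi'_*\cal O_{Y'}(-m\rho'^*E)=0$ for all $m\ge0$ (a computation on the projectivized normal bundle of $Z$), so by the Leray spectral sequence that group equals $H^1(Y',\omega_{Y'}\otimes\cal O_{Y'}(D))$.

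It then remains to prove $D$ is big and nef. Because $\cal O_{\bb P(\cal V)}(1)$ is big and nef, $\xi$ is nef, so Remark \ref{rmk:easyseshadri}.(c) gives $\sh(\cal V;x)=\sup\{t\ge0:\pi'^*\xi-t\rho'^*E\in\Nef^1(Y')\}$; since $\tfrac{n+s}{p+r}<\sh(\cal V;x)$ and $\Nef^1(Y')$ is convex, $\pi'^*\xi-\tfrac{n+s}{p+r}\rho'^*E$ is nef, hence so is $D$. For bigness, fix $\epsilon>0$ with $\tfrac{n+s}{(1-\epsilon)(p+r)}<\sh(\cal V;x)$ and write
\[
  D=(p+r)\Bigl[(1-\epsilon)\bigl(\pi'^*\xi-\tfrac{n+s}{(1-\epsilon)(p+r)}\rho'^*E\bigr)+\epsilon\,\pi'^*\xi\Bigr];
\]
the bracketed class is the sum of a nef class and $\epsilon$ times $\pi'^*\xi$, which is big and nef as the pullback of the big and nef class $\xi$ under the birational morphism $\pi'$, so $D$ is big and nef. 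Kawamata--Viehweg vanishing then gives $H^1(Y',\omega_{Y'}\otimes\cal O_{Y'}(D))=0$, completing the proof; the final assertion is the case $p=0$, $s=0$ applied at every point of $X$.

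The delicate point is not any individual vanishing but the bookkeeping that makes $\pi'^*\cal G(-(s+1)\rho'^*E)$ equal to $\omega_{Y'}$ twisted by exactly $(p+r)\pi'^*\xi-(n+s)\rho'^*E$, so that the hypothesis $\sh(\cal V;x)>\tfrac{n+s}{p+r}$ is precisely what positivizes $D$: here the two canonical-bundle formulas and the discrepancy $n-1$ of the blow-up of the fibre must all fit together. One also has to notice that bigness of $D$ (needed for Kawamata--Viehweg, and the reason the weaker hypothesis that $\cal O_{\bb P(\cal V)}(1)$ is only big and nef suffices rather than ampleness) comes for free by perturbing inside the region where $\pi'^*\xi-t\rho'^*E$ remains nef.
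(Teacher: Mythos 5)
Your proposal is correct and follows essentially the same route as the paper: reduce $s$-jet separation of $\omega_X\otimes\Sym^p\cal V\otimes\det\cal V$ to the vanishing of $H^1\bigl(Y',\omega_{Y'}\otimes\cal O_{Y'}((p+r)\pi'^*\xi-(n+s)\rho'^*E)\bigr)$ via the relative Euler sequence and the blow-up canonical bundle formula, check nefness of this class from Remark \ref{rmk:easyseshadri} and bigness by perturbing with a small multiple of the big class $\pi'^*\xi$, and conclude by Kawamata--Viehweg. The paper's proof (modeled on the Griffiths vanishing argument in \cite[Theorem 7.3.1]{laz042}) compresses the bookkeeping you spell out, but the decomposition and the key vanishing are the same.
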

Compare with \cite[Proposition 6.8]{dem}.
A relative version of this argument yields a higher-rank analogue of
\cite[Theorem 2.2]{Cataldoaif}; see Theorem \ref{thrm:dmthmaanalogue}.

\begin{proof}We follow the proof of the Griffiths vanishing result in \cite[Theorem 7.3.1]{laz042}. We prove that 
$H^1\bigl(X, \omega_X\otimes\Sym^p\cal V\otimes\det\cal V\otimes\frak m_x^{s+1}\bigr)=0$.
This is equivalent to 
$$H^1\bigl(\bb P(\pi^*\cal V), \omega_{\bb P(\pi^*\cal V)}\otimes\cal O_{\bb P(\pi^*\cal V)}(p+r)\otimes\rho'^*\cal O(-(n+s)E)\bigr)=0.$$
By Remark \ref{rmk:easyseshadri}, we know that $(p+r)\pi'^*\xi-(n+s)\rho'^*E$ is nef. 
It is also big as it is a positive combination between the big divisor $\pi'^*\xi$ and the nef (so pseudo-effective) $\pi'^*\xi-\sh(\cal V;x)\rho'^*E$. 
The conclusion follows from the Kawamata--Viehweg vanishing theorem.
\end{proof}

\section{Base loci}
\label{section:loci}
Building on ideas of Nakamaye, \cite[Remark 6.5]{ELMNP} proves that if $D$ is a big and nef divisor on a smooth projective variety,
then the Seshadri constants of $D$ determine the augmented base locus:
\[\bb B_+(D)=\{x\in X\st \sh(D;x)=0\}.\]
We aim to prove a generalization to sheaves.
Let $\cal V$ be a coherent sheaf on a protective scheme $X$.
\cite[Definition 2.1]{bundleloci} defines the \emph{base locus} of $\cal V$ as 
\[\bs(\cal V)\coloneqq  \{x\in X\st H^0(X,\cal V)\to \cal V(x)\text{ is not surjective}\}.\]
With notation as in Notation \ref{notn:seshnot}, when $\cal V$ is locally free,
the relation with the base locus of $\cal O_{\bb P(\cal V)}(1)$ is given by
\[\rho\bigl(\bs\bigl(\cal O_{\bb P(\cal V)}(1)\bigr)\bigr)=\bs(\cal V).\]
\begin{rmk}\label{rmk:baselocusbundle}
More precisely, $\bs(\cal V)=\Supp\cal Q$ and, $\bs(\cal O_{\bb P(\cal V)}(1))\subseteq \bb P(\cal Q)$, with equality when $\cal V$ is locally free. 
Here $\cal Q$ determined by
\[\pushQED{\qed}H^0(X;\cal V)\otimes\cal O_X\stackrel{{\rm ev}}{\longrightarrow} \cal
V\longrightarrow \cal Q\longrightarrow 0.\qedhere\popQED\]
\end{rmk}
\noindent The \emph{stable base locus} of a coherent sheaf $\cal V$ is
\[\bb B(\cal V)\coloneqq  \bigcap_{k\geq 1}\bs(\Sym^k\cal V).\]
Let $gg(\cal V)\coloneqq   X\setminus\bs(\cal V)$ be the globally generated locus of $\cal V$.
From
\begin{equation}\label{eq:gglocus}
gg(\cal V)\subseteq gg(\Sym^m\cal V)\subseteq gg(\Sym^k\Sym^m\cal V)\subseteq gg(\Sym^{km}\cal V),
\end{equation}
we deduce that 
\[\bb B(\cal V)\subseteq\bb B(\Sym^m\cal V)\]
for all $m\geq 1$.
While the inclusion
\[\rho\bigl(\bb B\bigl(\cal O_{\bb P(\cal V)}(1)\bigr)\bigr)\subseteq\bb B(\cal
V)\]
is easy to prove (see \cite[p.\ 233]{bundleloci}), equality may fail, 
even when $\cal V$ is locally free (see \cite[Example 3.2]{miurb}).
However, equality does hold if one allows perturbations.
\begin{defn}[{\cite[Definition 2.4]{bundleloci}}]
  The \emph{augmented base locus} of a coherent sheaf $\cal V$ is
\[\bb B_+(\cal V)\coloneqq  \bigcap_{k\geq 0}\bb B\bigl(\Sym^k\cal V\otimes\cal
O_X(-H)\bigr),\]
where $H$ is any ample divisor on $X$.
\end{defn}
To show that the definition is independent of $H$, we prove the following:
\begin{lem}\label{lem:bplususebaselocus}Let $\cal V$ be a coherent sheaf on a projective scheme over an algebraically closed field. 
Then for all ample divisors $H$, 
\[\bigcap_{k\geq 0}\bb B\bigl(\Sym^k\cal V\otimes\cal
O_X(-H)\bigr)=\bigcap_{k\geq 0}\bs\bigl(\Sym^k\cal V\otimes\cal O_X(-H)\bigr).\]
If $H$ is ample and globally generated, then the intersection on the right-hand
side stabilizes to $\bs\bigl(\Sym^k\cal V\otimes\cal O_X(-H)\bigr)$ for all sufficiently divisible $k$.
\end{lem}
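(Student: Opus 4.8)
The plan is to abbreviate $\cal W_k\coloneqq\Sym^k\cal V\otimes\cal O_X(-H)$, so that $\Sym^m\cal W_k=\Sym^m(\Sym^k\cal V)\otimes\cal O_X(-mH)$, and to use the natural surjection $\Sym^m(\Sym^k\cal V)\twoheadrightarrow\Sym^{mk}\cal V$ (already invoked in \eqref{eq:gglocus}), which after twisting by a line bundle gives a surjection $\Sym^m\cal W_k\twoheadrightarrow\Sym^{mk}\cal V\otimes\cal O_X(-mH)$. I would reduce the whole statement to two monotonicity properties of the globally generated locus $gg(-)=X\setminus\bs(-)$: first, $gg(\cal F)\subseteq gg(\Sym^j\cal F)$ for every coherent sheaf $\cal F$ and every $j\geq1$, which is \eqref{eq:gglocus}; and second, $gg(\cal F\otimes\cal O_X(-D))\subseteq gg(\cal F)$ whenever $\cal O_X(D)$ is globally generated, since a quotient of a sheaf that is globally generated at $x$ is again globally generated at $x$. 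Recall also that each $\bs(-)$ is closed, being the support of a coherent sheaf (Remark \ref{rmk:baselocusbundle}).

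The inclusion ``$\subseteq$'' is immediate: $\bb B(\cal W_k)=\bigcap_{m\geq1}\bs(\Sym^m\cal W_k)\subseteq\bs(\cal W_k)$, and one intersects over $k$. For ``$\supseteq$'' I would argue by contraposition. If $x\notin\bigcap_k\bb B(\cal W_k)$, then $\Sym^m\cal W_{k_0}$ is globally generated at $x$ for some $k_0\geq0$ and $m\geq1$. By \eqref{eq:gglocus} applied to $\cal W_{k_0}$, the sheaf $\Sym^{lm}\cal W_{k_0}$ is globally generated at $x$ for every $l\geq1$; pushing this through the surjection $\Sym^{lm}\cal W_{k_0}\twoheadrightarrow\Sym^{lmk_0}\cal V\otimes\cal O_X(-lmH)$, so is $\Sym^{lmk_0}\cal V\otimes\cal O_X(-lmH)=\cal W_{lmk_0}\otimes\cal O_X\bigl(-(lm-1)H\bigr)$. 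Since $H$ is ample, $\cal O_X\bigl((lm-1)H\bigr)$ is globally generated once $l\gg0$; tensoring by it and using the second monotonicity property, $\cal W_{lmk_0}$ is globally generated at $x$, that is, $x\notin\bs(\cal W_{lmk_0})$, whence $x\notin\bigcap_k\bs(\cal W_k)$.

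For the stabilization statement I would assume $H$ globally generated, so that the chain above sharpens to the clean inequality $\bs(\cal W_{km})\subseteq\bs(\cal W_m)$ for all $k,m\geq1$: apply \eqref{eq:gglocus} to $\cal W_m$, then the surjection $\Sym^k\cal W_m\twoheadrightarrow\Sym^{km}\cal V\otimes\cal O_X(-kH)=\cal W_{km}\otimes\cal O_X\bigl(-(k-1)H\bigr)$, then tensor by the globally generated $\cal O_X\bigl((k-1)H\bigr)$. Since $m\mid m!$, this yields $\bigcap_{k}\bs(\cal W_k)=\bigcap_{n\geq1}\bs(\cal W_{n!})$, and the right-hand side is a descending chain of closed subsets of the Noetherian scheme $X$, hence stabilizes: $\bs(\cal W_{n_0!})=\bigcap_k\bs(\cal W_k)$ for some $n_0$. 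Setting $N\coloneqq n_0!$, every positive multiple $k$ of $N$ satisfies $\bigcap_j\bs(\cal W_j)\subseteq\bs(\cal W_k)\subseteq\bs(\cal W_N)=\bigcap_j\bs(\cal W_j)$, hence $\bs(\cal W_k)=\bigcap_j\bs(\cal W_j)$ — the asserted stabilization.

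The one point requiring care, and the reason ampleness of $H$ suffices for the first identity while global generation is needed for the stabilization, is the bookkeeping of the twist by $H$: an $m$-th symmetric power turns the twist $-H$ into $-mH$, and one lands back among the sheaves $\cal W_j$ only after correcting by $(m-1)H$. For the first identity we may pass to large multiples, where $\cal O_X\bigl((lm-1)H\bigr)$ is globally generated by Serre vanishing, so ampleness of $H$ is enough; for the stabilization the correction must be available for every $k$, which is precisely global generation of $H$.
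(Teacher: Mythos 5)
Your proof is correct and follows essentially the same route as the paper: both identities rest on the surjections $\Sym^{m}(\Sym^{k}\cal V)\twoheadrightarrow\Sym^{mk}\cal V$ together with twisting by globally generated multiples of $H$ (large multiples sufficing for the first identity since $H$ is ample, every multiple being available when $H$ is globally generated), and the stabilization follows from the resulting monotonicity $\bs\bigl(\Sym^{mk}\cal V\otimes\cal O_X(-H)\bigr)\subseteq\bs\bigl(\Sym^{k}\cal V\otimes\cal O_X(-H)\bigr)$ plus noetherianity, exactly as in the paper. Your factorial bookkeeping merely makes explicit the ``sufficiently divisible'' step that the paper leaves to the reader.
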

\begin{proof}If $x\in gg\bigl(\Sym^k\cal V\otimes\cal O_X(-H)\bigr)$, then clearly 
$x\not\in\bb B\bigl(\Sym^k\cal V\otimes\cal O_X(-H)\bigr)$.
Conversely, if $x\in gg\bigl(\Sym^m\bigl(\Sym^k\cal V\otimes\cal O_X(-H)\bigr)\bigr)$, then 
$x\in gg\bigl(\Sym^{pmk}\cal V\otimes\cal O_X(-pmH)\bigr)$ for all $p\geq 1$. 
For large $p$, so that $(pm-1)H$ is globally generated, we obtain $x\in gg\bigl(\Sym^{pmk}\cal V\otimes\cal O_X(-H)\bigr)$.

When $H$ is ample and globally generated, then 
$\bs\bigl(\Sym^k\cal V\otimes\cal O_X(-H)\bigr)\supseteq 
\bs\bigl(\Sym^{mk}\cal V\otimes\cal O_X(-H)\bigr)$ for all $k,m\geq 1$.
Conclude by noetherianity.
\end{proof}

We now deduce independence of $H$ in the definition of $\bb B_+(\cal V)$. 
\begin{cor}[cf.\ {\cite[Remark 2.5.1]{bundleloci}}]\label{cor:bundleloci251}Let $\cal V$ be as above, and let $A$ and $H$ be ample divisors. Then,
\[\bigcap_{k\geq 0}\bs\bigl(\Sym^k\cal V\otimes\cal O_X(-H)\bigr)=\bigcap_{k\geq
0}\bs\bigl(\Sym^k\cal V\otimes\cal O_X(-A)\bigr).\]
In particular, the definition of $\bb B_+(\cal V)$ is independent of the choice
of ample divisor $H$.
\end{cor}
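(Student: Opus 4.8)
The plan is to first dispose of the ``in particular'' clause and then prove the displayed equality. For the former: by definition $\bb B_+(\cal V)=\bigcap_{k\geq 0}\bb B(\Sym^k\cal V\otimes\cal O_X(-H))$, and by the first equality in Lemma \ref{lem:bplususebaselocus} this agrees with $\bigcap_{k\geq 0}\bs(\Sym^k\cal V\otimes\cal O_X(-H))$ for \emph{every} ample $H$; so once the displayed equality of the corollary is known, $\bb B_+(\cal V)$ does not depend on the choice of $H$. Everything therefore reduces to the displayed equality, and by the symmetry of its two sides in $A$ and $H$ it suffices to prove one inclusion, say $\bigcap_{k\geq 0}\bs(\Sym^k\cal V\otimes\cal O_X(-H))\subseteq\bigcap_{k\geq 0}\bs(\Sym^k\cal V\otimes\cal O_X(-A))$.

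I would prove this contrapositively. Fix a closed point $x$ and suppose $x\in gg(\Sym^k\cal V\otimes\cal O_X(-A))$ for some $k\geq 0$, i.e.\ $x\notin\bs(\Sym^k\cal V\otimes\cal O_X(-A))$. Applying the globally-generated-locus inclusions \eqref{eq:gglocus} to the coherent sheaf $\Sym^k\cal V\otimes\cal O_X(-A)$, using $\Sym^p(\cal W\otimes L)=\Sym^p\cal W\otimes L^{\otimes p}$, and using that $\Sym^{pk}\cal V$ is a quotient of $\Sym^p\Sym^k\cal V$ (so that global generation at $x$ is inherited), one obtains $x\in gg(\Sym^{pk}\cal V\otimes\cal O_X(-pA))$ for all $p\geq 1$. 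Now choose $p$ large enough that $pA-H$ is globally generated, which is possible since $A$ is ample. Writing $\Sym^{pk}\cal V\otimes\cal O_X(-H)=\bigl(\Sym^{pk}\cal V\otimes\cal O_X(-pA)\bigr)\otimes\cal O_X(pA-H)$ and using that global generation at a point is preserved under tensor products — via the cup-product map $H^0(X,\cal F)\otimes H^0(X,\cal G)\to H^0(X,\cal F\otimes\cal G)$ — we conclude $x\in gg(\Sym^{pk}\cal V\otimes\cal O_X(-H))$, hence $x\notin\bigcap_{k'\geq 0}\bs(\Sym^{k'}\cal V\otimes\cal O_X(-H))$. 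This is exactly the desired inclusion, and interchanging the roles of $A$ and $H$ yields the reverse one.

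This is the bundle analogue of the divisor argument in \cite[Remark 2.5.1]{bundleloci}, and I do not anticipate a genuine obstacle: the one point needing care is the bookkeeping between symmetric powers and the line-bundle twist, which is precisely what \eqref{eq:gglocus} was set up to handle, together with the passage to a sufficiently large $p$ so that $pA-H$ becomes globally generated. No Noetherian or Castelnuovo--Mumford input is needed at this stage, all of that having already been absorbed into Lemma \ref{lem:bplususebaselocus}.
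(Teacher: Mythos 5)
Your proposal is correct and follows essentially the same route as the paper: power up via \eqref{eq:gglocus}, choose the exponent large enough that the difference of the two ample divisors becomes globally generated, tensor to transfer global generation at $x$, and invoke symmetry plus Lemma \ref{lem:bplususebaselocus} for the ``in particular'' clause. The only difference is the cosmetic one of which ample divisor you start from, which is immaterial given the symmetry.
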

\begin{proof}If $x\in gg\bigl(\Sym^k\cal V\otimes\cal O_X(-H)\bigr)$, then $x\in gg\bigl(\Sym^{mk}\cal V\otimes\cal O_X(-mH)\bigr)$
for all $m\geq 1$. In particular if $m$ is large enough so that $mH-A$ is globally generated, then 
$x\in gg\bigl(\Sym^{mk}\cal V\otimes\cal O_X(-A)\bigr)$. This proves one inclusion. The other one follows by symmetry.
\par The last statement follows from the above and Lemma
\ref{lem:bplususebaselocus}.
\end{proof}

The relation between $\bb B_+(\cal V)$ and $\bb B_+(\cal O_{\bb P(\cal V)}(1))$ is given by the following:

\begin{prop}\label{prop:b+push}Let $\cal V$ be a coherent sheaf on a projective scheme over an algebraically closed field.
  Then,
\[\bb B_+(\cal V)\supseteq\rho\bigl(\bb B_+\bigl(\cal O_{\bb P(\cal V)}(1)\bigr)\bigr).\]
Equality holds when intersecting with the open locally free locus of $\cal V$.
\end{prop}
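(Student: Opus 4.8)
Write $\xi=c_1(\cal O_{\bb P(\cal V)}(1))$ and $\rho\colon\bb P(\cal V)\to X$ for the bundle map. Fix an ample and globally generated divisor $H$ on $X$; enlarging $H$ if necessary (harmless by Corollary \ref{cor:bundleloci251}) we may assume $\xi+\rho^*H$ is ample on $\bb P(\cal V)$. By Lemma \ref{lem:bplususebaselocus} and Corollary \ref{cor:bundleloci251} we have $\bb B_+(\cal V)=\bigcap_{k\geq 0}\bs(\Sym^k\cal V\otimes\cal O_X(-H))$. Running the same kind of perturbation argument for the $\rho$-ample class $\xi$ (using that $\rho^*H$ is globally generated to clear denominators, and noetherianity to stabilize), one checks that $\bb B_+(\cal O_{\bb P(\cal V)}(1))=\bigcap_{\ell,N\geq 1}\bs(N\ell\xi-N\rho^*H)$; in particular it suffices, for the inclusion, to compare $\bs(M\xi-N'\rho^*H)$ with $\bs(\Sym^M\cal V\otimes\cal O_X(-N'H))$ for $M\gg 0$, and, for the equality on the locally free locus, to produce a single $k$ with $x\notin\bs(\Sym^k\cal V\otimes\cal O_X(-H))$.

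\textbf{The inclusion $\bb B_+(\cal V)\supseteq\rho(\bb B_+(\cal O_{\bb P(\cal V)}(1)))$.}
For $M\gg 0$ the natural map $\rho^*\Sym^M\cal V\to\cal O_{\bb P(\cal V)}(M)$ is surjective (relative ampleness, cf.\ Remark \ref{rmk:amplesheaves} and Lemma \ref{lem:CM}), $R^{>0}\rho_*\cal O_{\bb P(\cal V)}(M)=0$, and $\rho_*\cal O_{\bb P(\cal V)}(M)=\Sym^M\cal V$, so by the projection formula and Leray $H^0(\bb P(\cal V),M\xi-N'\rho^*H)=H^0(X,\Sym^M\cal V\otimes\cal O_X(-N'H))$. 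If $x\notin\bs(\Sym^M\cal V\otimes\cal O_X(-N'H))$, then $\Sym^M\cal V\otimes\cal O_X(-N'H)$ is globally generated on an open $V\ni x$; pulling back and composing with the surjection above (twisted by $\rho^*\cal O_X(-N'H)$) shows $M\xi-N'\rho^*H$ is globally generated on $\rho^{-1}(V)$ by sections coming from $H^0(\bb P(\cal V),M\xi-N'\rho^*H)$, whence $\rho^{-1}(x)\cap\bs(M\xi-N'\rho^*H)=\emptyset$, i.e.\ $x\notin\rho(\bs(M\xi-N'\rho^*H))$. Intersecting over the relevant indices, and using that $\bs(\Sym^m\cal V\otimes\cal O_X(-mH))\supseteq\bs(\Sym^m\cal V\otimes\cal O_X(-H))$ (removing more ample only enlarges the base locus) and \eqref{eq:gglocus}, gives $\rho(\bb B_+(\cal O_{\bb P(\cal V)}(1)))\subseteq\bb B_+(\cal V)$.

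\textbf{Equality on the locally free locus.}
Let $U$ be the open locus where $\cal V$ is locally free; we must show that for $x\in U$ with $\rho^{-1}(x)\cap\bb B_+(\cal O_{\bb P(\cal V)}(1))=\emptyset$ one has $x\notin\bb B_+(\cal V)$. Since $\bb B_+(\cal O_{\bb P(\cal V)}(1))$ is closed and $\rho$ is proper, there is an open $V\ni x$, $V\subseteq U$, with $\rho^{-1}(V)\cap\bb B_+(\cal O_{\bb P(\cal V)}(1))=\emptyset$, so for a suitable large $k$ the class $k\xi-\rho^*H$ is globally generated on $\rho^{-1}(V)$. One cannot conclude directly by pushing forward: over $V$ the class $k\xi-\rho^*H$ has relative degree $k$, and a base-point-free linear \emph{subsystem} of $|\cal O_{\bb P^{r-1}}(k)|$ on a fiber $\rho^{-1}(x)\cong\bb P^{r-1}$ need not be the complete system. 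The fix is to pass to the relative Veronese embedding $v_k\colon\bb P(\cal V)\hookrightarrow\bb P(\Sym^k\cal V)$ over $X$, with $v_k^*\cal O_{\bb P(\Sym^k\cal V)}(1)=\cal O_{\bb P(\cal V)}(k)$: on $\bb P(\Sym^k\cal V)$ the line bundle $\cal O_{\bb P(\Sym^k\cal V\otimes\cal O_X(-H))}(1)$ restricts to $\cal O(1)$ on the linear fiber $\rho_k^{-1}(x)\cong\bb P^{\binom{k+r-1}{r-1}-1}$ (here $\Sym^k\cal V$ is locally free at $x$ since $\cal V$ is), and for $\cal O(1)$ on a projective space \emph{base-point-free along the fiber does force} the restriction map on global sections to be surjective onto $H^0$ of the fiber; pushing down, this says exactly that $\Sym^k\cal V\otimes\cal O_X(-H)$ is globally generated at $x$, so $x\notin\bs(\Sym^k\cal V\otimes\cal O_X(-H))\supseteq\bb B_+(\cal V)$. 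The step I expect to be the main obstacle is exactly the upgrade from ``$k\xi-\rho^*H$ globally generated along $v_k(\rho^{-1}(x))$'' to ``$\cal O_{\bb P(\Sym^k\cal V\otimes\cal O_X(-H))}(1)$ globally generated along the whole fiber $\rho_k^{-1}(x)$'': by Remark \ref{rmk:baselocusbundle} the intersection of that base locus with $\rho_k^{-1}(x)$ is a \emph{linear} subspace of $\bb P^{\binom{k+r-1}{r-1}-1}$, and it is disjoint from the non-degenerate Veronese variety $v_k(\rho^{-1}(x))$; to force this linear subspace to be empty one invokes the ELMNP-type ``ample locus'' description of $\bb B_+$ for line bundles (cf.\ \cite{ELMNP,nakamaye}) — realizing $\xi$, on a modification that is an isomorphism near $\rho^{-1}(x)$, as $A+E$ with $A$ ample and $\Supp E$ missing $\rho^{-1}(x)$ — so that for $k\gg 0$ the image of $H^0$ in $H^0(\rho_k^{-1}(x),\cal O(1))$ is large enough to be the whole space. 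Combining this with the inclusion from the previous paragraph yields $\bb B_+(\cal V)\cap U=\rho(\bb B_+(\cal O_{\bb P(\cal V)}(1)))\cap U$.
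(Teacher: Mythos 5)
Your first half (the inclusion $\rho\bigl(\bb B_+(\cal O_{\bb P(\cal V)}(1))\bigr)\subseteq\bb B_+(\cal V)$) is essentially the paper's argument: push global generation of $\Sym^k\cal V\otimes\cal O_X(-H)$ up to base-point-freeness of $k\xi-\rho^*H$ along the fiber and compare with $\bb B_+(\xi)$. One small caveat: $\rho^*H$ is not ample on $\bb P(\cal V)$, so your identity $\bb B_+(\cal O_{\bb P(\cal V)}(1))=\bigcap_{\ell,N}\bs(N\ell\xi-N\rho^*H)$ needs the $\rho$-ampleness of $\xi$ made explicit; the paper does this by working throughout with the genuinely very ample class $A=\cal O_{\bb P(\cal V)}(1)\otimes\rho^*\cal O_X(2H)$.

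The reverse inclusion on the locally free locus is where your proposal does not close, and you flag the missing step yourself. From $\rho^{-1}(x)\cap\bb B_+(\xi)=\emptyset$ you obtain only that some $k\xi-\rho^*H$ is base-point free along $\rho^{-1}(x)$; after the relative Veronese this says that the base locus of $\cal O_{\bb P(\Sym^k\cal V)}(1)\otimes\rho_k^*\cal O_X(-H)$ meets the fiber over $x$ in a linear subspace (Remark \ref{rmk:baselocusbundle}) disjoint from the Veronese image, and such a linear subspace need not be empty. But emptiness of that subspace, i.e.\ base-point-freeness along the \emph{whole} fiber of $\bb P(\Sym^k\cal V)$, is by definition the same statement as surjectivity of $H^0(X,\Sym^k\cal V\otimes\cal O_X(-H))\to\Sym^k\cal V\otimes k(x)$ --- which is exactly what is to be proved. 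So the Veronese detour only renames the goal, and your concluding appeal to an ELMNP-type decomposition $\xi\equiv A+E$ with $\Supp E$ missing $\rho^{-1}(x)$ is not an argument: no mechanism is given by which such a decomposition forces the image of $H^0$ in $H^0(\rho_k^{-1}(x),\cal O(1))$ to be everything (and such a decomposition presupposes $\xi$ big, which is not assumed; when $\xi$ is not big both sides of the proposition are all of $X$, so that case is harmless but the appeal is still vacuous). This is precisely the crux, and the paper closes it differently: using the very ample $A$ above, quasi-compactness of the fiber and the inclusions of globally generated loci furnished by very ampleness of $A$ give a single sufficiently divisible $k$ with $\cal O_{\bb P(\cal V)}(k-1)\otimes\rho^*\cal O_X(-2H)$ globally generated along $\bb P(\cal V(x))$, and then the descent is performed on the evaluation morphism of sheaves: pushing it forward by $\rho$, using $\rho_*\cal O_{\bb P(\cal V)}(k-1)=\Sym^{k-1}\cal V$ and the fact that $\cal O_X\to\rho_*\cal O_{\bb P(\cal V)}$ is an isomorphism near $x$ in the locally free locus, one concludes that $\Sym^{k-1}\cal V\otimes\cal O_X(-2H)$ is globally generated at $x$ (this is the adaptation of \cite[Proposition 3.2]{bundleloci}). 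If you want to keep your route, you must actually establish the surjectivity onto the fiber's $H^0$ --- for instance by justifying this pushforward/base-change step, or by a cohomological argument controlling $H^1$ of the relevant ideal-sheaf twist --- rather than asserting that the image is ``large enough.''
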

\begin{proof}
\noindent \cite[Proposition 3.2]{bundleloci} proves that when $\cal V$ is locally free on complex projective manifolds, 
and the proof in general is essentially the same.
Let $H$ be a very ample divisor on $X$ such that $\cal V(H)$ is globally generated.
We obtain a surjection $H^0(X,\ \cal V(H))\otimes\cal O_X(H)\twoheadrightarrow \cal V(2H)$, which shows that 
$A\coloneqq  \cal O_{\bb P(\cal V)}(1)\otimes\rho^*\cal O_X(2H)$ is very ample on $\bb P(\cal V)$. 

Assume $x\in gg\bigl(\Sym^k\cal V\otimes\cal O_X(-H)\bigr)$. Then 
$\rho^{-1}\{x\}=\bb P(\cal V(x))\subseteq gg\bigl(\cal O_{\bb P(\cal V)}(2k)\otimes\rho^*\cal O_X(-2H)\bigr)$.
We have $\cal O_{\bb P(\cal V)}(2k)\otimes\rho^*\cal O_X(-2H)=\cal O_{\bb P(\cal V)}(2k+1)\otimes A^{\vee}$. 
These show the ``$\supseteq$'' inclusion. 

Assume now $\bb P(\cal V(x))\subseteq\bigcup_{k\geq 0}gg\bigl(\cal O_{\bb P(\cal V)}(k)\otimes A^{\vee}\bigr)$.
Since $A$ is very ample, we have inclusions 
$gg\bigl(\cal O_{\bb P(\cal V)}(k)\otimes A^{\vee}\bigr)\subseteq gg\bigl(\cal O_{\bb P(\cal V)}(mk)\otimes A^{\vee}\bigr)$ 
for all $m\geq 1$ and all $k\geq 0$. We deduce that $\cal O_{\bb P(\cal V)}(k)\otimes A^{\vee}=\cal O_{\bb P(\cal V)}(k-1)\otimes\rho^*\cal O_X(-2H)$ is globally generated
along $\bb P(\cal V(x))$ for sufficiently divisible $k$. 
Pushing forward to $X$, since $\rho_*\cal O_{\bb P(\cal V)}(k)=\Sym^k\cal V$ for $k$ large enough, we find that the canonical map
\[H^0\bigl(X,\ \Sym^{k-1}\cal V\otimes\cal O_X(-2H)\bigr)\otimes\rho_*\cal O_{\bb P(\cal V)}\to\Sym^{k-1}\cal V\otimes\cal O_X(-2H)\]
is surjective at $x$. If $x$ is in the locally free locus of $\cal V$, then the natural map 
$\cal O_X\to\rho_*\cal O_{\bb P(\cal V)}$ is an isomorphism around $x$, hence $x\in gg\bigl(\Sym^{k-1}\cal V\otimes\cal O_X(-2H)\bigr)$. 
\end{proof}

\begin{rmk}\cite{ELMNP} and \cite{birkar} define augmented base loci
of $\bb R$-Cartier $\bb R$-divisors. If $\cal V$ is locally free,
one can use the result above to define $\bb B_+(\cal V\langle\lambda\rangle)\coloneqq  \rho(\bb B_+(\xi+\rho^*\lambda))$.
\end{rmk}

We start relating $\bb B_+(\cal V)$ to Seshadri constants. 

\begin{lem}\label{lem:bplussesh}Let $\cal V$ be a coherent sheaf on a projective scheme $X$. If $x\not\in\bb B_+(\cal V)$, then $\sh(\cal V;x)>0$.
\end{lem}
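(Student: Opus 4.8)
The plan is to unwind the definition of $\bb B_+(\cal V)$ into a global generation statement, translate that into a nefness statement at $x$ (hence $\sh\geq 0$), and then ``put back'' a small ample twist to upgrade $\geq 0$ to $>0$. Concretely: fix an ample Cartier divisor $H$ on $X$. By Lemma \ref{lem:bplususebaselocus}, $\bb B_+(\cal V)=\bigcap_{k\geq 0}\bs(\Sym^k\cal V\otimes\cal O_X(-H))$, so the hypothesis $x\notin\bb B_+(\cal V)$ means exactly that $\Sym^k\cal V\otimes\cal O_X(-H)$ is globally generated at $x$ for some $k\geq 0$. I would first dispose of the degenerate case: if no irreducible curve of $X$ passes through $x$, then $\cal C_{\cal V,x}=\emptyset$ and $\sh(\cal V;x)=\infty$, so we may assume such a curve exists. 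In that case the witnessing index satisfies $k\geq 1$, since $\cal O_X(-H)$ cannot be globally generated at $x$: if it were, it would be nef at $x$ by Example \ref{ex:ggnef}, forcing $-H\cdot C\geq 0$ for a curve $C\ni x$, contradicting ampleness of $H$.

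Next I would use Example \ref{ex:ggnef} again: since $\Sym^k\cal V\otimes\cal O_X(-H)$ is globally generated at $x$, it is nef at $x$, and therefore $\sh(\Sym^k\cal V\otimes\cal O_X(-H);x)\geq 0$. Now rewrite this twisted symmetric power as $\Sym^k\bigl(\cal V\langle-\tfrac1k H\rangle\bigr)=(\Sym^k\cal V)\langle-H\rangle$ (here $H$ is Cartier, so the twist is an honest tensor), and apply the homogeneity of Seshadri constants for twisted sheaves, Lemma \ref{lem:homogeneous}, with $d=k$. This gives $\sh\bigl(\Sym^k\cal V\otimes\cal O_X(-H);x\bigr)=k\cdot\sh\bigl(\cal V\langle-\tfrac1k H\rangle;x\bigr)$, and dividing by $k\geq 1$ yields $\sh\bigl(\cal V\langle-\tfrac1k H\rangle;x\bigr)\geq 0$.

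Finally I would observe that $\cal V=\bigl(\cal V\langle-\tfrac1k H\rangle\bigr)\otimes\cal O_X\langle\tfrac1k H\rangle$, and invoke superadditivity under tensor products (Lemma \ref{lem:tensorproducts}):
\[
\sh(\cal V;x)\ \geq\ \sh\bigl(\cal V\langle-\tfrac1k H\rangle;x\bigr)+\sh\bigl(\cal O_X\langle\tfrac1k H\rangle;x\bigr)\ \geq\ 0+\tfrac1k\,\sh(H;x)\ >\ 0,
\]
where $\sh(\cal O_X\langle\tfrac1k H\rangle;x)=\tfrac1k\sh(H;x)>0$ because $H$ is ample (the classical Seshadri ampleness criterion, i.e.\ Theorem \ref{seshadriample} with $\rho=\mathrm{id}$). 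This completes the argument. There is no genuine obstacle here: all the substantive content is packaged in the cited results, and the only points needing a little care are the reduction to $k\geq 1$ and keeping the $\bb R$-twist bookkeeping consistent when passing through the symmetric power. (As an alternative to the last step, one could instead feed $\cal W\coloneqq \cal V\langle-\tfrac1k H\rangle$ into Corollary \ref{cor:seshadriconvex} with ample class $H$, since $\epsilon_{\cal W}(\tfrac1k)=\sh(\cal V;x)\geq\epsilon_{\cal W}(0)+\tfrac1k\sh(H;x)>0$.)
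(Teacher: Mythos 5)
Your argument is correct and follows essentially the same route as the paper's proof: extract from $x\notin\bb B_+(\cal V)$ the global generation at $x$ of a twisted symmetric power, deduce $\sh\geq 0$ via Example \ref{ex:ggnef}, and then combine homogeneity (Lemma \ref{lem:homogeneous}) with superadditivity under tensor products (Lemma \ref{lem:tensorproducts}) and $\sh(H;x)>0$ to get strict positivity. The only differences are bookkeeping: the paper works with $\Sym^{mk}\cal V\otimes\cal O_X(-mH)$ instead of the fractional twist $\cal V\langle-\tfrac1k H\rangle$, and leaves implicit the degenerate cases ($k\geq 1$, no curve through $x$) that you spell out.
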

\begin{proof}The assumptions imply that for every ample Cartier divisor $H$ on $X$ there exists $k>0$ such that $\Sym^{mk}\cal V\otimes\cal O_X(-mH)$ is globally generated at $x$
for sufficiently large $m$. Then $\sh(\Sym^{mk}\cal V\otimes\cal O_X(-mH);x)\geq 0$ by Example \ref{ex:ggnef}.
By Lemmas \ref{lem:homogeneous} and \ref{lem:tensorproducts} 
we get
\begin{align*}\sh(\cal V;x)=\frac 1{mk}\sh\bigl(\Sym^{mk}\cal V;x\bigr)&\geq\frac 1{mk}\sh\bigl(\Sym^{mk}\cal V\otimes\cal O_X(-mH);x\bigr)+\frac 1{mk}\sh(\cal O_X(mH);x)\\
  &\geq \frac 1k\sh(\cal O_X(H);x)>0.\qedhere
\end{align*}
\end{proof}

\begin{cor}\label{cor:Fanos}
  Let $X$ be a smooth projective variety over a field of characteristic zero.
If $\bb B_+(TX)\subsetneq X$, then $X\simeq\bb P^n$.
\end{cor}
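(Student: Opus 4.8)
The plan is to reduce the statement directly to Proposition \ref{prop:Fanos}.(2). First I would note that the augmented base locus $\bb B_+(TX)$ is a closed subset of $X$: by Lemma \ref{lem:bplususebaselocus} it equals $\bigcap_{k\geq 0}\bs\bigl(\Sym^k TX\otimes\cal O_X(-H)\bigr)$ for any fixed ample divisor $H$, and each $\bs(-)$ is closed, being the support of the cokernel sheaf of Remark \ref{rmk:baselocusbundle}. Thus the hypothesis $\bb B_+(TX)\subsetneq X$ says precisely that $\bb B_+(TX)$ is a \emph{proper} closed subset of the irreducible variety $X$.

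Next I would choose a closed point $x_0\in X$ that is general in the sense of \cite[Notation 2.2]{kebekus} and that, in addition, lies outside $\bb B_+(TX)$. This is possible: being general in the sense of loc.\ cit.\ amounts to avoiding a certain proper closed subset of $X$, and enlarging that subset by $\bb B_+(TX)$ leaves it proper and closed, so a point avoiding the union exists. For such an $x_0$, Lemma \ref{lem:bplussesh} yields $\sh(TX;x_0)>0$. Since the ground field has characteristic zero and $x_0$ is general, Proposition \ref{prop:Fanos}.(2) applies and gives $X\simeq\bb P^n$.

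There is essentially no serious obstacle here; the corollary is a formal consequence of Lemma \ref{lem:bplussesh} (converting ``$x\notin\bb B_+$'' into ``$\sh>0$'') together with Proposition \ref{prop:Fanos}.(2). The only point requiring a word of care is the compatibility of the two genericity requirements on $x_0$ — lying outside $\bb B_+(TX)$ and being general in the sense of \cite[Notation 2.2]{kebekus} — and this is immediate because each requirement merely excludes a proper closed subset of the irreducible variety $X$, so their union is still a proper closed subset.
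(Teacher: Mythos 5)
Your proposal is correct and follows essentially the same route as the paper: apply Lemma \ref{lem:bplussesh} at a general point outside the proper closed subset $\bb B_+(TX)$ to get $\sh(TX;x_0)>0$, then invoke Proposition \ref{prop:Fanos}(2). The extra remarks on closedness of $\bb B_+(TX)$ and compatibility of the two genericity conditions are fine elaborations of what the paper leaves implicit.
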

\begin{proof}Lemma \ref{lem:bplussesh} implies $\sh(TX;x)>0$ for $x$ a general point on $X$.
Now use Proposition \ref{prop:Fanos}(2).
\end{proof}

\begin{defn}[{see \cite[Theorem 6.4]{bundleloci}}]\label{def:vbig} A sheaf $\cal
  V$ is called \emph{V-big}\footnote{``V'' stands for Viehweg.} if $\bb B_+(\cal
  V)\neq X$.
\end{defn}

\noindent \cite[Examples 1.7 and 1.8]{Jab09} shows that this is usually stronger than asking for $\cal O_{\bb P(\cal V)}(1)$ to be big, even when $\cal V$ is locally free.
See also \cite[Remark 6.6]{bundleloci}.

The main result of this section is the following:
\begin{prop}\label{prop:baseseshadrivanish}
  Let $\cal V$ be a locally free sheaf of finite rank on a 
  projective scheme $X$ over an algebraically closed field, 
and suppose that $\cal V$ is nef. 
Then,
  \[\bb B_+(\cal V)=\bigl\{x\in X\st \sh(\cal V;x)=0\bigr\}.\]
If $\cal V$ is only a coherent sheaf (but still nef), and $x$ is in the locally free locus of $\cal V$, then $x\in\bb B_+(\cal V)$ if and only if 
$\sh(\cal V;x)=0$. 
\end{prop}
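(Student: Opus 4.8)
The plan is to establish the two inclusions separately, transferring the nontrivial one to the projectivization $\bb P(\cal V)$ so that Nakamaye's theorem applies. Throughout, fix $x$ in the locally free locus of $\cal V$ (which is all of $X$ when $\cal V$ is locally free), and set $\xi\coloneqq c_1(\cal O_{\bb P(\cal V)}(1))$. Since $\cal V$ is nef, $\xi$ is nef, so $\sh(\cal V;x)=\sh(\xi;x)\ge 0$ by Remark \ref{rmk:easyseshadri}.(a); combined with Lemma \ref{lem:bplussesh} this already gives $\{x:\sh(\cal V;x)=0\}\subseteq\bb B_+(\cal V)$, since $\sh(\cal V;x)=0$ then forces $x$ to lie in $\bb B_+(\cal V)$. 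It remains to prove the reverse inclusion, i.e.\ that $\sh(\cal V;x)>0$ forces $x\notin\bb B_+(\cal V)$.

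So assume $\sh(\cal V;x)>0$ and set $Y=\bb P(\cal V)$, $\rho\colon Y\to X$; we may assume $\dim Y\ge1$, the remaining case being trivial. First I would propagate positivity into the fiber $\rho^{-1}(x)$: if some $y\in\rho^{-1}(x)$ had $\sh(\xi;y)=0$, then, as $\sh(\xi;y)\in[0,1)$, Remark \ref{rmk:compareless1} would give $\sh(\cal V;x)\le\sh(\xi;y)=0$, a contradiction; hence $\sh(\xi;y)>0$ for every $y\in\rho^{-1}(x)$. Picking one such $y$ and applying Proposition \ref{prop:seshadrihigherdimension} to the classical Seshadri constant of $\xi$ on $Y$ with $W=Y$ yields $0<\sh(\xi;y)\le\bigl(\xi^{\dim Y}/\mult_y Y\bigr)^{1/\dim Y}$, so $\xi^{\dim Y}>0$ and $\xi$ is \emph{big} and nef. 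Now invoke Nakamaye's theorem to write $\bb B_+(\xi)=\Null(\xi)=\bigcup\{V:\xi^{\dim V}\cdot[V]=0\}$. If some $y\in\rho^{-1}(x)$ lay in $\bb B_+(\xi)$, then $y$ would lie on a subvariety $V$ with $\xi^{\dim V}\cdot[V]=0$, and a second application of Proposition \ref{prop:seshadrihigherdimension} (with $W=V$) would force $\sh(\xi;y)\le\bigl(\xi^{\dim V}\cdot[V]/\mult_y V\bigr)^{1/\dim V}=0$, contradicting the previous step. Hence $\rho^{-1}(x)\cap\bb B_+(\xi)=\emptyset$; since Proposition \ref{prop:b+push} gives $\bb B_+(\cal V)=\rho(\bb B_+(\xi))$ after intersecting with the locally free locus, and $x$ lies in that locus, we conclude $x\notin\bb B_+(\cal V)$, as desired. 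This argument is identical in the coherent case, where $x$ is assumed in the locally free locus and the second half of Proposition \ref{prop:b+push} still applies.

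The main obstacle is justifying the appeal to Nakamaye's theorem in the required generality. The equality $\bb B_+(D)=\Null(D)$ for a big and nef divisor $D$ is due to Nakamaye \cite{nakamaye} for smooth complex projective varieties (see also \cite[Remark 6.5]{ELMNP}), whereas here $X$, hence $Y=\bb P(\cal V)$, may be singular or non-reduced and the ground field is an arbitrary algebraically closed field. I would appeal to the extension of Nakamaye's theorem to big and nef $\bb R$-Cartier $\bb R$-divisors on projective schemes over arbitrary fields from \cite{birkar} (already used in this paper for the Nakai--Moishezon criterion), reducing to the case of varieties if necessary. It is also worth recording that when $\xi$ is \emph{not} big the computation above already shows $\sh(\xi;y)=0$ for all $y\in Y$, hence $\sh(\cal V;x)=0$ for every $x$ by Remark \ref{rmk:compareless1}; so both sides of the asserted equality equal $X$ in that case, which is precisely why the hypothesis $\sh(\cal V;x)>0$ automatically places us in the big and nef setting where Nakamaye's theorem is available.
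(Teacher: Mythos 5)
Your proposal is correct and is essentially the paper's argument run in the contrapositive direction: both reduce the nontrivial inclusion to $\bb P(\cal V)$ via Proposition \ref{prop:b+push}, describe $\bb B_+(\xi)$ by the null locus of the nef class $\xi$ (Nakamaye/Birkar), bound $\sh(\xi;y)$ by the subvariety inequality, and transfer back to $\sh(\cal V;x)$ with Remark \ref{rmk:compareless1} and Lemma \ref{lem:bplussesh}. The only difference is your detour through bigness of $\xi$, which is unnecessary (and slightly delicate when $\bb P(\cal V)$ is reducible), since the result of \cite{birkar} that the paper cites already gives the null-locus description for nef $\bb R$-Cartier classes on projective schemes without any bigness hypothesis.
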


\begin{proof}In view of Lemma \ref{lem:bplussesh}, it is enough to justify the ``$\subseteq$'' inclusion.
Let $x\in\bb B_+(\cal V)$ such that $\cal V$ is locally free around $x$. 
By Proposition \ref{prop:b+push}, there exists $y\in\bb P(\cal V(x))$ such that $y\in\bb B_+(\xi)$.
Since $\xi$ is nef, \cite{birkar} proves that there exists a
subvariety $Z\subseteq \bb P(\cal V)$ through $y$ such
that $\xi^{\dim Z}\cdot Z=0$. 
By \cite[Proposition 5.1.9]{laz04}, we deduce $\sh(\xi;y)=0$.
Conclude by Remark \ref{rmk:compareless1}.
\end{proof}

We obtain an immediate improvement of Theorem \ref{seshadriample}.

\begin{cor}Let $X$ be a projective scheme. 
Let $\cal V$ be a nef locally free sheaf of finite rank on $X$.
Then, $\cal V$ is ample if and only if $\sh(\cal V;x)>0$ for all $x\in X$.
\end{cor}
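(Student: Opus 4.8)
The plan is to derive this from Proposition~\ref{prop:baseseshadrivanish} together with the fact that a divisor on a projective scheme is ample exactly when its augmented base locus is empty. The forward implication carries no content: if $\cal V$ is ample then $\inf_{x\in X}\sh(\cal V;x)>0$ by Theorem~\ref{seshadriample}, and in particular $\sh(\cal V;x)>0$ for every $x\in X$.

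For the converse, assume $\sh(\cal V;x)>0$ for all $x\in X$. Since $\cal V$ is nef we have $\sh(\cal V;x)\geq 0$ for all $x$ by Remark~\ref{rmk:easyseshadri}.(a), so the hypothesis says exactly that $\{x\in X\st\sh(\cal V;x)=0\}=\emptyset$. By Proposition~\ref{prop:baseseshadrivanish} this set is $\bb B_+(\cal V)$, so $\bb B_+(\cal V)=\emptyset$. Because $\cal V$ is locally free on all of $X$, Proposition~\ref{prop:b+push} gives $\bb B_+(\cal V)=\rho\bigl(\bb B_+(\cal O_{\bb P(\cal V)}(1))\bigr)$, where $\rho\colon\bb P(\cal V)\to X$ is the bundle map; hence $\bb B_+(\cal O_{\bb P(\cal V)}(1))=\emptyset$. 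By the standard characterization of the augmented base locus of an $\bb R$-Cartier divisor on a projective scheme (see \cite{ELMNP,birkar}), this forces $\cal O_{\bb P(\cal V)}(1)$ to be ample, i.e.\ $\cal V$ is ample by definition.

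The one step that carries the weight — and the one I would scrutinize — is the implication ``$\sh(\cal V;x)>0$ for every $x$'' $\Rightarrow$ ``$\bb B_+(\cal V)=\emptyset$'', which is precisely where the nefness of $\cal V$ enters, through Proposition~\ref{prop:baseseshadrivanish} and ultimately through the Nakai--Moishezon-type criterion of \cite{birkar} for nef $\bb R$-divisors. Theorem~\ref{seshadriample} only produces ampleness from the uniform bound $\inf_{x\in X}\sh(\cal V;x)>0$; the point of this corollary is that under the extra nefness hypothesis the pointwise positivity already forces the uniform one, with $\bb B_+(\cal V)$ serving as the bridge between the two.
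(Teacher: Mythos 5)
Your argument is correct and is exactly the route the paper intends: the corollary is stated as an immediate consequence of Proposition~\ref{prop:baseseshadrivanish}, with the converse coming from $\bb B_+(\cal V)=\emptyset$ and the standard fact that a divisor (here $\cal O_{\bb P(\cal V)}(1)$, via Proposition~\ref{prop:b+push}) is ample precisely when its augmented base locus is empty. Your identification of the nefness hypothesis as the ingredient making pointwise positivity suffice (through the Nakamaye-type description of $\bb B_+$) matches the paper's reasoning.
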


The following lemma will be used in the proof of
Theorem \ref{thrm:dmthmaanalogue}.

\begin{lem}\label{lem:bplussjets}
  Let $X$ be a projective scheme, and let $\mathcal{V}$ be a coherent sheaf on $X$.
  If $x \notin \mathbb{B}_+(\mathcal{V})$ is a closed point, then for every
  coherent sheaf $\mathcal{F}$ on $X$ and every integer $s \ge 0$, the sheaf
  $\mathcal{F} \otimes_{\mathcal{O}_X} \Sym^m \mathcal{V}$
  separates $s$-jets at $x$ for all $m$ sufficiently large.
\end{lem}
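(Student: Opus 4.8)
The plan is to realize $\cal F\otimes_{\cal O_X}\Sym^m\cal V$ as a quotient of a tensor product in which one factor is globally generated at $x$ while the other is a fixed coherent sheaf twisted by a large multiple of an ample divisor, and then to feed this into the superadditivity of jet separation recorded in Lemma~\ref{lem:ito37}. First I would fix any ample divisor $H$ on $X$. Since $x\notin\bb B_+(\cal V)$, Lemma~\ref{lem:bplususebaselocus} identifies $\bb B_+(\cal V)$ with $\bigcap_{k\ge 0}\bs(\Sym^k\cal V\otimes\cal O_X(-H))$, so there is an integer $k\ge 1$ for which $\cal G\coloneqq\Sym^k\cal V\otimes\cal O_X(-H)$ is globally generated at $x$; equivalently $\cal G$ separates $0$-jets at $x$, i.e.\ $s(\cal G;x)\ge 0$. (If no such $k\ge 1$ exists then $x$ must be an isolated point of $X$ and the statement is trivial.) From now on $k$ is fixed.

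Next, for $m\ge 1$ I would perform Euclidean division $m=ak+r$ with $0\le r<k$. Multiplication in the symmetric algebra $\Sym^*\cal V$ gives a surjection $\Sym^r\cal V\otimes(\Sym^k\cal V)^{\otimes a}\twoheadrightarrow\Sym^m\cal V$; tensoring with $\cal F$ and using $(\Sym^k\cal V)^{\otimes a}=\cal G^{\otimes a}\otimes\cal O_X(aH)$, this becomes a surjection
\[
  \cal B_r\otimes\cal G^{\otimes a}\twoheadrightarrow\cal F\otimes\Sym^m\cal V,
  \qquad\cal B_r\coloneqq\cal F\otimes\Sym^r\cal V\otimes\cal O_X(aH).
\]
By Remark~\ref{rmk:jetsetquotients} this yields $s(\cal F\otimes\Sym^m\cal V;x)\ge s(\cal B_r\otimes\cal G^{\otimes a};x)$, and applying Lemma~\ref{lem:ito37} $a$ times --- the nonnegativity hypotheses being met inductively from $s(\cal G;x)\ge 0$ together with the bound $s(\cal B_r;x)\ge 0$ secured in the next step --- gives $s(\cal B_r\otimes\cal G^{\otimes a};x)\ge s(\cal B_r;x)+a\cdot s(\cal G;x)\ge s(\cal B_r;x)$. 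Thus it is enough to arrange that every $\cal B_r$ separates $s$-jets at $x$.

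For the latter, I would note that $\cal F\otimes\Sym^r\cal V$ runs through only finitely many fixed coherent sheaves as $r\in\{0,1,\dots,k-1\}$. Serre vanishing applied to the coherent sheaves $\frak m_x^{s+1}(\cal F\otimes\Sym^r\cal V)$ produces an $a_0$ with $H^1\bigl(X,\frak m_x^{s+1}(\cal F\otimes\Sym^r\cal V)\otimes\cal O_X(aH)\bigr)=0$ for all $a\ge a_0$ and all such $r$; the cohomology sequence of the exact sequence $0\to\frak m_x^{s+1}\cal C\to\cal C\to\cal C/\frak m_x^{s+1}\cal C\to 0$ with $\cal C=\cal F\otimes\Sym^r\cal V$, twisted by the invertible sheaf $\cal O_X(aH)$, then shows that $\cal B_r$ separates $s$-jets at $x$ whenever $a\ge a_0$. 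Consequently, for every $m\ge k\,a_0$ we have $a=\lfloor m/k\rfloor\ge a_0$, hence $s(\cal F\otimes\Sym^m\cal V;x)\ge s(\cal B_r;x)\ge s$, which is exactly the assertion.

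I do not anticipate any deep difficulty: the two genuine ingredients --- the description of $\bb B_+(\cal V)$ through stable base loci of the symmetric twists $\Sym^k\cal V\otimes\cal O_X(-H)$ (Lemma~\ref{lem:bplususebaselocus}), which hands over the globally generated building block $\cal G$, and the superadditivity $s(\cal A;x)+s(\cal B;x)\le s(\cal A\otimes\cal B;x)$ (Lemma~\ref{lem:ito37}) --- are already in place. The main point requiring care, and the only likely obstacle, is to keep the hypotheses of Lemma~\ref{lem:ito37} valid along the iteration (every factor entering a tensor product must already separate a nonnegative number of jets at $x$) and to do so uniformly over the residue $r$ of $m$ modulo $k$; the Euclidean-division presentation combined with Serre vanishing is precisely what makes this uniform.
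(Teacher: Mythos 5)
Your argument is correct and follows essentially the same route as the paper's proof: extract a building block $\Sym^k\cal V\otimes\cal O_X(-H)$ globally generated at $x$ from $x\notin\bb B_+(\cal V)$, perform Euclidean division $m=ak+r$, present $\cal F\otimes\Sym^m\cal V$ as a quotient of (remainder sheaf twisted by $\cal O_X(aH)$) tensored with powers of the building block, and conclude by the superadditivity in Lemma \ref{lem:ito37}. The only deviations are cosmetic: you use $\cal G^{\otimes a}$ where the paper uses $\Sym^a(\Sym^k\cal V\otimes\cal O_X(-H))$, and you secure $s$-jet separation of the finitely many remainder sheaves by Serre vanishing rather than by choosing $H$ very ample separating $s$-jets and invoking Lemma \ref{lem:ito37} again.
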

\begin{proof}
Let $H$ be a very ample divisor on $X$ that separates $s$-jets at $x$.
Since $x\not\in\bb B_+(\cal V)$, there exists $m\geq 1$ 
such that $x\in gg\big(\Sym^m\cal V\otimes\cal O_X(-H)\bigr).$

Let $n_0$ be sufficiently large so that 
$\cal F\otimes\Sym^r\cal V\otimes\cal O_X(nH)$ 
separates $s$-jets at $x$ for all $0\leq r<m$ and all $n\geq n_0$.
Such $n_0$ exists by Lemma \ref{lem:ito37}.

For $M\geq mn_0$, write $M=mq+r$ with $0\leq r<m$ and $q\geq n_0$.
Then $\cal F\otimes\Sym^M\cal V$ is a quotient of 
$\cal F\otimes\Sym^r\cal V\otimes\Sym^q\Sym^m\cal V=
\bigl(\cal F\otimes\Sym^r\cal V\otimes\cal O_X(qH)\bigr)
\otimes\Sym^q(\Sym^m\cal V\otimes\cal O_X(-H))$.
Conclude by Lemma \ref{lem:ito37}.
\end{proof}

\begin{cor}[Stability of augmented base loci]\label{cor:b+stable}
With assumptions as in the lemma, let $H$ be an ample divisor on $X$.
Then for all sufficiently large $m$, we have 
\[\bb B_+(\cal V)=\bb B_+\bigl(\Sym^m\cal V\otimes\cal O_X(-H)\bigr).\]
\end{cor}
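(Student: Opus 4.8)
The plan is to establish the two inclusions separately: ``$\supseteq$'' will hold for every $m\ge 1$, while ``$\subseteq$'' will need $m$ large. Throughout I would fix an auxiliary very ample divisor $A$ on $X$ and, invoking Corollary \ref{cor:bundleloci251}, compute every augmented base locus below using $A$; by Lemma \ref{lem:bplususebaselocus} this gives $\bb B_+(\cal V)=\bigcap_{k\ge 0}\bs(\Sym^k\cal V\otimes\cal O_X(-A))$ and, writing $\cal W_m\coloneqq \Sym^m\cal V\otimes\cal O_X(-H)$, also $\bb B_+(\cal W_m)=\bigcap_{k\ge 0}\bs\bigl(\Sym^{km}\cal V\otimes\cal O_X(-kH-A)\bigr)$.

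For ``$\supseteq$'' I would argue pointwise: if $x\notin\bb B_+(\cal W_m)$ then $\Sym^{km}\cal V\otimes\cal O_X(-kH-A)$ is globally generated at $x$ for some $k$, hence so is $\Sym^{jkm}\cal V\otimes\cal O_X(-jkH-jA)$ for all $j\ge 1$ by \eqref{eq:gglocus}; since $H$ and $A$ are ample, twisting up by $jkH$ and then by $(j-1)A$ (both globally generated once $j$ is large) shows $\Sym^{jkm}\cal V\otimes\cal O_X(-A)$ is globally generated at $x$, so $x\notin\bs(\Sym^{jkm}\cal V\otimes\cal O_X(-A))\supseteq\bb B_+(\cal V)$. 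The size of $m$ plays no role here.

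The real work is ``$\subseteq$'', which is a uniform version of the argument proving Lemma \ref{lem:bplussjets}. Set $U\coloneqq X\setminus\bb B_+(\cal V)$. The stabilization clause of Lemma \ref{lem:bplususebaselocus} furnishes a \emph{single} integer $N_0\ge 1$ with $\bb B_+(\cal V)=\bs(\Sym^{N_0}\cal V\otimes\cal O_X(-A))$, so $\Sym^{N_0}\cal V\otimes\cal O_X(-A)$, and therefore each $\Sym^{jN_0}\cal V\otimes\cal O_X(-jA)$ by \eqref{eq:gglocus}, is globally generated on all of $U$. By Serre vanishing I would then choose an ample $B_0$ with $\Sym^r\cal V\otimes\cal O_X(B_0)$ globally generated for every $0\le r<N_0$, and an integer $q_0$ with $\cal O_X\bigl((q-1)A-B_0-H\bigr)$ globally generated for all $q\ge q_0$. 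For $m\ge q_0N_0$, write $m=qN_0+r$ with $0\le r<N_0$ and $q\ge q_0$; using that $\Sym^m\cal V$ is a quotient of $\Sym^{qN_0}\cal V\otimes\Sym^r\cal V$, exhibit $\cal W_m\otimes\cal O_X(-A)=\Sym^m\cal V\otimes\cal O_X(-H-A)$ as a quotient of
\[
\bigl(\Sym^{qN_0}\cal V\otimes\cal O_X(-qA)\bigr)\otimes\bigl(\Sym^r\cal V\otimes\cal O_X(B_0)\bigr)\otimes\cal O_X\bigl((q-1)A-B_0-H\bigr),
\]
a tensor product of one sheaf globally generated on $U$ and two globally generated on $X$; hence $\cal W_m\otimes\cal O_X(-A)$ is globally generated on $U$, i.e.\ $\bs(\cal W_m\otimes\cal O_X(-A))\subseteq\bb B_+(\cal V)$. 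Since the $k=1$ term gives $\bb B_+(\cal W_m)\subseteq\bs(\cal W_m\otimes\cal O_X(-A))$, this yields $\bb B_+(\cal W_m)\subseteq\bb B_+(\cal V)$ for all $m\ge q_0N_0$, and combined with ``$\supseteq$'' completes the proof.

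I expect the main obstacle to be precisely the uniformity: Lemma \ref{lem:bplussjets} only gives, for each individual $x\notin\bb B_+(\cal V)$, separation of jets for $m$ large depending on $x$, whereas here one needs a threshold $m$ independent of the point. This is resolved by the stabilization in Lemma \ref{lem:bplususebaselocus}, which produces the $q$-independent globally generated sheaf $\Sym^{N_0}\cal V\otimes\cal O_X(-A)$ on the whole of $U$ that feeds into the Euclidean-division step.
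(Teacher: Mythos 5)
Your proposal is correct, and the easy inclusion (one containment of globally generated loci, valid for every $m\ge 1$) is essentially the paper's argument; but for the direction $\bb B_+\bigl(\Sym^m\cal V\otimes\cal O_X(-H)\bigr)\subseteq\bb B_+(\cal V)$ you take a genuinely different route. The paper applies Lemma \ref{lem:bplussjets} pointwise (with $\cal F=\cal O_X(-2H)$, $s=0$) to get, for each $x\notin\bb B_+(\cal V)$, a threshold $m_x$, and then invokes noetherianity/openness to make $m_x$ independent of $x$. You instead build the uniformity in from the start: the stabilization clause of Lemma \ref{lem:bplususebaselocus}, applied with a very ample $A$, gives a single $N_0$ with $\bb B_+(\cal V)=\bs\bigl(\Sym^{N_0}\cal V\otimes\cal O_X(-A)\bigr)$, i.e.\ one symmetric power globally generated on all of $U=X\setminus\bb B_+(\cal V)$, and you then rerun the Euclidean-division/twisting mechanism from the proof of Lemma \ref{lem:bplussjets} uniformly over $U$, concluding $\bs\bigl(\Sym^m\cal V\otimes\cal O_X(-H-A)\bigr)\subseteq\bb B_+(\cal V)$ and hence $\bb B_+(\cal W_m)\subseteq\bb B_+(\cal V)$ for all $m\ge q_0N_0$. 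What each buys: the paper's version is shorter because it reuses Lemma \ref{lem:bplussjets} as a black box, while yours yields an explicit threshold and sidesteps the somewhat terse compactness step (the loci $gg\bigl(\Sym^m\cal V\otimes\cal O_X(-2H)\bigr)$ for varying $m$ are not nested, so "by noetherianity" requires exactly the kind of eventual stability your $N_0$ supplies). Two cosmetic points: your displayed identity $\bb B_+(\cal W_m)=\bigcap_{k\ge 0}\bs\bigl(\Sym^{km}\cal V\otimes\cal O_X(-kH-A)\bigr)$ is only clearly valid, and only needed, as the containment you actually use (passing from $\Sym^k\Sym^m\cal V$ to its quotient $\Sym^{km}\cal V$); and your labels $\subseteq$/$\supseteq$ are swapped relative to the displayed equality, though the argument itself is unambiguous about which inclusion holds for all $m$ and which needs $m$ large.
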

\begin{proof}We have 
$gg\Bigl(\Sym^n\bigl(\Sym^m\cal V\otimes\cal O_X(-H)\bigr)\otimes\cal
O_X(-H)\Bigr)\subseteq 
gg\bigl(\Sym^{nm}\cal V\otimes\cal O_X(-(n+1)H)\bigr)$. This proves the ``$\subseteq$'' inclusion for all $m\geq 1$.
Assume $x\not\in\bb B_+(\cal V)$. By Lemma \ref{lem:bplussjets}
there exists $m_x\geq 1$ such that
$\Sym^m\cal V\otimes\cal O_X(-2H)$ is globally generated at $x$ 
for all $m\geq m_x$.
In particular $x\not\in\bb B_+\bigl(\Sym^m\cal V\otimes\cal O_X(-H)\bigr)$
for $m\geq m_x$. 
The constant $m_x$ can be made independent of $x$ by noetherianity,
since $gg\bigl(\Sym^m\cal V\otimes\cal O_X(-2H)\bigr)$ and $X\setminus\bb B_+(\cal V)$ are open.
\end{proof}

\section{Direct images of pluricanonical sheaves}
In this section, we prove the following analogue of \cite[Theorem
A]{DuttaMurayama} for higher-rank bundles and for higher-order jets, in the
spirit of a relative Fujita-type conjecture of Popa and Schnell \cite[Conjecture
1.3]{popaschnell}.
\begin{thrm}\label{thrm:dmthmaanalogue}
  Let $f\colon Y \to X$ be a surjective morphism of complex projective
  varieties, where $X$ is of dimension $n$.
  Let $(Y,\Delta)$ be a log canonical $\mathbb{R}$-pair and let $\mathcal{V}$ be
  a locally free sheaf of finite rank $r \ge 1$ on $X$ such that $\cal O_{\bb
  P(\cal V)}(1)$ is big and nef.
  Consider a Cartier divisor $P$ on $Y$ such that $P \sim_{\mathbb{R}}
  k(K_Y+\Delta)$ for some integer $k \ge 1$, and consider a general smooth
  closed point $x \in X \setminus \mathbb{B}_+(\mathcal{V})$.
  If we have
  \begin{equation}\label{eq:dmthmaanalogueineq}
    \sh(\mathcal{V};x) > k \cdot \frac{n+s}{m+k(r-1)+1},
  \end{equation}
  then the sheaf
  \begin{equation}\label{eq:toshowsepsjets}
    f_*\mathcal{O}_Y(P) \otimes_{\mathcal{O}_X} \Sym^m\mathcal{V}
    \otimes_{\mathcal{O}_X} (\det \mathcal{V})^{\otimes k}
  \end{equation}
  separates $s$-jets at $x$.
  \par In particular, if $X$ is smooth, $\mathcal{V}$ is ample, and $\beta>0$ is
  such that $\pi^*\mathcal{V}^\vee\langle\beta\xi\rangle$ is ample, then with $M$ as in \eqref{eq:Mhacon}, for
  every integer
  \[
    \lambda > k \cdot \biggl( \frac{\beta}{M}(n+s) - (r-1) \biggr)-1
  \]
  the sheaf $f_*\mathcal{O}_Y(P) \otimes
  \Sym^\lambda\cal V \otimes (\det \mathcal{V})^{\otimes k}$ 
separates $s$-jets at all general points $x \in X$.
\end{thrm}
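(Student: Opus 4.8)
The plan is to deduce the ``in particular'' assertion formally from the first part of Theorem \ref{thrm:dmthmaanalogue} (already proved above) together with Hacon's lower bound on the Seshadri constants of ample bundles \cite[Theorem 1.5.a.i]{hacon}. First I would verify that the hypotheses of the first part are met in this situation. Since $\mathcal{V}$ is ample, $\xi$ is ample, hence $\mathcal{O}_{\mathbb{P}(\mathcal{V})}(1)$ is big and nef. Moreover $\mathbb{B}_+(\mathcal{V})=\emptyset$: by Remark \ref{rmk:amplesheaves} the sheaf $\Sym^k\mathcal{V}\otimes\mathcal{O}_X(-H)$ is globally generated for $k\gg 0$, so $\mathbb{B}\bigl(\Sym^k\mathcal{V}\otimes\mathcal{O}_X(-H)\bigr)=\emptyset$ for such $k$, whence the intersection defining $\mathbb{B}_+(\mathcal{V})$ is empty. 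As $X$ is smooth, ``a general smooth closed point $x\in X\setminus\mathbb{B}_+(\mathcal{V})$'' in the first part then simply means ``a general closed point $x\in X$.''

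Next I would invoke Hacon's estimate: for $\beta\in\mathbb{Q}_+$ with $\pi^*\mathcal{V}^\vee\langle\beta\xi\rangle$ ample and $M$ as in \eqref{eq:Mhacon}, one has $\sh(\mathcal{V};x)\geq M/\beta$ for general $x\in X$ (this is exactly the input that, fed into Proposition \ref{prop:jetsepbound}, recovers Theorem \ref{thrm:hacon17}). The remainder is arithmetic: if $\lambda$ is an integer with $\lambda > k\bigl(\tfrac{\beta}{M}(n+s)-(r-1)\bigr)-1$, then $\lambda+k(r-1)+1 > k\tfrac{\beta}{M}(n+s)$, so
\[
  \sh(\mathcal{V};x)\ \geq\ \frac{M}{\beta}\ >\ \frac{k(n+s)}{\lambda+k(r-1)+1}\ =\ k\cdot\frac{n+s}{\lambda+k(r-1)+1},
\]
which is precisely inequality \eqref{eq:dmthmaanalogueineq} with $m=\lambda$. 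Applying the first part of Theorem \ref{thrm:dmthmaanalogue} with $m=\lambda$ at a general closed point $x\in X=X\setminus\mathbb{B}_+(\mathcal{V})$ then shows that $f_*\mathcal{O}_Y(P)\otimes\Sym^\lambda\mathcal{V}\otimes(\det\mathcal{V})^{\otimes k}$ separates $s$-jets at $x$, as desired.

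There is no genuine obstacle here, since the ``in particular'' claim is a direct corollary; the only minor point to watch is that the locus of ``general'' points over which the conclusion holds is the intersection of the locus supplied by the first part of Theorem \ref{thrm:dmthmaanalogue} and the locus supplied by \cite[Theorem 1.5.a.i]{hacon}. Each of these is the complement of a proper closed subset (respectively, of an at most countable union of such), so the intersection is again of this form, and the separation of $s$-jets holds throughout it.
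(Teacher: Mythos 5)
There is a genuine gap: your proposal only establishes the ``in particular'' assertion, taking the first (and main) statement of Theorem \ref{thrm:dmthmaanalogue} as ``already proved above.'' Nothing above the theorem proves that statement; it is precisely the content that the proof must supply, and it is where essentially all the work lies. The paper's argument for it is substantial: after reducing (via Lemma \ref{lem:bplussjets}) to a minimal $m_0$ for which the sheaf separates $s$-jets, one passes to $\bb P_Y(f^*\cal V)$, chooses a general divisor $\mathfrak D$ in $\lvert \rho_Y^*(P-G)+k\,\rho_Y^*c_1(f^*\cal V)+m_0\eta\rvert$ to reduce the case $k>1$ to a $k=1$ statement for a new log canonical pair, takes log resolutions to restore simple normal crossings, and then proves the needed surjectivity of restriction maps to jets by an injectivity statement on $H^1$ coming from Fujino's Koll\'ar-type injectivity theorem, with the hypothesis $\sh(\cal V;x)>k\frac{n+s}{m+k(r-1)+1}$ entering through the bigness and nefness (and control of the augmented base locus, via \cite[Remark 6.5]{ELMNP}) of classes of the form $(m+r-\frac{k-1}{k}m_0)\xi-(n+t+\delta)D$ on the blow-up. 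None of this is present in your proposal, so the theorem itself is not proved.

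The part you do argue---deducing the ``in particular'' statement from the first part---is essentially the same route the paper indicates (it remarks that the last statement follows as in \cite[Theorem 1.7]{hacon} using the Seshadri lower bound of \cite[Theorem 1.5.a.i]{hacon}), and your verification that $\bb B_+(\cal V)=\emptyset$ for ample $\cal V$ and the arithmetic reduction to \eqref{eq:dmthmaanalogueineq} with $m=\lambda$ are correct. One small inaccuracy: intersecting the locus from Hacon's bound (complement of a countable union of proper closed subsets) with the locus from the first part gives only a \emph{very general} locus; the passage from ``very general'' to ``general'' requires the additional observation, made in the paper, that separation of $s$-jets is an open condition in $x$.
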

Note that by Proposition \ref{prop:baseseshadrivanish}, the condition $x \notin
\mathbb{B}_+(\mathcal{V})$ follows from the condition on $\sh(\mathcal{V};x)$
in \eqref{eq:dmthmaanalogueineq}.
This condition also implies $\cal V$ is V-big in the sense of Definition
\ref{def:vbig}.
We also note that the last statement follows in the same way as in \cite[Theorem
1.7]{hacon}, using the lower bound for Seshadri constants in \cite[Theorem
1.5.a.i]{hacon}, hence it suffices to show the first statement.
Finally, our statement has ``general'' instead of ``very general'' since
separating $s$-jets is an open condition.
\begin{proof}
By applying Lemma \ref{lem:bplussjets} to $\mathcal{F} =
  f_*\mathcal{O}_Y(P) \otimes (\det \mathcal{V})^{\otimes k}$, there exists a
  smallest positive integer $m_0$ such that the sheaf \eqref{eq:toshowsepsjets}
  separates $s$-jets at $x$ for $m = m_0$.
We will prove that
the sheaf \eqref{eq:toshowsepsjets} separates $s$-jets at $x$
for a suitable choice of a general point $x$, if
\begin{equation}\label{eq:psneatidea}
      \sh(\mathcal{V};x) > \frac{n+s}{m+r-\frac{k-1}{k}m_0}.
\end{equation}
The choice of the general point $x$ will be detailed momentarily,
but first we explain how the conclusion of the theorem follows from
\eqref{eq:psneatidea}.
This inequality is equivalent to
  \[
    m > \frac{n+s}{\sh(\mathcal{V};x)} + \frac{k-1}{k}m_0 - r,
  \]
  and by the minimality of $m_0$, we see that
  \[
    m_0 \le \biggl\lfloor\frac{n+s}{\sh(\mathcal{V};x)} + \frac{k-1}{k}m_0 - r
    \biggr\rfloor + 1 \le \frac{n+s}{\sh(\mathcal{V};x)} + \frac{k-1}{k}m_0 - r
    + 1.
  \]
  Rearranging this inequality yields
  \[
    m_0 \le k \cdot \biggl( \frac{n+s}{\sh(\mathcal{V};x)} -r + 1\biggr),
  \]
  and substituting this upper bound for $m_0$ into the inequality
  for $m$ above, we see that the sheaf \eqref{eq:toshowsepsjets} separates
  $s$-jets at $x$ if
  \[
    m > \frac{n+s}{\sh(\mathcal{V};x)} + (k-1)\cdot\biggl(
    \frac{n+s}{\sh(\mathcal{V};x)} -r + 1\biggr) - r
    = k\cdot \frac{n+s}{\sh(\mathcal{V};x)} - k(r-1) - 1
  \]
  which is equivalent to the inequality \eqref{eq:dmthmaanalogueineq}.
  This idea was inspired by the proof of \cite[Theorem 1.7]{popaschnell}.  

  We now explain the choice of the general point $x$.
  Following Steps 0 and 1 in the proof of \cite[Theorem A]{DuttaMurayama}, we
  may assume that $Y$ is smooth, that $\Delta$ has simple normal crossings
  support and coefficients in $(0,1]$, and that the image of the adjunction
  morphism
  \[
    f^*f_*\mathcal{O}_Y(P) \longrightarrow \mathcal{O}_Y(P)
  \]
  is of the form $\mathcal{O}_Y(P-G)$ for a divisor $G$ such that $\Delta+G$ has
  simple normal crossings support.
  We will show that under these assumptions, the sheaf \eqref{eq:toshowsepsjets}
  separates $s$-jets at all smooth closed points $x \in X \setminus
  \mathbb{B}_+(\mathcal{V})$ satisfying \eqref{eq:dmthmaanalogueineq}, such that
  $f$ is smooth at $x$ and such that the fiber $Y_x \coloneqq  
  f^{-1}(x)$ over $x$ intersects each component of $\Delta$ transversely.
  \begin{step}
    Reduction to the case $k = 1$ for a suitable pair.
  \end{step}
By assumption on $m_0$, we know that the sheaf \eqref{eq:toshowsepsjets}
  separates $s$-jets at $x$ for $m = m_0$, and in particular, is globally
  generated at $x$.  
This implies that the sheaf
  \[
    \mathcal{O}_Y(P-G) \otimes \Sym^{m_0} f^*\mathcal{V} \otimes (\det
    f^*\mathcal{V})^{\otimes k}
  \]
  is globally generated along $Y_x$.
  By pulling back along the bundle map
\[
\rho_Y\colon \bb P_Y(f^*\cal V)\longrightarrow Y,
\]   
and using the $m_0$th symmetric power of the
  tautological quotient map, the 
  invertible sheaf
  \[
    \mathcal{O}_{\mathbb{P}_Y(f^*\mathcal{V})} \bigl( \rho_Y^*(P-G)\bigr)
    \otimes \mathcal{O}_{\mathbb{P}_{Y}(f^*\mathcal{V})}(m_0) \otimes \bigl(\det
    (f \circ \rho_Y)^*\mathcal{V}\bigr)^{\otimes k}
  \]
  on $\mathbb{P}_{Y}(f^*\mathcal{V})$ is globally generated along
  $\rho_Y^{-1}(Y_x)$.
  Now let $c_1(f^*\mathcal{V})$ denote the divisor class of the determinant of
  $f^*\mathcal{V}$ on $Y$, and let $\eta=c_1(\cal O_{\mathbb{P}_{Y}(f^*\mathcal{V})}(1))$.
 Switching to divisor notation, 
  \begin{align*}
    \rho_Y^*(P-G) + k\,\rho_Y^*c_1(f^*\mathcal{V}) + m_0\eta
    &\sim_{\mathbb{R}} \rho_Y^*(k\Delta-G) + k\,\rho_Y^*K_Y +
    k\,\rho_Y^*c_1(f^*\mathcal{V}) + m_0\eta\\
    &\sim_{\mathbb{R}} \rho_Y^*(k\Delta-G) +
    k\,K_{\mathbb{P}_{Y}(f^*\mathcal{V})} + (m_0+kr) \eta.
  \end{align*}
  By Bertini's theorem, we can therefore choose a general divisor
  \[
    \mathfrak{D} \in \bigl\lvert \rho_Y^*(P-G) + k\,\rho_Y^*c_1(f^*\mathcal{V}) + m_0\eta \bigr\rvert
  \]
  that is smooth along $\rho_Y^{-1}(Y_x)$, and intersects both
  $\rho_Y^{-1}(Y_x)$ and the supports of $\rho_Y^*\Delta$ and $\rho_Y^*G$
  transversely (see \cite[Lemma 4.1.11]{laz04}) in a neighborhood of $\rho_Y^{-1}(Y_x)$.
  We then have
  \[
    k\bigl(K_{\mathbb{P}_{Y}(f^*\mathcal{V})}+\rho_Y^*\Delta\bigr)
    \sim_{\mathbb{R}} K_{\mathbb{P}_{Y}(f^*\mathcal{V})}+\rho_Y^*\Delta +
    \frac{k-1}{k}\mathfrak{D} + \frac{k-1}{k}\rho_Y^*G -
    \frac{k-1}{k}(m_0+kr)\eta.
  \]
  \par We now want to rewrite the right-hand side as the sum of a log canonical
  divisor coming from a log canonical pair on $\mathbb{P}_{Y}(f^*\mathcal{V})$
  and a multiple of $\eta$.
  Since $\Delta + \frac{k-1}{k}G$ may have some coefficients greater than one,
  we first adjust the coefficients of $\Delta$ and $G$.
  Applying \cite[Lemma 2.18]{DuttaMurayama} to $c = \frac{k-1}{k}$, there
  exists an effective Cartier $\bb Z$-divisor $G' \preceq G$ such that
  \[
    \Delta' \coloneqq   \Delta + \frac{k-1}{k}G - G'
  \]
  is effective with simple normal crossings support, with components
  intersecting $Y_x$ transversely, and with coefficients in $(0,1]$.
  Since $\rho_Y$ is a smooth morphism, the pullback $\rho_Y^*\Delta'$ also has
  these same properties on $\mathbb{P}_Y(f^*\mathcal{V})$.
  We then have
  \begin{equation}\label{eq:decomponpyfstarv}
    \begin{aligned}
      \rho_Y^*\bigl(P + k\,c_1(f^*\mathcal{V}) - G'\bigr) &\sim_{\mathbb{R}}
      k\bigl(K_{\mathbb{P}_{Y}(f^*\mathcal{V})}+r\eta+\rho_Y^*\Delta\bigr)-
      \rho_Y^*G'\\
      &\sim_{\mathbb{R}} K_{\mathbb{P}_{Y}(f^*\mathcal{V})}+\rho_Y^*\Delta' +
      \frac{k-1}{k}\mathfrak{D} + \biggl(r - \frac{k-1}{k}m_0\biggr)\eta.
    \end{aligned}
  \end{equation}
  This $\mathbb{R}$-linear equivalence will be used to reduce the case $k > 1$
  for the pair $(Y,\Delta)$ to the case $k = 1$ for the pair
  $(\mathbb{P}_{Y}(f^*\mathcal{V}),\rho_Y^*\Delta' +
  \frac{k-1}{k}\mathfrak{D})$.
  \begin{step}
    Replacing $\mathfrak{D}$ with a divisor with simple normal crossings
    support.
  \end{step}
  Let $\mu\colon Z \to \mathbb{P}_Y(f^*\mathcal{V})$ be a common log resolution
  for $\mathfrak{D}$ and $(\mathbb{P}_Y(f^*\mathcal{V}),\rho_Y^*\Delta')$.  
  Note that we can choose $\mu$ to be an isomorphism along $\rho_Y^{-1}(Y_x)$,
  since $\mathfrak{D}$ and $\rho_Y^*\Delta'$ intersect transversely and have
  simple normal crossings support in a neighborhood of $\rho_Y^{-1}(Y_x)$.
  We can then write
  \[
    \mu^*\mathfrak{D} = \mathfrak{D}_1 + F, \qquad (\rho_Y\circ\mu)^*\Delta' =
    \mu_*^{-1}(\rho_Y^*\Delta') + F_1,
  \]
  where $\mathfrak{D}_1$ is a smooth divisor intersecting $(\rho_Y \circ
  \mu)^{-1}(Y_x)$ transversely and $F,F_1$ are supported away from $(\rho_Y \circ
  \mu)^{-1}(Y_x)$.
  Define
  \begin{gather*}
    F' \coloneqq   \biggl\lfloor \frac{k-1}{k} F + F_1 \biggr\rfloor, \qquad
    \widetilde{\Delta} \coloneqq   (\rho_Y\circ\mu)^*\Delta' +
    \frac{k-1}{k}\mu^*\mathfrak{D} - F',\\
    \widetilde{P} \coloneqq   (\rho_Y\circ\mu)^*\bigl(P +
    k\,c_1(f^*\mathcal{V})\bigr) + K_{Z/\mathbb{P}_Y(f^*\mathcal{V})}.
  \end{gather*}
  Note that $\widetilde{\Delta}$ has simple normal crossings support and
  coefficients in $(0,1]$ by assumption on the log resolution $\mu$ and by the
  definition of $F'$, and also has components intersecting $(\rho_Y \circ
  \mu)^{-1}(Y_x)$ transversely.
  Pulling back the decomposition in \eqref{eq:decomponpyfstarv} via $\mu$ and
  adding $K_{Z/\mathbb{P}_Y(f^*\mathcal{V})} - F'$ yields
  \begin{equation}\label{eq:frakdnowhassnc}
        \widetilde{P} - (\rho_Y\circ\mu)^*G' - F'
      \sim_{\mathbb{R}} K_Z + \widetilde{\Delta} +
      \biggl( r-\frac{k-1}{k}m_0\biggr)\mu^*\eta.
    \end{equation}
  \begin{step}\label{step:dmthmaanalogue4}
    To show the sheaf \eqref{eq:toshowsepsjets} separates
    $s$-jets at $x$, it suffices to show that the sheaf
    \begin{equation}\label{eq:toshowsepsjetsmodified}
      (f \circ \rho_Y \circ \mu)_*\mathcal{O}_Z\bigl(\widetilde{P} -
      (\rho_Y\circ\mu)^*G' - F' + m\mu^*\eta\bigr)
    \end{equation}
    separates $s$-jets at $x$.
  \end{step}
  Consider the commutative diagram
  \[
    \scalebox{0.75}{\xymatrix{
      H^0\bigl(X,(f \circ \rho_Y \circ \mu)_*\mathcal{O}_Z\bigl(\widetilde{P} -
      (\rho_Y\circ\mu)^*G' - F' + m\mu^*\eta\bigr)\bigr) \ar[r]
      \ar@{^(->}[d]
      & H^0\bigl(X,(f \circ \rho_Y \circ \mu)_*\mathcal{O}_Z\bigl(\widetilde{P} -
      (\rho_Y\circ\mu)^*G' - F' + m\mu^*\eta\bigr) \otimes
      \frac{\mathcal{O}_X}{\mathfrak{m}_x^{s+1}}\bigr)\ar[d]^{\simeq}\\
      H^0\bigl(X,(f \circ \rho_Y \circ \mu)_*\mathcal{O}_Z\bigl(\widetilde{P} -
      (\rho_Y\circ\mu)^*G'+m\mu^*\eta\bigr)\bigr) \ar[r]
      \ar[d]_{\simeq}
      & H^0\bigl(X,(f \circ \rho_Y \circ \mu)_*\mathcal{O}_Z\bigl(\widetilde{P} -
      (\rho_Y\circ\mu)^*G'+m\mu^*\eta\bigr) \otimes
      \frac{\mathcal{O}_X}{\mathfrak{m}_x^{s+1}}\bigr)\ar[d]^{\simeq}\\
      H^0\bigl(X,f_*\mathcal{O}_Y(P-G') \otimes \Sym^m\mathcal{V}
      \otimes (\det\mathcal{V})^{\otimes k} \bigr) \ar[r] \ar[d]_{\simeq}
      & H^0\bigl(X,f_*\mathcal{O}_Y(P-G') \otimes \Sym^m\mathcal{V}  \otimes
      (\det\mathcal{V})^{\otimes k} \otimes
      \frac{\mathcal{O}_X}{\mathfrak{m}_x^{s+1}}\bigr) \ar[d]^{\simeq}\\
      H^0\bigl(X,f_*\mathcal{O}_Y(P) \otimes \Sym^m\mathcal{V}
      \otimes (\det\mathcal{V})^{\otimes k} \bigr) \ar[r]
      & H^0\bigl(X,f_*\mathcal{O}_Y(P) \otimes \Sym^m\mathcal{V}  \otimes
      (\det\mathcal{V})^{\otimes k} \otimes
      \frac{\mathcal{O}_X}{\mathfrak{m}_x^{s+1}}\bigr)
    }}
  \]
  where the top right isomorphism holds since $F'$ is supported away from
  $(\rho_Y \circ \mu)^{-1}(Y_x)$.
  The vertical isomorphisms in the middle row follow from the
  projection formula, the fact that $K_{Z/\mathbb{P}_Y(f^*\mathcal{V})}$ is
  $\mu$-exceptional, and the fact that
  $\mathbb{R}\rho_{Y*}\mathcal{O}_{\mathbb{P}_Y(f^*\mathcal{V})}(m)$ is
  quasi-isomorphic to $\Sym^mf^*\mathcal{V}$ for $m \ge 0$.
  The vertical isomorphisms in the bottom row follow from \cite[Lemma
  2.17]{DuttaMurayama}.
  If the top horizontal arrow is surjective, then the commutativity of the
  diagram implies that the bottom horizontal arrow is also surjective, i.e., the
  sheaf in \eqref{eq:toshowsepsjets} separates $s$-jets at $x$.
  \begin{step}\label{step:dmthmaanalogue5}
    The sheaf \eqref{eq:toshowsepsjetsmodified} separates $s$-jets at $x$ if
    \[
      \sh(\mathcal{V};x) > \frac{n+s}{m+r-\frac{k-1}{k}m_0}.
    \]
  \end{step}
   
Consider the commutative diagram
  \[
    \xymatrix{
Z' \ar[r]^{\pi_Z}\ar[d]_{\mu'}\ar@{}[dr]|*={\square} & Z\ar[d]^{\mu}\\
      \mathbb{P}_{Y'}(\mathcal{W}) \ar[r]^-{\pi_Y'}\ar@{}[dr]|*={\square}
      \ar[d]_{\rho_{Y'}} & \mathbb{P}_Y(f^*\mathcal{V}) \ar[d]^{\rho_Y}\\
      Y' \ar[r]^{\pi_Y} \ar[d]_{f'}\ar@{}[dr]|*={\square} & Y \ar[d]^{f}\\
      X' \ar[r]^{\pi} & X
    }
  \]
  with cartesian squares,
  where $X' = \bl_xX$, where $Y' = \bl_{Y_x}Y$, and
  $\mathcal{W} = (f \circ
  \pi_Y)^*\mathcal{V} = (\pi \circ f')^*\mathcal{V}$.
  The bottom square is cartesian since $f$ is flat at $x$.
Since $\rho_Y$ is smooth and therefore flat, we also have
$\bb P_{Y'}(\cal W)=\bl_{\rho_Y^{-1}Y_x}\bb P_Y(f^*\cal V)$.
In the top square, $\pi_Z$ is the blow-up of $Z$ along $(\rho_Y \circ \mu)^{-1}(Y_x)$ since $\mu$ is an isomorphism over $\rho_Y^{-1}(Y_x)$. 
Consider the commutative diagram
  \begin{equation}\label{eq:extensionh0}
    \mathclap{\begin{gathered}
      \scalebox{0.75}{\xymatrix{
          H^0\bigl(Z',\pi_Z^*\mathcal{O}_Z
            \bigl(\widetilde{P} - (\rho_Y\circ\mu)^*G'
          - F' + m\mu^*\eta\bigr)\bigr) \ar[r]
          & H^0\bigl(Z',\pi_Z^*\mathcal{O}_Z
	\bigl(\widetilde{P} - (\rho_Y\circ\mu)^*G' - F' +
          m\mu^*\eta\bigr)\bigr\rvert_{(t+1)\mu^{\prime*}E}\bigr)\\
          H^0\bigl(Z,\mathcal{O}_Z
            \bigl(\widetilde{P} - (\rho_Y\circ\mu)^*G'
          - F' + m\mu^*\eta\bigr)\bigr) \ar[r] \ar[u]^{\simeq}
          & H^0\bigl(Z,\mathcal{O}_Z\bigl(\widetilde{P} - (\rho_Y\circ\mu)^*G'
          - F' + m\mu^*\eta\bigr)/\mathcal{I}_{(\rho_Y \circ
          \mu)^{-1}(Y_x)}^{t+1}\bigr) \ar[u]_{\simeq}\\
          H^0\bigl(X,(f \circ \rho_Y \circ \mu)_*\mathcal{O}_Z\bigl(\widetilde{P} -
          (\rho_Y\circ\mu)^*G' - F' + m\mu^*\eta\bigr)\bigr) \ar[r]
          \ar[u]^{\simeq}
          & H^0\bigl(X,(f \circ \rho_Y \circ \mu)_*\mathcal{O}_Z\bigl(\widetilde{P} -
            (\rho_Y\circ\mu)^*G' - F' + m\mu^*\eta\bigr) \otimes
          \frac{\mathcal{O}_X}{\mathfrak{m}_x^{t+1}}\bigr) \ar[u]_{\alpha_t(x)}
      }}
    \end{gathered}}
  \end{equation}
  where the vertical arrows in the top row are isomorphisms by the fact that
  $\pi_Z$ is the blow-up along the smooth subscheme
  $(\rho_Y \circ \mu)^{-1}(Y_x) \subseteq Z$; see \cite[Lemma 4.3.16]{laz04}.
  We will show that the top horizontal arrow is surjective for $t = 0$ and $t
  = s$.
  The $t = 0$ statement will show that $\alpha_t(x)$ is surjective by the
  commutativity of the diagram, hence an isomorphism for all
  $t$ by cohomology and base change \cite[Corollary 8.3.11]{illusiefga}, using
  the flatness of $f$ at $x$.
  The surjectivity of the top horizontal arrow for $t = s$ will then show
  that the sheaf \eqref{eq:toshowsepsjetsmodified}
  separates $s$-jets at $x$.
  \par Choose a sufficiently small positive rational number $\delta$ such that
  \[
    \sh(\mathcal{V};x) > \frac{n+s+\delta}{m+r-\frac{k-1}{k}m_0}.
  \]
  Let $D$ denote the exceptional divisor for the blow-up
  $\mathbb{P}_{X'}(\pi^*\mathcal{V}) \to \mathbb{P}_X(\mathcal{V})$ along
  $\mathbb{P}(\mathcal{V}(x))$, let $\xi$ denote the Serre class on
  $\mathbb{P}_{X'}(\pi^*\mathcal{V})$, and let 
  $E$ denote the exceptional divisor of the blow-up $\pi'_Y$.
  The $\mathbb{Q}$-divisor
  \begin{equation}\label{eq:nearenddiv}
    \begin{aligned}
      \MoveEqLeft[7]\mu^{\prime*}\biggl(\biggl( m + r - \frac{k-1}{k}m_0
      \biggr)\pi_Y^{\prime*}\eta - (n+t+\delta)E\biggr)\\
      &= (\rho_{Y'} \circ \mu')^*\biggl( \biggl( m + r - \frac{k-1}{k}m_0 \biggr)\xi
      - (n+t+\delta)D\biggr)
    \end{aligned}
  \end{equation}
  is big and nef for $t\in\{0,s\}$ by assumption on $\sh(\mathcal{V};x)$ and
  Remark \ref{rmk:easyseshadri}(c). 
  By the definition of $\sh(\mathcal{V};x)$ and \cite[Remark 6.5]{ELMNP}, the
  stable base locus of the divisor \eqref{eq:nearenddiv} is disjoint from
  $\mu^{\prime*}E$ (cf.\ the proof of \cite[Lemma 3.3]{DuttaMurayama}).
  By Bertini's theorem, for $\ell$ a sufficiently large and divisible integer, 
  we can therefore choose a general divisor
  \[
    \mathfrak{E} \in \biggl\lvert \ell \biggl(\mu^{\prime*}\biggl(\biggl( m + r -
    \frac{k-1}{k}m_0 \biggr)\pi_Y^{\prime*}\eta - (n+t+\delta)E\biggr)\biggr)
    \biggr\rvert
  \]
  that is smooth along $\mu^{\prime*}E$, and intersects every component of the
  support of $\pi^*_Z\widetilde{\Delta}$ transversely in a neighborhood of
  $\mu^{\prime*}E$.
  \par Choose a common log resolution $\nu\colon \widetilde{Z}' \to Z'$ for
  $\mathfrak{E}$ and $(Z',\pi^*_Z\widetilde{\Delta})$ that is an isomorphism
  along $\mu^{\prime*}E$.
  We then write
  \[
    \nu^*\mathfrak{E} = \mathfrak{E}_1 + B, \qquad (\pi_Z \circ
    \nu)^*\widetilde{\Delta} = \nu_*^{-1}\pi^*_Z\widetilde{\Delta} + B_1
  \]
  where $\mathfrak{E}_1$ is a smooth prime divisor intersecting $(\mu' \circ
  \nu)^*E$ transversely and $B,B_1$ are supported away from $(\mu' \circ
  \nu)^*E$.
  Define
  \begin{gather*}
    B' \coloneqq   \biggl\lfloor \frac{1}{\ell} B + B_1 \biggr\rfloor, \qquad
    \Gamma \coloneqq   (\pi_Z \circ \nu)^*\widetilde{\Delta} +
    \frac{1}{\ell}\nu^*\mathfrak{E} - B' + \delta(\mu' \circ \nu)^*E,\\
    Q \coloneqq   (\pi_Z \circ \nu)^*\widetilde{P} + K_{\widetilde{Z}'/Z'},
  \end{gather*}
  where we note that $\Gamma$ has simple normal crossings support and
  coefficients in $(0,1]$, since $\pi^*_Z\widetilde{\Delta}$ has simple normal
  crossings support and coefficients in $(0,1]$ by the condition that
  $\widetilde{\Delta}$ has components intersecting $(\rho_Y \circ
  \mu)^{-1}(Y_x)$ transversely; see \cite[Corollary 6.7.2]{fulton84}.
  By the $\mathbb{R}$-linear equivalence \eqref{eq:frakdnowhassnc}, we
  have that
  \begin{align*}
    \MoveEqLeft[3]\pi_Z^*\widetilde{P} - (\rho_Y\circ\mu
    \circ \pi_Z)^*G' - \pi_Z^*F' + m(\mu\circ\pi_Z)^*\eta -
    (t+1)\mu^{\prime*}E\\
    &\sim_{\mathbb{R}} K_{Z'} + \pi_Z^*\widetilde{\Delta} + \delta\mu^{\prime*}E
    + \frac{1}{\ell}\mathfrak{E}
  \end{align*}
  where we use the fact that $\pi_Z$ is the blow-up along the smooth subvariety
  $(\rho_Y \circ \mu)^{-1}(Y_x)$ of codimension $n$.
  Pulling back along $\nu$ and adding $K_{\widetilde{Z}'/Z} - B'$, we obtain
  \begin{equation}\label{eq:fujinoequiv}
    \begin{aligned}
      \MoveEqLeft[3]Q - (\rho_Y\circ\mu \circ \pi_Z \circ \nu)^*G' - (\pi_Z
      \circ \nu)^*F' - B' + m(\mu\circ\pi_Z \circ \nu)^*\eta - (t+1)(\mu' \circ
      \nu)^*E\\
      &\sim_{\mathbb{R}} K_{\widetilde{Z}'} + \Gamma.
    \end{aligned}
  \end{equation}
  Since $B'$ is supported away from $(\mu'\circ\nu)^*E$ and
  $K_{\widetilde{Z}'/Z}$ is $\nu$-exceptional, an argument similar to Step
  \ref{step:dmthmaanalogue4} shows that 
  to show the surjectivity of the top horizontal arrow in
  \eqref{eq:extensionh0}, it suffices to show that the morphism
  \begin{multline*}
    H^1\bigl(\widetilde{Z}',\mathcal{O}_{\widetilde{Z}'}\bigl(
    Q - (\rho_Y\circ\mu \circ \pi_Z \circ \nu)^*G' - (\pi_Z \circ
    \nu)^*F' - B' + m(\mu\circ\pi_Z \circ \nu)^*\eta -
    (t+1)\mu^{\prime*}E\bigr)\bigr)\\
    \longrightarrow
    H^1\bigl(\widetilde{Z}',\mathcal{O}_{\widetilde{Z}'}\bigl(
    Q - (\rho_Y\circ\mu \circ \pi_Z \circ \nu)^*G' - (\pi_Z \circ
    \nu)^*F' - B' + m(\mu\circ\pi_Z \circ \nu)^*\eta\bigr)\bigr)
  \end{multline*}
  is injective.
  This injectivity follows from Fujino's Koll\'ar-type injectivity theorem
  \cite[Theorem 5.4.1]{fujinommp} by using the $\mathbb{R}$-linear equivalence
  \eqref{eq:fujinoequiv} and the fact that $\Gamma$ contains $(\mu' \circ
  \nu)^*E$ in its support.
  \par The argument above works for $t = 0$ or $t = s$, hence the
  sheaf \eqref{eq:toshowsepsjetsmodified} separates $s$-jets at $x$.
  \end{proof}
Specializing to the case when $\mathcal{V}$ is an invertible sheaf, we obtain
the following version of \cite[Theorem A]{DuttaMurayama} for higher-order jets
using the lower bound on Seshadri constants in \cite{ekl95}.
This also gives a generic version of \cite[Corollary 2.7]{ShentuZhang} for big
and nef line bundles that are not necessarily globally generated, albeit with
weaker bounds.
\begin{cor}\label{cor:dmthmaanalogue}
  Let $f\colon Y \to X$ be a surjective morphism of complex projective
  varieties, where $X$ is of dimension $n$.
  Let $(Y,\Delta)$ be a log canonical $\mathbb{R}$-pair and let $\mathcal{L}$ be
  a big and nef invertible sheaf on $X$.
  Consider a Cartier divisor $P$ on $Y$ such that $P \sim_{\mathbb{R}}
  k(K_Y+\Delta)$ for some integer $k \ge 1$.
  Then, the sheaf
  \[
    f_*\mathcal{O}_Y(P) \otimes_{\mathcal{O}_X} \mathcal{L}^{\otimes \ell}
  \]
  separates $s$-jets at all general points $x \in X$ for all $\ell \ge k
  (n(n+s)+1)$.
\end{cor}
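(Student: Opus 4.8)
The plan is to obtain this as the rank-one case of Theorem~\ref{thrm:dmthmaanalogue}, feeding in the Ein--K\"uchle--Lazarsfeld lower bound on Seshadri constants. Take $\mathcal{V} = \mathcal{L}$, so that $r = 1$, $\bb P(\mathcal{V}) = X$, and $\cal O_{\bb P(\mathcal{V})}(1) = \mathcal{L}$ is big and nef by hypothesis; moreover $\Sym^m\mathcal{V} = \mathcal{L}^{\otimes m}$ and $\det\mathcal{V} = \mathcal{L}$, so the sheaf \eqref{eq:toshowsepsjets} of Theorem~\ref{thrm:dmthmaanalogue} is $f_*\mathcal{O}_Y(P) \otimes \mathcal{L}^{\otimes(m+k)}$. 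Given $\ell \ge k(n(n+s)+1)$, I would set $m \coloneqq \ell - k$, which is $\ge 0$ since $\ell \ge 2k$; then $f_*\mathcal{O}_Y(P) \otimes \mathcal{L}^{\otimes\ell}$ is exactly the sheaf to which Theorem~\ref{thrm:dmthmaanalogue} applies, provided its Seshadri hypothesis \eqref{eq:dmthmaanalogueineq}, which for $r = 1$ reads
\[
  \sh(\mathcal{L};x) > k\cdot\frac{n+s}{m+1} = \frac{k(n+s)}{\ell - k + 1},
\]
holds at a suitable point $x$.

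To verify this, I would invoke the theorem of Ein, K\"uchle, and Lazarsfeld \cite{ekl95}: for a big and nef invertible sheaf $\mathcal{L}$ on a smooth projective variety of dimension $n$, one has $\sh(\mathcal{L};x) \ge 1/n$ at a very general point $x$. When $X$ is singular one first passes to a resolution of singularities that is an isomorphism near the general point $x$; this preserves bigness and nefness and does not change $\sh(\mathcal{L};x)$. This is the line-bundle instance of the bound \cite[Theorem~1.5.a.i]{hacon} already used in the proof of Theorem~\ref{thrm:dmthmaanalogue}. Now the arithmetic: for $\ell \ge k(n(n+s)+1)$ we have $\ell - k + 1 \ge kn(n+s) + 1 > kn(n+s)$, hence
\[
  \frac{k(n+s)}{\ell - k + 1} < \frac{k(n+s)}{kn(n+s)} = \frac{1}{n} \le \sh(\mathcal{L};x)
\]
at every very general $x$, so the hypothesis \eqref{eq:dmthmaanalogueineq} of Theorem~\ref{thrm:dmthmaanalogue} is satisfied. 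Note that, as remarked after that theorem, the inequality automatically forces $x \notin \bb B_+(\mathcal{L})$ by Proposition~\ref{prop:baseseshadrivanish}, so no further hypotheses need to be checked.

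Applying Theorem~\ref{thrm:dmthmaanalogue} therefore shows that $f_*\mathcal{O}_Y(P) \otimes \mathcal{L}^{\otimes\ell}$ separates $s$-jets at such a very general point $x$. Since the locus of points at which a coherent sheaf separates $s$-jets is open, and we have just exhibited a point in it, this locus is dense open and in particular contains every (Zariski-)general point of $X$; this is the assertion. The only step I expect to require any care is ensuring that the Ein--K\"uchle--Lazarsfeld bound $\sh(\mathcal{L};x) \ge 1/n$ is available in the stated generality --- big and nef rather than ample, and at a general point of a possibly singular $X$ --- but this is exactly the content of \cite{ekl95} combined with the reduction to a resolution; the rest is a direct specialization of Theorem~\ref{thrm:dmthmaanalogue} and bookkeeping with the numerical constant.
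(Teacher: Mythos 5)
Your proposal is correct and is essentially the paper's own argument: the paper deduces the corollary in one line by specializing Theorem \ref{thrm:dmthmaanalogue} to an invertible sheaf and invoking the Ein--K\"uchle--Lazarsfeld bound $\sh(\mathcal L;x)\ge 1/n$ at very general points, then passing from very general to general via openness of $s$-jet separation. Your choice $m=\ell-k$, the verification of \eqref{eq:dmthmaanalogueineq}, and the handling of singular $X$ by a resolution isomorphic near $x$ fill in exactly the bookkeeping the paper leaves implicit.
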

Just as in the case when $s = 0$, one can replace the lower bound $\ell \ge k
(n(n+s)+1)$ with the lower bound $\ell \ge k((n-1)(n+s)+1)$ when
$X$ is smooth of dimension at most three and $\mathcal{L}$ is ample; see
\cite[Remark 5.2]{DuttaMurayama}.

\bibliographystyle{amsalpha}
\bibliography{Seshadri}

\end{document}